\documentclass[12pt]{amsart}


\usepackage{amsmath,amssymb,amsfonts,amsthm}
\usepackage{microtype}
\usepackage{ifpdf}
\usepackage{enumerate}
\usepackage{ulem}
\usepackage{leftidx}

\usepackage{breqn}

\usepackage{tikz}
\usetikzlibrary{calc}
\usepackage{tikz-qtree}
\usetikzlibrary{backgrounds}

\ifpdf
\usepackage[pdftex,plainpages=false,hypertexnames=false,pdfpagelabels]{hyperref}
\else
\usepackage[dvips,plainpages=false,hypertexnames=false]{hyperref}
\fi

\vfuzz2pt 
\hfuzz2pt 

\newcommand{\pathtodiagrams}{diagrams/pdf/}

\newcommand{\mathfig}[2]{{\hspace{-3pt}\begin{array}{c}%
  \raisebox{-2.5pt}{\includegraphics[width=#1\textwidth]{\pathtodiagrams #2}}%
\end{array}\hspace{-3pt}}}

\newcommand{\arxiv}[1]{\href{http://arxiv.org/abs/#1}{\tt arXiv:\nolinkurl{#1}}}

\newcommand{\googlebooks}[1]{(preview at \href{http://books.google.com/books?id=#1}{google books})}

\theoremstyle{plain}
\newtheorem{prop}{Proposition}[section]
\newtheorem{fact}[prop]{Fact}
\newtheorem{conj}[prop]{Conjecture}
\newtheorem{thm}{Theorem}

\newtheorem{lem}[prop]{Lemma}

\newtheorem{cor}[prop]{Corollary}

\newtheorem*{thm*}{Theorem}
\newtheorem*{cor*}{Corollary}
\newtheorem{question}[prop]{Question}
\newtheorem*{question*}{Question}
\numberwithin{equation}{section}

\theoremstyle{remark}
\newtheorem{ex}[prop]{Example}

\newtheorem*{rem*}{Remark}               
\newtheorem*{ex*}{Example}                

\theoremstyle{definition}
\newtheorem{remark}[prop]{Remark}           
\newtheorem{defn}[prop]{Definition}         

\newtheorem*{defn*}{Definition}             

\theoremstyle{plain}

\newcounter{comment}
\newcommand{\noop}[1]{}

\def\clap#1{\hbox to 0pt{\hss#1\hss}}


\def\semicolon{;}
\def\applytolist#1{
    \expandafter\def\csname multi#1\endcsname##1{
        \def\multiack{##1}\ifx\multiack\semicolon
            \def\next{\relax}
        \else
            \csname #1\endcsname{##1}
            \def\next{\csname multi#1\endcsname}
        \fi
        \next}
    \csname multi#1\endcsname}

\def\calc#1{\expandafter\def\csname c#1\endcsname{{\mathcal #1}}}
\applytolist{calc}QWERTYUIOPLKJHGFDSAZXCVBNM;
\def\bbc#1{\expandafter\def\csname bb#1\endcsname{{\mathbb #1}}}
\applytolist{bbc}QWERTYUIOPLKJHGFDSAZXCVBNM;
\def\bfc#1{\expandafter\def\csname bf#1\endcsname{{\mathbf #1}}}
\applytolist{bfc}QWERTYUIOPLKJHGFDSAZXCVBNM;

\newcommand{\Span}{\operatorname{span}}

\renewcommand{\imath}{\mathfrak{i}}
\renewcommand{\jmath}{\mathfrak{j}}

\newcommand{\iso}{\cong}






\newcommand{\overcrossing}{\begin{tikzpicture}[baseline]
\node (x) at (0,0){};
    \draw (x.45)-- (.5,.5);
    \draw (x.135) -- (-.5,.5);
    \draw (x.315) -- (.5,-.5);
    \draw (x.45) -- (-.5,-.5);
\end{tikzpicture}}
\newcommand{\symmetriccrossing}{\begin{tikzpicture}[baseline]
\node (x) at (0,0){};
    \draw (x.45)-- (.5,.5);
    \draw (x.135) -- (-.5,.5);
    \draw (x.135) -- (.5,-.5);
    \draw (x.45) -- (-.5,-.5);
\end{tikzpicture}}

\newcommand{\identity}{\begin{tikzpicture}[baseline]
    \draw (1.5,.5) .. controls (2,0) .. (1.5,-.5);
    \draw (2.5,.5) .. controls (2,0) .. (2.5,-.5);
\end{tikzpicture}}
\newcommand{\cupcap}{\begin{tikzpicture}[baseline]
    \draw (3.5,.5) .. controls (4,0) .. (4.5,.5);
    \draw (3.5,-.5) .. controls (4,0) .. (4.5,-.5);
\end{tikzpicture}}

\newcommand{\tensor}{\otimes}

\newcommand{\Inv}{\operatorname{Inv}}
\newcommand{\Hom}{\operatorname{Hom}}

\newcommand{\tr}[1]{\text{tr}(#1)}


\textwidth   5.5in%
\textheight  9.0in%
\oddsidemargin 12pt%
\evensidemargin 12pt

\topmargin -.6in%
\headsep .5in

\makeatletter

\def\@testdef #1#2#3{%
  \def\reserved@a{#3}\expandafter \ifx \csname #1@#2\endcsname
 \reserved@a  \else
\typeout{^^Jlabel #2 changed:^^J%
\meaning\reserved@a^^J%
\expandafter\meaning\csname #1@#2\endcsname^^J}%
\@tempswatrue \fi}

\title{Categories generated by a trivalent vertex}
\author{Scott Morrison}
\author{Emily Peters}
\author{Noah Snyder}

\usetikzlibrary{patterns,decorations.markings}

\newcommand{\Cube}{\prism[45]{4}}
\newcommand{\PentPrism}{\prism{5}}

\newcommand{\prism}[2][0]{
\begin{tikzpicture}[scale=0.8,baseline=-0.1cm]
\fill[white] (0,0) circle (0.5cm);
\foreach \x in {1, ..., #2} {
	\draw (360*\x/#2-360/#2+#1:.3cm) -- (360*\x/#2+#1:.3cm)--(360*\x/#2+#1:.5cm) -- (360*\x/#2+360/#2+#1:.5cm);
}
\end{tikzpicture}
}

\newcommand{\ngon}[2][0]{
\begin{tikzpicture}[baseline=0cm]
\foreach \x in {1, ..., #2}
	\draw (360*\x/#2+#1:.8cm)--(360*\x/#2+#1:.5cm);
\foreach \x in {1, ..., #2}
	\draw (360*\x/#2+#1:.5cm) .. controls +(360*\x/#2+120+#1:.3cm) and +(360*\x/#2+360/#2-120+#1:.3cm) .. (360*\x/#2+360/#2+#1:.5cm);
\end{tikzpicture}
}
\newcommand{\ngonb}[2][0]{
\begin{tikzpicture}[baseline=0cm]
\foreach \x in {1, ..., #2}
	\draw (360*\x/#2+#1:.8cm)--(360*\x/#2+#1:.5cm);
\foreach \x in {1, ..., #2}
	\draw (360*\x/#2+#1:.5cm) .. controls +(360*\x/#2+120+#1:.3cm) and +(360*\x/#2+360/#2-120+#1:.3cm) .. (360*\x/#2+360/#2+#1:.5cm);
\foreach \x in {1, ..., #2}
	\node at (360*\x/#2+#1:.3cm) {$\bullet$};
\end{tikzpicture}
}

\newcommand{\drawI}{ \begin{tikzpicture}[baseline=0cm]
 	\draw (0,.2) .. controls +(30:.3cm) .. (45:.8cm);
 	\draw (0,.2) .. controls +(150:.3cm) .. (135:.8cm);
	\draw (0,.2) -- (0,-.2);
 	\draw (0,-.2) .. controls +(-30:.3cm) .. (-45:.8cm);
 	\draw (0,-.2) .. controls +(-150:.3cm) .. (-135:.8cm);
\end{tikzpicture}
}

\newcommand{\drawH}{ \begin{tikzpicture}[baseline=0cm,rotate=90]
 	\draw (0,.2) .. controls +(30:.3cm) .. (45:.8cm);
 	\draw (0,.2) .. controls +(150:.3cm) .. (135:.8cm);
	\draw (0,.2) -- (0,-.2);
 	\draw (0,-.2) .. controls +(-30:.3cm) .. (-45:.8cm);
 	\draw (0,-.2) .. controls +(-150:.3cm) .. (-135:.8cm);
\end{tikzpicture}}

\renewcommand{\cupcap}{\begin{tikzpicture}[baseline=0cm]
	\draw (45:.8cm) .. controls (0,0) .. (135:.8cm);
	\draw (-45:.8cm) .. controls (0,0) .. (-135:.8cm);
\end{tikzpicture}}

\newcommand{\twostrandid}{\begin{tikzpicture}[baseline=0cm,rotate=90]
	\draw (45:.8cm) .. controls (0,0) .. (135:.8cm);
	\draw (-45:.8cm) .. controls (0,0) .. (-135:.8cm);
\end{tikzpicture}}

\newcommand{\drawIb}{ \begin{tikzpicture}[baseline=0cm]
 	\draw (0,.2) .. controls +(30:.3cm) .. (45:.8cm);
 	\draw (0,.2) .. controls +(150:.3cm) .. (135:.8cm);
	\draw (0,.2) -- (0,-.2);
 	\draw (0,-.2) .. controls +(-30:.3cm) .. (-45:.8cm);
 	\draw (0,-.2) .. controls +(-150:.3cm) .. (-135:.8cm);
 	\node at (0,.4) {$\bullet$};
	\node at (0,-.4) {$\bullet$};
\end{tikzpicture}
}

\newcommand{\drawHb}{ \begin{tikzpicture}[baseline=0cm,rotate=90]
 	\draw (0,.2) .. controls +(30:.3cm) .. (45:.8cm);
 	\draw (0,.2) .. controls +(150:.3cm) .. (135:.8cm);
	\draw (0,.2) -- (0,-.2);
 	\draw (0,-.2) .. controls +(-30:.3cm) .. (-45:.8cm);
 	\draw (0,-.2) .. controls +(-150:.3cm) .. (-135:.8cm);
 	\node at (0,.4) {$\bullet$};
	\node at (0,-.4) {$\bullet$};
\end{tikzpicture}}

\newcommand{\pentafork}[1]{
\begin{tikzpicture}[baseline=0, rotate=#1]
	\coordinate (F) at (30:.6cm);
	\coordinate (P1) at (30:.4cm);
	\coordinate (P2) at (120:.6cm);
	\coordinate (P3) at (180:.7cm);
	\coordinate (P4) at (240:.7cm);
	\coordinate (P5) at (300:.6cm);
	\draw (60:1cm)--(F)--(0:1cm);
	\draw (F)--(P1)--(P2)--(P3)--(P4)--(P5)--(P1);
	\draw (P2)--(120:1cm);
	\draw (P3)--(180:1cm);
	\draw (P4)--(240:1cm);
	\draw (P5)--(300:1cm);
	\draw[dotted, thin, gray] (0,0) circle (1cm);
\end{tikzpicture}
}

\newcommand{\pentaforkb}{
\begin{tikzpicture}[baseline=0]
	\coordinate (F) at (90:.5cm);
	\coordinate (P1) at (0:.5cm);
	\coordinate (P2) at (90:.35cm);
	\coordinate (P3) at (180:.5cm);
	\coordinate (P4) at (240:.5cm);
	\coordinate (P5) at (300:.5cm);
	\coordinate (E1) at (0:.8cm);
	\coordinate (E2) at (60:.8cm);
	\coordinate (E3) at (120:.8cm);
	\coordinate (E4) at (180:.8cm);
	\coordinate (E5) at (240:.8cm);
	\coordinate (E6) at (300:.8cm);
	\draw (E2)--(F)--(E3);
	\draw (F)--(P2)--(P3)--(P4)--(P5)--(P1)--(P2);
	\draw (P1)--(E1);
	\draw (P3)--(E4);
	\draw (P4)--(E5);
	\draw (P5)--(E6);
	\path (F) ++(90:.15) node {$\bullet$};	
	\path (P1) ++(190:.2) node {$\bullet$};
	\path (P2) ++(270:.175) node {$\bullet$};
	\path (P3) ++(-10:.2) node {$\bullet$};
	\path (P4) ++(60:.15) node {$\bullet$};
	\path (P5) ++(120:.15) node {$\bullet$};
\end{tikzpicture}
}

\newcommand{\doubletrib}{
\begin{tikzpicture}[baseline=0]
	\coordinate (E1) at (0:.8cm);
	\coordinate (E2) at (60:.8cm);
	\coordinate (E3) at (120:.8cm);
	\coordinate (E4) at (180:.8cm);
	\coordinate (E5) at (240:.8cm);
	\coordinate (E6) at (300:.8cm);
	\coordinate (P1) at (60:.3cm);
	\coordinate (P2) at (240:.3cm);
	\draw (E1)--(P1)--(E3);
	\draw (P1)--(E2);
	\draw (E4)--(P2)--(E6);
	\draw (P2)--(E5);	
	\path (P1) ++(240:.15) node {$\bullet$};
	\path (P2) ++(60:.15) node {$\bullet$};
\end{tikzpicture}
}

\newcommand{\Icupb}{
\begin{tikzpicture}[baseline=0]
	\coordinate (E1) at (0:.8cm);
	\coordinate (E2) at (60:.8cm);
	\coordinate (E3) at (120:.8cm);
	\coordinate (E4) at (180:.8cm);
	\coordinate (E5) at (240:.8cm);
	\coordinate (E6) at (300:.8cm);
	\coordinate (P1) at (90:.5cm);
	\coordinate (P2) at (90:.1cm);
	\draw (E1)--(P2)--(E4);
	\draw (P1)--(P2);
	\draw (E2)--(P1)--(E3);
	\draw (E5) .. controls (270:.2cm) .. (E6);
	\path (P1) ++(90:.15) node {$\bullet$};
	\path (P2) ++(270:.15) node {$\bullet$};
\end{tikzpicture}
}

\newcommand{\Hcupb}{
\begin{tikzpicture}[baseline=0]
	\coordinate (E1) at (0:.8cm);
	\coordinate (E2) at (60:.8cm);
	\coordinate (E3) at (120:.8cm);
	\coordinate (E4) at (180:.8cm);
	\coordinate (E5) at (240:.8cm);
	\coordinate (E6) at (300:.8cm);
	\coordinate (P1) at (60:.2cm);
	\coordinate (P2) at (120:.2cm);
	\draw (E1)--(P1)--(E2);
	\draw (P1)--(P2);
	\draw (E3)--(P2)--(E4);
	\draw (E5) .. controls (270:.2cm) .. (E6);
	\path (P1) ++(20:.2) node {$\bullet$};
	\path (P2) ++(160:.2) node {$\bullet$};
\end{tikzpicture}
}

\newcommand{\threecupsb}{
\begin{tikzpicture}[baseline=0]
	\coordinate (E1) at (0:.8cm);
	\coordinate (E2) at (60:.8cm);
	\coordinate (E3) at (120:.8cm);
	\coordinate (E4) at (180:.8cm);
	\coordinate (E5) at (240:.8cm);
	\coordinate (E6) at (300:.8cm);
	\draw (E2) .. controls (90:.15) .. (E3);
	\draw (E4) .. controls (210:.15) .. (E5);
	\draw (E6) .. controls (320:.15) .. (E1);
\end{tikzpicture}
}

\newcommand{\firstpinwheelb}{
\begin{tikzpicture}[baseline=0]
	\coordinate (E1) at (0:.8cm);
	\coordinate (E2) at (60:.8cm);
	\coordinate (E3) at (120:.8cm);
	\coordinate (E4) at (180:.8cm);
	\coordinate (E5) at (240:.8cm);
	\coordinate (E6) at (300:.8cm);
	\coordinate (P1) at (45:.5cm);
	\coordinate (P2) at (90:.3cm);
	\coordinate (P3) at (270:.3cm);
	\coordinate (P4) at (225:.5cm);
	\draw (E1)--(P1)--(E2);
	\draw (E3)--(P2)--(P3)--(E6);
	\draw (E4)--(P4)--(E5);
	\draw (P1)--(P2)--(P3)--(P4);
	\path (P1) ++(20:.2cm) node {$\bullet$};
	\path (P4) ++(200:.2cm) node {$\bullet$};
	\path (P2) ++(200:.2cm) node {$\bullet$};
	\path (P3) ++(20:.2cm) node {$\bullet$};
\end{tikzpicture}
}

\newcommand{\secondpinwheelb}{
\begin{tikzpicture}[baseline=0, xscale=-1]
	\coordinate (E1) at (0:.8cm);
	\coordinate (E2) at (60:.8cm);
	\coordinate (E3) at (120:.8cm);
	\coordinate (E4) at (180:.8cm);
	\coordinate (E5) at (240:.8cm);
	\coordinate (E6) at (300:.8cm);
	\coordinate (P1) at (45:.5cm);
	\coordinate (P2) at (90:.3cm);
	\coordinate (P3) at (270:.3cm);
	\coordinate (P4) at (225:.5cm);
	\draw (E1)--(P1)--(E2);
	\draw (E3)--(P2)--(P3)--(E6);
	\draw (E4)--(P4)--(E5);
	\draw (P1)--(P2)--(P3)--(P4);
	\path (P1) ++(20:.2cm) node {$\bullet$};
	\path (P4) ++(200:.2cm) node {$\bullet$};
	\path (P2) ++(200:.2cm) node {$\bullet$};
	\path (P3) ++(20:.2cm) node {$\bullet$};
\end{tikzpicture}
}

\newcommand{\sixforestb}{
\begin{tikzpicture}[baseline=0, xscale=-1]
	\coordinate (E1) at (0:.8cm);
	\coordinate (E2) at (60:.8cm);
	\coordinate (E3) at (120:.8cm);
	\coordinate (E4) at (180:.8cm);
	\coordinate (E5) at (240:.8cm);
	\coordinate (E6) at (300:.8cm);
	\coordinate (P1) at (30:.5cm);
	\coordinate (P2) at (150:.5cm);
	\coordinate (P3) at (240:.4cm);
	\coordinate (P4) at (300:.4cm);
	\draw (E1)--(P1)--(E2);
	\draw (E3)--(P2)--(E4);
	\draw (E5)--(P3);
	\draw (E6)--(P4);
	\draw (P2)--(P3)--(P4)--(P1);
	\path (P1) ++(180:.15cm) node {$\bullet$};
	\path (P2) ++(0:.15cm) node {$\bullet$};
	\path (P3) ++(60:.15cm) node {$\bullet$};
	\path (P4) ++(120:.15cm) node {$\bullet$};
\end{tikzpicture}
}

\newcommand{\Rtwo}{
\begin{tikzpicture}[baseline=0]
	\coordinate (E1) at (45:.8cm);
	\coordinate (E2) at (135:.8cm);
	\coordinate (E3) at (225:.8cm);
	\coordinate (E4) at (315:.8cm);
	\draw[white, line width = 4pt] (E1) .. controls (180:1cm) .. (E4);	
	\draw (E1) .. controls (180:1cm) .. (E4);
	\draw[white, line width = 4pt] (E2) .. controls (0:1cm) .. (E3);	
	\draw (E2) .. controls (0:1cm) .. (E3);		
\end{tikzpicture}
}

\newcommand{\trivalentpullthroughone}{
\begin{tikzpicture}[baseline=0]
	\coordinate (E1) at (90-72:.8cm);
	\coordinate (E2) at (90:.8cm);
	\coordinate (E3) at (90+72:.8cm);
	\coordinate (E4) at (90+2*72:.8cm);
	\coordinate (E5) at (90+3*72:.8cm);
	\coordinate (P1) at (270:.2cm);
	\draw (E4) -- (P1) -- (E5);
	\draw (P1) -- (E2);
	\draw[white, line width = 4pt] (E1) -- (E3);
	\draw (E1) -- (E3);
	\path (P1) ++(270:.2cm) node {$\bullet$};
\end{tikzpicture}}

\newcommand{\trivalentpullthroughtwo}{
\begin{tikzpicture}[baseline=0]
	\coordinate (E1) at (90-72:.8cm);
	\coordinate (E2) at (90:.8cm);
	\coordinate (E3) at (90+72:.8cm);
	\coordinate (E4) at (90+2*72:.8cm);
	\coordinate (E5) at (90+3*72:.8cm);
	\coordinate (P1) at (90:.2cm);
	\draw (E4) .. controls (180:.2cm) .. (P1) .. controls (0:.2cm) ..  (E5);
	\draw (P1) -- (E2);
	\draw[white, line width = 4pt] (E1) .. controls (270:.4cm) .. (E3);
	\draw (E1) .. controls (270:.4cm) .. (E3);
	\path (P1) ++(270:.2cm) node {$\bullet$};	
\end{tikzpicture}}

\newcommand{\trivalentpullthroughthree}{
\begin{tikzpicture}[baseline=0]
	\coordinate (E1) at (90-72:.8cm);
	\coordinate (E2) at (90:.8cm);
	\coordinate (E3) at (90+72:.8cm);
	\coordinate (E4) at (90+2*72:.8cm);
	\coordinate (E5) at (90+3*72:.8cm);
	\coordinate (P1) at (270:.2cm);
	\draw (E4) -- (P1) -- (E5);
	\draw (E1) -- (E3);
	\draw[white, line width = 4pt] (P1) -- (E2);
	\draw (P1) -- (E2);
	\path (P1) ++(270:.2cm) node {$\bullet$};
\end{tikzpicture}
}

\newcommand{\trivalentpullthroughfour}{
\begin{tikzpicture}[baseline=0]
	\coordinate (E1) at (90-72:.8cm);
	\coordinate (E2) at (90:.8cm);
	\coordinate (E3) at (90+72:.8cm);
	\coordinate (E4) at (90+2*72:.8cm);
	\coordinate (E5) at (90+3*72:.8cm);
	\coordinate (P1) at (90:.2cm);
	\draw (E1) .. controls (270:.4cm) .. (E3);
	\draw[white, line width = 4pt] (E4) .. controls (180:.2cm) .. (P1) .. controls (0:.2cm) ..  (E5);
	\draw (E4) .. controls (180:.2cm) .. (P1) .. controls (0:.2cm) ..  (E5);
	\draw (P1) -- (E2);
	\path (P1) ++(270:.2cm) node {$\bullet$};	
\end{tikzpicture}
}

\newcommand{\strandtwist}{
\begin{tikzpicture}[baseline=0]
	\coordinate (E1) at (315:.8cm);
	\coordinate (E2) at (225:.8cm);
	\coordinate (P1) at (45:.2cm);
	\coordinate (P2) at (135:.2cm);
	\coordinate (C1) at (0:.6cm);
	\coordinate (C2) at (180:.6cm);
	\coordinate (C3) at (90:.16cm);
	\draw (E1) .. controls (C2) .. (P2);
	\draw[white, line width = 4pt] (P2) .. controls (C3) .. (P1) .. controls (C1) .. (E2);
	\draw (P2) .. controls (C3) .. (P1) .. controls (C1) .. (E2);
\end{tikzpicture}
}

\newcommand{\singlecap}{
\begin{tikzpicture}[baseline=0]
	\coordinate (E1) at (225:.8cm);
	\coordinate (E2) at (315:.8cm);
	\draw (E1) .. controls (90:.2cm) .. (E2);
\end{tikzpicture}
}

\newcommand{\trivalenttwistb}{
\begin{tikzpicture}[baseline=0]
	\coordinate (E1) at (330:.8cm);
	\coordinate (E2) at (210:.8cm);
	\coordinate (E3) at (90:.8cm);
	\coordinate (P1) at (90:.2cm);
	\coordinate (C1) at (0:.6cm);
	\coordinate (C2) at (180:.6cm);
	\draw (E1) .. controls (C2) .. (P1);
	\draw[white, line width=4pt] (P1) .. controls (C1) .. (E2);	
	\draw (P1) .. controls (C1) .. (E2);	
	\draw (P1) -- (E3);
	\path (P1) ++(270:.2cm) node {$\bullet$};
\end{tikzpicture}
}

\newcommand{\trivalentb}{
\begin{tikzpicture}[baseline=0]
	\coordinate (E1) at (330:.8cm);
	\coordinate (E2) at (210:.8cm);
	\coordinate (E3) at (90:.8cm);
	\coordinate (P1) at (0,0);
	\draw (E1) -- (P1) -- (E2);
	\draw (P1) -- (E3);
	\path (P1) ++(270:.2cm) node {$\bullet$};
\end{tikzpicture}
}

\newcommand{\trivalentotherdotb}{
\begin{tikzpicture}[baseline=0]
	\coordinate (E1) at (330:.8cm);
	\coordinate (E2) at (210:.8cm);
	\coordinate (E3) at (90:.8cm);
	\coordinate (P1) at (0,0);
	\draw (E1) -- (P1) -- (E2);
	\draw (P1) -- (E3);
	\path (P1) ++(150:.2cm) node {$\bullet$};
\end{tikzpicture}
}

\newcommand{\overviolinb}{
\begin{tikzpicture}[baseline=0]
	\coordinate (E1) at (45:.8cm);
	\coordinate (E2) at (225:.8cm);
	\coordinate (E3) at (270:.8cm);
	\coordinate (P1) at (0,0);
	\coordinate (P2) at (120:.6cm);
	\coordinate (P3) at (.4cm,.2cm);
	\coordinate (C1) at (100:.9cm);
	\coordinate (C2) at (135:.5cm);
	\coordinate (C3) at (.8,.1);
	\draw (E1) -- (P1) -- (E3);
	\draw (P1) .. controls  (C1) .. (P2);
	\draw[white, line width=4pt] (P2) .. controls  (C2) .. (P3);	
	\draw[white, line width=4pt] (P3) .. controls (C3) .. (E2);
	\draw (P2) .. controls  (C2) .. (P3);	
	\draw (P3) .. controls (C3) .. (E2);
	\path (P1) ++(330:.15cm) node {$\bullet$};
\end{tikzpicture}
}

\newcommand{\rotatedtrivalentb}{
\begin{tikzpicture}[baseline=0]
	\coordinate (E1) at (45:.8cm);
	\coordinate (E2) at (225:.8cm);
	\coordinate (E3) at (270:.8cm);
	\coordinate (P0) at (0,0);
	\coordinate (C1) at (-.1,.7);
	\coordinate (C2) at (-.3,-.2);
	\coordinate (C3) at (.2,-.2);
	\draw (P0) .. controls (C1) .. (E2);
	\draw (P0) .. controls (C2) .. (E3);
	\draw (P0) .. controls (C3) .. (E1);
	\path (P0) ++(270:.2cm) node {$\bullet$};
\end{tikzpicture}
}

\newcommand{\underviolinb}{
\begin{tikzpicture}[baseline=0]
	\coordinate (E1) at (45:.8cm);
	\coordinate (E2) at (225:.8cm);
	\coordinate (E3) at (270:.8cm);
	\coordinate (P1) at (0,0);
	\coordinate (P2) at (120:.6cm);
	\coordinate (P3) at (.4cm,.2cm);
	\coordinate (C1) at (100:.9cm);
	\coordinate (C2) at (135:.5cm);
	\coordinate (C3) at (.8,.1);
	\draw (P2) .. controls  (C2) .. (P3);	
	\draw (P3) .. controls (C3) .. (E2);
	\draw[white, line width=4pt] (E1) -- (P1) -- (E3);
	\draw[white, line width=4pt] (P1) .. controls  (C1) .. (P2);
	\draw (E1) -- (P1) -- (E3);
	\draw (P1) .. controls  (C1) .. (P2);
	
	\path (P1) ++(330:.15cm) node {$\bullet$};
\end{tikzpicture}
}

\newcommand{\trivalentcap}{
\begin{tikzpicture}[scale=.25]
	\coordinate (E1) at (90-72:.8cm);
	\coordinate (E2) at (90:.8cm);
	\coordinate (E3) at (90+72:.8cm);
	\coordinate (E4) at (90+2*72:.8cm);
	\coordinate (E5) at (90+3*72:.8cm);
	\coordinate (P1) at (0,0);
	\draw (E1)--(P1)--(E3);
	\draw (P1)--(E2);
	\draw (E4) .. controls (270:.2cm) .. (E5);
\end{tikzpicture}
}

\newcommand{\smalltwostrandid}{\begin{tikzpicture}[rotate=90,scale=.25]
	\draw (45:.8cm) .. controls (0,0) .. (135:.8cm);
	\draw (-45:.8cm) .. controls (0,0) .. (-135:.8cm);
\end{tikzpicture}}

\newcommand{\smalldrawI}{ \begin{tikzpicture}[scale=.25]
 	\draw (0,.2) .. controls +(30:.3cm) .. (45:.8cm);
 	\draw (0,.2) .. controls +(150:.3cm) .. (135:.8cm);
	\draw (0,.2) -- (0,-.2);
 	\draw (0,-.2) .. controls +(-30:.3cm) .. (-45:.8cm);
 	\draw (0,-.2) .. controls +(-150:.3cm) .. (-135:.8cm);
\end{tikzpicture}
}

\begin{document}

\begin{abstract}
This is the first paper in a general program to automate skein theoretic arguments.  In this paper, we study skein
theoretic invariants of planar trivalent graphs.  Equivalently, we classify trivalent categories, which are
nondegenerate pivotal tensor categories over $\bbC$ generated by a symmetric self-dual simple object $X$ and a
rotationally invariant morphism $1 \rightarrow X \tensor X \tensor X$.  Our main result is that the only trivalent
categories with $\dim \Hom(1 \to X^{\otimes n})$ bounded by $1,0,1,1,4,11,40$ for $0 \leq n \leq 6$ are quantum $SO(3)$,
quantum $G_2$, a one-parameter family of free products of certain Temperley-Lieb categories (which we call ABA
categories), and the
$H3$ Haagerup fusion category.  We also prove similar results where the map $1 \rightarrow X^{\otimes 3}$ is not rotationally invariant, and we give a complete classification of nondegenerate braided trivalent categories with dimensions of invariant spaces bounded by the sequence $1,0,1,1,4$.  Our main techniques are a new approach to finding skein relations which can be easily automated using Gr\"obner bases, and evaluation algorithms which use the discharging method developed in the proof of the $4$-color theorem.
\end{abstract}

\maketitle


\section{Introduction}

This is the first paper in a general program to automate skein theoretic arguments in quantum algebra and quantum
topology.  In this paper, we study skein theoretic invariants of planar trivalent graphs following Kuperberg \cite
{MR1265145}.  However, the general approach will work much more broadly and in later papers we will consider other
situations like those in \cite{MR2132671,MR1733737,MR1972635,MR2783128,1410.2876}.  One might think of this
program as attempting for skein theoretic arguments what \cite{MR2914056} did for principal graph arguments.  

Before getting into the particulars of this paper, we will recall the basic notions of skein theory, which we illustrate
using its most famous example.  The Jones polynomial invariant \cite{MR0766964} of framed links 
can be computed by applying the Kauffman bracket skein relations
\begin{align*}
\scalebox{1.4}{$\bigcirc$} & = -A^2 -A^{-2} \notag \\
\intertext{and}
\overcrossing & = A \identity + A^{-1} \cupcap.
\end{align*}

These relations not only assign a Laurent polynomial to framed links, they also
assign to each tangle a linear combination of noncrossing tangles.  Unlike for
ordinary tangles, the number of noncrossing tangles with $n$ boundary points is
finite (and indeed given by Catalan numbers).  Our goal is to find and prove
theorems modelled on the following well-known result.

\begin{thm*}  \label{thm:bracket}
Suppose a skein theoretic invariant of framed links has the property that the 
span of $4$-boundary point tangles modulo the relations is $2$-dimensional. This
invariant must be a specialization of the Jones polynomial.
\end{thm*}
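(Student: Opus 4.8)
The plan is to show that the stated hypotheses force the Kauffman bracket skein relations, after which the classical fact that those relations are consistent and evaluate to the Jones polynomial on framed links finishes the argument.

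First I would pin down the $4$-box space. Every noncrossing $4$-boundary-point tangle is, up to closed loops (each of which just contributes a factor of the loop value $\delta$), one of the two planar matchings $\identity$ and $\cupcap$, so the span of $4$-boundary-point tangles modulo the relations is automatically at most $2$-dimensional; the hypothesis says precisely that $\identity$ and $\cupcap$ are linearly independent and hence form a basis. Since a skein relation resolves a crossing into a combination of tangles with fewer crossings, every tangle — in particular a single crossing — lies in the span of the noncrossing ones, so we may write
\[
  \overcrossing = a\, \identity + b\, \cupcap
\]
for scalars $a,b \in \bbC$. Applying the $90^{\circ}$ rotation of the planar algebra, which exchanges $\identity \leftrightarrow \cupcap$ and carries a crossing to the opposite crossing, yields $\undercrossing = a\, \cupcap + b\, \identity$.

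Next I would impose Reidemeister II, that stacking $\overcrossing$ on top of $\undercrossing$ equals $\identity$. Expanding the composite and using $\cupcap \circ \cupcap = \delta\, \cupcap$ gives
\[
  \overcrossing \circ \undercrossing = ab\, \identity + \bigl(a^2 + b^2 + ab\,\delta\bigr)\, \cupcap,
\]
so linear independence of $\identity$ and $\cupcap$ forces $ab = 1$ (in particular $a,b\neq 0$) and $a^2 + b^2 + \delta = 0$. Writing $A := a$, so that $b = A^{-1}$, these read exactly
\[
  \overcrossing = A\, \identity + A^{-1}\, \cupcap, \qquad \delta = -A^2 - A^{-2},
\]
which are the Kauffman bracket skein relations.

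To finish: capping off two adjacent legs of the crossing and using the relations above shows a positive curl equals $-A^{3}$ times a strand, so the framing normalization is forced as well; and since these relations reduce any closed diagram to a scalar, the value of the invariant on every framed link is completely determined and agrees with the Kauffman bracket at $A$. By Kauffman's consistency theorem these relations are well defined — in particular Reidemeister III and the framed Reidemeister I hold automatically, so no further constraints arise — so the invariant is the Jones polynomial specialized at $A$; and since $A$ may be any nonzero complex number, $\delta = -A^2 - A^{-2}$ runs over all of $\bbC$ and no case is excluded. The step I expect to need the most care is not any computation but the setup: making precise what "a skein theoretic invariant of framed links" means, checking that crossings genuinely lie in the span of noncrossing tangles so that the expansion $\overcrossing = a\,\identity + b\,\cupcap$ is legitimate, and justifying the $90^{\circ}$-rotation and Reidemeister II identities at the level of the skein module rather than merely pictorially; the actual algebra is the single three-line expansion above.
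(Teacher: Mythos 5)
Your proposal is correct and follows essentially the same route as the paper: the dimension hypothesis forces a linear relation among the crossing, $\identity$, and $\cupcap$, Reidemeister II (with the $90^\circ$ rotation) pins the coefficients down to the Kauffman bracket form $\overcrossing = A\,\identity + A^{-1}\,\cupcap$ with $\delta = -A^2-A^{-2}$, and the relation then evaluates every framed link, so the invariant is a specialization of the Jones polynomial. You simply carry out explicitly the ``quick calculations'' the paper leaves implicit (deriving the relation from R2 alone and letting R3 follow from Kauffman's consistency), which is the same argument in slightly more detail.
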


The proof of this statement is straightforward.
Since the span of $4$-boundary point tangles is only $2$-dimensional, there must 
be some linear relation between the three diagrams which occur in the bracket 
relation.  Some quick calculations show that only the specific Kauffman bracket 
relation is compatible with the second and third Reidemeister moves.  Finally it is
 clear that the bracket relations are enough to determine the invariant, since 
 applying the crossing relation will turn any framed link into a linear 
 combination of unlinked, unknotted, circles.

A number of similar results have been proved following this same outline. 
In each case, assumptions on the dimensions of spaces of diagrams guarantee 
relations of a certain form must hold, and an involved calculation determines 
the coefficients of these relations (possibly in terms of some parameters). 
Subsequently one finds an evaluation algorithm using these relations, demonstrating 
that they suffice to determine the invariant.
Finally, one may also want to know that this invariant actually exists!

Our program has a two-fold goal. First, we are interested in generalizing this 
approach beyond invariants of links. We take the view that links are `merely' a 
certain class of planar graphs (with vertices modeled on the under-crossing and 
the over-crossing), subject to some local relations. We would like to be able to 
prove theorems about arbitrary such classes. Second, to the extent possible we 
want to automate the technique --- to find very general methods to derive such 
theorems, and where possible to implement these methods in code. This will 
enable us to rapidly explore many different skein theoretic settings, and moreover 
to explore much further out into the space of examples than is possible with 
by-hand calculations. (This paper restricts itself to the case of planar 
trivalent graphs --- but we go much further than previous investigations of 
trivalent skein theories.)

In particular, the first step in the outline above --- finding relations --- is 
highly amenable to computer calculation, as explained in this paper (see 
\cite{F4E6} for another approach to automating skein theory).  By contrast, the 
second step --- giving an evaluation algorithm --- remains an art.  In this 
paper, our evaluation algorithms all come from the discharging technique 
developed in the proof of the $4$-color theorem. Indeed, we hope that in the 
future it may be possible to systematically discover evaluation algorithms based 
on discharging, adapted to the available relations in a given skein theoretic 
setting.

In this paper we concentrate on skein theoretic invariants of planar trivalent 
graphs.  By the diagrammatic calculus for tensor categories, 
this paper can also be thought of as providing classifications of nondegenerate 
pivotal tensor categories over $\bbC$ generated by a symmetrically self-dual simple object $X$ 
and a rotationally invariant morphism $1 \rightarrow X \tensor X \tensor X$, 
where the dimensions of the first few invariant spaces 
$\Hom(1 \to X^{\otimes n})$ are small. 
We call such a tensor category a 
\emph{trivalent category}.  We note that the arguments in the paper are elementary and 
skein theoretic, so no knowledge of tensor categories is needed except for the existence proofs.

We now summarize this paper's results in a table. To use this table, compute 
the dimensions of the spaces of diagrams with $n$ boundary points (equivalently the 
invariant spaces $\Hom(1 \to X^{\otimes n})$ for your tensor category) for the first few values of $n$. 
If an initial segment of this sequence of dimensions appears in the first column 
of some row of this table, then the second column explicitly identifies the 
category for you. If, on the other hand, you only have upper 
bounds for the dimensions of invariant spaces, then your category appears in the 
corresponding row, or some previous row.  

\vspace{5mm}

\begin{center}
\begin{tabular}{l|l|c}
dimension bounds         & new examples 				 	& reference  $\quad$ \\
\hline
1,0,1,1,2,\ldots           & $SO(3)_{\zeta_5}$       			& Theorem \ref{thm:4} \\
1,0,1,1,3,\ldots           & $SO(3)_q$  or $OSp(1|2)$   	& ---$\|$--- \\
1,0,1,1,4,8,\ldots         & $ABA \subset TL_{\sqrt{d t^{-1}}} \ast TL_t$ & Theorem \ref{thm:5}\\
1,0,1,1,4,9,\ldots         & $(G_2)_{\zeta_{20}}$ 			& ---$\|$--- \\
1,0,1,1,4,10,\ldots        & $(G_2)_{q}$ 						& ---$\|$--- \\
1,0,1,1,4,11,37,\ldots     & $H3$ 							& Theorem \ref{thm:6}\\
1,0,1,1,4,11,40,\ldots     & nothing more 					& ---$\|$--- 
\end{tabular}
\end{center}

\vspace{5mm}

In this table, $\ast$ denotes the 
free product, and $H3$ denotes the fusion category Morita equivalent to the 
Haagerup fusion category constructed in \cite{MR2909758}.  It is fascinating to see 
the Haagerup subfactor once again appearing as the first surprising example in 
a classification of `small' categories.

The same classification is shown in Figure \ref{fig:treeoflife}.

Kuperberg proved in \cite{MR1265145} that if the dimensions are exactly $1,0,1,1,4,10$ and in
addition the diagrams with no internal faces give bases for these spaces, then
the category must be $(G_2)_{q}$.   In order to apply Kuperberg's result one needs to do a calculation
to verify linear independence (cf. \cite[Lemma 3.9]{MR2783128}).  Our classification is more satisfying, as it does not
include this linear independence assumption. To get an even more satisfying result, one would need to drop the condition
that the
trivalent vertex generates all morphisms.  Dropping the generating assumption would introduce some
additional examples (e.g. from subfactors, or from quantum subgroups of $(G_2)_q$).

It is worth noting that for these results up to and including the row $1,0,1,1,4,10$, 
we can also give proofs that do not use a computer, following Kuperberg.  However, 
these by-hand calculations are not enlightening, and we prefer to give 
computer-assisted arguments uniformly in all cases, because they are easier to 
follow and more reliable.\footnote{Indeed, N.S. initially did such calculations 
by hand. 
Due to human error this initial version missed the ABA case, but the error was 
easily caught by the more reliable computer.}  By contrast, the $1,0,1,1,4,11$ 
results would be quite difficult, and probably impossible, to check by hand.



We also prove classification results when the map $1 \rightarrow X
\tensor X \tensor X$ has a nontrivial rotational eigenvalue.  This case turns
out to be much easier than the rotationally invariant case (likely easy enough to check by hand in a tedious week), and there are
correspondingly many fewer possibilities.  If the dimensions are below
$1,0,1,1,4,11,40,\ldots$ then the category must be a twisted version of
$\mathrm{Rep}(S_3)$ or a twisted version of the Haagerup fusion category, or
possibly one other new tensor category.  
This new candidate category is interesting as it can not come from subfactors or quantum groups.  Finding such exotic tensor categories is one of our main motivations for this project.

As a corollary in the spirit of the results in \cite{MR2783128}, we see that if 
$X$ is a simple object in a pivotal category and 
$X^{\otimes 2} \iso 1 \oplus X \oplus A \oplus B$ for some simple objects $A$ 
and $B$, and moreover $\dim \Hom(X \to (A \oplus B)^{\otimes 2}) \leq 3$, then 
$\dim \Hom(1 \to X^{\otimes 5}) \leq 10$ and so the category generated by the 
morphism $X^{\otimes 2} \to X$ must be either a twisted $\mathrm{Rep}(S_3)$ 
category, or an $SO(3)_q$, $ABA$, or $(G_2)_q$ category.

These results also allow a complete classification of braided trivalent
categories with $\dim \Hom(1 \to X^{\otimes 4}) \leq 4$. given in Section \ref{sec:braided}.   A quick argument shows
that the braiding guarantees that $\dim \Hom(1 \to X^
{\otimes 5}) \leq 10$.  By the table above, any braided trivalent category  
with $\dim \Hom(1 \to X^{\otimes 4}) \leq 4$
must be $OSp(1|2)$, $SO(3)_q$ or $(G_2)_q$ (the ABA categories are not braided).  We also classify all braidings on these categories.
Note 
that these results on braided trivalent categories only use the $1,0,1,1,4,10$ 
classifications, and so can be checked by hand in a reasonable amount of time.

\pdfpageattr {/Group << /S /Transparency /I true /CS /DeviceRGB>>} 

\begin{figure}[!ht]
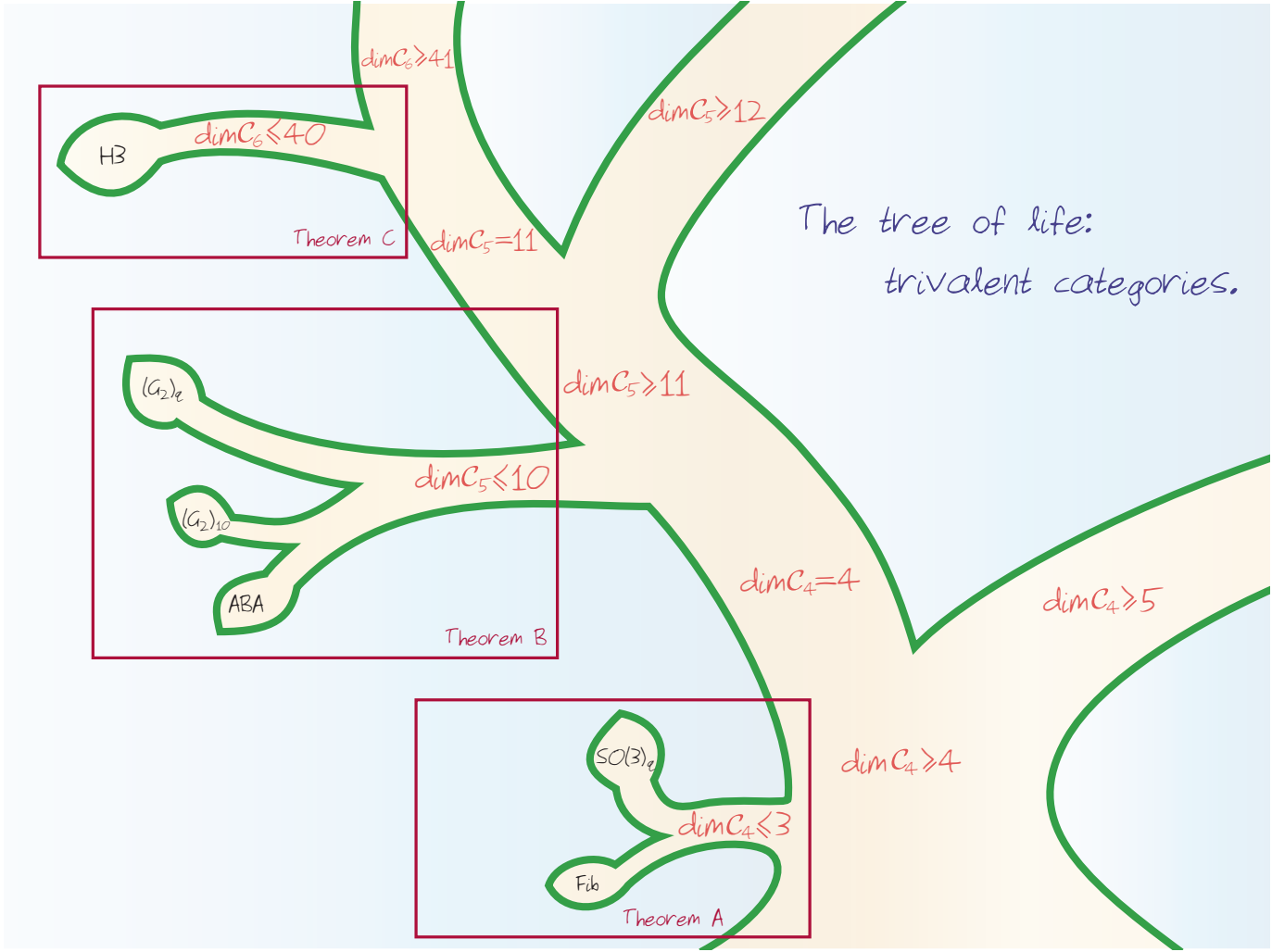

\makebox[\textwidth][c]{
$
\mathfig{1.3}{tree-of-life}
$
}
\caption{The `tree of life' of trivalent categories, as described in this
paper.
The rightmost branch, corresponding to trivalent categories with $\dim \cC\sb{4} \geq 5$, certainly has representatives:
the
(quantum) representation categories of the complex simple Lie algebras (excepting some small cases), with the trivalent vertex the Lie bracket on the adjoint representation. The other two branches,
corresponding to $\dim \cC\sb{4} = 4$ and $\dim \cC\sb{5} \geq 12$ or $\dim \cC\sb{4} = 4, \dim \cC\sb{5} = 11$, and
$\dim \cC\sb{6} \geq
41$, might well be extinct. It is tempting to believe that once some of the first few $\dim \cC\sb{n}$'s are small, it is
hard for later ones to be large.
}
\label{fig:treeoflife}
\end{figure}

\subsection{Source code}
This article relies on a number of computer calculations. In the interests of verifiability, the source code for all
these calculations are bundled with the {\tt arXiv} source of this article. After downloading the source, you'll find
a {\tt code/} subdirectory containing a number of Mathematica notebooks. These 
notebooks are referenced at the necessary points through the text.  As described 
above, equivalent calculations could also be performed by hand except for the 
computer calculations in Section \ref{sec:six} and parts of Section \ref{sec:rotationalev}.

\subsection{Acknowledgements}
Scott Morrison was supported by an Australian Research Council Discovery Early Career Researcher Award DE120100232,
and Discovery Projects DP140100732 and DP160103479. Emily Peters was supported by the NSF grant DMS-1501116.
Noah Snyder was supported by the NSF grant DMS-1454767. 
  All three authors were supported by DOD-DARPA grant HR0011-12-1-0009. Scott
Morrison would like to thank the Erwin Schr\"odinger Institute and its 2014 programme on ``Modern Trends in Topological
Quantum Field Theory'' for their hospitality.  We would like to thank Greg Kuperberg for a blog comment
\cite{how-to-almost-prove-the-4-color-theorem}
suggesting applying the discharging method to skein theory, Victor Ostrik for explaining his construction of the twisted Haagerup categories, and David Roe and Dylan Thurston for helpful suggestions.

\section{Trivalent categories}
In this section, we introduce the notion of a trivalent category, as a pivotal category which is `generated by a trivalent vertex'. In particular, every morphism in such a category is a linear combination of trivalent graphs (possibly with boundary) embedded in the plane, and indeed any such trivalent graph is allowed as a morphism.

In Appendix \ref{sec:local} we motivate trivalent categories for a wider audience --- particularly graph theorists ---
by explaining an equivalence between trivalent categories and certain skein theoretic invariants of planar trivalent
graphs.  While this equivalence is not essential for understanding the paper, reading the appendix may be useful for readers
unfamiliar with diagrammatic methods in category theory.

This category theoretic language will be useful, because the examples we will encounter along the way
come from fields of mathematics where category theory is very convenient. Nevertheless this point of view is not needed in the proofs
of the main theorems.  The key to translating category theoretic statements below into graph theoretic statements
is to remember that $\Hom_{\mathcal C}(1\rightarrow X^{\otimes n})$ is the vector space
of formal linear combination of planar trivalent graphs with $n$ boundary points modulo the relevant skein relations for the category $\cC$.

\vspace{11pt}
Recall that a (strict) pivotal category is a rigid monoidal category such that $x^{**} = x$ for all $x$. In this paper, all of our categories will be $\bbC$-linear.

Pivotal categories axiomatize the nicest possible theory of duals, and
correspondingly have a diagrammatic calculus allowing arbitrary planar
isotopies.  As usual, we have string diagrams representing morphisms, with
oriented strings labelled by objects of the category, and vertices (or
`coupons') labelled by morphisms of the category. The strings may have critical
points, which we interpret as the evaluation and coevaluation maps provided by
the rigid structure. Arbitrary planar isotopies of a diagram preserve the
represented morphism; it is critical that $2\pi$ rotations of the vertices are
allowed, and this corresponds exactly to strict pivotality.

Given a pivotal category $\cC$ and a chosen object $X$, we use the notation
$\cC_k = \Inv_\cC(X^{\otimes k}) = \Hom_\cC (1 \to X^{\otimes k})$ for the
`invariant spaces' of $X$. (If you know about planar algebras \cite{math.QA/9909027}, recall these vector
spaces form an unshaded planar algebra.) We say a category $\cC$ is
\emph{evaluable} if $\dim \Hom(1 \to 1) = 1$, and in fact $\Hom(1 \to 1)$ may be identified with
the ground field by sending the empty diagram to $1$. The category $\cC$ is
\emph{nondegenerate} if for every morphism $x: a \to b$, there is another
morphism $x': b \to a$ so $\operatorname{tr}(x x') \neq 0 \in \Hom(1 \to 1)$.

\begin{defn}
A \emph{trivalent} category $(\cC, X, \tau)$ is a nondegenerate evaluable pivotal category over $\bbC$ with 
an object $X$ with $\dim \cC_1 = 0$, $\dim{\cC_2} = 1$, and $\dim{\cC_3} = 1$, with a rotationally invariant morphism $\tau \in \cC_3$
called `the trivalent vertex', such that the category is generated (as a pivotal category) by $\tau$.
\end{defn}

We'll often simply refer to $\cC$ itself as a trivalent category.  

The rotational invariance of $\tau$ allows us to drop the ``coupon'' attached to
$\tau$ in string diagrams and treat it as an undecorated trivalent vertex.

We want one more simplification to our diagrammatic calculus: in the present situation it
turns out that we can always ignore the orientations on strings, because the
object $X$ is automatically symmetrically self-dual. Said another way, the
2-valent vertex corresponding to the self-duality $X \iso X^*$ is rotationally
symmetric.

\begin{lem}
The object $X$ is symmetrically self-dual.
\end{lem}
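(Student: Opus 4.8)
The plan is to show that the canonical duality morphism $X \to X^*$ (equivalently, the $2$-valent vertex built from the rigid structure) is rotationally symmetric, by analyzing the one-dimensional space $\cC_2 = \Hom(1 \to X \otimes X)$. Since $\dim \cC_2 = 1$ and $\dim \cC_1 = 0$, the category is evaluable and nondegenerate, so the trace pairing $\cC_2 \times \cC_2 \to \Hom(1\to1) = \bbC$ is nondegenerate; in particular the circle value $d := \operatorname{tr}(\id_X) \in \bbC$ is nonzero. Write $\delta \in \cC_2$ for the cup coming from the coevaluation $1 \to X \otimes X^* = X \otimes X$ (using $X^{**}=X$). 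First I would observe that the $2\pi$-rotation operator acts on the one-dimensional space $\cC_2$, hence acts as a scalar $\omega$; strict pivotality guarantees that rotating by a full $2\pi$ is the identity, so $\omega^{?}$-type constraints pin $\omega$ down. More precisely, the relevant rotation here is by $\pi$ (swapping the two boundary points of a cup), and I would show this rotation sends $\delta$ to $\lambda\delta$ for some scalar $\lambda$, then argue $\lambda = 1$.

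The key steps, in order: (1) Because $\cC_2$ is one-dimensional, the rotation by $\pi$ acting on $\cC_2$ is multiplication by some $\lambda \in \bbC$, and the duality cup $\delta$ is an eigenvector, so $\pi$-rotating $\delta$ gives $\lambda \delta$. (2) Apply the rotation twice: rotating by $2\pi$ must be the identity on any morphism by strict pivotality, so $\lambda^2 = 1$, giving $\lambda = \pm 1$. (3) Rule out $\lambda = -1$ using the trivalent vertex $\tau \in \cC_3$: cap off one of the three strands of $\tau$ with $\delta$ to land in $\cC_1 = 0$ — so this "partial trace" is zero regardless — but instead consider capping two of the three strands, or rather use the rotational invariance of $\tau$ together with the self-duality vertex to form a closed diagram whose value must be consistent under a $\pi$-rotation; the sign $\lambda$ will appear as a multiplicative factor, and comparing with $d \neq 0$ (or with the nondegeneracy of the trace form, which forces some closed trivalent diagram to be nonzero) forces $\lambda = 1$. (4) Conclude that $\delta$ is invariant under exchanging its endpoints, i.e. the self-duality $X \iso X^*$ is rotationally symmetric, which is exactly the statement that $X$ is symmetrically self-dual; in particular the two a priori different cups (coevaluation of $X$ and of $X^*$) coincide, so orientations on strings may be dropped.

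The main obstacle is step (3): showing the sign is $+1$ rather than $-1$. The case $\lambda = -1$ is genuinely possible for a self-dual object in the absence of a trivalent vertex (an "anti-symmetrically self-dual" object, as occurs for instance in $\su2$-type settings), so the argument must crucially use the existence of $\tau$ and its rotational invariance. The cleanest route is: a rotationally invariant $\tau \in \cC_3$ together with a $\lambda=-1$ self-duality would force, upon composing $\tau$ with itself along a strand and using the duality to identify $X$ with $X^*$, a sign obstruction making $\langle \tau, \tau\rangle$-type closed diagrams vanish or behave inconsistently; since nondegeneracy requires $\tau$ to pair nontrivially against \emph{something} in $\cC_3$ (and $\dim \cC_3 = 1$, so $\tau$ must pair nontrivially with itself, whence $\operatorname{tr}(\tau \bar\tau) \neq 0$), the sign must be $+1$. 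Carefully bookkeeping which rotation (by $\pi$ versus by $2\pi/3$) produces the sign, and checking it against the coupon-rotation conventions of strict pivotality, is the delicate part; everything else is a short diagrammatic computation.
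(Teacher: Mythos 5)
Your reduction of the statement to showing that the one-click rotation acts by $+1$ on the one-dimensional space $\cC_2$ (equivalently, that the Frobenius--Schur sign of $X$ is $+1$) is a correct reformulation, and your steps (1)--(2) are fine. The genuine gap is step (3): you never actually produce the contradiction ruling out $\lambda=-1$. You assert that a closed $\langle\tau,\tau\rangle$-type diagram would ``vanish or behave inconsistently,'' but the mechanism you offer --- forming a closed diagram ``whose value must be consistent under a $\pi$-rotation'' --- does not by itself generate a sign: a closed diagram is a composite of honest morphisms, and its value is invariant under planar isotopy with no sign appearing. The sign $\lambda$ only enters when one compares two genuinely different algebraic expressions for the same picture (bending a leg of $\tau$ to the left versus to the right, or comparing a morphism with its transpose), and that comparison is exactly the bookkeeping you defer as ``the delicate part.'' So the central claim is asserted rather than proved. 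Relatedly, your diagnosis that the argument ``must crucially use \dots the rotational invariance of $\tau$'' is off: routing the sign through the rotation eigenvalue of $\tau$ is a red herring, and what is actually essential is nondegeneracy together with $\dim\cC_3=1$.

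For comparison, the paper closes this step with no eigenvalue bookkeeping at all: take any self-duality $\alpha:X\to X^*$ and the nonzero map $\psi:X\to X\tensor X$ coming from $\tau$, and form $\phi=\psi^*\circ(\alpha\tensor\alpha^*)\circ\psi\in\Hom(X,X^*)$. This $\phi$ is fixed by the transpose for purely formal reasons ($\psi^{**}=\psi$ and $\alpha^{**}=\alpha$ under strict pivotality; diagrammatically the picture is its own $\pi$-rotation), and it is nonzero because nondegeneracy plus $\dim\cC_3=1$ force $\operatorname{tr}(\alpha^{-1}\circ\gamma\circ\beta)\neq 0$ for any nonzero $\beta:X\to X\tensor X$ and $\gamma:X\tensor X\to X^*$, hence $\gamma\circ\beta\neq0$. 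Since the transpose acts on the one-dimensional space $\Hom(X,X^*)$ by your sign $\lambda$, a nonzero fixed vector forces $\lambda=+1$; this is precisely the content your step (3) needs, and note that the rotational invariance of $\tau$ is never used. One further small slip worth fixing: the coevaluation lands in $X\tensor X^*$, and pivotality identifies $X^{**}$ with $X$, not $X^*$ with $X$, so to define your $\delta\in\cC_2$ you must first fix a self-duality (as the paper also does) rather than invoke $X^{**}=X$.
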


\begin{proof}
  Suppose that $\alpha: X \rightarrow X^*$ is any self-duality and that $\psi: X \rightarrow X \otimes X$ is
an inclusion.   Because $\cC$ is nondegenerate and $\dim \cC_3 = 1$, given any two non-zero 
maps $\beta: X \to X \otimes X$ and $\gamma: X \otimes X \to X^*$, 
$\operatorname{tr}(\alpha^{-1} \circ \gamma \circ \beta) \neq 0$, so 
$\alpha^{-1} \circ \gamma \circ \beta$ and  $\gamma \circ \beta$ are 
nonzero too. Taking $\gamma = \psi^* \circ (\alpha \otimes \alpha^*)$ and 
$\beta = \psi$, we see that the map
$$
\psi^* \circ (\alpha \otimes \alpha^*) \circ \psi = 
\begin{tikzpicture}[baseline=-2, scale=1.5];
	\node[rectangle, draw] (A) at (1,0) {$\psi$};
	\node[rectangle, draw] (B) at (-1,0) {\rotatebox{180}{$\psi$}};
	\node[rectangle, draw] (C) at (0,-.5) {$\alpha$};
	\node[rectangle, draw] (D) at (0,.5) {\rotatebox{180}{$\alpha$}};

\begin{scope}[thick, decoration={
    markings,
    mark=at position 0.5 with {\arrow{<}}}
    ] 
	\draw[postaction={decorate}] (D.0) to [in=180, out=0] (A.160);
	\draw[postaction={decorate}] (C.0) to [in=180, out=0] (A.200);
	\draw[postaction={decorate}] (D.180) to [in=0,out=180] (B.20);
	\draw[postaction={decorate}] (C.180) to [in=0, out=180] (B.-20);
	\draw[postaction={decorate}] (A.0)--  ++(0:5mm);
	\draw[postaction={decorate}] (B.180)--  ++(180:5mm);
\end{scope}
	\end{tikzpicture}
$$ is nonzero and manifestly rotationally invariant.
\end{proof}

Combining the symmetric self-duality of $X$ with the rotational invariance of $\tau$, we can interpret any unoriented planar trivalent graph with $n$ boundary points as an element of $\cC_n$.

To any trivalent category we assign several important parameters as follows.   
Since $\dim \cC_0=1$, any diagram with a loop in it is a multiple $d$ of the same diagram missing that loop. 
The loop value $d$ must be nonzero because it is the pairing of the 
unique-up-to-scalar element of $\cC_2$ with itself, and $\cC$ is nondegenerate.
  In addition, we must have a relation 
\begin{equation}
\label{eq:bigon}
\ngon[90]{2}
= b \cdot
\begin{tikzpicture}[baseline=.3cm]
\draw (0,0)--(0,1);
\end{tikzpicture}
\end{equation}
for some  parameter $b$ which is again nonzero, since the theta graph must be nonzero. Because one can rescale the trivalent vertex by a constant, without loss of generality we can assume $b=1$.  Finally, we see that \begin{equation}
\label{eq:triangle}
\ngon[90]{3}
= t \cdot
\begin{tikzpicture}[baseline=.1cm,scale=0.75]
\draw (0,0) -- (0,1);
\draw (0,0) -- (0.7,-0.5);
\draw (0,0) -- (-0.7,-0.5);
\end{tikzpicture},
\end{equation}
 although in this case the parameter $t$ can be zero.  These parameters $d$ and $t$ will be the key parameters in this paper.

We now give a simple example of a trivalent category, the `chromatic category'.  Further examples appear throughout this paper, in particular 
$SO(3)_q$ (which is essentially the same as the chromatic category) in the proof of Proposition \ref{prop:4:realization},
$ABA_{(d,t)}$ in the proof of Proposition \ref{prop:5:realization:ABA},
$(G_2)_q$ in Definition \ref{def:G2}, and the H3 category of \cite{MR2909758} 
which is given a trivalent presentation in Section \ref{sec:six}.

The chromatic category has as objects finite subsets of an interval, and a 
morphism between two such sets is a linear combination of trivalent graphs 
embedded in a strip between these intervals, subject to the following local 
relations:
\begin{align*}
\tikz[baseline=0]{\draw (0,0) circle (4mm);} & = n - 1 \\ 
\ngon[90]{2}
& = (n-2) \cdot
\begin{tikzpicture}[baseline=.3cm]
\draw (0,0)--(0,1);
\end{tikzpicture} \\
\ngon[90]{3}
& = (n-3) \cdot
\begin{tikzpicture}[baseline=.1cm,scale=0.75]
\draw (0,0) -- (0,1);
\draw (0,0) -- (0.7,-0.5);
\draw (0,0) -- (-0.7,-0.5);
\end{tikzpicture} \\
\drawH +  \twostrandid & = \drawI \; + \cupcap
\end{align*}
(Here the object $X$ is just a singleton on an interval, and the morphism $\tau$
is just the trivalent vertex.) One can verify that these relations suffice to
evaluate any closed trivalent graph: the `$I=H$' relation ensures that we can
reduce the size of a chosen face without increasing the total number of vertices
in the graph, and once there is a small enough face, one of the other relations
lets us reduce the total number of vertices. Indeed, in the case that the
parameter $n$ is an integer, one can see that this evaluation is exactly the
normalized chromatic number of the graph --- that is, the number of ways to
color the faces of the planar trivalent graph with $n$ colors such that adjacent
faces do not share a color, with the `outer face' of the planar diagram always
having a fixed color. The proof of this fact is that in each relation, the total
number of colorings is the same on either side of the relation.

Later we will see that this category is actually equivalent to the category we
call $SO(3)_q$ below, namely the category of representations of
$U_q(\mathfrak{sl}(2))$ consisting of representations whose highest weight is a
root, with the equivalence sending the object $X$ to the irreducible
$3$-dimensional representation and sending the trivalent vertex to some multiple
of the quantum determinant. The parameters match up according to the formula $n=
q^2+2+q^{-2}$. (This is of course a well known equivalence in quantum topology.)

Similarly, the $G_2$ spider defined in \cite{MR1403861} is an example of a
trivalent category, and it is equivalent to the category of representations of
$U_q(\mathfrak{g}_2)$ with $X$ the $7$-dimensional representation and $\tau$ the
quantum deformation of the defining invariant antisymmetric trilinear form.

There is a well-known theorem about nondegeneracy and negligibles which we will need.
\begin{prop} \label{prop:general:uniqueness}
An evaluable pivotal category has a unique maximal ideal, the negligible ideal. (cf. \cite[Proposition 3.5]{MR2979509})
\end{prop}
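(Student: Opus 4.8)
The plan is to show that the negligible morphisms form an ideal, and that any proper ideal is contained in it; since the negligible ideal is itself proper (it does not contain $\id_1$, as $\cC$ is evaluable), this gives that it is the unique maximal ideal. Recall that a morphism $f : a \to b$ is \emph{negligible} if $\operatorname{tr}(f g) = 0$ for every $g : b \to a$; here the trace lands in $\Hom(1\to 1) \iso \bbC$. First I would check that the collection $\cN$ of negligible morphisms is closed under addition and under left and right composition with arbitrary morphisms: if $f$ is negligible and $h,k$ are composable morphisms, then $\operatorname{tr}(h f k \, g) = \operatorname{tr}(f \, (k g h))$ by cyclicity of the trace in a pivotal category, and the right-hand side vanishes since $kgh$ ranges over a subset of the morphisms paired against $f$. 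Closure under $\tensor$ with identities follows similarly (or from the monoidal structure together with composition closure), so $\cN$ is a two-sided tensor ideal. It is proper because $\operatorname{tr}(\id_1 \cdot \id_1) = \id_1 \neq 0$ in $\Hom(1\to 1)$.

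Next I would show that every proper ideal $\cI$ is contained in $\cN$. Suppose not: there is $f : a \to b$ in $\cI$ which is not negligible, so there exists $g : b \to a$ with $\operatorname{tr}(gf) = \lambda \id_1$ for some $\lambda \neq 0 \in \bbC$. But $gf \in \cI$ since $\cI$ is an ideal, hence $\operatorname{tr}(gf) = \lambda \id_1 \in \cI$ (the trace is a composite of $gf$ with the (co)evaluation morphisms of the object $a$, which are morphisms of $\cC$). Dividing by $\lambda$, we get $\id_1 \in \cI$, and then for every object $c$ we have $\id_c = \id_c \tensor \id_1$ (using that $\id_1$ acts as a unit) lying in $\cI$, so $\cI$ is the whole category --- contradicting properness. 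Therefore $\cI \subseteq \cN$, and in particular $\cN$ is the unique maximal ideal.

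The only genuinely delicate point is making sure the trace-based arguments are carried out correctly in the pivotal (as opposed to spherical) setting --- that the left and right partial traces agree enough for cyclicity $\operatorname{tr}(hf k\,g) = \operatorname{tr}(f\,kgh)$ to hold, and that the definition of ``negligible'' is the symmetric one (vanishing trace against all $g$, computed consistently). This is standard (the cited \cite[Proposition 3.5]{MR2979509} handles exactly this), so I do not expect a real obstacle; the argument is essentially the classical observation that an ideal is proper iff it avoids $\id_1$, combined with the fact that nondegeneracy of the pairing is precisely the statement that $\cN$ is the \emph{radical} of the trace form. I would also remark that nondegeneracy of $\cC$ is not needed for this proposition --- it simply says that in a nondegenerate category the negligible ideal is zero --- and that evaluability is what is used, to identify $\Hom(1\to 1)$ with $\bbC$ so that ``$\operatorname{tr}(gf) \neq 0$'' makes sense as a scalar condition.
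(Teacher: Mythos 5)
Your proposal is correct: the paper gives no proof of this proposition, simply citing \cite[Proposition 3.5]{MR2979509}, and your argument is precisely the standard one that the citation covers (negligibles form a proper tensor ideal by cyclicity of the trace, and any ideal containing a non-negligible morphism contains a nonzero multiple of $\id_1$ and hence everything, using evaluability to identify $\Hom(1\to 1)$ with $\bbC$). Your closing remarks — that nondegeneracy is not needed here and that the left/right trace subtlety is harmless in the pivotal setting — are also accurate.
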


\begin{cor}
\label{cor:reductions=>uniqueness}
Given a collection of linear relations amongst planar trivalent graphs, such that any closed diagram can be reduced to 
a multiple of the empty diagram by those relations, there is a unique 
nondegenerate trivalent category satisfying those relations.
\end{cor}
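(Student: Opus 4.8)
The plan is to realize the desired category as one explicit quotient, and then use Proposition~\ref{prop:general:uniqueness} to see that every nondegenerate trivalent category satisfying the relations must coincide with it. First I would assemble the universal object. Let $\cA$ be the $\bbC$-linear pivotal category whose objects are the tensor powers of a symmetrically self-dual object $X$, and with $\Hom_{\cA}(1 \to X^{\tensor n})$ the space of formal $\bbC$-linear combinations of planar trivalent graphs with $n$ boundary points, modulo planar isotopy, the pivotal relations ($2\pi$ rotations of vertices), and the given collection of linear relations; composition and tensor product are the evident stacking operations. This $\cA$ carries a rotationally invariant $\tau \in \Hom_{\cA}(1 \to X^{\tensor 3})$, the single trivalent vertex, which generates it as a pivotal category.

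The point of $\cA$ is that a nondegenerate trivalent category $\cC$ satisfies the given relations precisely when there is a (necessarily full) pivotal functor $F \colon \cA \onto \cC$ with $F(\tau)$ the trivalent vertex of $\cC$: fullness is exactly the condition that $\cC$ is generated by its trivalent vertex, and well-definedness of $F$ on Hom-spaces is exactly the condition that $\cC$ respects isotopy, pivotality, and the relations. Hence $\cC \iso \cA/\ker F$, and everything reduces to identifying $\ker F$. Next I would note that the reduction hypothesis makes $\cA$ evaluable: every closed diagram is a scalar multiple of the empty diagram, so $\Hom_{\cA}(1\to 1)$ is spanned by the empty diagram, hence at most one-dimensional — and exactly one-dimensional unless the relations are so degenerate as to kill the empty diagram, in which case no evaluable, hence no trivalent, category satisfies them. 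Granting evaluability, Proposition~\ref{prop:general:uniqueness} provides a unique maximal ideal $\cN \subset \cA$, the negligible ideal; since any proper ideal avoids $\id_1$ (an ideal containing $\id_1$ contains everything) and therefore, by Zorn, extends to a maximal one, every proper ideal of $\cA$ is contained in $\cN$.

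The key step is then that $\ker F = \cN$ for every $F$ as above. The inclusion $\ker F \subseteq \cN$ holds because $\ker F$ is a proper ideal ($\cC$ is evaluable, so $F$ does not kill $\id_1$). For the reverse inclusion, $F$ preserves traces and, both categories being evaluable, acts as the identity on $\Hom(1\to1) = \bbC$; using fullness of $F$ one checks that a negligible morphism of $\cA$ is sent to a negligible morphism of $\cC$, and nondegeneracy of $\cC$ forces that image to be $0$, so $\cN \subseteq \ker F$. Thus every such $\cC$ is $\iso \cA/\cN$, which gives uniqueness; and $\cA/\cN$ is itself nondegenerate (the negligible ideal of the quotient of an evaluable category by its negligible ideal is trivial), evaluable, and satisfies the relations, which gives existence.

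I expect the routine pieces — that stacking graphs modulo the relations genuinely assembles into a pivotal category, that a pivotal functor respects traces, that the negligible quotient is nondegenerate — to be standard. The one delicate point, and the place where the hypotheses really get used, is verifying that $\cA/\cN$ meets the numerical constraints $\dim(\cA/\cN)_1 = 0$ and $\dim(\cA/\cN)_2 = \dim(\cA/\cN)_3 = 1$ built into the definition of a trivalent category; in the settings where the corollary is applied these follow from the available bigon, triangle, and $I=H$ type relations together with nondegeneracy, but for the bare statement as written one is really asserting ``at most one, and exactly one precisely when $\cA/\cN$ satisfies those dimension conditions.''
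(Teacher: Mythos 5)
Your proof is correct and is exactly the argument the paper intends for this corollary: the reduction hypothesis makes the universal diagram category $\cA$ evaluable, Proposition \ref{prop:general:uniqueness} identifies its negligible ideal $\cN$ as the unique maximal ideal, and the kernel of the (full) functor from $\cA$ to any nondegenerate category satisfying the relations must equal $\cN$, so every such category is $\cA/\cN$. Your closing caveat --- that ``unique'' should really be read as ``at most one,'' with genuine existence requiring $\cA/\cN$ to meet the trivalent dimension constraints --- is apt, since the paper only ever invokes the corollary for uniqueness.
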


\begin{remark}
Any trivalent category is automatically spherical.  To see this note that since 
$X$ is simple we need only check that the dimension of $X$ and the dimension of 
$X^*$ agree, but since $X$ is self-dual this is obvious.
\end{remark}

\section{Small graphs}

We will write $D(n,k)$ for the collection of trivalent graphs with $n$ boundary 
points and at most $k$ internal faces having four or more edges. 
For a fixed trivalent category, we write $M(n,k)$ for the matrix of bilinear inner 
products, i.e.
$$
\langle X, Y \rangle =
\begin{tikzpicture}[baseline=0]
\draw (-0.2,0) arc (180:0:1.2);
\draw (0,0) arc (180:0:1);
\draw (0.2,0) arc (180:0:0.8);
\node[draw,fill=white,circle,inner sep=8pt] (X) at (0,0) {$X$};
\node[draw,fill=white,circle,inner sep=8pt] (Y) at (2,0) {$Y$};
\end{tikzpicture},
$$
of the elements of $D(n,k)$,
and $\Delta(n,k)$ is the determinant of $M(n,k)$. 
Similarly, we will write $D^\square(n,k)$ for the collection of trivalent graphs 
with $n$ boundary points and at most $k$ internal faces having five or more edges, 
and analogously define $M^\square(n,k)$ and $\Delta^\square(n,k)$.   

\begin{prop}\label{thm:zeroalltheway}
For either $\mu=\emptyset$ or $\mu = \square$,
if there is a linear relation amongst diagrams in $D^\mu(n,k)$, then 
$\Delta^\mu(n',k')=0$ for all  $n' \geq n$ and $k' \geq k$.
\end{prop}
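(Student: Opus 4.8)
The plan is to propagate an honest linear relation, rather than the mere vanishing of $\Delta^\mu$: a singular Gram matrix need not come from a relation amongst the diagrams themselves unless one knows in addition that $D^\mu(n,k)$ spans $\cC_n$ and that the trace pairing is nondegenerate there, so it is cleaner to keep an actual relation in hand throughout. Thus I would establish the two propagation statements --- a nontrivial linear relation amongst the diagrams of $D^\mu(n,k)$ produces one amongst the diagrams of $D^\mu(n,k+1)$, and likewise one amongst the diagrams of $D^\mu(n+1,k)$ --- and then iterate. The proposition follows at once, because a nontrivial relation $\sum_i c_i D_i = 0$ places the nonzero vector $(c_i)$ in the kernel of the inner-product matrix $M^\mu$, so that $\Delta^\mu = 0$.

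The step in $k$ is immediate: a graph with at most $k$ internal faces of the relevant kind (four-or-more edges when $\mu = \emptyset$, five-or-more when $\mu = \square$) certainly has at most $k+1$ of them, so $D^\mu(n,k) \subseteq D^\mu(n,k+1)$, and a relation amongst the smaller collection is, after padding with zero coefficients, a relation amongst the larger.

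The step in $n$ uses the operation $\nu$ that ``splits off the last strand'': given a planar trivalent graph representing an element of $\cC_n$, insert a new trivalent vertex $w$ on the edge meeting the last boundary point $p_n$, placed right next to $p_n$, and run the third edge of $w$ out to a new boundary point $p_{n+1}$ inserted next to $p_n$ on the boundary circle. This is precisely post-composition with the trivalent vertex read as a map $X \to X\tensor X$, tensored with the identity on the first $n-1$ strands, so it extends to a $\bbC$-linear map $\cC_n \to \cC_{n+1}$ and hence carries a relation $\sum_i c_i D_i = 0$ to $\sum_i c_i\,\nu(D_i) = 0$. Two facts are then needed. First, $\nu$ maps $D^\mu(n,k)$ into $D^\mu(n+1,k)$: since $p_n$ is univalent, the two sides of its edge join up around $p_n$ into a single face that meets the boundary, so that edge borders no internal face; hence inserting $w$ there, together with the new near-boundary region it bounds, neither creates, destroys, nor resizes any internal face, and in particular leaves unchanged the number of internal faces with four-or-more (resp.\ five-or-more) edges. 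Second, $\nu$ is injective on diagrams: in $\nu(D)$ the vertex adjacent to $p_{n+1}$ is exactly $w$, and deleting $w$, rejoining its other two edges into one, and deleting $p_{n+1}$ recovers $D$; so distinct diagrams have distinct images and $\sum_i c_i\,\nu(D_i) = 0$ is again nontrivial.

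Combining these, an initial nontrivial relation amongst $D^\mu(n,k)$ yields, by iterating $\nu$ together with the inclusions in $k$, a nontrivial relation amongst $D^\mu(n',k')$ for every $n' \geq n$ and $k' \geq k$, whence $\Delta^\mu(n',k') = 0$; and the argument is uniform in $\mu \in \{\emptyset, \square\}$, since the threshold on face sizes enters only through the stability property of $\nu$, which does not see it. The one point that demands genuine care is the first fact about $\nu$ --- that it stays inside the diagram class --- which hinges on the (otherwise innocuous) convention for what counts as an internal face near the boundary; once one records that an edge incident to a boundary point borders only boundary-touching faces, it is straightforward. (There is a harmless special case when the strand at $p_n$ is an arc running to another boundary point rather than to an internal vertex, but the same local move on that arc applies without change.)
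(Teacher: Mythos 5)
Your proof is correct and is essentially the paper's argument: the paper transports the relation by gluing a fixed tree (with the appropriate number of leaves) onto a chosen boundary point of every diagram in the relation, which is exactly your map $\nu$ iterated, combined with the trivial inclusion $D^\mu(n,k)\subseteq D^\mu(n,k+1)$; in both cases the glued diagrams remain in the relevant class (no internal faces are created or altered), are pairwise distinct, and the resulting nontrivial relation puts a nonzero vector in the kernel of the Gram matrix, forcing $\Delta^\mu(n',k')=0$. The only cosmetic slip is the phrase about the two sides of the edge at $p_n$ ``joining up'' into one face --- they are simply both boundary faces --- and this does not affect the argument.
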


\begin{proof}
Take the  diagrams appearing in the relation and glue a fixed tree (with $n'-n$ leaves) to a fixed boundary point of each of them.
There is then a non-trivial relation amongst the resulting diagrams, and hence $\Delta^\mu(n',k')=0$ also.
\end{proof}

\begin{cor}
If $\Delta^\mu(n,k)=0$ and the diagrams in $D^\mu(n,k)$ span $\cC_n$, then $\Delta^\mu(n',k') = 0$ for all  $n' \geq n$,
$k' \geq k$.
\end{cor}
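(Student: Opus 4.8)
The plan is to reduce this immediately to Proposition~\ref{thm:zeroalltheway} by converting the hypothesis $\Delta^\mu(n,k)=0$ into an honest linear relation amongst the diagrams of $D^\mu(n,k)$. The only extra ingredient needed is that the bilinear pairing $\langle\cdot,\cdot\rangle$ is nondegenerate when restricted to the single space $\cC_n$. This follows from the nondegeneracy of $\cC$: for any nonzero $x \in \cC_n = \Hom(1 \to X^{\otimes n})$ there is a morphism $x' \colon X^{\otimes n} \to 1$ with $\operatorname{tr}(xx') \neq 0$, and using the symmetric self-duality of $X$ (the lemma above) we identify $x'$ with an element of $\cC_n$ whose pairing with $x$ is exactly $\operatorname{tr}(xx') \neq 0$. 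So $\langle\cdot,\cdot\rangle$ has trivial radical on $\cC_n$.

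Now enumerate $D^\mu(n,k) = \{d_1, \dots, d_m\}$, so that $M^\mu(n,k)$ is the Gram matrix $\big(\langle d_i, d_j\rangle\big)_{i,j}$. Since $\Delta^\mu(n,k) = \det M^\mu(n,k) = 0$, there is a nonzero vector $c = (c_1, \dots, c_m)$ with $M^\mu(n,k)\,c = 0$; equivalently $\langle d_i, \sum_j c_j d_j\rangle = 0$ for every $i$. Because the $d_i$ span $\cC_n$ by hypothesis, this says the element $\sum_j c_j d_j \in \cC_n$ pairs to zero with all of $\cC_n$, hence lies in the radical, hence is zero. Thus $\sum_j c_j d_j = 0$ is a nontrivial linear relation amongst the diagrams in $D^\mu(n,k)$, and applying Proposition~\ref{thm:zeroalltheway} gives $\Delta^\mu(n',k') = 0$ for all $n' \geq n$ and $k' \geq k$, as claimed. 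The cases $\mu = \emptyset$ and $\mu = \square$ are identical, as nothing above distinguishes them.

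There is essentially no hard step: the substance is the standard fact that the Gram matrix of a spanning set with respect to a nondegenerate form is singular exactly when the set is linearly dependent. The only point requiring a little care is translating the categorical nondegeneracy axiom, stated in terms of $\operatorname{tr}(xx') \neq 0$ for pairs of morphisms, into nondegeneracy of the symmetric bilinear form on the one space $\cC_n$; once the self-duality of $X$ is used to turn morphisms $X^{\otimes n} \to 1$ into elements of $\cC_n$, this is routine.
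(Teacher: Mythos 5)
Your proof is correct and is exactly the argument the paper intends: the corollary is stated without proof as an immediate consequence of Proposition~\ref{thm:zeroalltheway}, and your bridging step (nondegeneracy of the pairing on $\cC_n$ plus the spanning hypothesis turns a kernel vector of the Gram matrix into an honest linear relation amongst the diagrams) is the intended one.
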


\begin{remark}
In all our examples, we will have enough conditions on the trivalent category that it will be possible to evaluate each of the $\Delta^\mu(n,k)$ that we consider as a rational function in $d$, $b$, and $t$.  We will always normalize to set $b=1$, but it is worth noting that you can recover the rational function up to an overall power of $b$, from the $b=1$ specialization as follows.  Notice that rescaling the trivalent vertex by $\lambda$ rescales the values of all closed planar trivalent graphs.  We can put a grading on closed planar trivalent graphs according to the number of trivalent vertices. Our parameters $d = \bigcirc, b = \tikz[scale=0.25,baseline=-2]{\draw (0,0) circle (1); \draw (1,0) -- (-1,0);} / d$, and
$t = \tikz[scale=0.4,baseline=-2]{\draw (90:1) -- (210:1) -- (330:1) -- (90:1) -- (0,0) -- (210:1) (0,0) -- (330:1);}/bd$
have gradings $0$, $+2$, and $+2$.  It is not difficult to see, by looking at the diagrams involved in the calculation, that $\Delta^\mu(n,k)$ is homogenous with respect to this grading.  So to recover the rational function from its $b=1$ factorization up to an overall power of $b$, we simply multiply each monomial by a power of $b$ to make it homogenous.
\end{remark}

\section{Diagrams with four boundary points} \label{sec:four}

Recall that for any trivalent category $\cC$ we get two numbers $(d,t)$ from the 
loop and the triangle, and furthermore $d \neq 0$.  In this section we prove
\begin{thm}\cite[Theorem 3.4]{1202.4396} \label{thm:4}
A trivalent category $\cC$ with $\dim \cC_4 \leq 3$ has $P_{SO(3)} = d+t-dt-2 = 0$ 
and must be either an $SO(3)_q$ category for $d = q^2+1+q^{-2}$ if $(d,t) \neq (-1,3/2)$,
 or $OSp(1|2)$ if  $(d,t) = (-1,3/2)$.
\end{thm}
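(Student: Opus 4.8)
The plan is to extract from the hypothesis $\dim\cC_4\le 3$ a single skein relation among the ``forest'' diagrams with four boundary points, and then to run the evaluation argument already sketched above for the chromatic category, so that Corollary \ref{cor:reductions=>uniqueness} pins $\cC$ down in terms of $(d,t)$.

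First I would list the four planar diagrams in $\cC_4$ built from forests: the two Temperley--Lieb diagrams $\cupcap$ and $\twostrandid$, and the two $H$-shaped diagrams $\drawI$ and $\drawH$ obtained by joining two trivalent vertices along an internal edge; these are the only planar trivalent graphs with four boundary points and no internal face. Since all four lie in $\cC_4$ and $\dim\cC_4\le 3$, there is a nontrivial linear dependence $\alpha\,\cupcap+\beta\,\twostrandid+\gamma\,\drawI+\delta\,\drawH=0$, and rotating by a quarter turn (legitimate because $X$ is symmetrically self-dual and $\tau$ is rotationally invariant) produces a second dependence with $\cupcap\leftrightarrow\twostrandid$ and $\drawI\leftrightarrow\drawH$ interchanged. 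To determine the coefficients I would cap these relations off in the two available ways --- on a pair of adjacent strands, compose with the cap $X^{\otimes 2}\to 1$, and compose with the trivalent vertex $X^{\otimes 2}\to X$ --- landing in $\cC_2$ and $\cC_3$, each of which is one-dimensional. Evaluating the resulting diagrams uses only $\dim\cC_1=0$ (which annihilates the ``lollipop'' configurations in which two legs of a single vertex are joined together), the normalization $b=1$, the loop value $d$, and the triangle value $t$ (the last entering precisely when capping one of the $H$-diagrams with a vertex produces an internal triangle). This gives a small linear system; eliminating the coefficients leaves a polynomial relation between $d$ and $t$ which factors as $P_{SO(3)}(d,t)$ times a second polynomial.

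Next I would discard the extraneous factor together with the edge values $d=\pm1$: on these loci the skein relation just derived becomes incompatible with the nonvanishing of some small closed diagram (such as the theta graph or the tetrahedron), contradicting nondegeneracy, so no trivalent category can occur there. What survives is exactly $P_{SO(3)}(d,t)=0$, equivalently $t=\frac{d-2}{d-1}$, together with a definite $I=H$-type relation in which $\drawI$ and $\drawH$ both occur with invertible coefficients. At this point the argument given above for the chromatic category applies essentially verbatim: using the $I=H$ relation we can shrink any chosen internal face without increasing the number of trivalent vertices, and once a face is small the bigon or triangle relation reduces the vertex count; hence every closed trivalent graph reduces to a multiple of the empty diagram, and Corollary \ref{cor:reductions=>uniqueness} yields a unique nondegenerate trivalent category for each such $(d,t)$. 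Note that we never need the four forests to span $\cC_4$, because the $I=H$ relation is applied directly as a local rewriting rule to arbitrary diagrams.

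Finally, for existence, I would exhibit the realizations: the chromatic category --- equivalently $SO(3)_q$, with $X$ the $3$-dimensional irreducible $U_q(\mathfrak{sl}_2)$-representation --- realizes every point of the curve $P_{SO(3)}=0$ through $d=q^2+1+q^{-2}$, with $t=\frac{d-2}{d-1}$ matching $\frac{n-3}{n-2}$ under $n=q^2+2+q^{-2}$; the one exception is $q^2=-1$, where $SO(3)_q$ degenerates and the point $(d,t)=(-1,\tfrac{3}{2})$ is realized instead by $OSp(1|2)$. I expect the main obstacle to be the middle step: beyond the routine bookkeeping of the capping calculation, one has to dispose correctly of the degenerate parameter values, because it is exactly there that the internal structure of $\cC_4$ --- and with it the precise shape of the $I=H$ relation and the soundness of the evaluation algorithm --- can change, and it is in these degenerate cases, not the generic one, that an unexpected example would have to be ruled out.
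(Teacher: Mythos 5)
Your outline follows the paper's (derive a four-box relation, run the face-reduction evaluation, invoke Corollary \ref{cor:reductions=>uniqueness}, then exhibit $SO(3)_q$ and $OSp(1|2)$), and your capping computation is sound as far as it goes: requiring the putative dependence among the four forest diagrams (call them the cup-cap $U_1$, the identity $U_2$, and the two trees $I$, $H$) to die under the two caps and the two vertex-caps is equivalent to requiring it to pair to zero against $D(4,0)$, and the resulting $4\times 4$ determinant is, up to the factor $d^4$, exactly the paper's $\Delta(4,0)=d^4\,P_{SO(3)}\,(d+t+dt)$. The genuine gap is your disposal of the extraneous component $d+t+dt=0$. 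On that locus the forced dependence is, up to scale, $U_1+U_2=(1+d)(I+H)$, and this vector is automatically orthogonal to all four diagrams of $D(4,0)$; moreover the theta graph and the tetrahedron evaluate to $d$ and $td$ using only the bigon and triangle relations, so neither can conflict with the new relation --- the contradiction you propose ``with the nonvanishing of the theta graph or the tetrahedron'' simply is not there. The first diagram that does see the relation is the square: pairing the relation against the square must give zero, and that pairing evaluates (by bigon and triangle moves alone) to $2d\left(1-(1+d)t^2\right)$, which together with $d+t+dt=0$ forces the two golden-ratio points, and these already satisfy $P_{SO(3)}=0$. This square-pairing step is precisely what the paper's second determinant $\Delta(4,1)$ supplies, and some computation involving a square face is unavoidable; without it the whole curve $d+t+dt=0$ survives your argument.

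Two further corrections. Discarding ``the edge values $d=\pm1$'' is both unnecessary and, for $d=-1$, wrong: on $P_{SO(3)}=0$ one has $d\neq1$ automatically, while $(d,t)=(-1,3/2)$ lies on the curve and is realized by $OSp(1|2)$, so it cannot be excluded by nondegeneracy --- indeed your final paragraph reinstates it, contradicting your middle step; the coefficient $1/(d-1)$ is perfectly finite there. Second, the claim that you ``never need the four forests to span $\cC_4$'' fails at exactly the two golden-ratio points, where both factors of the determinant vanish and your capping system has a two-dimensional kernel: there, knowing only that the actual dependence satisfies your linear system does not pin down the $I=H$ relation with its coefficients $\pm 1/(d-1)$, which is what the uniqueness step needs in order to compare two hypothetical categories with the same $(d,t)$. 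The paper closes this by combining nondegeneracy with spanning (Lemmas \ref{lem:4:spanning} and \ref{lem:recognition:SO3}), which makes every kernel vector of the Gram matrix an actual relation; you should do the same, after which the remainder of your uniqueness and realization steps track the paper's.
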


(Throughout this paper we will use $P$'s with various subscripts to denote important polynomials in $d$ and $t$ whose
vanishing set corresponds to some existing trivalent category.  So, for example, $P_{SO(3)}$ is the polynomial which vanishes when $d$
and $t$ have the values that they have for quantum $SO(3)$.  By contrast, we will use $Q_{i,j}$ to denote polynomials
whose exact form is not important to the reader.  Here $i$ and $j$ are the degrees of the polynomial in $d$ and $t$
respectively.  The smaller polynomials are listed in the appendix, and all appear in computer readable form in the
{\tt polynomials/} directory of the {\tt arXiv} source of this article.)

This Theorem follows from three propositions.

\begin{prop}[Non-existence]
\label{prop:4:nonexistence}
For any $(d,t)$ not satisfying $P_{SO(3)}  = 0$ there are no trivalent
categories with $\dim \Span D(4,0) \leq 3$.
\end{prop}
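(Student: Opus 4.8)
The plan is to turn the hypothesis into the vanishing of a Gram determinant that is a polynomial in $d$ and $t$, and then to show that this vanishing forces $P_{SO(3)}=0$.

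\emph{Step 1: reduce $\Span D(4,0)$ to four diagrams.} Every diagram in $D(4,0)$ has all of its internal faces bigons or triangles, and the relations \eqref{eq:bigon} (normalized so $b=1$) and \eqref{eq:triangle} remove such a face at the cost of a scalar while strictly decreasing the number of trivalent vertices. Iterating, each element of $D(4,0)$ is a scalar multiple of a planar trivalent forest on four univalent boundary points, and there are exactly four of these: the two Temperley--Lieb diagrams $\twostrandid$ and $\cupcap$, and the two ``halves of a theta'' $\drawI$ and $\drawH$. Hence $\Span D(4,0)=\Span\{\twostrandid,\cupcap,\drawI,\drawH\}$, and the hypothesis $\dim\Span D(4,0)\le 3$ says precisely that these four diagrams satisfy a nontrivial linear relation.

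\emph{Step 2: pass to the Gram matrix and compute $\Delta(4,0)$.} Pairing the linear relation against each of the four diagrams gives $M(4,0)\,c=0$ with $c\neq 0$, so $\Delta(4,0)=\det M(4,0)=0$. Each of the nine entries of $M(4,0)$ is a closed trivalent graph containing only loops, bigons and triangles, so can be evaluated using the loop value $d$ together with \eqref{eq:bigon} and \eqref{eq:triangle}: for instance $\langle\twostrandid,\twostrandid\rangle=\langle\cupcap,\cupcap\rangle=d^{2}$, $\langle\twostrandid,\cupcap\rangle=d$, $\langle\twostrandid,\drawH\rangle=\langle\cupcap,\drawI\rangle=0$ (each such closure contains a one-boundary-point subdiagram, which vanishes since $\dim\cC_1=0$), $\langle\drawI,\drawI\rangle=\langle\drawH,\drawH\rangle=d$ (two bigon reductions down to a $\theta$), and $\langle\drawI,\drawH\rangle=td$ (the tetrahedron, reduced through one triangle to a $\theta$). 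Computing the determinant of the resulting $4\times 4$ matrix and factoring gives $\Delta(4,0)=\pm\, d^{4}\,P_{SO(3)}\,Q$ with an extra linear factor $Q=dt+d+t$. Since $d\neq 0$ for any trivalent category, the hypothesis forces $P_{SO(3)}=0$ or $Q=0$.

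\emph{Step 3: kill the spurious factor $Q$.} This is the one genuine difficulty: the level-$4$ Gram determinant alone leaves the extraneous curve $Q=0$, so some higher constraint is needed. By Proposition \ref{thm:zeroalltheway}, the linear relation among the diagrams of $D(4,0)$ forces not just $\Delta(4,0)=0$ but $\Delta(n',k')=0$ for all $n'\ge 4$, $k'\ge 0$. I would evaluate $\Delta(5,0)$ in the same way (a larger but still finite computation) and run a Gr\"obner-basis elimination to check that $\{\Delta(4,0)=\Delta(5,0)=0\}\cap\{d\ne 0\}$ is exactly $\{P_{SO(3)}=0\}$; in particular $\Delta(5,0)$ is not identically zero on $Q=0$, so no trivalent category can have parameters on that curve. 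Alternatively, solving $M(4,0)c=0$ on $Q=0$ produces an explicit candidate skein relation (of the shape $\twostrandid+\cupcap=(1+d)(\drawI+\drawH)$), which one can then apply to a diagram with a single interior square face in two different orders and derive a contradiction. Either route completes the proof: $\dim\Span D(4,0)\le 3$ implies $P_{SO(3)}=0$, so no trivalent category with $\dim\Span D(4,0)\le 3$ exists unless $(d,t)$ lies on $P_{SO(3)}=0$. Everything apart from this last elimination is bookkeeping --- enumerating the four forests, evaluating nine closed graphs, and one determinant factorization.
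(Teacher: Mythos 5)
Your Steps 1 and 2 reproduce the paper's computation: $D(4,0)$ consists of the four tree/forest diagrams, a dependency among them forces $\Delta(4,0)=0$, and $\Delta(4,0)=d^4\,P_{SO(3)}\,(d+t+dt)$, so the whole content of the proposition is killing the spurious curve $Q_{1,1}=dt+d+t=0$. The genuine gap is in Step 3, and your primary route there fails. In any trivalent category $\Delta(5,0)=d^{10}P_{ABA}^2P_{SO(3)}^4Q_{1,2}$ with $Q_{1,2}=d(2t^2+2t+1)+3t^2-2$, and the curves $Q_{1,1}=0$ and $Q_{1,2}=0$ meet at $(d,t)=(-2,-2)$ (a point on the $G_2$ curve, with $d\neq0$), where $P_{SO(3)}=-10\neq0$. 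Hence $\{\Delta(4,0)=\Delta(5,0)=0,\ d\neq0\}$ is strictly larger than $\{P_{SO(3)}=0\}$: the Gr\"obner check you propose to ``verify'' would come back false. Saying that $\Delta(5,0)$ is not identically zero on $Q_{1,1}=0$ only excludes all but finitely many points of that curve, and at least one surviving point lies off $P_{SO(3)}=0$, so the argument as stated does not prove the proposition.

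The paper closes this hole differently, by bringing the square diagram into play: the genuine relation among $D(4,0)$ guaranteed by $\dim\Span D(4,0)\le 3$ pairs to zero with everything, in particular with the square, so it gives a null vector of $M(4,1)$ and hence $\Delta(4,1)=0$ as well. The determinant $\Delta(4,1)$ involves the unknown value of the cube graph, but that unknown appears with coefficient $(d+t+dt)$, so on the spurious curve it drops out and one is left with $2d+2dt-4dt^2+2dt^4+2d^2t^4=0$; together with $d+t+dt=0$ this has only the solutions $(d,t)=\bigl(\tfrac{1\pm\sqrt5}{2},\tfrac{1\mp\sqrt5}{2}\bigr)$, which lie on $P_{SO(3)}=0$. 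Your alternative sketch is close to this in spirit: on $Q_{1,1}=0$ the kernel of $M(4,0)$ is spanned by the symmetric vector you name, and pairing that relation against the square gives $2d\bigl(1-(d+1)t^2\bigr)=0$, which on the curve is exactly $P_{ABA}=0$, again landing on $P_{SO(3)}=0$. But as written it is only a sketch with two unaddressed points: a kernel vector of $M(4,0)$ is merely a \emph{candidate} relation (null vectors of the Gram form are genuine relations only when $D(4,0)$ spans $\cC_4$), so you must argue that the actual relation supplied by the hypothesis coincides with it --- e.g.\ at points of $Q_{1,1}=0$ where that kernel is one-dimensional, with the remaining special points handled separately, or better, avoid identifying the relation at all and argue via $\Delta(4,1)=0$ as the paper does; and the ``apply to a square face in two orders and derive a contradiction'' step needs to be carried out, since the contradiction is a computation (it produces the constraint $P_{ABA}=0$), not an automatic consequence.
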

\begin{prop}[Uniqueness]
\label{prop:4:uniqueness}
For every pair $(d,t)$ satisfying $P_{SO(3)}  = 0$ there is at most one trivalent category with $\dim \cC_4 \leq 4$.
\end{prop}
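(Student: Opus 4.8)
The plan is to reduce to Corollary~\ref{cor:reductions=>uniqueness}: it is enough to show that any trivalent category $\cC$ with $\dim\cC_4\le 4$ and $P_{SO(3)}(d,t)=0$ is forced to satisfy a set of local skein relations that (i) is completely determined by the pair $(d,t)$ and (ii) evaluates every closed trivalent graph to a scalar. Then all such $\cC$ coincide with the unique nondegenerate trivalent category satisfying those relations.

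First I would locate the forced relation. Up to removing internal bigons and triangles with $b=1$ and the parameter $t$, the space $\Span D(4,0)$ is spanned by the two noncrossing pairings of the four boundary points together with the two two-vertex trees $\drawI$ and $\drawH$, while $\Span D(4,1)$ adds the ``square'' $S$ (four trivalent vertices bounding an internal $4$-gon, one leg out of each corner). Since $\dim\cC_4\le 4$ and $D(4,1)$ has five diagrams, there is a nontrivial linear relation $R$ among them inside $\cC_4$; being zero in $\cC_4$, $R$ pairs to zero against every diagram of $D(4,0)$. Each such pairing is the evaluation of a small closed trivalent graph, hence an explicit polynomial in $d$ and $t$ obtained from the loop, bigon and triangle reductions (for instance the theta graph evaluates to $d$). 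Thus $R$ lies in the kernel of an explicit $5\times 4$ matrix of pairings whose $4\times 4$ block is the Gram matrix $M(4,0)$, with determinant $\Delta(4,0)$. A finite computation (of exactly the type one automates with a Gr\"obner basis) shows $\Delta(4,0)$ vanishes on the curve $P_{SO(3)}=0$, so there $M(4,0)$ drops rank; and on the complement of finitely many points of that curve the kernel of the full $5\times4$ matrix is one-dimensional, so $R$ is unique up to scale and its coefficients are determined rational functions of $d$ and $t$. This is the forced ``$\drawI$--$\drawH$'' (or square-switch) relation.

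Next I would check this relation, together with the loop, bigon and triangle relations, is a reducing system in the sense of Corollary~\ref{cor:reductions=>uniqueness}. This is the chromatic-category argument already sketched: the relation shrinks a chosen internal face of a closed diagram without increasing the number of trivalent vertices, after which a bigon, triangle or loop reduction strictly lowers the vertex count, so iterating evaluates any closed diagram to a multiple of the empty diagram (equivalently, one runs the discharging evaluation algorithm). As a byproduct $D(4,0)$ spans $\cC_4$, so the relations holding in $\cC_4$ are exactly $\ker M(4,0)$ and there is nothing further to determine. Corollary~\ref{cor:reductions=>uniqueness} then yields uniqueness. The finitely many special points --- where $M(4,0)$ drops rank by more than one, giving $\dim\cC_4\le 2$ (the $SO(3)_{\zeta_5}$ point), or where the relation is the ``super'' variant ($OSp(1|2)$ at $(d,t)=(-1,3/2)$) --- are treated the same way: the (possibly larger) kernel is still computed from $(d,t)$ and is still a reducing system.

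The main obstacle is precisely this bookkeeping. One must verify, uniformly over the whole curve $P_{SO(3)}=0$ and at its degenerate points, that the relevant pairing matrices have exactly the expected ranks (so the forced relation is genuinely unique as a function of $(d,t)$ rather than living in a higher-dimensional family), and that the resulting relations really do evaluate every closed diagram. In this low-dimensional case these are small enough to check by hand --- and in fact this case is an instance of the classical chromatic/$SO(3)_q$ skein theory --- but organizing them cleanly and reliably is what the Gr\"obner-basis computation does, and this is the step deserving real care; the rest is routine.
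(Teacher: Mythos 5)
Your overall skeleton (find a forced local relation, check it reduces every closed graph, invoke Corollary~\ref{cor:reductions=>uniqueness}) is the paper's, but the step where you pin down the forced relation has a genuine gap: the rank claim is false. The kernel of the pairing matrix of $D(4,1)$ against $D(4,0)$ is \emph{not} one-dimensional on $P_{SO(3)}=0$ away from finitely many points --- it is (at least) two-dimensional along the whole curve. Indeed, besides the $I=H$ element it always contains the ``square-reduction'' element (the square minus its expansion in $\Span D(4,0)$), as one sees either by computing the rank directly (e.g.\ at $d=3$, $t=1/2$ the $4\times 5$ matrix has rank $3$) or by noting that in the existing $SO(3)_q$ categories the formal $5$-dimensional space maps onto the $3$-dimensional $\cC_4$, so the pairing-against-$D(4,0)$ kernel has dimension $2$. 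Consequently the relation $R$ you extract from $\dim\cC_4\le 4$ is not determined by $(d,t)$: for all your argument shows, $R$ could be a pure square reduction. That would not let you shrink pentagons or larger faces, so the evaluation step fails (a closed trivalent graph is only guaranteed a face with at most five sides --- square reduction alone is exactly the situation that forces the whole $D^\square(5,1)$ analysis of Section~5), and the ``byproduct'' that $D(4,0)$ spans $\cC_4$, which you then use to say the relations are exactly $\ker M(4,0)$, is no longer available; the argument becomes circular.

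The gap is repairable, and the repair is essentially the paper's route. Since $\Delta(4,0)=d^4 P_{SO(3)}(d+t+dt)$ vanishes on the curve, $D(4,0)$ cannot be linearly independent in $\cC_4$: if it were, it would be a basis (as $\dim\cC_4\le 4$), and then nondegeneracy would force $\ker M(4,0)=0$, i.e.\ $\Delta(4,0)\neq 0$. So there is a relation among $D(4,0)$ itself, and one must then massage it into the form of Equation~\eqref{eq:I=H}: if only the two crossingless diagrams appear one glues an $H$ onto the top, and otherwise one rotates and rescales so the coefficient of $\drawH$ is $1$ (this case analysis is exactly the paper's Lemmas preceding Lemma~\ref{lem:4:spanning}, and your proposal does not address it). Only after spanning of $D(n,0)$ is established this way can one legitimately read the precise coefficients off $\ker M(4,0)$ as in Lemma~\ref{lem:recognition:SO3}. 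Two smaller points: the special points need no separate treatment of a ``super variant'' --- at $(d,t)=(-1,3/2)$ the relation of Lemma~\ref{lem:recognition:SO3} has the same form with $d=-1$, and $OSp(1|2)$ only enters in the realization statement, not in uniqueness; and at the golden points a larger kernel of $M(4,0)$ is harmless since it still contains the $I=H$ relation.
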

\begin{prop}[Realization]
\label{prop:4:realization}
The $SO(3)_q$ categories are trivalent categories with $\dim \cC_4 \leq
3$, and realize every pair $(d,t)$ satisfying $P_{SO(3)}  = 0$, except $(-1,3/2)$.  The remaining point $(-1,3/2)$ is realized by  $OSp(1|2)$.
\end{prop}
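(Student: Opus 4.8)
The plan is to exhibit the two required families explicitly --- the one-parameter family $SO(3)_q$ coming from quantum $\csl{2}$, and the single category $OSp(1|2)$ coming from the Lie superalgebra $\mathfrak{osp}(1|2)$ --- compute their loop and triangle parameters $(d,t)$, check $\dim\cC_4\leq 3$, and verify that together they cover the curve $P_{SO(3)}=d+t-dt-2=0$. I would run everything through the chromatic category introduced above. For a parameter $n\notin\{1,2\}$ (so $d=n-1\neq 0$ and the theta graph is nonzero) its relations reduce any closed diagram to a multiple of the empty one, so by Corollary~\ref{cor:reductions=>uniqueness} it is a nondegenerate trivalent category; rescaling the vertex by $(n-2)^{-1/2}$ to set $b=1$, its parameters are $d=n-1$ and $t=(n-3)/(n-2)=(d-2)/(d-1)$, so a one-line computation gives $P_{SO(3)}(d,t)=0$, and $\dim\cC_4\leq 3$ since modulo the $I=H$ relation the four diagrams in $D(4,0)$ reduce to three and these span $\cC_4$. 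As $P_{SO(3)}(1,\cdot)\equiv -1$, every point of the curve with $d\neq 0$ has $d\neq 1$, so setting $n=d+1\notin\{1,2\}$ realizes it: the chromatic categories already sweep out the whole curve apart from the forbidden value $d=0$.

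It remains to attach the names. As recalled above, the chromatic category at $n$ is equivalent to $SO(3)_q$, the (semisimplified) category of $\uqsl{2}$-representations of root-lattice highest weight, with $X=V_2$ the three-dimensional simple and $\tau$ a multiple of the quantum determinant, whenever $n=q^2+2+q^{-2}$ for an admissible $q$. Solving this equation shows that the only values with no admissible $q$ are $n\in\{0,1,2\}$, corresponding to $q$ a primitive root of unity of order $4$, of order $3$ or $6$, and of order $8$ respectively; for every other $n$ the chromatic category genuinely is $SO(3)_q$ (for instance $n=4$, $q=1$ is classical $\Rep(SO(3))$). Of the three exceptional values, $n=1$ ($d=0$) and $n=2$ ($d=1$) are already excluded; the remaining one, $n=0$, gives $(d,t)=(-1,3/2)$, and there the would-be $q=\pm i$ makes the representation-theoretic $SO(3)_q$ degenerate --- $V_2$ becomes negligible and dies in the semisimplification --- so $(-1,3/2)$ is \emph{not} realized by any $SO(3)_q$.

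To realize $(-1,3/2)$ I would instead use $OSp(1|2):=\Rep(\mathfrak{osp}(1|2))$ with $X$ its three-dimensional defining representation $\bbC^{1|2}$. This representation theory is semisimple, so the category is evaluable and nondegenerate; $\bbC^{1|2}$ is symmetrically self-dual via its orthosymplectic form; one has $\dim\cC_1=\dim\Hom(1\to X)=0$ since $X$ is simple and nontrivial, while $X\tensor X\cong 1\oplus X\oplus Y$ with $Y$ five-dimensional and all multiplicities one gives $\dim\cC_2=1$ and $\dim\cC_3=\dim\Hom(X\to X\tensor X)=1$; the cubic $\mathfrak{osp}(1|2)$-invariant in $X^{\tensor 3}$ is cyclically symmetric, so $\tau$ is rotationally invariant; and $X$ generates. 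Its loop value is the superdimension $d=1-2=-1$, which via the chromatic dictionary forces $n=0$ and $t=3/2$, and $\dim\cC_4=\dim\End{X^{\tensor 2}}=3$. This exhibits the last point of the curve and completes the proof.

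The main obstacle is $OSp(1|2)$: this is the one place where the chromatic/Temperley--Lieb bookkeeping is not enough and one must actually do the orthosymplectic super-invariant theory --- semisimplicity of $\mathfrak{osp}(1|2)$-representations, symmetric self-duality and the tensor-square decomposition of $\bbC^{1|2}$, and the existence, one-dimensionality, and cyclic symmetry of the cubic invariant, culminating in the computation that its triangle parameter is exactly $3/2$ (equivalently, that $\Rep(\mathfrak{osp}(1|2))$ is the chromatic category at $n=0$, which is also what certifies that the chromatic category is nondegenerate and evaluable there). A secondary, more routine task is the dictionary between $n$ and $q$, in particular confirming that $d=-1$ is the only point of the curve not attained by an admissible $SO(3)_q$.
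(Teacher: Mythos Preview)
Your approach is sound and reaches the same conclusion as the paper, but the route differs. The paper builds $SO(3)_q$ directly inside Temperley--Lieb: cable each strand by the Jones--Wenzl idempotent $f^{(2)}$, write down the explicit trivalent vertex with normalization $\sqrt{\delta/(\delta^2-2)}$, and read off $d=\delta^2-1$ and $t=(\delta^2-3)/(\delta^2-2)$. Existence is then automatic, since $TL_\delta$ is manifestly a well-defined pivotal category and $SO(3)_q$ sits inside it. You instead take the chromatic category as primary and only afterwards attach the name $SO(3)_q$ via the dictionary $n=q^2+2+q^{-2}$. The payoff is a single construction sweeping the whole curve, including $n=0$; the cost is an existence check you have not actually supplied. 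Corollary~\ref{cor:reductions=>uniqueness} gives only \emph{uniqueness}: you still owe the reader a reason why the chromatic relations are consistent (do not force the empty diagram to zero), which requires a confluence argument, the chromatic-polynomial interpretation, or --- circularly --- the Temperley--Lieb model. This is easy to fill, but as written it is a gap.

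One minor inaccuracy: at $q=\pm i$ the obstruction is not that $V_2$ becomes negligible (its quantum dimension is $[3]_q=-1\neq 0$) but that $\delta=[2]_q=0$, so $f^{(2)}$ is undefined and the trivalent-vertex normalization blows up. Your conclusion that $SO(3)_q$ is unavailable there is correct; only the diagnosis is slightly off.

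Your $OSp(1|2)$ argument is essentially the paper's: invoke the Lie superalgebra directly, verify $\dim\cC_4=3$ and $d=-1$, and deduce $t=3/2$ from the curve $P_{SO(3)}=0$ rather than by a separate computation. You are right that this is the step demanding the most outside input, and you have identified the relevant checks (self-duality, the tensor-square decomposition, rotational invariance of the cubic invariant) correctly.
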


Although versions of these propositions were already proved in \cite{1202.4396}\footnote{
In \cite{1202.4396} the point $(d,t)=(-1,3/2)$ was not discussed, since in the subfactor context $d>0$.},
we give a slightly different argument which is easier to automate and thus scale to the needs of the later sections. 
For the first proposition, we use the dimension assumption to see that $\Delta(4,0)$ 
and $\Delta(4,1)$ vanish.  A short calculation shows that $P_{SO(3)}$ must vanish. 
(Later, this sort of calculation will be handled by Gr\"obner bases, but for now 
it's easy enough to do by hand.)   For the second proposition, we fix $(d, t)$
satisfying $P_{SO(3)}  = 0$. The proof divides into three phases. We use the degeneracy implied by $\dim \cC_4 \leq 3$ 
and an easy graph theoretic fact to show that $D(n,0)$ spans $\cC_n$ for all $n$. Specializing to $n = 4$, this shows 
that the kernel of $M(4,0)$ gives a relation in the category. The fact that this relation suffices to evaluate all 
closed diagrams shows that there is at most one trivalent category at this value of $(d,t)$.  Finally, the third 
proposition is a straightforward statement about a well-understood family of categories.

\begin{remark}
The proof of Theorem \ref{thm:4} only needs the weakening of Proposition \ref{prop:4:uniqueness} that covers the cases 
where $\dim \cC_4 \leq 3$. We will need the full strength later in the paper.
\end{remark}

\subsection*{Proof of Proposition \ref{prop:4:nonexistence} (Non-existence)}

The diagrams $D(4,1)$ are $$\twostrandid \, , \, \cupcap \, , \,  \drawI \, , \, \drawH \, , \, \ngon[45]{4}, $$
and they have matrix of inner products%
\newcommand{\sca}{0.3}
\newcommand{\twocircles}{
	\begin{tikzpicture}[scale=\sca,baseline=8]
	\draw (0,0) circle (.4cm);
	\draw (1,0) circle (.4cm);
	\end{tikzpicture}
}
\newcommand{\circlewithwaist}{
	\begin{tikzpicture}[scale=\sca]
	\draw (.5,-.2) .. controls (.4,-.2) .. (-45:.4cm) arc (-45:-315:.4cm) .. controls (.4,.2) .. (.5,.2);
	\begin{scope}[xshift=1cm, rotate=180]
		\draw (.5,-.2) .. controls (.4,-.2) .. (-45:.4cm) arc (-45:-315:.4cm) .. controls (.4,.2) .. (.5,.2);
	\end{scope}
	\end{tikzpicture}
}
\newcommand{\bigonwithwaist}{
	\begin{tikzpicture}[scale=\sca]
	\draw (.5,-.2) .. controls (.4,-.2) .. (-45:.4cm) arc (-45:-180:.4cm) node[shape=coordinate] (a) {} arc (-180:-315:.4cm) .. controls (.4,.2) .. (.5,.2);
	\begin{scope}[xshift=1cm, rotate=180]
		\draw (.5,-.2) .. controls (.4,-.2) .. (-45:.4cm) arc (-45:-180:.4cm) node[shape=coordinate] (b) {} arc (-180:-315:.4cm) .. controls (.4,.2) .. (.5,.2);
	\end{scope}
	\draw (a) .. controls ($(a)+(-0.5,0.0)$) and (-1,1) ..  (0.5,0.8) .. controls (2,1) and ($(b)+(0.5,0.0)$) .. (b);
	\end{tikzpicture}	
}
\newcommand{\bigon}{
	\begin{tikzpicture}[scale=\sca]
	\draw (0,-.35)--(0,.35);
	\draw (0,-.35) arc (-120:120:.4);	
	\draw (0,-.35) arc (300:60:.4);	
	\end{tikzpicture}
}
\newcommand{\earring}{
	\begin{tikzpicture}[scale=\sca]
	\draw (0,0) circle (.4cm);
	\draw (-.4,0) .. controls (-1,0) and (-1,0.6) .. (0.5,0.6) .. controls (2,0.6) and (2,0) .. (1.4,0);
	\draw (1,0) circle (.4cm);
	\end{tikzpicture}
}
\newcommand{\dumbbell}{
	\begin{tikzpicture}[scale=\sca]
	\draw (0,0) circle (.4cm);
	\draw (.4,0) -- (.6,0);
	\draw (1,0) circle (.4cm);
	\end{tikzpicture}
}
\newcommand{\drawIbox}{ \begin{tikzpicture}[scale=\sca,baseline=0cm]
 	\draw (0,.2) .. controls +(30:.3cm) .. (45:.8cm);
 	\draw (0,.2) .. controls +(150:.3cm) .. (135:.8cm);
	\draw (0,.2) -- (0,-.2);
 	\draw (0,-.2) .. controls +(-30:.3cm) .. (-45:.8cm);
 	\draw (0,-.2) .. controls +(-150:.3cm) .. (-135:.8cm);
	\draw (45:.8cm) -- (135:.8cm) -- (-135:.8cm) -- (-45:.8cm) -- cycle;
	\end{tikzpicture}
}
\newcommand{\drawHbox}{ \begin{tikzpicture}[scale=\sca,baseline=0cm,rotate=90]
 	\draw (0,.2) .. controls +(30:.3cm) .. (45:.8cm);
 	\draw (0,.2) .. controls +(150:.3cm) .. (135:.8cm);
	\draw (0,.2) -- (0,-.2);
 	\draw (0,-.2) .. controls +(-30:.3cm) .. (-45:.8cm);
 	\draw (0,-.2) .. controls +(-150:.3cm) .. (-135:.8cm);
	\draw (45:.8cm) -- (135:.8cm) -- (-135:.8cm) -- (-45:.8cm) -- cycle;
\end{tikzpicture}}
\newcommand{\cupcapbox}{\begin{tikzpicture}[scale=\sca,baseline=0cm]
	\draw (45:.8cm) .. controls (0,0) .. (135:.8cm);
	\draw (-45:.8cm) .. controls (0,0) .. (-135:.8cm);
	\draw (45:.8cm) -- (135:.8cm) -- (-135:.8cm) -- (-45:.8cm) -- cycle;
\end{tikzpicture}}
\newcommand{\twostrandidbox}{\begin{tikzpicture}[scale=\sca,baseline=0cm,rotate=90]
	\draw (45:.8cm) .. controls (0,0) .. (135:.8cm);
	\draw (-45:.8cm) .. controls (0,0) .. (-135:.8cm);
	\draw (45:.8cm) -- (135:.8cm) -- (-135:.8cm) -- (-45:.8cm) -- cycle;
\end{tikzpicture}}
\newcommand{\doublebigon}{
	\begin{tikzpicture}[scale=\sca,rotate=90]
	\draw (0,0) circle (.4cm);
	\draw (-.4,0) .. controls (-1,0) and (-1,0.6) .. (0.5,0.6) .. controls (2,0.6) and (2,0) .. (1.4,0);
	\draw (0.4,0) -- (0.6,0);
	\draw (1,0) circle (.4cm);
	\end{tikzpicture}
}
\newcommand{\tet}{
	\begin{tikzpicture}[scale=\sca]
	\draw (0,-.35)--(0,.35);
	\draw (0,-.35) arc (-120:0:.4) node[shape=coordinate] (a) {} arc (0:120:.4);	
	\draw (0,-.35) arc (300:180:.4) node[shape=coordinate] (b) {} arc (180:60:.4);	
	\draw (a) .. controls ($(a)+(1,0)$) and (0.8,-0.7) .. (0,-0.7) .. controls (-0.8,-0.7) and ($(b)+(-1,0)$) .. (b);
	\end{tikzpicture}
}
\newcommand{\cappedsquare}{
	\begin{tikzpicture}[scale=\sca]
	\draw (.4,.4) -- (.4,-.4)--(-.4,-.4) .. controls (-.8,-.8) and (.8,-.8) .. (.4,-.4);
	\draw (-.4,-.4) -- (-.4,.4)--(.4,.4) .. controls (.8,.8) and (-.8,.8) .. (-.4,.4);	
	\end{tikzpicture}
}
\newcommand{\cappedsquarearc}{
	\begin{tikzpicture}[scale=\sca]
	\draw (.4,.4) -- (.4,-.4)--(-.4,-.4) .. controls (-.8,-.8) and (.8,-.8) .. (.4,-.4);
	\draw (-.4,-.4) -- (-.4,.4)--(.4,.4) .. controls (.8,.8) and (-.8,.8) .. (-.4,.4);
	\draw (0,0.7) .. controls (0,1.0) and (1,1.0) .. (1,0) .. controls (1,-1) and (0,-1) .. (0,-0.7);
	\end{tikzpicture}
}%
\begin{align*}
\left(
\mathfig{0.3}{inner-product-matrix}
\right) & = 
\begin{pmatrix}
d^2 & d & d & 0 & d \\
d & d^2 & 0 & d & d \\ 
d & 0 & d & td & t^2d \\
0 & d & td & d & t^2d \\
d & d & t^2d & t^2d & \Cube
\end{pmatrix}.
\end{align*}

Recall $\Delta(4,1)$ is the determinant of this matrix, and $\Delta(4,0)$ is the determinant of the minor leaving off the last row and column.

\begin{fact}
In a trivalent category,
\begin{align*}
\Delta(4, 0) = d^4 (d+t-dt-2)(d+t+dt).
\end{align*}
\end{fact}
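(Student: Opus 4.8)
\emph{Proof proposal.} This is a direct computation, and I would organize it in three steps: pin down the spanning set $D(4,0)$, record its Gram matrix using the three skein relations available in any trivalent category, and evaluate the determinant.

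Since the square $\ngon[45]{4}$ is itself an internal $4$-gon face, $D(4,0)$ consists of exactly the first four diagrams in the displayed list for $D(4,1)$: the two-strand identity, the cup--cap, the ``$I$'', and the ``$H$''. Hence $M(4,0)$ is the top-left $4\times 4$ block of the matrix displayed above, and $\Delta(4,0)$ is its determinant. To justify that block I would glue pairs of these diagrams and evaluate the resulting closed graphs using only three facts: a spare disjoint circle multiplies the value by $d$; the theta graph equals $b\cdot d = d$ after the normalization $b=1$ (apply \eqref{eq:bigon} and then delete the leftover circle); the tetrahedron equals $t$ times the theta graph, hence $td$, by applying the triangle relation \eqref{eq:triangle} to one of its triangular faces; and any closure that creates a $1$-valent vertex evaluates to $0$ because $\dim \cC_1 = 0$ (this is what produces the two zero entries $\langle \twostrandid,\drawH\rangle = \langle \cupcap,\drawI\rangle = 0$). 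Carrying this out reproduces
\[
M(4,0)=\begin{pmatrix} d^2 & d & d & 0 \\ d & d^2 & 0 & d \\ d & 0 & d & td \\ 0 & d & td & d \end{pmatrix}.
\]

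Finally I would compute the determinant. Pulling a factor of $d$ out of each of the four rows gives $\Delta(4,0) = d^4 \det N$ with
\[
N=\begin{pmatrix} d & 1 & 1 & 0 \\ 1 & d & 0 & 1 \\ 1 & 0 & 1 & t \\ 0 & 1 & t & 1 \end{pmatrix},
\]
and a cofactor expansion along the first row yields $\det N = d^2 + t^2 - 2d - 2t - d^2 t^2$. Expanding the product $(d+t-dt-2)(d+t+dt)$ gives exactly this polynomial, so $\Delta(4,0) = d^4(d+t-dt-2)(d+t+dt)$. There is no genuine obstacle here: the entire content lies in evaluating the closed diagrams correctly (in particular recognizing the theta and tetrahedron graphs and the vanishing caused by $1$-valent vertices), since the determinant expansion and the factorization of the resulting quartic are purely mechanical.
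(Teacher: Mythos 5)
Your proposal is correct and matches the paper's approach: the paper simply records the $5\times 5$ inner product matrix for $D(4,1)$ and defines $\Delta(4,0)$ as the determinant of the $4\times 4$ minor you wrote down, and your Gram-matrix entries and determinant expansion agree with it. One small phrasing quibble: the vanishing entries come not from a literal $1$-valent vertex but from a tadpole (self-loop at a trivalent vertex), which vanishes because it is a morphism in $\cC_1 = 0$ — exactly the reason you cite, so the argument stands.
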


\begin{fact}
In a trivalent category,
\begin{align*}
\Delta(4, 1) = - d^4(d+t-dt-2)(2d+2dt-4dt^2+2dt^4+2d^2t^4 - \Cube(d+t+dt))
\end{align*}
\end{fact}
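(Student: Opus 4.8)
The plan is to compute $\Delta(4,1)$ directly as the determinant of the explicit $5\times 5$ Gram matrix $M(4,1)$ displayed above, after first filling in the one entry not yet pinned down by $d$ and $t$ alone, namely $\Cube = \langle \ngon[45]{4}, \ngon[45]{4}\rangle$. This ``square-squared'' closed diagram is a priori a new parameter, but in fact it is determined by $d$ and $t$: applying the defining relations of the trivalent category (the loop value $d$, the bigon relation \eqref{eq:bigon} with $b=1$, and the triangle relation \eqref{eq:triangle}) we can reduce $\langle \ngon[45]{4}, \ngon[45]{4}\rangle$ step by step. Concretely, the gluing of two squares along all four boundary points is a small closed trivalent graph; capping off or fusing one pair of parallel edges produces triangles and bigons which collapse via \eqref{eq:triangle} and \eqref{eq:bigon}, and iterating expresses $\Cube$ as a polynomial in $d$ and $t$. (One does not strictly need the closed form: since $\Cube$ appears linearly in $M(4,1)$, one can just carry it as a symbol and simplify at the end, but knowing $\Cube$ as a polynomial is cleaner.)

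Next I would expand the determinant. Using the previously recorded fact that $\Delta(4,0)$ — the determinant of the top-left $4\times 4$ minor — equals $d^4(d+t-dt-2)(d+t+dt)$, I would perform a cofactor expansion of $\Delta(4,1)$ along the last row (or column), writing
\[
\Delta(4,1) = \Cube \cdot \Delta(4,0) - (\text{cofactor terms involving the off-diagonal entries } d, d, t^2 d, t^2 d).
\]
Each cofactor is the determinant of a $4\times 4$ matrix whose entries are monomials in $d$ and $t$, so these are finite, mechanical computations. Collecting terms and factoring should reveal the common factor $d^4(d+t-dt-2)$ — the appearance of $P_{SO(3)} = d+t-dt-2$ as a factor is expected, since whenever $P_{SO(3)}=0$ we are in the $SO(3)_q$ / $OSp(1|2)$ situation where $\dim \cC_4 \le 3$ forces $\Delta(4,1)=0$ as well. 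The remaining factor should match $-(2d+2dt-4dt^2+2dt^4+2d^2t^4 - \Cube(d+t+dt))$; verifying this is just algebra once $\Cube$ is expressed in terms of $d,t$, and indeed if $\Cube$ is left symbolic the last factor is manifestly linear in $\Cube$, which is consistent with the cofactor expansion structure.

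The main obstacle is the reduction computing $\langle \ngon[45]{4}, \ngon[45]{4}\rangle$, i.e.\ evaluating the closed graph obtained by doubling a square: one must be careful to track all the diagrams that arise (some reductions produce a ``dumbbell'' or ``theta-with-a-handle'' type graph, which themselves need the bigon and loop relations to collapse), and to not double-count. Everything else — the cofactor expansion and the factoring — is routine linear algebra over $\Integer[d,t]$, the kind of thing that is tedious but not subtle, and which in the later sections of the paper is delegated to a Gr\"obner basis computation. I would organize the write-up so that the only genuinely content-bearing step is the evaluation of $\Cube$ (or, equivalently, the observation that the last column of $M(4,1)$ is determined), and then present the determinant expansion as a direct calculation, noting that it visibly carries the factor $d^4 P_{SO(3)}$ inherited from $\Delta(4,0)$.
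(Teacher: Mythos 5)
Your mechanical plan — expand the $5\times 5$ Gram matrix by cofactors along the last row, with $\Delta(4,0)$ appearing as the coefficient of the corner entry — is correct, and with $\Cube$ carried as a symbol it is essentially the paper's own computation: the Fact is stated with $\Cube$ as an explicit parameter precisely because that entry is \emph{not} a function of $d$ and $t$. The step you promote to ``the only genuinely content-bearing step,'' namely reducing $\Cube$ to a polynomial in $d$ and $t$ using the loop value and Equations \eqref{eq:bigon} and \eqref{eq:triangle}, would fail. Gluing two squares along their four boundary points yields the cube graph (the $4$-prism): eight trivalent vertices, twelve edges, six faces, every face a square. It contains no loop, bigon, or triangle, so none of those relations can ever be applied to it; ``capping off or fusing a pair of parallel edges'' is not a legal move in a general trivalent category — that is exactly a square-reduction relation, which only appears later (Equation \eqref{eq:square}) and only for cubic categories. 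In a general trivalent category $\Cube$ is an independent parameter; in a cubic category Proposition \ref{prop:cubic}(c) pins it down as the rational function $\frac{2d+2dt-4dt^2+2dt^4+2d^2t^4}{d+t+dt}$, and that determination is a \emph{consequence} of $\Delta(4,1)=0$, i.e.\ of the very Fact you are proving, not an input to it. (If $\Cube$ were always a polynomial in $d,t$, the statement would not mention it, and the ``dumbbell/theta-with-a-handle'' intermediate graphs you anticipate never arise.)

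So the write-up should be organized the opposite way. What does reduce are the pairings of the square against the four diagrams of $D(4,0)$, and of those diagrams against each other: gluing the square to trees or cup--caps creates bigons and triangles, giving the entries $d,d,t^2d,t^2d$ and the known $4\times 4$ block, while the corner entry must be left as $\Cube$. Then the determinant expansion you describe, with $\Cube$ symbolic, is the whole proof; your own sanity check confirms it, since the $\Cube$-coefficient of the claimed expression is $d^4 P_{SO(3)}(d+t+dt)=\Delta(4,0)$, exactly as the cofactor expansion along the last row requires. In short, the parenthetical fallback in your proposal is the actual argument, and the route you call ``cleaner'' is not available.
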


A trivalent category with $\dim \Span D(4,0) \leq 3$ must have $\Delta(4,0) =
\Delta(4,1) = 0$.  Since $d \neq 0$, we must have either $P_{SO(3)}  = 0$, or $(d+t+dt) = 0$ and $2d+2dt-4dt^2+2dt^4+2d^2t^4 = 0$.  In the latter case, solving gives $(d,t) =
\left( \frac{1 \pm \sqrt{5}}{2}, \frac{1 \mp \sqrt{5}}{2}\right)$ which also satisfies $P_{SO(3)} = 0$. \qed

\begin{remark}
In this paper we work over $\bbC$ throughout, but it is also interesting to ask
these questions in other characteristics.  The argument above works with no
modifications outside of characteristic $2$.  In characteristic $2$, the
argument breaks down since when $(d+t+dt) = 0$, we have that $\Delta(4,1)$ is
automatically zero.  However, in characteristic $2$, a closer look shows that
$\Delta(4,0) = d^4 P_{SO(3)}^2$, so the conclusion holds anyway.  We will ignore
non-zero characteristic in the rest of the paper.
\end{remark}

\subsection*{Proof of Proposition \ref{prop:4:uniqueness} (Uniqueness)}
We fix a value of $(d,t)$ satisfying $P_{SO(3)}  = 0$.

\begin{lem}
\label{lem:I=H=>spanning}
If there is a relation of the form
\begin{equation}
\label{eq:I=H}
 \drawH \; = \alpha \; \drawI \; + \beta \; \twostrandid \; + \gamma \; \cupcap,
 \end{equation}
then 
any trivalent graph in $\cC_n$ can be reduced to $\operatorname{span} D(n,0)$.
\end{lem}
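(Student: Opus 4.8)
The plan is to induct on the number $v(G)$ of internal trivalent vertices of $G$, using \eqref{eq:I=H} together with the bigon relation \eqref{eq:bigon} and the triangle relation \eqref{eq:triangle} (which hold in every trivalent category, and, being applied to a $2$- or $3$-vertex subdiagram, strictly decrease the vertex count). It will in fact be a nested induction: the outer variable is $v(G)$, and for each fixed value of $v(G)$ the inner variable is $m(G):=\min\{\,|f|:f\text{ an internal face of }G\,\}$, where $|f|$ denotes the number of edges of $f$, with $m(G)=\infty$ if $G$ has no internal faces.

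First I would dispatch the easy cases. If $v(G)=0$ then $G$ is a disjoint union of boundary-to-boundary arcs and internal loops, so it has no internal face bounded by edges and (after optionally using the loop value $d$ to delete loops) already lies in $\operatorname{span} D(n,0)$. For $v(G)\ge 1$: an internal monogon is a self-loop at a trivalent vertex, so $G$ contains a subdiagram representing a morphism in $\cC_1=0$ and hence $G=0$; an internal bigon or triangle can be removed via \eqref{eq:bigon} or \eqref{eq:triangle}, strictly lowering $v(G)$, so the outer inductive hypothesis finishes the job; and if $G$ has no internal face with four or more edges, then by definition $G\in D(n,0)$. Thus we may assume $v(G)\ge 1$, that $G$ has at least one internal face with $\ge 4$ edges, and that $G$ has no internal face with fewer than $4$ edges, so $m(G)\ge 4$.

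For the main step, pick an internal face $f_0$ with $|f_0|=m(G)$ and an edge $e$ on its boundary, and apply \eqref{eq:I=H} at $e$. This writes $G=\alpha\,G_I+\beta\,G'+\gamma\,G''$, where $G'$ and $G''$ have $v(G)-2$ vertices and so lie in $\operatorname{span} D(n,0)$ by the outer induction, and $G_I$ is obtained from $G$ by rotating the edge $e$ (replacing the $H$ by the $I$), so it has the same number of vertices. Reading off the four faces incident to $e$: in passing to $G_I$ the face $f_0$ and the face across $e$ each lose an edge, the two faces across the edges of $f_0$ adjacent to $e$ each gain an edge, and all other faces are unchanged. If $m(G)=4$ this turns $f_0$ into a triangle in $G_I$, and more generally if any face of $G_I$ has become a bigon or triangle we are back in an already-treated case, where $v$ drops; otherwise every internal face of $G_I$ still has $\ge 4$ edges while the descendant of $f_0$ has $m(G)-1$ of them, so $m(G_I)=m(G)-1$ and we conclude by the inner induction.

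The step I expect to be the main obstacle is justifying the face-count bookkeeping for $G_I$ in the presence of \emph{degenerate} local configurations: a priori the edge $e$ could have $f_0$ on both sides, the two ``faces across the adjacent edges'' could coincide with one another, with $f_0$, or with the face across $e$, and an edge or a vertex could repeat along $\partial f_0$ --- in any of which cases an edge gained by one face could cancel an edge it lost, and $m(G)$ need not drop. The remedy is to show that, once $G$ has no internal face with fewer than $4$ edges, one can always choose $e$ so that no such degeneracy occurs, or else exhibit an internal bigon or triangle directly (again reducing $v$); carrying out this case analysis carefully is where the real work lies, everything else being routine.
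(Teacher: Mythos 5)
Your proposal is correct and, at bottom, is the paper's argument: apply \eqref{eq:I=H} to an edge of an internal face, use the loop, bigon and triangle relations (and $\cC_1=0$) on the resulting terms, and run a well-founded induction until everything lies in $\operatorname{span} D(n,0)$. The differences are in the bookkeeping, and they are worth recording. The paper's two-sentence proof applies the relation to the \emph{largest} face and inducts on the pair (number of internal faces, size of the largest face); as your own face-count shows, this measure is not literally monotone, since the two faces across the edges adjacent to $e$ each gain an edge, so a neighbour of size $m(G)$ or $m(G)-1$ can keep the largest face size from dropping. Your choice of a \emph{smallest} face with the lexicographic measure (vertex count, minimal internal face size) sidesteps exactly this: growth of neighbouring faces is invisible to the minimum, the face across $e$ only drops from size $\geq m(G)$ to $\geq m(G)-1$, and the non-rotated terms are handled by the vertex count alone, so your induction is cleaner than the one the paper sketches. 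As for the degeneracies you defer to "the real work": they are not a genuine obstacle, and they resolve by the same $\cC_1=0$ mechanism you already used for monogons rather than by a careful choice of $e$. If the face $f_0$ lies on both sides of some edge of the local picture (your cases where $f_0$ coincides with the face across $e$ or with a face across an adjacent edge), that edge borders the same face on both sides and hence is a bridge; the face must then wrap around one end of the bridge, which forces the subgraph hanging off that end to have no boundary points, so cutting the bridge exhibits a morphism in $\cC_1=0$ and the whole diagram vanishes. The remaining coincidences (the face across $e$ agreeing with a face across an adjacent edge, or those two agreeing with each other) only cause a face to keep or gain edges and never disturb the descent. So the case analysis you flag amounts to a few lines; the paper's own proof does not address it at all.
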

\begin{proof}
Applying this relation to the largest face gives a sum of terms with either 
fewer faces or the same number of faces but with a smaller largest face.  By 
induction, we can write any diagram as a sum of terms with no internal faces.
\end{proof}

\begin{lem}
In a trivalent category with $\dim \Span D(4,0) \leq 3$, the diagrams $D(n,0)$ span $\cC_n$ for all
$n$.
\end{lem}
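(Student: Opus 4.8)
The plan is to invoke Lemma~\ref{lem:I=H=>spanning}: it suffices to exhibit a single relation of the form~\eqref{eq:I=H}, since that lemma then reduces every trivalent graph in $\cC_n$ into $\Span D(n,0)$, giving the claim for all $n$ at once. So the whole problem is to extract such a relation from the hypothesis $\dim \Span D(4,0) \le 3$.

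First I would record that $D(4,0)$ is the four-element set $\{\twostrandid,\cupcap,\drawI,\drawH\}$ (the list of $D(4,1)$ above adds only the square $\ngon[45]{4}$, which has an internal $4$-gon). Since these are four diagrams spanning a space of dimension $\le 3$, there is a nonzero linear relation
\[
a\,\twostrandid + b\,\cupcap + c\,\drawI + e\,\drawH = 0
\]
in $\cC_4$. The goal is to arrange that the coefficient of $\drawH$ is nonzero, after which we divide by it and are done. Here I would use the order-$4$ rotation of $\cC_4$: it is a linear automorphism permuting this four-element set of diagrams, interchanging $\drawI\leftrightarrow\drawH$ and $\twostrandid\leftrightarrow\cupcap$, and it carries relations to relations. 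Thus if $e\neq 0$ we are already finished; if $e=0$ but $c\neq 0$, rotating the relation produces one whose $\drawH$-coefficient equals $c\neq 0$.

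The remaining possibility to rule out is $c=e=0$, i.e.\ a nonzero relation $a\,\twostrandid+b\,\cupcap=0$; this is the only real content of the argument, and it is dispatched by pairing against $\drawI$. Reading off the inner-product matrix $M(4,1)$ computed above, $\langle\twostrandid,\drawI\rangle = d$ (this closes up to a theta graph) and $\langle\cupcap,\drawI\rangle = 0$ (this closes up to a diagram containing a $\cC_1$-tadpole, hence vanishes), so the pairing gives $ad=0$, whence $a=0$ since $d\neq 0$; then $b\,\cupcap=0$ and pairing against $\cupcap$ gives $bd^2=0$, so $b=0$, contradicting nontriviality. Having produced a relation of the form~\eqref{eq:I=H}, Lemma~\ref{lem:I=H=>spanning} completes the proof.

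I do not expect a genuine obstacle here: the argument uses only $d\neq 0$ (not even $P_{SO(3)}=0$). The only points needing a routine check are that the order-$4$ rotation genuinely sends planar trivalent diagrams to planar trivalent diagrams, and hence relations to relations, and that $D(4,0)$ really is the stated four diagrams.
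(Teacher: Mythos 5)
Your proof is correct, and its overall skeleton is the same as the paper's: produce a relation among the four diagrams of $D(4,0)$, massage it (using the order-four rotation of $\cC_4$, which indeed permutes these diagrams and preserves relations) into the form of Equation \eqref{eq:I=H}, and then invoke Lemma \ref{lem:I=H=>spanning}. The one place you diverge is the degenerate case where the only relation available involves just the two crossingless diagrams: the paper does not rule this case out but instead handles it directly, gluing an ``H'' along the top boundary of both pictures to convert the dependency into a relation of the form \eqref{eq:I=H}; you instead show the case is vacuous by pairing a putative relation $a\cdot(\text{two strands})+b\cdot(\text{cup-cap})=0$ against the $I$ diagram, using $\langle \text{two strands}, I\rangle = d\neq 0$ (a theta) and $\langle \text{cup-cap}, I\rangle = 0$ (a dumbbell, killed by $\dim\cC_1=0$), and then against the cup-cap. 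That computation is valid and in fact proves slightly more than the paper needs, namely that the two crossingless diagrams are linearly independent in every trivalent category; the paper's gluing trick is marginally more economical in that it never has to decide this, but both arguments are complete and rest only on $d\neq 0$ and the normalization $b=1$.
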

\begin{proof}
If the two diagrams \scalebox{0.6}{$\twostrandid$} and \scalebox{0.6}{$\cupcap$} are linearly dependent, we obtain a relation as in Equation \eqref{eq:I=H} by adding an ``H'' along the top boundary of both 
pictures. Otherwise, there must be a relation amongst the four diagrams in $D(4,0)$, with the coefficient of at least one of \scalebox{0.6}{$\drawH$} and \scalebox{0.6}{$\drawI$} being nonzero. Rescaling and rotating gives a relation as in Equation \eqref{eq:I=H}.
\end{proof}

\begin{lem}
\label{lem:4:spanning}
In a trivalent category with $\dim \cC_4 \leq 4$, the diagrams $D(4,0)$ span $\cC_4$.
\end{lem}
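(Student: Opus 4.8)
The plan is a short two-case argument according to the dimension of $\Span D(4,0)$. Recall from the non-existence proof that $D(4,0)$ consists of exactly the four diagrams $\twostrandid$, $\cupcap$, $\drawI$, $\drawH$, so that $\dim \Span D(4,0) \le 4$ holds automatically. Hence either $\dim \Span D(4,0) \le 3$ or $\dim \Span D(4,0) = 4$, and I would treat these separately.

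In the first case, $\dim \Span D(4,0) \le 3$ is precisely the hypothesis of the preceding lemma, which concludes that $D(n,0)$ spans $\cC_n$ for \emph{all} $n$; specializing to $n = 4$ gives the claim at once, and in fact this case does not use the assumption $\dim \cC_4 \le 4$ at all.

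In the second case, the four diagrams $\twostrandid$, $\cupcap$, $\drawI$, $\drawH$ are linearly independent, so they span a $4$-dimensional subspace of $\cC_4$. Since $\dim \cC_4 \le 4$ by hypothesis, that subspace is all of $\cC_4$, so again $D(4,0)$ spans $\cC_4$. Combining the two cases proves the lemma.

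There is no serious obstacle here: once the preceding lemma is available the argument is pure bookkeeping. The only point that deserves a moment's care is the a priori bound $\dim \Span D(4,0) \le 4$, i.e.\ the graph-theoretic fact that any reduced trivalent graph with four boundary points and no internal face with four or more sides is one of the four listed diagrams. This follows from a routine count: such a diagram must be a forest on the four boundary points (a cycle would enclose a bigon, triangle, or larger face), and a forest on four boundary points has either no internal vertices, giving the two noncrossing pairings $\twostrandid$ and $\cupcap$, or two internal vertices, giving $\drawI$ and $\drawH$. This is exactly the identification of $D(4,0)$ already used implicitly in the non-existence proof.
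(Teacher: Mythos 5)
Your proof is correct and is essentially identical to the paper's: the paper also splits into the cases $\dim \Span D(4,0) \leq 3$ (where the previous lemma applies) and $\dim \Span D(4,0) = 4$ (where the conclusion is immediate from $\dim \cC_4 \leq 4$). Your extra remarks identifying the four diagrams of $D(4,0)$ are consistent with what the paper uses implicitly.
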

\begin{proof}
If $\dim \Span D(4,0) \leq 3$, then the previous Lemma applies.  Otherwise, $\dim \Span D(4,0) = 4$ and the conclusion
is immediate.
\end{proof}

Note that on $P_{SO(3)}  = 0$, $d \neq 1$.

\begin{lem} \label{lem:recognition:SO3}
In a trivalent category where $D(4,0)$ spans $\cC_4$ and $P_{SO(3)}  = 0$, there is
a relation of the form
$$  \drawH \; - \; \drawI \; + \frac{1}{d-1}\; \twostrandid \; -  \frac{1}{d-1}\; \cupcap = 0.$$
\end{lem}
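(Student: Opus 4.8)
The plan is to exhibit the relation directly as a kernel vector of the Gram matrix $M(4,0)$ already computed in the non-existence proof, and then promote it to an honest relation using the spanning hypothesis together with nondegeneracy.

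Since $P_{SO(3)} = d + t - dt - 2 = 0$, the Fact above gives $\Delta(4,0) = d^4\,P_{SO(3)}\,(d+t+dt) = 0$, so $M(4,0)$ is singular. I would not bother describing the whole kernel. Ordering the elements of $D(4,0)$ as $(\twostrandid,\ \cupcap,\ \drawI,\ \drawH)$, I simply test the candidate vector $v = (1,\,-1,\,1-d,\,d-1)$ against
\[
M(4,0) = \begin{pmatrix} d^2 & d & d & 0 \\ d & d^2 & 0 & d \\ d & 0 & d & td \\ 0 & d & td & d \end{pmatrix}.
\]
A one-line computation shows that the first two coordinates of $M(4,0)\,v$ vanish identically in $d$ and $t$, while the last two equal $-d\,P_{SO(3)}$ and $d\,P_{SO(3)}$ respectively; since $P_{SO(3)} = 0$ and $d \neq 0$ in any trivalent category, $v \in \ker M(4,0)$.

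Now set $w = \twostrandid - \cupcap + (1-d)\,\drawI + (d-1)\,\drawH \in \cC_4$. By the previous step $\langle w, D\rangle = 0$ for every $D \in D(4,0)$, and since $D(4,0)$ spans $\cC_4$ by hypothesis, $\langle w, x\rangle = 0$ for all $x \in \cC_4$; nondegeneracy of $\cC$ then forces $w = 0$ (this is the same ``radical vector of a spanning Gram matrix is zero'' principle already invoked above). Finally $d \neq 1$ on $P_{SO(3)} = 0$, since $P_{SO(3)}|_{d=1} = -1$, so dividing the relation $w = 0$ by $d - 1$ yields exactly
\[
\drawH \;-\; \drawI \;+\; \frac{1}{d-1}\,\twostrandid \;-\; \frac{1}{d-1}\,\cupcap \;=\; 0 .
\]
I expect no real obstacle here: all the conceptual input (singularity of $M(4,0)$ from $\Delta(4,0) = 0$, and spanning plus nondegeneracy promoting a form-radical vector to an actual relation) is already available, and the only new ingredient is the trivial verification that $v$ lies in the kernel.
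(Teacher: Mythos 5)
Your proposal is correct and follows essentially the same route as the paper: the paper's proof likewise observes that since $D(4,0)$ spans, any vector in the radical of the pairing must vanish, and then finds the relation in the kernel of $M(4,0)$ at $P_{SO(3)}=0$. Your only (harmless) variation is verifying the explicit kernel vector $v=(1,-1,1-d,d-1)$ directly rather than computing the kernel, together with the same observation that $d\neq 1$ on $P_{SO(3)}=0$ to normalize the coefficient of $\drawH$.
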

\begin{proof}
Since $D(4,0)$ spans, any element of $\cC_4$ with inner product $0$ with all elements of $D(4,0)$ must vanish.
Computing the kernel of $M(4,0)$ as given above, we find this relation.
\end{proof}

To prove the proposition, we observe that by Lemma \ref{lem:4:spanning}, $\cC_4 = \Span D(4,0)$, so by Lemma \ref{lem:recognition:SO3}
there is a relation of the form in Equation \eqref{eq:I=H}. 
Finally, Lemma \ref{lem:I=H=>spanning} shows that this relation suffices to evaluate all closed diagrams as a
multiple of the empty diagram, and Corollary \ref{cor:reductions=>uniqueness} shows that there is a unique trivalent category
at this value of $(d, t)$.
\qed

\subsection*{Proof of Proposition \ref{prop:4:realization} (Realization)}

The curve $P_{SO(3)}  = 0$ is rational and can be parameterized by $d= \delta^2 -1$ 
and $t= \frac{\delta^2 -3}{\delta^2-2}$ where $\delta \neq \pm \sqrt{2}$.
It is more usual to change variables to $q+q^{-1} = \delta$ where $q$ is not a 
primitive $8$th root of unity.  Under this parameterization $q$ and $q^{-1}$ are sent to the same point.

So long as $\delta \neq 0$, that is $(d,t) \neq (-1,3/2)$, we have the following
realization.  Let $\mathrm{TL}_\delta$ be the Temperley-Lieb category, which
consists of linear combinations of planar tangles with the circle equal to
$\delta$.  This is not a trivalent category, since there is no trivalent vertex.
However, we can interpret trivalent graphs in $\mathrm{TL}_\delta$ as follows.
Take a trivalent graph and replace each strand with a pair of strands attached
to the second Jones-Wenzl projection $$f^{(2)} = \twostrandid \;
-\frac{1}{\delta} \; \cupcap,$$ and replace each trivalent vertex by
\begin{equation*}
\sqrt{\frac{\delta}{\delta^2-2}} \; \begin{tikzpicture}[baseline]
\foreach \n in {0, 120, 240} {
	\draw (30+\n:0.15) -- ($(30+\n:0.15) + (90+\n:1.33)$) (30+\n:0.15) -- ($(30+\n:0.15)+(-30+\n:1.33)$);
}
\foreach \n in {0, 120, 240} {
	\node[draw, rectangle, fill=white,rotate=\n] at (90+\n:0.8) {$f^{(2)}$};
}
\end{tikzpicture}.
\end{equation*}

This trivalent category is called $SO(3)_q$ where $\delta = q+q^{-1}$.  The trivalent vertex is normalized so that $b=1$, and quick calculation shows that $d= \delta^2 -1 = q^2+1+q^{-2}$ and $t= \frac{\delta^2 -3}{\delta^2-2} = \frac{q^2-1+q^{-2}}{q^2+q^{-2}}$.

The remaining point $(-1,3/2)$ is realized in a somewhat different way.  The Lie supergroup $OSp(1|2)$ has a standard $(1|2)$-dimensional representation which we denote $X$.  This representation is simple, and using highest weight theory, $X \otimes X \cong 1 \oplus X \oplus Y$ for some simple object $Y$ distinct from $1$ and $X$.  Thus $X$ is self-dual and the map $X \otimes X \rightarrow X$ gives a map $X^{\otimes 3} \rightarrow 1$.  A direct calculation shows that this map factors through the symmetric cube of $X$, and thus gives a trivalent vertex.  We normalize this vertex so that the value of the bigon is $1$.  This gives a trivalent category which we denote $OSp(1|2)$ which has $\dim \cC_4 = 3$ and $d=-1$.  Thus, it must realize the remaining point $(-1, 3/2)$.
 \qed
  
 \begin{remark}
We will abuse notation somewhat and use $SO(3)_{\pm i}$ to refer to $OSp(1|2)$.  This can be justified by giving an appropriately modified definition of $SO(3)_q$.  Specifically the categories of representations of $U_{\pm i q}(\mathfrak{su}(2))$  and $U_q(\mathfrak{osp}(1|2))$ are closely related but non-isomorphic due the appearance of certain signs. However, if you restrict attention to the representations of even highest weight the categories become equivalent because these signs all vanish \cite{MR1230843, MR3010460}.

This strange point $(-1,3/2)$ can also be realized by the nondegenerate quotient of $(G_2)_{\pm i}$.  This is a straightforward calculation, but we will delay it until Remark \ref{rem:G2-i} since we will discuss $(G_2)_q$ in much more detail in the next section.  
 \end{remark}

\begin{remark}
When $d$ is generic on $P_{SO(3)}  = 0$, the dimension of $\cC_n$ is given by
the Motzkin sums $1,0,1,1, 3,6,15,36,\ldots$ (A005043 in \cite{EIS}).  More
specifically, by computing the radical of the inner product, all of these
categories have $\dim \cC_4 = 3$ unless $d$ satisfies $d^2=d+1$.  In this last
case (where $d$ is the golden ratio or its Galois conjugate) instead we have
$\dim \cC_4=2$ and the category satisfies the additional skein relation $$\drawI
\; = \; \twostrandid \; -  \frac{1}{d}\; \cupcap.$$  These two examples are
often called the golden categories or the Fibonacci categories.  For these 
categories the dimensions are given by Fibonacci numbers $1,0,1,1,2,3,5,
\ldots$.
\end{remark}

\begin{remark}
One can make sense of $SO(3)_q$ at an $8$th root of unity, by not including the 
$\sqrt{\frac{\delta}{\delta^2-2}}$ factor in the trivalent vertex.  With this 
normalization, $b=0$ and so the category is degenerate.  Its nondegenerate 
quotient has no trivalent vertices.
\end{remark} 

\subsection{Cubic categories}

\begin{defn}
A \emph{cubic category} is a trivalent category $\cC$  with $\dim{\cC_4} = 4$.
\end{defn}

\begin{prop}
\label{prop:cubic}
For any cubic category,
\begin{enumerate}[(a)]
\item the diagrams $D(4,0)$ form a basis of $\cC_4$,
\item $d+t+dt \neq 0$ and $P_{SO(3)} = d+t-dt-2 \neq 0$,
\item $\Cube = \frac{2d+2dt-4dt^2+2dt^4+2d^2t^4 }{d+t+dt}$, and
\item the square satisfies the following relation
\begin{multline} \label{eq:square}  
  \ngon[45]{4} =  \frac{d t^2+t^2-1}{d t+d+t}
 \left( \; 
\drawI
\; + \;
 \drawH
\; \right)
 \; \\+ \; \frac{-t^2+t+1}{d t+d+t}
\left( \; 
\twostrandid
\; + \;
 \cupcap
\; \right)
\end{multline} 
\end{enumerate}
\end{prop}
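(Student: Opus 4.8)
The plan is to extract all four statements from the two Facts computing $\Delta(4,0)$ and $\Delta(4,1)$, together with the spanning result for diagrams with no internal faces that is already available. For part (a), recall from Lemma~\ref{lem:4:spanning} that in any trivalent category with $\dim \cC_4 \leq 4$ the diagrams of $D(4,0)$ --- namely $\twostrandid$, $\cupcap$, $\drawI$, and $\drawH$ --- span $\cC_4$. Since we are assuming $\dim \cC_4 = 4$ and there are exactly four such diagrams, a spanning set of the right cardinality is automatically a basis. In particular $\Delta(4,0) = \det M(4,0) \neq 0$.

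For part (b), feed this nonvanishing into the first Fact, $\Delta(4,0) = d^4(d+t-dt-2)(d+t+dt)$. Since $d \neq 0$ and $\Delta(4,0) \neq 0$, neither factor can vanish, so $d+t+dt \neq 0$ and $P_{SO(3)} = d+t-dt-2 \neq 0$. For part (c), the hypothesis $\dim \cC_4 = 4$ together with the fact that $D(4,1)$ consists of five diagrams forces a linear dependence among them, hence $\Delta(4,1) = 0$. Substituting into the second Fact, $\Delta(4,1) = -d^4(d+t-dt-2)\bigl(2d+2dt-4dt^2+2dt^4+2d^2t^4 - \Cube(d+t+dt)\bigr)$, and cancelling the nonzero factors $d^4$ and $P_{SO(3)}$ (using part (b)) leaves $\Cube(d+t+dt) = 2d+2dt-4dt^2+2dt^4+2d^2t^4$; dividing by $d+t+dt \neq 0$ gives the claimed value of $\Cube$.

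For part (d), since $D(4,0)$ is a basis of $\cC_4$ by (a), the square diagram $\ngon[45]{4}$ has a unique expansion $\ngon[45]{4} = \beta\,\twostrandid + \beta'\,\cupcap + \alpha\,\drawI + \alpha'\,\drawH$. To determine the coefficients, pair both sides against each of the four diagrams of $D(4,0)$: the left-hand pairings form precisely the last column of the displayed matrix $M(4,1)$, that is $(d,\,d,\,t^2 d,\,t^2 d)^{T}$ (for instance, gluing the square onto $\drawI$ and applying the triangle relation \eqref{eq:triangle} twice collapses it to a theta graph, which evaluates to $t^2 d$), while the right-hand pairings are $M(4,0)$ applied to the coefficient vector $(\beta, \beta', \alpha, \alpha')^{T}$. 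Thus the coefficient vector equals $M(4,0)^{-1}(d, d, t^2 d, t^2 d)^{T}$, and $M(4,0)$ is invertible by (b). The reflection of the boundary circle that exchanges $\drawI \leftrightarrow \drawH$ and $\twostrandid \leftrightarrow \cupcap$ fixes the square, forcing $\alpha = \alpha'$ and $\beta = \beta'$; this serves as a consistency check and reduces the computation to the $2\times 2$ system $\beta(d+1) + \alpha = 1$ and $\beta + \alpha(1+t) = t^2$, whose solution $\beta = \frac{-t^2+t+1}{dt+d+t}$, $\alpha = \frac{dt^2+t^2-1}{dt+d+t}$ is exactly the content of \eqref{eq:square}. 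As a check, pairing \eqref{eq:square} with the square itself reproduces the value of $\Cube$ found in (c).

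The only genuine obstacle is the bookkeeping in (d): one has to identify correctly the inner products of the square against the four diagrams of $D(4,0)$ (reading glued-up diagrams and applying the bigon and triangle relations), and then invert the $4\times 4$ matrix $M(4,0)$ --- or, using the symmetry, solve the reduced $2 \times 2$ system. Everything else follows immediately from the two stated Facts and the dimension hypothesis $\dim \cC_4 = 4$.
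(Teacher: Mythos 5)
Your proposal is correct and takes essentially the same route as the paper: parts (a)--(c) are exactly the paper's argument via Lemma \ref{lem:4:spanning} and the two determinant Facts, and for (d) the paper just observes that the relation lies in the radical of the inner product on $D(4,1)$, which is the same linear-algebra step you carry out by expanding the square in the basis $D(4,0)$ and solving against $M(4,0)$. Your coefficients match \eqref{eq:square}, so nothing is missing.
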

\begin{proof}
By Lemma \ref{lem:4:spanning} if $D(4,0)$ is dependent then $\dim \cC_4 \leq 3$.  Hence, $D(4,0)$ must be
independent, and so a basis of $\cC_4$.  Hence,  $\Delta(4,0) \neq 0$, which implies 
$d+t+dt \neq 0$ and $d+t-dt-2 \neq 0$. On the other hand, we must have $\Delta(4,1) = 0$, 
giving (c). Finally,  equation (\ref{eq:square}) 
is in the radical of the inner product on $D(4,1)$.
\end{proof}

We now identify the minimal idempotents in $\cC_4$ for any cubic category
(subject to a certain quantity being invertible).

\begin{prop}
\label{thm:idempotents-and-traces}
Suppose $\cC$ is a cubic category with parameters $d$ and $t$.
If $$\xi=\sqrt{d^2 t^4+2 d \left(t^4-2 t^3-t^2+4
   t+2\right)+\left(t^2-2 t-1\right)^2}$$ is nonzero, then the
four minimal idempotents in $\cC_4$ (with respect the multiplication via vertical stacking) are
\begin{align*}
\iota & = \frac{1}{d} \cupcap \\
x & = \drawI \\
y_\pm & = 
\frac{-(d+1) t^2 \pm\xi +1}{\pm2 \xi } \twostrandid
+ \frac{d \left(t^2-2 t-2\right) \mp \xi +t^2-2 t-1}{\pm 2 d \xi } \cupcap \\
& \quad - \frac{d (t+2) t \pm \xi +t^2+1}{\pm 2 \xi } \drawI
+ \frac{d t+d+t}{\pm \xi } \drawH
\end{align*}
with dimensions
\begin{align*}
\tr \iota & = 1 \\
\tr x & = d \\
\tr {y_\pm} & = -\frac{d^3 t^2+d^2 \left(\mp\xi +2 t^2+2 t-1\right)+d
   (\pm\xi +2 t+3)\pm\xi -t^2+2 t+1}{\pm 2 \xi }
\end{align*}
We note that $\tr{y_\pm}$ never vanishes since $d \neq 0$ and $P_{SO(3)} \neq 0$.
\end{prop}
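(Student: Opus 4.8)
The plan is to exploit the fact that, by Proposition \ref{prop:cubic}(a), $D(4,0)$ is a basis of $\cC_4$, and that $\cC_4$ is an associative algebra under vertical stacking. First I would set up the multiplication table of the four basis elements $\twostrandid$, $\cupcap$, $\drawI$, $\drawH$ under stacking. Each product is again a trivalent graph with four boundary points, possibly with internal faces; using the loop value $d$, the bigon relation \eqref{eq:bigon} with $b=1$, the triangle relation \eqref{eq:triangle}, and the square relation \eqref{eq:square} from Proposition \ref{prop:cubic}(d) (together with the value of $\Cube$ from part (c)), every such product can be rewritten in the $D(4,0)$ basis. This is a finite, if tedious, computation — exactly the kind of thing the authors delegate to a Mathematica notebook. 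Once the $4\times 4$-matrix-valued structure constants are in hand, the algebra $\cC_4$ is an explicit $4$-dimensional associative $\bbC$-algebra depending on the parameters $d,t$.

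Next I would observe that $\iota = \frac1d\cupcap$ is visibly an idempotent (a single circle contributes a factor of $d$), and it is central and minimal because $\cupcap$ spans the image of $\cC_2 \to \cC_4$ given by ``cap off'': stacking anything against $\cupcap$ closes off a loop. Similarly $x = \drawI$ should be checked to be idempotent directly from the multiplication table — stacking $\drawI$ on $\drawI$ produces $\drawI$ with a bigon in the middle, which by $b=1$ is again $\drawI$ — and one checks $x\iota = \iota x = 0$, again because a cap meets the middle edge of the $I$ and produces $\cupcap$ times the bigon value... one must be careful here, so the honest route is just to read it off the structure constants. So the remaining summand of $\cC_4$ is the complementary ideal $(1-\iota-x)\cC_4$, which must be $2$-dimensional. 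The content of the proposition is to diagonalize multiplication there: find the two primitive idempotents $y_\pm$ of this $2$-dimensional subalgebra. Concretely, restrict the regular representation to the complement, get a $2\times 2$ matrix of an element, and solve the quadratic for its eigenvalues; the discriminant of that quadratic is exactly $\xi^2$, which explains the square root in the statement and the hypothesis $\xi \neq 0$ (needed so the two idempotents are distinct and the subalgebra is semisimple rather than having a nilpotent). The explicit coefficients of $y_\pm$ in the four basis diagrams come out of this eigenvector computation.

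For the trace formulas, recall that $\tr$ on $\cC_4$ is the map $\cC_4 \to \cC_0 = \bbC$ given by closing up all four strands (the Markov-type closure), i.e.\ $\tr(z) = \langle z, \twostrandid\rangle$ or the appropriate closure — I would fix the normalization so that $\tr$ is the categorical dimension, giving $\tr(\frac1d\cupcap) = \frac1d\cdot(\text{closed bigon-ish diagram}) = 1$ and $\tr(\drawI) = d$ (the closure of the $I$ is a theta-graph divided by... again, read from the known closed evaluations: circle $=d$, theta $=d$ after normalizing $b=1$, tetrahedron $=\Cube$, etc.). Then $\tr(y_\pm)$ is just the $\bbC$-linear combination of $\tr$ of the four basis diagrams with the coefficients found above; simplifying using $P_{SO(3)} = d+t-dt-2 = 0$... no — wait, in a \emph{cubic} category $P_{SO(3)} \neq 0$ by Proposition \ref{prop:cubic}(b), so no such simplification is available, and the trace formula is left in the stated raw form. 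The final sentence, that $\tr(y_\pm)$ never vanishes, I would get by computing the product $\tr(y_+)\tr(y_-)$ (a symmetric function of the eigenvalues, hence a rational function of $d,t$ with no $\xi$) and factoring it; the claim is that this product is a unit multiple of something manifestly nonzero given $d\neq0$ and $P_{SO(3)}\neq0$, so neither factor can be zero.

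The main obstacle is the bookkeeping in the multiplication table — in particular making sure every product of two of the four diagrams is correctly reduced using \eqref{eq:bigon}, \eqref{eq:triangle}, \eqref{eq:square}, and $\Cube = \frac{2d+2dt-4dt^2+2dt^4+2d^2t^4}{d+t+dt}$, with no sign or factor-of-$d$ errors; once that table is correct, diagonalizing a $2\times 2$ block and reading off eigenvectors and traces is routine linear algebra best left to the accompanying notebook. A secondary subtlety is confirming that $\frac1d\cupcap$ and $\drawI$ really are orthogonal idempotents summing to something whose complement is $2$-dimensional — i.e.\ that the algebra $\cC_4$ is semisimple of type $\bbC^4$ exactly when $\xi\neq0$ — which is what licenses calling $y_\pm$ ``the'' remaining minimal idempotents.
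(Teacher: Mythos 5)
Your proposal is correct and follows essentially the same route as the paper: the authors also treat this as a direct computation in the algebra $\cC_4$ with basis $D(4,0)$, reducing products via the bigon, triangle, and square relations (delegated to the notebook {\tt code/idempotents.nb}), noting that $\iota$ and $x$ are obviously minimal idempotents, and then solving the quadratic system $y_\pm\iota=y_\pm x=0$, $y_++y_-=1-\iota-x$, $y_\pm^2=y_\pm\neq 0$ — which is exactly your diagonalization of the two-dimensional complement, with $\xi^2$ as the discriminant. No gap.
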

\begin{proof}
This is a direct calculation using Equation \eqref{eq:square}, performed in the Mathematica notebook {\tt
code/idempotents.nb} available with the {\tt arXiv} source of this article. That $\iota$ and $x$
are minimal idempotents is clear. We then solve the quadratic equations $y_\pm \iota = y_\pm x  = 0$, $y_+ + y_- = 1 - \iota - x$,
and
$y_\pm^2 = y_\pm \neq 0$.
\end{proof}

\begin{remark}
When $\xi$ vanishes there is no basis of projections and so the category is not 
semisimple.  This does not happen in any of our examples.  (Note that for $G_2$ 
at $q=\pm i$ we have that $\xi$ vanishes, but at this special value $G_2$ is no 
longer cubic.  See Remark \ref{rem:badpoints}.)
\end{remark}

\begin{lem}
In any cubic category, if $n+2k < 12$, all entries of 
$M^\square(n,k)$ can be written as rational functions in $d$ and $t$.
\end{lem}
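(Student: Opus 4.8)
The plan is to show that every entry of $M^\square(n,k)$ is, after closing up two diagrams from $D^\square(n,k)$ with a fixed system of nested cups, a closed planar trivalent graph whose internal faces are all controlled — specifically, a closed diagram in which every internal face has at most four edges — and then to argue that any such small closed diagram can be evaluated to a rational function in $d$ and $t$ using only the relations available in a cubic category: the loop value $d$, the bigon relation \eqref{eq:bigon} (with $b=1$), the triangle relation \eqref{eq:triangle}, and the square relation \eqref{eq:square} of Proposition \ref{prop:cubic}. The point of the hypothesis $n+2k<12$ is a counting bound: closing up two elements of $D^\square(n,k)$ produces a closed connected trivalent graph, and by an Euler-characteristic argument the number of faces, edges, and vertices are all bounded linearly in $n+2k$, so for $n+2k<12$ the resulting closed graph is small enough that some internal face must have at most four sides (in fact one wants: a face with at most four sides on which one of \eqref{eq:bigon}, \eqref{eq:triangle}, \eqref{eq:square} can be applied to strictly decrease a suitable complexity measure, e.g. the number of trivalent vertices).

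The key steps, in order, are as follows. First I would set up the complexity/induction: for a closed connected planar trivalent graph $G$, let $V(G)$ be its number of vertices; I claim that if every internal face of $G$ has at most four edges and $V(G)$ is below the relevant threshold, then $G$ has a face with $\leq 3$ edges (a loop, bigon, or triangle) \emph{or} a four-sided face, and in either case one of the reductions strictly decreases $V(G)$ while keeping the "all internal faces have $\leq 4$ edges" property — here one must check that applying \eqref{eq:square} to a square face does not create a new face with five or more sides, which follows because the right-hand side of \eqref{eq:square} consists only of diagrams with no internal faces ($\drawI$, $\drawH$, $\twostrandid$, $\cupcap$) grafted in place of the square. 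Second, I would verify the Euler-characteristic bookkeeping that guarantees, when $n+2k<12$, that closing up two diagrams from $D^\square(n,k)$ yields a closed graph all of whose internal faces have $\leq 4$ sides: the diagrams in $D^\square(n,k)$ have at most $k$ faces with $\geq 5$ sides, and gluing two of them across $n$ boundary points merges the outer regions; the total "excess" of large faces is bounded by $2k$ plus a contribution from $n$, and $n+2k<12$ forces this excess to be absorbable (the constant $12$ is exactly tuned so that the cubic-category relations, which reach up to squares, suffice). Third, with these two facts in hand, each matrix entry $\langle X,Y\rangle$ is a closed diagram to which the reduction terminates, and since every reduction step replaces a diagram by a $\bbZ[d,t]$-linear (indeed $\bbZ[d,t,(dt+d+t)^{-1}]$-linear, from \eqref{eq:square}) combination of simpler diagrams, the final value lies in the field $\bbC(d,t)$.

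I expect the main obstacle to be the face-counting step: one has to be careful about the distinction between internal faces and the outer face, about connectivity (if a diagram from $D^\square(n,k)$ is disconnected, or becomes disconnected after capping off, one handles components separately, but circles and through-strands must be accounted for), and most importantly about making the Euler-characteristic inequality tight enough that $n+2k<12$ is genuinely the right hypothesis rather than something weaker. Concretely, for a closed connected trivalent graph with $V$ vertices one has $E=\tfrac32 V$ and $F=\tfrac12 V+2$, and $\sum_{\text{faces}}(\text{sides})=2E=3V$; if every internal face had $\geq 5$ sides one would get $3V \geq 5(F-1)+ (\text{outer})$, which combined with $F=\tfrac12 V + 2$ forces $V$ large, so for $V$ below a threshold a small face exists — the remaining work is to relate $V$ of the capped-off graph to $n+2k$ for the original boundary diagrams, using that each diagram in $D^\square(n,k)$ already has a bound on its vertex count in terms of $n$ and $k$. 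A secondary subtlety is checking that the reduction using \eqref{eq:square} really does decrease $V$ (the right side has fewer vertices than the left) and does not reintroduce pentagonal faces, which as noted above is immediate from the explicit form of \eqref{eq:square}. Once these are nailed down, the conclusion — rationality in $d$ and $t$ — is automatic since the reduction rules have coefficients in $\bbC(d,t)$.
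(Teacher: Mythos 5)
There is a genuine gap: both pillars of your induction are false as stated. First, closing up two elements of $D^\square(n,k)$ does \emph{not} yield a closed graph all of whose faces have at most four sides. Elements of $D^\square(n,k)$ are allowed up to $k$ internal faces with five or more edges (that is exactly what the definition permits), and in addition the faces obtained by fusing the boundary faces of the two halves can have arbitrarily many sides. What the hypothesis $n+2k<12$ actually buys is only a bound on the \emph{total number} of faces of the glued closed graph (at most $k$ internal faces from each half plus at most $n$ faces coming from merged boundary faces), hence fewer than twelve faces; the identity $\sum_f(6-|f|)=12$ for a closed planar trivalent graph then guarantees that \emph{some} face is a square or smaller, not that all of them are. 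Second, the invariant you propose to maintain, ``every internal face has at most four edges,'' is not preserved by the square relation \eqref{eq:square}: although the diagrams on its right-hand side have no internal faces of their own, splicing one of them into the ambient graph in place of a square merges or enlarges the neighbouring faces (for instance the cup--cap term fuses the two side faces of the square into one face), and this can easily create faces with five or more sides. So your induction has no valid starting point and its inductive step fails.

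The repair is to induct on the total number of faces, which is what the paper does. Each application of the loop, bigon, triangle, or square relation strictly decreases the face count of the closed graph, so the property ``fewer than twelve faces'' persists throughout the reduction, and the Euler-characteristic count re-supplies a face of size at most four at every stage; the process therefore terminates, and since every step has coefficients in $\bbC(d,t)$ (with denominators powers of $dt+d+t$ from \eqref{eq:square}), each entry of $M^\square(n,k)$ is a rational function of $d$ and $t$. Your alternative complexity measure, the number of trivalent vertices, does decrease under the reductions, but by itself it never guarantees that a reducible face of size at most four is available at the next step --- that availability is precisely what the face-count invariant provides.
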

\begin{proof}
There are fewer than twelve faces in each inner product appearing in 
$M^\square(n,k)$, because the number of faces is bounded by $n+2k$. Any 
polyhedron with fewer than twelve faces has at 
least one face which is a square or smaller. The relations for simplifying bigons, 
triangles and squares (Equations \eqref{eq:bigon}, \eqref{eq:triangle}, and 
\eqref{eq:square}) then suffice to rewrite this polyhedron as a linear combination 
of  polyhedra with strictly fewer faces; repeating the argument completely 
evaluates the original polyhedron as a function of $d$ and $t$.

In fact, the denominators in these rational functions are always powers of 
$Q_{1,1} = dt + d + t$, the denominator appearing in Equation \eqref{eq:square}.
\end{proof}

We unapologetically state the values of these determinants as facts, even though for larger $n$ and $k$ they are the
result of quite intensive calculations (computing the inner products is already time consuming, and subsequently the
determinant is even harder). Of course, a computer is doing these calculations (see {\tt
code/ComputingInnerProducts.nb}).

A careful reader will note that the matrices $M^\square(8,0)$, $M^\square(8,1)$,
$M^\square(9,0)$, $M^\square(9,1)$, $M^\square(10,0)$, and $M^\square(11,0)$ are
covered by the above lemma, but we do not compute their determinants in what follows. They seem
quite difficult without very considerable computational resources. In any case,
our analysis of small skein theories meets another hurdle first; we can't even
compute the intersections of $\Delta^\square(7,0)$ and $\Delta^\square(7,1)$, or
of $\Delta^\square(7,1)$ and $\Delta^\square(7,2)$.

\section{Diagrams with five boundary points}

In this section we study diagrams with five boundary points. Note that 
\begin{align*}
D(5,0) & = \left\{
\mathfig{0.1}{tree1}, \mathfig{0.1}{tree2}, \mathfig{0.1}{tree3}, \mathfig{0.1}{tree4}, \mathfig{0.1}{tree5}, \right. \\
& \qquad \left. \mathfig{0.1}{forest1}, \mathfig{0.1}{forest2}, \mathfig{0.1}{forest3}, \mathfig{0.1}{forest4}, \mathfig{0.1}{forest5}\
	\right\}
\end{align*}
is a set of 10 diagrams, and $$D^\square(5,1) \setminus D(5,0) = \left\{ \ngon[90]{5} \right\}$$ has just one element.

\begin{thm}
\label{thm:5}
If $\cC$ is a cubic category (that is $\dim \cC_4 = 4$) and $\dim \cC_5 \leq 10$, then either
\begin{enumerate}
\item $(d,t)$ satisfy $P_{ABA}= t^2-t-1 =0$ and $\cC$ is one of the ABA categories described below, or
\item $(d,t)$ satisfy $$P_{G_2} = d^2 t^5+2 d t^5-4 d t^4-d t^3+6 d t^2+4 d t+d+t^5-4 t^4+t^3+7 t^2-2=0$$ and $\cC$ is $(G_2)_q$ with
\begin{align*}
d & =q^{10} + q^8 + q^2 + 1 + q^{-2} + q^{-8} + q^{-10}, \\ 
\intertext{and}
t & = - \frac{q^2 - 1 + q^{-2}}{q^4 + q^{-4}}.
\end{align*}
\end{enumerate} 
\end{thm}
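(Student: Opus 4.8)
The plan is to split the statement into three propositions, parallel to the treatment of Theorem~\ref{thm:4}: a \emph{non-existence} statement (for $(d,t)$ on neither curve $P_{ABA}=0$ nor $P_{G_2}=0$ there is no cubic category with $\dim\cC_5\le 10$), a \emph{uniqueness} statement (for each $(d,t)$ on one of the curves there is at most one cubic category with $\dim\cC_5\le 10$), and a \emph{realization} statement (the ABA categories of Proposition~\ref{prop:5:realization:ABA} and the $(G_2)_q$ of Definition~\ref{def:G2} exist, and between them cover both curves). Throughout we may use that a cubic category already satisfies \eqref{eq:bigon}, \eqref{eq:triangle}, the square relation \eqref{eq:square}, and the inequalities $d\ne 0$, $Q_{1,1}=dt+d+t\ne 0$, $P_{SO(3)}=d+t-dt-2\ne 0$ from Proposition~\ref{prop:cubic}, and that by the rationality lemma of Section~\ref{sec:four} every entry of $M^\square(5,1)$ is a rational function of $d$ and $t$ whose denominator is a power of $Q_{1,1}$.

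For non-existence: since $D^\square(5,1)$ has $11$ elements and $\dim\cC_5\le 10$, the Gram matrix $M^\square(5,1)$ is singular, so $\Delta^\square(5,1)=0$. We compute this determinant explicitly by first evaluating all its inner products --- each is a closed diagram with at most $7$ faces, hence reducible by \eqref{eq:bigon}, \eqref{eq:triangle}, \eqref{eq:square} to a rational function of $d,t$ --- and then expanding the determinant (carried out in the accompanying Mathematica notebooks). Clearing the $Q_{1,1}$-denominator and discarding the factors that cannot vanish in a cubic category ($d$, $Q_{1,1}$, $P_{SO(3)}$), a Gr\"obner basis computation shows the surviving numerator is a product of powers of $P_{ABA}=t^2-t-1$ and $P_{G_2}$, so $(d,t)$ lies on one of the two curves. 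This is the computationally heaviest step, and the one an earlier hand calculation got wrong, losing the ABA branch.

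For uniqueness, fix $(d,t)$ on one of the curves. An Euler-characteristic argument --- the analogue of the ``easy graph theoretic fact'' in Theorem~\ref{thm:4} --- shows that using only \eqref{eq:bigon}, \eqref{eq:triangle}, \eqref{eq:square} any diagram in $\cC_5$ rewrites as a combination of diagrams in $D^\square(5,1)$, so $D^\square(5,1)$ spans $\cC_5$; since the pairing on $\cC_5$ is nondegenerate, $\ker M^\square(5,1)$ is exactly the space of relations among these $11$ diagrams, and it is nonzero because $\dim\cC_5\le 10$. Using that the rank of $M^\square(5,1)$ depends only on $(d,t)$, one checks that on each curve the pentagon $\ngon[90]{5}$ lies in the span of the trees and forests $D(5,0)$, i.e.\ there is a relation rewriting $\ngon[90]{5}$ in terms of $D(5,0)$. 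Now \eqref{eq:bigon}, \eqref{eq:triangle}, \eqref{eq:square} reduce every internal face of size $\le 4$ and this new relation removes pentagons; each move strictly decreases the number of trivalent vertices, and any nonempty closed trivalent diagram has a face of size $\le 5$ by Euler's formula, so iterating evaluates every closed diagram as a multiple of the empty diagram. By Corollary~\ref{cor:reductions=>uniqueness} there is at most one nondegenerate trivalent category at this $(d,t)$.

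Finally, realization. For $P_{ABA}=0$ (so $t$ is the golden ratio or its Galois conjugate and $d$ is free), the free product $ABA_{(d,t)}\subset TL_{\sqrt{dt^{-1}}}\ast TL_t$ of Proposition~\ref{prop:5:realization:ABA} is checked to be a cubic trivalent category with $\dim\cC_5=8\le 10$ whose loop and triangle parameters range over the whole curve. For $P_{G_2}=0$, the $G_2$ spider $(G_2)_q$ of Definition~\ref{def:G2} is cubic with $\dim\cC_5\le 10$, and a direct spider computation gives the stated $d=q^{10}+q^8+q^2+1+q^{-2}+q^{-8}+q^{-10}$ and $t=-(q^2-1+q^{-2})/(q^4+q^{-4})$, which parameterize $P_{G_2}=0$ (with $q$ and $q^{-1}$ giving the same point, excluding finitely many $q$). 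I expect the main obstacles to be (i) trusting the $\Delta^\square(5,1)$ elimination, so that \emph{precisely} the two named curves fall out after removing the forbidden factors, and (ii) the ABA realization --- verifying that the free product really is a cubic trivalent category and that $\dim\cC_5=8$.
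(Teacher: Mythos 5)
Your decomposition (non-existence via $\Delta^\square(5,1)$, uniqueness via an evaluation algorithm, realization by ABA and $(G_2)_q$) is the paper's, and the non-existence and realization parts are essentially right: in a cubic category $\Delta^\square(5,1)$ factors as $d^{11}P_{ABA}^3P_{SO(3)}^5P_{G_2}Q_{1,1}^{-2}$, so no Gr\"obner bases are even needed there. But your uniqueness step has a genuine gap at its first move: you claim that an Euler-characteristic argument using only the relations \eqref{eq:bigon}, \eqref{eq:triangle}, \eqref{eq:square} rewrites every diagram in $\cC_5$ into $\operatorname{span} D^\square(5,1)$. Euler's formula only guarantees a face with at most \emph{five} sides, so reduction by the $\le 4$-gon relations can get stuck on open graphs all of whose internal faces are pentagons or larger; these need not lie in $D^\square(5,1)$, which allows at most one large face. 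Concretely, delete a path of three vertices from the dodecahedron: you get a planar trivalent graph with $5$ boundary points, no bigons, triangles or squares, and seven or more internal pentagons. So the asserted spanning does not follow, and with it falls the step ``nondegeneracy $\Rightarrow$ $\ker M^\square(5,1)$ consists of relations'': a Gram-kernel vector is only forced to be a relation once you know the diagrams you paired against span $\cC_5$.

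The paper closes exactly this hole before touching the Gram matrix. Since $\dim\cC_5\le 10$ and $\#D^\square(5,1)=11$, the diagrams are automatically linearly \emph{dependent}; Lemma \ref{lem:pentagon-reduction} then converts \emph{any} such dependency into a relation reducing the pentagon (if the pentagon doesn't appear, graft an ``H'' onto a tree relation, or insert a forest relation inside a pentagon). Only with that pentagon reduction in hand does the discharging/growth-region argument (Corollary \ref{cor:nopents}, Lemma \ref{lem:5:dependent=>spans}) show $D(5,0)$ spans $\cC_5$, after which the kernel of the Gram matrix really does give the explicit relations on each curve (Lemmas \ref{lem:ABA-relation} and \ref{lem:G2-relation}), and the evaluation/uniqueness argument you sketch goes through. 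Note also that on the $P_{ABA}$ curve the computed relations involve only the trees, not the pentagon, so even granting spanning, your statement that ``the pentagon lies in the span of $D(5,0)$'' is obtained in the paper by the same grafting trick rather than by inspecting the kernel. Your proposal needs this dependency-to-pentagon-reduction step (or a substitute) to be a proof.
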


Theorem \ref{thm:5} follows from four propositions.
\begin{prop}[Non-existence]
\label{prop:5:nonexistence}
For any $(d,t)$ not satisfying $P_{ABA}=0$ or $P_{G_2}=0$ there are no 
trivalent categories with $\dim \cC_4 = 4$ and $\dim \Span D^\square(5,1) \leq 10$.
\end{prop}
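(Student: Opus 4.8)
The plan is to follow the template of the proof of Proposition~\ref{prop:4:nonexistence}: convert the dimension hypothesis into the vanishing of a small number of Gram determinants, evaluate those determinants as rational functions of $d$ and $t$, and show that their common zero locus --- inside the ``cubic locus'' cut out by Proposition~\ref{prop:cubic} --- consists precisely of the pairs $(d,t)$ with $P_{ABA} = 0$ or $P_{G_2} = 0$. So suppose $\cC$ is a trivalent category with $\dim \cC_4 = 4$ and $\dim \Span D^\square(5,1) \leq 10$. Since $\cC$ is cubic, Proposition~\ref{prop:cubic} supplies everything we need about $\cC_4$: $D(4,0)$ is a basis, the square relation~\eqref{eq:square} holds, the cube graph has a known value (a rational function of $d$ and $t$ with denominator a power of $Q_{1,1} = d+t+dt$), and $d \neq 0$, $P_{SO(3)} \neq 0$, $Q_{1,1} \neq 0$.

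The first step is bookkeeping. The set $D^\square(5,1) = D(5,0) \cup \{\ngon[90]{5}\}$ has $11$ elements, so $\dim \Span D^\square(5,1) \leq 10$ forces the Gram matrix $M^\square(5,1)$ to be singular, i.e.\ $\Delta^\square(5,1) = 0$. The hypothesis also exhibits a nontrivial linear relation among the diagrams of $D^\square(5,1)$, so Proposition~\ref{thm:zeroalltheway}, applied with $\mu = \square$, $n = 5$, $k = 1$, yields $\Delta^\square(5,2) = 0$ (and $\Delta^\square(6,1) = 0$, $\Delta^\square(5,3) = 0$, $\ldots$, which I would keep in reserve). Note we do \emph{not} obtain $\Delta(5,0) = 0$ this way: the ten diagrams of $D(5,0)$ might be independent, with only the pentagon redundant.

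The second step is to evaluate $\Delta^\square(5,1)$ and $\Delta^\square(5,2)$ explicitly. Each inner product in $M^\square(5,1)$ (respectively $M^\square(5,2)$) is the value of a closed planar trivalent graph with at most $7$ (respectively $9$) faces; such a graph always has a face with at most four edges --- the Euler-characteristic fact underlying the Lemma that the entries of $M^\square(n,k)$ are rational in $d$ and $t$ when $n + 2k < 12$ --- so the loop value together with relations~\eqref{eq:bigon}, \eqref{eq:triangle}, \eqref{eq:square} strips off that face, and iterating expresses the entry as a rational function of $d$ and $t$ with denominator a power of $Q_{1,1}$. This is exactly the computation in {\tt code/ComputingInnerProducts.nb}; one then forms the two determinants and factors their numerators. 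Both numerators must vanish on the curves $P_{ABA} = 0$ and $P_{G_2} = 0$, since the ABA and $(G_2)_q$ categories realized later in this section are cubic with $\dim \cC_5 \leq 10 < 11$; so, modulo powers of $d$, of $P_{SO(3)}$, and of $Q_{1,1}$, each numerator is divisible by $P_{ABA} \cdot P_{G_2}$.

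The last step --- and the one real obstacle --- is to show that $\Delta^\square(5,1) = \Delta^\square(5,2) = 0$ has no \emph{further} solutions in the cubic locus. Concretely: clear denominators, form the ideal $I \subseteq \bbC[d,t]$ generated by the numerators of $\Delta^\square(5,1)$ and $\Delta^\square(5,2)$, saturate $I$ against $d \cdot P_{SO(3)} \cdot Q_{1,1}$, and verify by a Gr\"obner basis computation that the saturation is contained in the ideal $(P_{ABA} \cdot P_{G_2})$ --- equivalently, that the common vanishing locus of the two determinants, away from $d = 0$, $P_{SO(3)} = 0$, and $Q_{1,1} = 0$, lies on the union of the two curves $P_{ABA} = 0$ and $P_{G_2} = 0$. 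Eliminating any spurious shared components of the two Gram determinants is the crux; should the pair $\Delta^\square(5,1), \Delta^\square(5,2)$ not cut the variety down far enough, the remedy is to adjoin $\Delta^\square(6,1)$ or $\Delta^\square(5,3)$, which are still forced to vanish by Proposition~\ref{thm:zeroalltheway} and still within range of the rationality lemma. Granting the Gr\"obner computation, the contrapositive is exactly the assertion of the Proposition.
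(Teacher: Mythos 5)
Your proposal is correct and follows essentially the same route as the paper: the hypothesis forces the $11\times 11$ Gram determinant $\Delta^\square(5,1)$ to vanish, and this determinant is evaluated as a rational function of $d,t$ inside the cubic locus using Proposition~\ref{prop:cubic}. The only difference is that the paper's computation shows this single determinant already factors in any cubic category as $d^{11} P_{ABA}^3 P_{SO(3)}^5 P_{G_2} Q_{1,1}^{-2}$, so its vanishing together with $d \neq 0$ and $P_{SO(3)} \neq 0$ immediately yields $P_{ABA}=0$ or $P_{G_2}=0$, making your fallback intersection with $\Delta^\square(5,2)$ and the Gr\"obner saturation step unnecessary.
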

\begin{prop}[Uniqueness]
\label{prop:5:uniqueness}
For every pair $(d,t)$ satisfying $P_{ABA}=0$ or $P_{G_2}=0$ there is at most 
one trivalent category with $\dim \cC_4 = 4$ and $\dim \cC_5 \leq 11$.
\end{prop}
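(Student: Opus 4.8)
The plan is to follow the three-phase template of Proposition~\ref{prop:4:uniqueness}. Fix $(d,t)$ with $P_{ABA}(d,t)=0$ or $P_{G_2}(d,t)=0$ and let $\cC$ be a cubic category realizing it with $\dim\cC_5\leq 11$; by Proposition~\ref{prop:cubic} the bigon, triangle and square relations \eqref{eq:bigon}, \eqref{eq:triangle}, \eqref{eq:square} hold in $\cC$ and $D(4,0)$ is a basis of $\cC_4$. The first step is to show $D^\square(5,1)$ spans $\cC_5$: using these relations, together with the vanishing of any diagram containing a monogon (since $\dim\cC_1=0$), one repeatedly reduces a minimal internal face, each move strictly decreasing the number of internal faces, until the diagram becomes a linear combination of diagrams with no internal face of fewer than five sides; an elementary Euler-characteristic count then shows such a diagram with five boundary points has at most one internal face, hence lies in $D^\square(5,1)=D(5,0)\cup\{\ngon[90]{5}\}$. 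This is the exact analogue of the spanning lemmas of Section~\ref{sec:four}.

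Next I would extract a skein relation. By the rational-functions lemma ($n+2k=7<12$) the entries of $M^\square(5,1)$ are explicit rational functions of $d$ and $t$, and I would record as a Fact the factorization of $\Delta^\square(5,1)$, which is divisible by $P_{ABA}\,P_{G_2}$ (this is also what drives Proposition~\ref{prop:5:nonexistence}); in particular $\Delta^\square(5,1)(d,t)=0$, so $\dim\cC_5\leq 10$ and $M^\square(5,1)$ is singular. Since $D^\square(5,1)$ spans $\cC_5$ and $\cC$ is nondegenerate, the kernel of $M^\square(5,1)$ is precisely the space of linear relations among the eleven diagrams of $D^\square(5,1)$, and this space depends only on $(d,t)$. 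I would then check, in each case, that it contains a relation with nonzero coefficient on $\ngon[90]{5}$ --- equivalently, that $D(5,0)$ spans $\cC_5$ and the pentagon is an explicit linear combination of trees and forests. On $P_{G_2}=0$ this is forced because $\Delta(5,0)\neq 0$ there (a further recorded Fact), so no relation can be supported inside $D(5,0)$; on $P_{ABA}=0$ one reads it directly from the kernel, where two additional independent relations among the elements of $D(5,0)$ also appear, matching $\dim\cC_5=8$ for the ABA categories. Thus the two curves are treated in parallel, differing only in $\dim\ker M^\square(5,1)$.

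With a pentagon relation in hand alongside \eqref{eq:bigon}, \eqref{eq:triangle}, \eqref{eq:square}, I would finally check that every closed trivalent graph evaluates to a scalar: a stray circle contributes a factor $d$, a monogon kills the diagram, and any closed trivalent polyhedron has --- since $\sum_f(6-|f|)=12$ --- a face of size at most five, which is removed by whichever of the bigon, triangle, square or pentagon relations applies; each such move strictly decreases the number of faces, so iterating terminates at a multiple of the empty diagram. (In these $\dim\cC_5\leq 10$ situations this bookkeeping can be done by hand following Kuperberg's treatment of the $G_2$ spider; the discharging argument is reserved for the $\dim\cC_6$ analysis.) Corollary~\ref{cor:reductions=>uniqueness} then shows there is at most one cubic category at $(d,t)$, which is the proposition.

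The step I expect to be the real obstacle is the symbolic computation in the second phase: assembling the $11\times11$ matrix $M^\square(5,1)$ requires evaluating many small closed trivalent graphs via the cubic relations, and then factoring $\Delta^\square(5,1)$ (and $\Delta(5,0)$) as rational functions in $d$ and $t$ and recognising the factors $P_{ABA}$ and $P_{G_2}$ is heavy enough that it is naturally delegated to the accompanying {\tt code/} notebooks. A secondary difficulty is the ABA case: because $\dim\cC_5$ collapses to $8$, one must confirm that the relations extracted from the kernel --- the pentagon relation together with the two extra tree relations --- genuinely suffice to run the evaluation algorithm, rather than merely assuming this.
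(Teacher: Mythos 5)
The central gap is in your first step. You claim that the cubic relations alone (bigon, triangle, square, plus monogon vanishing) reduce any $5$-boundary-point diagram into $\Span D^\square(5,1)$, on the grounds that ``an elementary Euler-characteristic count'' shows a diagram with five boundary points and no internal face of fewer than five sides has at most one internal face. That combinatorial claim is false: the set of such diagrams is infinite and is not contained in $D^\square(5,1)$. For instance, take a dodecahedron as a closed component disjoint from a $5$-leg tree, or a dodecahedron with one edge cut open to give two legs together with a single trivalent vertex contributing the other three legs; these have five boundary points, no internal faces of size $\leq 4$, and ten or more internal pentagonal faces, and the square-and-smaller relations cannot touch them. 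This is exactly why the paper never asserts unconditional spanning of $D^\square(5,1)$: it either gets spanning for free from the dimension hypothesis (if the eleven diagrams of $D^\square(5,1)$ are independent they span because $\dim\cC_5\leq 11$ --- note your argument never uses this hypothesis, which is a warning sign), or, in the dependent case, it first manufactures a pentagon-reduction relation from the dependence (Lemma \ref{lem:pentagon-reduction}) and only then reduces arbitrary diagrams, using Corollary \ref{cor:nopents}, whose notion of ``small face'' includes pentagons precisely so that dodecahedron-like pieces get killed. Since your later steps (identifying $\ker M^\square(5,1)$ with the space of relations) rest on spanning, this gap is material; it is repaired by the paper's case split rather than by graph combinatorics.

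A secondary problem: on the $G_2$ curve you argue a kernel vector must have nonzero pentagon coefficient ``because $\Delta(5,0)\neq 0$ there.'' But $\Delta(5,0)=d^{10}P_{ABA}^2P_{SO(3)}^4Q_{1,2}$ vanishes at the point $(d,t)=(-1,-1)$ of the $G_2$ curve (the $(G_2)_{\zeta_{20}}$ point, where $Q_{1,2}=0$ and $\dim\cC_5$ drops to $9$), which is a legitimate cubic point, so your dichotomy breaks there. The robust fix, which the paper uses, is either to exhibit the explicit pentagon relation in the radical along the whole curve (Lemma \ref{lem:G2-relation}), or to note that \emph{any} relation among $D^\square(5,1)$ --- even one supported on trees or forests, as in the ABA case --- yields a pentagon reduction by gluing an $H$ onto it or inserting it into a pentagon (Lemma \ref{lem:pentagon-reduction}); your hope of reading a pentagon-coefficient relation directly off the kernel in the ABA case is plausible but unverified, whereas the gluing trick makes it unnecessary. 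Your final evaluation step (every closed trivalent graph has a face of size at most five, reduce by induction) and the appeal to Corollary \ref{cor:reductions=>uniqueness} match the paper and are fine.
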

\begin{prop}[Realization]
\label{prop:5:realization:ABA}
The trivalent categories $ABA_{(d,t)}$, defined below via a free product 
construction, satisfy $\dim \cC_4 = 4$ and $\dim \cC_5 \leq 10$, and realize 
every pair $(d,t)$ satisfying $P_{ABA}=0$ and $P_{SO(3)} \neq 0$.
\end{prop}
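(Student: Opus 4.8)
\emph{Plan.} The idea is to exhibit $ABA_{(d,t)}$ explicitly inside a free product of two Temperley--Lieb categories (in the sense of Bisch--Jones) and to read the required properties off that model; one could instead present $ABA_{(d,t)}$ by generators and relations and appeal to Corollary \ref{cor:reductions=>uniqueness}, but the free product makes the dimension counts transparent. Recall that the free product $\cP \ast \cQ$ of two pivotal categories has as morphisms the linear combinations of planar diagrams drawn with two non-crossing ``colours'' of strand, the $\cP$-coloured pieces being evaluated in $\cP$ and the $\cQ$-coloured pieces in $\cQ$. Apply this with $\cP = \mathrm{TL}_{\delta_a}$ and $\cQ = \mathrm{TL}_{\delta_b}$, choosing $\delta_b$ so that the two-strand Jones--Wenzl projector $\rho := \JW 2$ of $\mathrm{TL}_{\delta_b}$ satisfies $\JW 4 = 0$ --- equivalently $\mathrm{TL}_{\delta_b}$ is the Fibonacci category and $\dim\rho$ is a root of $x^2-x-1$ --- and $\delta_a$ so that $\delta_a^2 \dim\rho = d$; to land on the prescribed pair one takes $\dim\rho = 1-t$ and $\delta_a^2(1-t) = d$ (this is the category written $ABA \subset \mathrm{TL}_{\sqrt{dt^{-1}}}\ast\mathrm{TL}_t$ in the introduction, up to the Galois relabelling $t \leftrightarrow 1-t$). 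Let $A$ be the generating object of $\mathrm{TL}_{\delta_a}$, put $X = A \tensor \rho \tensor A$, and let $\tau \in \Hom(1 \to X^{\tensor 3})$ be the morphism that places the Fibonacci trivalent vertex on the three $\rho$-strands and joins the $A$-strands cyclically (the right $A$-leg of each tensor factor to the left $A$-leg of the next). Define $ABA_{(d,t)}$ to be the nondegenerate quotient, by the negligible ideal (Proposition \ref{prop:general:uniqueness}), of the pivotal subcategory of $\mathrm{TL}_{\delta_a}\ast\mathrm{TL}_{\delta_b}$ generated by $X$ and $\tau$.

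\emph{The trivalent category axioms.} The ambient free product is $\bbC$-linear, pivotal and evaluable, hence so is $ABA_{(d,t)}$. A census of colour-respecting planar matchings of the boundary points of $X^{\tensor n}$ --- in which a matching contributes only if it does not cap the two $\rho$-strands of a single tensor factor to each other, such a cap being killed by $\JW 2$ --- shows $\dim\Hom(1\to X) = 0$ and $\dim\Hom(1\to X^{\tensor 2}) = \dim\Hom(1\to X^{\tensor 3}) = 1$; these dimensions can only drop under the nondegenerate quotient, but the theta graph $\langle\tau,\tau\rangle$ is the loop value times the normalised bigon, hence nonzero, so $\tau$ is not negligible and we get $\dim\cC_1 = 0$, $\dim\cC_2 = \dim\cC_3 = 1$. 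The vertex $\tau$ is rotationally invariant because the Fibonacci vertex spans a one-dimensional invariant space on which the rotation acts trivially and the $A$-triangle is manifestly cyclic, and then $X$ is symmetrically self-dual by the lemma proved above. The remaining axiom --- that $X$ and $\tau$ generate the subcategory --- is the one substantive point; see below.

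\emph{Parameters and dimension bounds.} A closed loop of $X$ consists of one closed $A$-loop and one closed $\rho$-loop, so the loop value is $\delta_a^2 \dim\rho = d$; after normalising $\tau$ so that the bigon is $1$, the triangle contains exactly one closed $A$-loop just as the bigon does, so the $A$-contributions cancel and the triangle parameter equals the Fibonacci triangle value $1-\dim\rho$, which the choice of $\delta_b$ arranges to be $t$. Since $\dim\rho$ is a root of $x^2-x-1$, this gives $P_{ABA}(d,t) = t^2-t-1 = 0$, and as $d$ runs over $\bbC^{\times}$ (with the two choices of root for $\dim\rho$) every such pair is obtained; the pairs also on $P_{SO(3)} = 0$ are precisely the degenerate specialisations (there $A$ collapses to the unit and $X = \rho$, so $\dim\cC_4$ drops to $2$), which is why they are excluded. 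Finally $\dim\cC_4 = 4$ and $\dim\cC_5 \le 10$ (in fact $\dim\cC_5 = 8$) follow from the same matching census: because $\JW 4 = 0$ the spaces $\Hom(1\to\rho^{\tensor 4})$ and $\Hom(1\to\rho^{\tensor 5})$ stay small, and one checks directly that $D(4,0)$ is linearly independent (so $M(4,0)$ has rank $4$, and since $\Delta(4,1) = 0$ the square is a combination of the tree--forest diagrams) while $D^\square(5,1)$ spans at most a $10$-dimensional space; alternatively one extracts an $I=H$-type relation from the $\mathrm{TL}_{\delta_a}$ factor and applies Lemmas \ref{lem:I=H=>spanning} and \ref{lem:4:spanning}.

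\emph{Main obstacle.} The delicate part is the generation statement together with the upper bounds on $\dim\cC_4$ and $\dim\cC_5$: one must verify that the matching census is complete --- that the higher Jones--Wenzl projectors $\JW{2k}$ in the $\rho$-direction, and the $\mathrm{TL}_{\delta_a}$-combinatorics, never produce extra morphisms, and that the free product introduces no unexpected negligibles --- and in particular that requiring $\JW 4 = 0$ is exactly what pins $\delta_b$ down, so that $\dim\cC_4$ is $4$ rather than larger. Once this census is in hand, everything else is routine bookkeeping with the loop, bigon and triangle normalisations.
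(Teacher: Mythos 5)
Your construction is essentially the paper's: the paper also realizes $ABA_{(d,t)}$ inside a free product $TL_\delta \ast \cG_\tau$ (your $TL_{\delta_b}$ at the golden modulus with $\rho=\JW{2}$ is just an explicit model of the golden category $\cG_\tau$), with $X$ the $A$--$B$--$A$ strand, the vertex given by the golden vertex surrounded by $TL$ caps normalised so $b=1$, the same $(d,t)=(\delta^2\tau,\bar\tau)$ parameter match, and the nondegenerate quotient. What you flag as the ``main obstacle'' is exactly what the paper settles by writing down the explicit spanning sets (noncrossing partitions of the boundary with Fibonacci-many golden diagrams in each part, giving sizes $1,0,1,1,4,8$) and checking by inner products that they are bases when $P_{SO(3)}\neq 0$, so your census argument is the intended one; however, your suggested ``alternative'' of extracting an $I=H$-type relation and invoking Lemma \ref{lem:I=H=>spanning} cannot work, since such a relation is a dependence among $D(4,0)$ and would force $\dim\cC_4\leq 3$, contradicting that $ABA_{(d,t)}$ is cubic (Proposition \ref{prop:cubic}).
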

\begin{prop}[Realization]
\label{prop:5:realization:G2}
The $(G_2)_q$ categories are trivalent categories with $\dim \cC_4 = 4$ and 
$\dim \cC_5 \leq 10$, and realize every pair $(d,t)$ satisfying $P_{G_2}=0$ 
and $P_{SO(3)} \neq 0$.
\end{prop}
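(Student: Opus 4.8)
The plan is to take $(G_2)_q$ as the category constructed in Definition~\ref{def:G2} --- Kuperberg's $G_2$ spider \cite{MR1403861}, or for $q$ a root of unity its nondegenerate quotient (Proposition~\ref{prop:general:uniqueness}) --- and to check the three quantitative assertions directly. Since the spider is by construction a nondegenerate evaluable pivotal category generated by a rotationally invariant trivalent vertex (the $q$-deformed antisymmetric trilinear form on the $7$-dimensional representation of $U_q(\mathfrak{g}_2)$), everything structural is free, and what remains is: (i) the parameters $(d,t)$ of $(G_2)_q$ satisfy $P_{G_2}=0$; (ii) $\dim\cC_4=4$ and $\dim\cC_5\leq 10$; and (iii) every $(d,t)$ with $P_{G_2}=0$ and $P_{SO(3)}\neq 0$ arises as $(d,t)$ of some $(G_2)_q$.

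For (i), I would evaluate the circle and the triangle-over-bigon in the spider using Kuperberg's relations --- equivalently, compute the quantum dimension of $X$ and the appropriate contraction for $U_q(\mathfrak{g}_2)$ --- obtaining $d = q^{10}+q^8+q^2+1+q^{-2}+q^{-8}+q^{-10}$ and $t = -(q^2-1+q^{-2})/(q^4+q^{-4})$, and then verify that $P_{G_2}(d,t)$ vanishes identically in $q$. This is cleanest after the substitution $s=q^2+q^{-2}$, which turns the formulas into the rational parameterization $d = s^5+s^4-5s^3-4s^2+6s+3$, $t=(1-s)/(s^2-2)$; the vanishing of $P_{G_2}(d(s),t(s))$ is then a single polynomial identity in $s$.

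For (ii), I would first observe that on the locus $P_{G_2}=0$, $P_{SO(3)}\neq 0$ one has $Q_{1,1}=dt+d+t\neq 0$: otherwise $\Delta(4,0)=d^4 P_{SO(3)}\, Q_{1,1}$ would vanish, forcing $D(4,0)$ to be dependent and hence (Theorem~\ref{thm:4}) $P_{SO(3)}=0$. Consequently all the defining evaluation relations of the $G_2$ spider (loop, bigon, triangle, square, pentagon) have invertible denominators here --- as in the general cubic case, powers of $Q_{1,1}$ --- and together they reduce every $n$-boundary-point diagram to a combination of diagrams with no internal faces, so $\dim\cC_5\leq 10$ and $\dim\cC_4\leq 4$. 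Since $\dim\cC_4\leq 3$ would again force $P_{SO(3)}=0$ via Theorem~\ref{thm:4}, this gives $\dim\cC_4=4$; for generic $q$ these bounds also match the classical $G_2$ fusion count $\dim\End{X^{\otimes 2}}=4$, $\dim\Hom(X\to X^{\otimes 4})=10$. For (iii), the $s$-parameterization exhibits $\{P_{G_2}=0\}$ as a rational curve; it is birational --- over a generic line $t=t_0$ the two solutions $s$ of $t_0 s^2+s-2t_0-1=0$ map to the two points of the curve over $t_0$, since $P_{G_2}$ has degree $2$ in $d$ --- hence surjective onto the affine curve, and since $q\mapsto q^2+q^{-2}$ is onto $\bbC$, every $(d,t)$ with $P_{G_2}=0$ equals $(d(q),t(q))$ for some $q$. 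Restricting to $P_{SO(3)}\neq 0$ gives the statement.

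The hard part will be (ii) at roots of unity: confirming that the nondegenerate quotient is genuinely a cubic trivalent category --- i.e. that $\dim\cC_1=0$, $\dim\cC_2=1$, $\dim\cC_3=1$, $\dim\cC_4=4$ all persist --- at every point of $\{P_{G_2}=0\}\setminus\{P_{SO(3)}=0\}$, equivalently that the entire degenerate locus of $(G_2)_q$ (the finitely many $q$ where $X^{\otimes 2}$ fails to be a sum of four distinct simples, such as $q=\pm i$, which maps to $(-1,\tfrac32)$, together with the $q$ where $t$ is undefined, namely those with $s=\pm\sqrt2$) is contained in $\{P_{SO(3)}=0\}$ or carries no finite $(d,t)$; this is precisely the content of Remark~\ref{rem:badpoints}. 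By contrast, the spider evaluations of $d$ and $t$, the identity $P_{G_2}(d(s),t(s))\equiv 0$, and the degree-$2$-in-$d$ count are routine, of exactly the kind delegated to the accompanying notebooks.
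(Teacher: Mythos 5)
There is a genuine gap, and it is exactly the point the paper flags with ``a priori these relations might collapse everything.'' You treat $(G_2)_q$ as ``by construction a nondegenerate evaluable pivotal category generated by a rotationally invariant trivalent vertex,'' so that ``everything structural is free.'' It is not: Definition \ref{def:G2} presents $(G_2)_q'$ by generators and relations, and before one can even form the nondegenerate quotient one must know that closed diagrams evaluate consistently under the loop/bigon/triangle/square/pentagon relations. If two reduction sequences for some closed graph gave different scalars, the empty diagram would be forced to zero and the whole category would collapse, at generic $q$ just as much as at roots of unity --- so this is not a difficulty confined to the ``bad'' values, and it is not the content of Remark \ref{rem:badpoints}, which merely catalogues the special points on the curves. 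The paper resolves it by verifying that Kuperberg's relations are confluent (Sikora--Westbury) and invoking the Diamond Lemma, which makes the evaluation, and hence the inner products, well defined; only then do explicit inner-product (determinant) computations show that the no-internal-face diagrams are linearly independent for $n\leq 4$ away from the $SO(3)$ curve, so that $(G_2)_q$ is cubic.

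This gap also undermines your derivation of $\dim\cC_4=4$: you invoke Proposition \ref{prop:4:nonexistence}/Theorem \ref{thm:4} to rule out $\dim\cC_4\leq 3$, but that result applies to categories already known to be trivalent (nonzero, evaluable, nondegenerate, with loop and bigon values giving the claimed $(d,t)$), which is precisely what has not yet been established; as it stands the argument is circular. Granting consistency, your route is a legitimate and slightly slicker alternative to the paper's direct basis computation: the upper bounds $\dim\cC_4\leq 4$, $\dim\cC_5\leq 10$ via face reductions and Corollary \ref{cor:nopents}, the computation of $(d,t)$ and the identity $P_{G_2}(d(s),t(s))=0$, and the surjectivity of the rational parameterization all match the paper. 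But without either the confluence argument or some substitute (e.g.\ a functor to $\Rep(U_q(\mathfrak{g}_2))$ or tilting modules together with an independence argument --- which the paper deliberately avoids because its status at roots of unity is unclear), the realization claim is not proved.
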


(In fact, in this section we only need the weakening of Proposition 
\ref{prop:5:uniqueness} that covers the cases where $\dim \cC_5 \leq 10$. We 
will need the full strength in the next section.)

\begin{remark}
\label{rem:badpoints}
By non-degeneracy $d \neq 0$.  By Proposition \ref{prop:cubic}, any cubic category has $P_{SO(3)} \neq 0$ and $d+t +d t \neq 0$. In this remark we catalog what happens at the remaining special points where one of these polynomials does vanish.

The points on the intersection of the $P_{ABA}$ curve and $P_{SO(3)}(d+t+d t) =0 $ are $(d,t) = (\tau, \bar{\tau})$ and $(\bar{\tau}, \tau)$.  These two points do correspond to trivalent, but non-cubic, categories: the golden categories with $\dim \cC_2 = 2$.

The intersection points of the $P_{G_2}$ curve and $P_{SO(3)}(d+t+d t) =0 $ are:
\begin{enumerate}
\item $(d,t)=(-1,3/2)$, corresponding to $q$ a primitive $4$th root of unity.
\item $(d,t)=(-2,-2)$, corresponding to $q$  a primitive $3$rd or $6$th root of unity.
\item $(d,t)=(2,0)$, corresponding to $q$ a primitive $12$th root of unity.
\item The two points $(d,t) = (\tau, \bar{\tau})$ and $(d,t)=(\bar{\tau},\tau)$, corresponding to $q$ a primitive $30$th root of unity.
\end{enumerate}
The first of these cases is the special trivalent non-cubic category $(G_2)_{q=\pm i} = OSp(1|2)$. For the second, the bigon for $G_2$ at that value is $0$, so the non-degenerate quotient is not trivalent.  The third case corresponds to the trivalent non-cubic category $SO(3)_{\zeta_{12}}$.  The last case corresponds to the golden categories with $\dim \cC_2 = 2$.

As always we assume $d \neq 0$, corresponding to $q$ is not a primitive $7$th, $14$th, or $24$th root of unity, as otherwise the non-degenerate quotient is trivial.
\end{remark}

\begin{remark}
If $\cC$ and $\cD$ are two pivotal categories, then their tensor product $\cC \boxtimes \cD$ has morphisms consisting of a red planar diagram and a blue planar diagram superimposed on each other, where the blue parts live in $\cC$ and the red parts live in $\cD$ and the red diagrams are allowed to cross the blue diagrams in a symmetric way.  Clearly the dimensions of the box spaces multiply, as $$\Hom_{\cC \boxtimes \cD}(1 \to (X \boxtimes Y)^{\otimes n}) \iso  \Hom_{\cC}(1 \to X^{\otimes n}) \otimes  \Hom_{\cD}(1 \to Y^{\otimes n}),$$ and the invariants of closed diagrams also just multiply.  Let $\cG_\tau$ denote the Golden category with loop value $\tau$ (and triangle value $\bar{\tau}$).  The tensor products $\cG_\tau \boxtimes  \cG_\tau$, $\cG_\tau \boxtimes  \cG_{\bar{\tau}}$, and $\cG_{\bar{\tau}} \boxtimes  \cG_{\bar{\tau}}$ give pivotal categories with a distinguished trivalent vertex with $n$-boundary point spaces of dimension $(1,0,1,1,4,9, \ldots)$.  It is natural to wonder how they fit into our classification.  The category $\cG_\tau \boxtimes  \cG_{\bar{\tau}}$ has $(d,t)=(-1,-1)$ which lies on the $G_2$ curve and corresponds to $q$ a primitive $20$th root.  The categories $\cG_\tau \boxtimes  \cG_\tau$ and $\cG_{\bar{\tau}} \boxtimes  \cG_{\bar{\tau}}$ have $(d,t)$ being $(\tau^2, \bar{\tau}^2)$ and $(\bar{\tau}^2, \tau^2)$.  These points lie on the $\mathrm{SO}(3)$ curve corresponding to $q$ being a primitive $20$th root of unity (half of which give $d=\tau^2$ and half of which give $d=\bar{\tau}^2$).  In particular, these categories are not generated by the trivalent vertex.  Instead, they correspond to the even part of the $D_6$ subfactor and its Galois conjugate.  These categories contain a trivalent subcategory $\mathrm{SO}(3)_{\zeta_{20}}$ and a single extra generating $4$-box satisfying a version of the relations from \cite{MR2559686}.  The relationship between $D_6$ and the tensor products of golden categories is explained by level-rank duality between $\mathrm{SO}(3)_4$ and $\mathrm{SO}(4)_3$ together with the coincidence of Lie algebras between $\mathfrak{so}(4)$ and $\mathfrak{sl}(2) \oplus \mathfrak{sl}(2)$, as explained in \cite[Theorem 4.1]{MR2783128}.
\end{remark}

We follow the same outline as in the previous section.

\subsection*{Proof of Proposition \ref{prop:5:nonexistence} (Non-existence)}

\begin{fact}
In any trivalent category, 
\begin{align*}
\Delta(5, 0) = d^{10} P_{ABA}^2 P_{SO(3)}^4 Q_{1,2}.
\end{align*} 
\end{fact}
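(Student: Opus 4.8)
The statement is an explicit polynomial identity in $d$ and $t$, so the plan is to compute the Gram matrix $M(5,0)$ and then factor its determinant, organized so that the shape of the answer is essentially forced before the big computation.

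\emph{Setting up the matrix.} The set $D(5,0)$ is the list of ten diagrams displayed above: the five planar binary trees with five leaves (equivalently the triangulations of a pentagon) and the five ``forests'' consisting of a cup on an adjacent pair of boundary points together with a single trivalent vertex on the other three. For each of the $55$ pairs, gluing the first diagram to the reflection of the second along the five boundary points gives a closed planar trivalent graph with at most $6$ vertices, hence at most $9$ edges and at most $5$ faces; counting edge--face incidences ($2E \le 18$ spread over $\le 5$ faces) forces a face with at most three sides, this inequality persists after reducing, and a one-sided face (monogon) vanishes because $\dim \cC_1 = 0$. So the bigon relation \eqref{eq:bigon} (normalized to $b=1$), the triangle relation \eqref{eq:triangle}, and the loop value $d$ evaluate every entry, and since none of these relations produces a sum, each entry comes out as a monomial $d^a t^c$ with $a \ge 1$, or zero. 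This produces $M(5,0)$ explicitly over $\bbZ[d,t]$, which is the calculation in {\tt code/ComputingInnerProducts.nb}.

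\emph{Extracting the factors.} Because every nonzero entry is a monomial divisible by $d$, every term in the Leibniz expansion of the determinant is divisible by $d^{10}$, so $d^{10} \mid \Delta(5,0)$. Next, at a generic point of the curve $P_{SO(3)} = d+t-dt-2 = 0$ there is a trivalent category (an $SO(3)_q$ category, by Theorem~\ref{thm:4}) with $\dim \cC_5 \le 6$ (the Motzkin value in the remarks of Section~\ref{sec:four}); hence the ten vectors of $D(5,0)$ span a space of dimension $\le 6$ and $\operatorname{rank} M(5,0) \le 6$ there. A symmetric polynomial matrix whose rank drops to $n-r$ along an irreducible hypersurface has determinant divisible by the $r$-th power of the defining polynomial (a Schur complement on the degenerate block), so $P_{SO(3)}^4 \mid \Delta(5,0)$. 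The same argument along $P_{ABA} = t^2-t-1 = 0$, using $\dim \cC_5 \le 8$ for the $ABA_{(d,t)}$ categories, gives $P_{ABA}^2 \mid \Delta(5,0)$. Since $d$, $P_{SO(3)}$, $P_{ABA}$ are pairwise-coprime irreducibles, $\Delta(5,0) = d^{10} P_{SO(3)}^4 P_{ABA}^2 \cdot R(d,t)$ for some polynomial $R$.

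\emph{Identifying $R$, and the main obstacle.} It remains to pin down $R$ as $Q_{1,2}$, a polynomial of degree $1$ in $d$ and $2$ in $t$ that is coprime to the three factors already extracted (so that no power was missed). For this one actually evaluates the $10 \times 10$ symbolic determinant and divides through; the homogeneity of $\Delta^\mu(n,k)$ under the grading of Section~\ref{sec:four} (restoring $b$, a monomial $d^a t^c$ has weight $2c$) and the $SO(3)_q$ and chromatic-category specializations provide consistency checks on the result. The real obstacle is simply the size of that determinant: the entries are tidy monomials, but the full $10 \times 10$ symbolic determinant and factorization are the step done by computer rather than by hand --- which is exactly why this is stated as a Fact. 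A subsidiary point needing care is making the face-counting claim fully rigorous, i.e.\ that every closed diagram obtained by gluing two elements of $D(5,0)$ really does reduce using only bigons, triangles and loops; this is the same Euler-characteristic bookkeeping used for $M^\square(n,k)$ elsewhere in the paper.
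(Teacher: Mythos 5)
Your proposal is correct and is essentially the paper's own route: the paper states this Fact as the output of the computer evaluation of the Gram matrix $M(5,0)$ (whose entries are reduced using only the loop, bigon and triangle relations) and of its determinant, in {\tt code/ComputingInnerProducts.nb}. Your preliminary divisibility observations (the factor $d^{10}$ from the monomial entries, and the rank-drop arguments along the $SO(3)$ and $ABA$ loci, the latter really applied to each linear factor of $P_{ABA}$ over $\bbC$) are sound but supplementary, since, as you acknowledge, identifying the final factor $Q_{1,2}$ still rests on the same symbolic determinant computation.
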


\begin{fact}
In any trivalent category,
$$
\Delta^\square(5,1)  = d^{10} P_{ABA}^2 P_{SO(3)}^4  \left(-5 d t\left( d t^5+2  t^5 -2t^4 -2 t^3 +2  t^2 + t\right)+ Q_{1,2} \PentPrism \right),$$
and in a cubic category, this specializes to
$$\Delta^\square(5, 1)  = d^{11} P_{ABA}^3 P_{SO(3)}^5 P_{G_2} Q_{1,1}^{-2}.$$
\end{fact}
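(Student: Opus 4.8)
The plan is to compute $\Delta^\square(5,1)$ directly as the determinant of the $11 \times 11$ Gram matrix $M^\square(5,1)$ of inner products of the eleven diagrams in $D^\square(5,1)$, namely the ten trees and forests of $D(5,0)$ together with the single pentagon diagram, and then to factor the resulting polynomial. Each entry $\langle X, Y\rangle$ of $M^\square(5,1)$ is the closed trivalent graph obtained by gluing $X$ to the reflection of $Y$ along their five boundary points. First I would evaluate all $66$ of these closed graphs (up to the symmetry of $M^\square(5,1)$) using only the loop value $d$ together with the bigon and triangle relations \eqref{eq:bigon} and \eqref{eq:triangle}; since each glued graph has only a few internal faces, and the only one that cannot be reduced all the way to a monomial in $d$ and $t$ is the pentagonal prism $\PentPrism$ (which is the self-pairing of the pentagon diagram), every entry of $M^\square(5,1)$ is a polynomial in $d$, $t$, and $\PentPrism$. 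Expanding the determinant then produces a polynomial which turns out to be linear in $\PentPrism$, of the asserted shape $d^{10} P_{ABA}^2 P_{SO(3)}^4\bigl(-5dt(\cdots) + Q_{1,2}\PentPrism\bigr)$.

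The factorization I would justify partly by \emph{a priori} reasoning and partly by the explicit expansion. Divisibility by $P_{SO(3)}$ and by $P_{ABA}$ (up to the precise exponents) is forced by Proposition \ref{thm:zeroalltheway}: on $P_{SO(3)} = 0$ the category is $SO(3)_q$, whose invariant-space dimension sequence is the Motzkin sequence with $\dim \cC_5 = 6 < 11$, and on $P_{ABA} = 0$ the ABA categories have $\dim \cC_5 \le 10 < 11$, so $\Delta^\square(5,1)$ must vanish identically along both curves; the power $d^{10}$ is the same one appearing through loop removals in $\Delta(5,0) = d^{10} P_{ABA}^2 P_{SO(3)}^4 Q_{1,2}$. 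The precise multiplicities $2$ and $4$, and the remaining linear-in-$\PentPrism$ cofactor $-5dt(dt^5 + 2t^5 - 2t^4 - 2t^3 + 2t^2 + t) + Q_{1,2}\PentPrism$, I would simply read off from the factored determinant.

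For the cubic specialization one invokes the square relation \eqref{eq:square}, available whenever $\dim \cC_4 = 4$. This lets one evaluate the pentagonal prism as a rational function of $d$ and $t$: the prism always has a square face, so repeatedly applying \eqref{eq:square} strictly decreases the number of faces, and finishing with the triangle and bigon relations evaluates $\PentPrism$ completely, with denominator a power of $Q_{1,1} = dt+d+t$ exactly as in the lemma showing the entries of $M^\square(n,k)$ are rational in $d,t$ for cubic categories. (Equivalently, $\Cube$ is already pinned down by Proposition \ref{prop:cubic}(c).) Substituting this value of $\PentPrism$ into the general formula and simplifying collapses it to $d^{11} P_{ABA}^3 P_{SO(3)}^5 P_{G_2} Q_{1,1}^{-2}$. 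The new factor $P_{G_2}$ must appear because $(G_2)_q$ is cubic with $\dim \cC_5 = 10 < 11$ by Proposition \ref{prop:5:realization:G2}, and the bumped exponents of $P_{ABA}$ and $P_{SO(3)}$ reflect that the linear-in-$\PentPrism$ cofactor itself acquires extra factors of $P_{ABA}$ and $P_{SO(3)}$ once $\PentPrism$ is rewritten in $d$ and $t$; this also matches the statement of Theorem \ref{thm:5}, since in a cubic category $d\neq 0$, $P_{SO(3)}\neq 0$, and $Q_{1,1}\neq 0$, so $\dim\cC_5\le 10$ exactly when $P_{ABA}=0$ or $P_{G_2}=0$.

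The main obstacle is purely the scale of the computation: one must evaluate dozens of moderately large closed trivalent graphs, assemble an $11 \times 11$ matrix of polynomials in $d$, $t$, and $\PentPrism$, compute and factor its determinant, reduce the pentagonal prism in the cubic case, and simplify. This is not feasible cleanly by hand, so the calculation is carried out by computer (see {\tt code/ComputingInnerProducts.nb}); the role of the hand argument is to predict which factors must occur and to organize the face-reduction of the pentagonal prism.
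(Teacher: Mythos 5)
Your proposal matches the paper's approach: the paper simply records this Fact as the output of a computer calculation of the $11\times 11$ Gram determinant of $D^\square(5,1)$ (see {\tt code/ComputingInnerProducts.nb}), where every pairing except the prism self-pairing reduces via the loop, bigon, and triangle relations, and the cubic specialization is obtained exactly as you say, by evaluating $\PentPrism$ with the square relation and Proposition \ref{prop:cubic}, with denominators powers of $Q_{1,1}$. Your supplementary a priori predictions of the $P_{SO(3)}$, $P_{ABA}$, and $P_{G_2}$ factors are a reasonable sanity check but, as you note, the actual justification is the computation itself.
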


A cubic category with $\dim \Span D^\square(5,1) \leq 10$ must have $\Delta^\square(5,1) = 0$.  Since $d \neq 0$, we must have one of $P_{SO(3)} = 0$, $P_{ABA} = 0$, or $P_{G_2} = 0$.    Since $P_{SO(3)} \neq 0$, the proposition is proved.
\qed

\subsection*{Proof of Proposition \ref{prop:5:uniqueness} (Uniqueness)}

We fix a value of $(d,t)$ satisfying $P_{G_2}$ or $P_{ABA}$.  

It is well-known that any planar trivalent graph has a pentagon or smaller face.
We want an analogue of this result for open planar trivalent graphs, that is,
planar trivalent graphs having boundary.  (This is similar to the analysis in
\cite{MR1403861}.) In order to state this result we introduce some language.

\begin{defn}\label{defininggrowth}
An \emph{open} planar trivalent graph is a planar trivalent graph in the disc which meets the boundary in $n \geq 1$
specified
points. An open planar trivalent
graph has two kinds of faces, \emph{internal faces} and \emph{boundary
faces}, depending on whether they touch the boundary of the disc.

We say a planar trivalent graph is \emph{connected} if the vertices and edges (whether internal edges or boundary edges)
form a connected topological space.

We say that an open planar trivalent graph is \emph{boundary connected} if every component meets the boundary.
Note that a face need not be topologically a disc, unless the graph is boundary connected.

A \emph{boundary region} of a connected planar trivalent graph is a small neighborhood of some contiguous proper subset
of the
boundary faces.  A boundary region is called a \emph{growth region} if the 
number of edges meeting the boundary is greater than the number of edges not 
meeting the boundary.  Figure \ref{fig:growthregion} illustrates these definitions.
\end{defn}

\begin{figure}[ht]
\begin{tikzpicture}
\draw[dashed] (2,0) arc (0:180:2cm);
\draw (10:2cm) -- (30:1cm) -- (50:2cm);
\draw (170:2cm) -- (150:1cm) -- (130:2cm);
\draw (30:1cm)--(45:.5cm) -- ++(-90:.5cm);
\draw (150:1cm)--(135:.5cm) -- ++(-90:.5cm);
\draw (45:.5cm)--(90:.5cm)--(135:.5cm);
\draw (90:.5cm)--(90:2cm);
\node at (30:1.6cm) {$D$};
\node at (60:1cm) {$C$};
\node at (120:1cm) {$B$};
\node at (150:1.6cm) {$A$};
\end{tikzpicture}
\caption{Some boundary faces in a portion of an open planar trivalent graph.  
The boundary region which is a small neighborhood of faces $A$, $B$, $C$, and 
$D$ is a growth region -- it has five edges meeting the boundary and two which 
don't.  The boundary region which is a small neighborhood of $B$ and $C$ is not 
a growth region --- it has three edges meeting the boundary and four edges which 
don't. }
\label{fig:growthregion}
\end{figure}
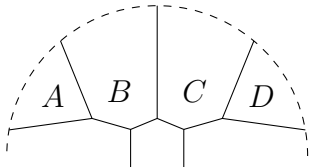

We now restrict our attention to boundary connected open trivalent graphs.
We assign a \emph{charge} to each face as follows.  Let $n$ be the number of edges meeting that face, and let $m$ be the
number of disjoint boundary intervals meeting that face.
Now assign the charge $6-n-2m$.  In particular, an internal $n$-gon face is assigned $6-n$, so the only positively
charged internal faces are pentagons or smaller.   A boundary face that only meets the boundary once and touches $n$
edges is
given charge $4-n$.   A standard Euler characteristic argument shows that the total charge of the graph is $6$.
\begin{lem}\label{lem:boundarycharge}
In a connected open planar trivalent graph, any boundary region for which the 
sum of the charges on its constituent boundary faces is at least $2$ is a growth region.
\end{lem}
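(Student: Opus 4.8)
The plan is to run a charge-counting argument on the boundary region, comparing the charge stored on its constituent faces against a purely combinatorial count of edges meeting versus not meeting the boundary. Let $R$ be a boundary region, a small neighborhood of a contiguous set of boundary faces $F_1, \dots, F_s$ (in cyclic order along the boundary). I would set up notation for the three kinds of edges incident to $R$: the \emph{boundary edges} (segments of the boundary circle, but for the purposes of the charge $6-n-2m$ the relevant count is of graph-edges meeting the boundary), the edges of the graph that lie on the ``outer'' frontier of $R$ separating a face $F_i$ from a face not in $R$, and the internal edges of $R$ separating some $F_i$ from some $F_j$. Write $e_\partial$ for the number of graph edges meeting the boundary inside $R$ and $e_{\mathrm{int}}$ for the number of edges not meeting the boundary that are incident to at least one $F_i$; the claim ``$R$ is a growth region'' is exactly $e_\partial > e_{\mathrm{int}}$.

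The key step is to express $\sum_{i=1}^s (\text{charge of } F_i)$ in terms of $e_\partial$, $e_{\mathrm{int}}$, and the number of boundary intervals. For a boundary face $F_i$ meeting the boundary in $m_i$ intervals and incident to $n_i$ edges, its charge is $6 - n_i - 2m_i$. Summing over $i$, each internal edge of $R$ (separating two of the $F_i$) is counted twice, each frontier edge once, and each boundary edge once; similarly $\sum m_i$ relates to the number of boundary intervals, which for a contiguous boundary region is controlled because the faces are consecutive along the boundary. Carrying out this bookkeeping, $\sum_i (6 - n_i - 2m_i)$ becomes a linear expression of the form $(\text{const}) + e_\partial - 2 e_{\mathrm{int}} - (\text{something nonnegative counting frontier edges and extra boundary intervals})$; the constant and the signs should be arranged so that $\sum_i \text{charge}(F_i) \geq 2$ forces $e_\partial - e_{\mathrm{int}}$ to be strictly positive. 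This is where the definition of ``growth region'' was tuned to match the charge formula, so the inequality should come out cleanly once the correct Euler-type identity is written down; I would derive that identity by contracting $R$ to a disc and applying $V - E + F = 1$ locally, exactly as in the global statement that the total charge is $6$.

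The main obstacle I anticipate is the careful treatment of boundary faces that meet the boundary in more than one interval (the $m_i > 1$ case) and of the endpoints of the region — a frontier edge at the two ends of the contiguous block of faces behaves slightly differently from one in the middle, and a face can wrap around so that it contributes to both ends. I would handle this by first proving the lemma under the simplifying assumption that the graph is boundary connected (so every face is a disc, as noted in Definition \ref{defininggrowth}), which kills the pathological multi-interval faces for the \emph{internal} combinatorics, and then checking that the remaining multi-interval boundary faces only make the left-hand side $\sum_i \text{charge}(F_i)$ smaller (each extra interval costs $-2$ in charge) while not decreasing $e_\partial - e_{\mathrm{int}}$, so the implication is preserved. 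The rest is a finite case check on how the two ``end'' frontier edges are counted, which I expect to be routine once the central identity is in hand.
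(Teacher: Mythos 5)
Your overall strategy (sum the charges over the region and turn the hypothesis into an edge count) is the same as the paper's, but you have misread which edges the definition of a growth region counts, and with your count the statement you set out to prove is false. In Definition \ref{defininggrowth}, as calibrated by the counts in Figure \ref{fig:growthregion} (``five edges meeting the boundary and two which don't'' for the region $A,B,C,D$), the edges ``not meeting the boundary'' are the edges along which the neighborhood meets the rest of the graph, i.e.\ the edges crossing the frontier of the region; these emanate from vertices on the region's inner boundary and are typically \emph{not} incident to any constituent face $F_i$. Your $e_{\mathrm{int}}$ --- edges missing the disc boundary that are incident to some $F_i$ --- is a different quantity: for the region $A,B,C,D$ of the figure it equals $4$, not $2$, and your taxonomy of edges (boundary edges, frontier edges separating $F_i$ from a face outside $R$, edges separating $F_i$ from $F_j$) omits the frontier-crossing edges altogether. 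Moreover the implication you aim for, $\sum_i \mathrm{charge}(F_i)\geq 2 \Rightarrow e_\partial > e_{\mathrm{int}}$, fails: in the same figure take the region consisting of $B$, $C$, $D$. Its charges are $0$, $0$, $2$, so the total is $2$; it has four edges meeting the boundary and four incident edges which do not, so $e_\partial = e_{\mathrm{int}}$ and your inequality fails --- yet it \emph{is} a growth region in the intended sense, since only three edges cross its frontier. So no choice of constants in your bookkeeping identity can be ``arranged'' to give what you want; the identity has to be written in terms of the frontier-crossing count.

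With the right quantity the proof is much shorter than your plan. The connectivity hypothesis already forces every boundary face to meet the boundary in a single interval (so there is no need for your reduction to boundary-connected graphs or for multi-interval correction terms), and hence each constituent face has charge $4-n$. Then a single local count along the inner boundary path of the region --- the Euler-type identity you propose, but with $I$ the number of edges leaving the region through its frontier and $O$ the number of edges meeting the disc boundary inside it --- gives that the total charge of the region is exactly $O-I+1$. Thus total charge at least $2$ is literally equivalent to $O>I$, i.e.\ to being a growth region, and that identity is the entire content of the paper's proof.
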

\begin{proof}
Once we assume a graph is connected, each boundary face meets the boundary in a single interval.
Now, if a boundary region has $I$ incoming edges and $O$ outgoing edges, then the total charge of the boundary faces is
$O-I+1$.
\end{proof}

We say that an internal face is \emph{small} if it has $5$ or fewer sides.

\begin{lem}
Any connected open planar trivalent graph has an internal small face or a growth region.
\end{lem}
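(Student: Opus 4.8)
The plan is to run a short discharging argument on top of the charge assignment already in place, so that the only new ingredients are elementary combinatorics of the cyclic sequence of boundary faces together with Lemma \ref{lem:boundarycharge}. First I would assume the connected open planar trivalent graph $\Gamma$ has \emph{no} internal small face and produce a growth region. Since $\Gamma$ is connected, every boundary face meets the boundary in a single interval, so a boundary face with $n$ incident edges has charge $4-n$, while an internal $n$-gon has charge $6-n$; hence the hypothesis (every internal face has at least six sides) forces every internal face to have charge $\le 0$. Because the total charge of $\Gamma$ is $6$, the boundary faces carry total charge at least $6$.

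Next I would pass to the purely combinatorial picture: list the boundary faces $F_1,\dots,F_p$ in cyclic order with integer charges $c_1,\dots,c_p$, where $c_i=4-n_i\le 3$ and $\sum_i c_i\ge 6$, so in particular $p\ge 2$. I claim there is a \emph{proper} contiguous sub-arc of these faces whose charges sum to at least $2$; by Lemma \ref{lem:boundarycharge} such an arc is a growth region, finishing the proof. If some $c_i\ge 2$, the singleton $\{F_i\}$ works, and it is proper since $p\ge 2$. Otherwise every $c_i\le 1$; since the boundary faces of charge $\le 0$ contribute nonpositively, the number $s$ of boundary faces of charge exactly $1$ satisfies $s\ge\sum_i c_i\ge 6$. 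Consider the $s$ cyclic gaps between consecutive $1$-faces; each gap consists of faces of charge $\le 0$, so its charge sum $t_j$ is $\le 0$, and the gaps partition exactly the non-$1$ boundary faces, whence $s+\sum_j t_j=\sum_i c_i\ge 6$, i.e. $\sum_j t_j\ge 6-s$. If every $t_j\le -1$ then $\sum_j t_j\le -s<6-s$, a contradiction, so some $t_j=0$. The arc running from one $1$-face through that gap to the next $1$-face then has charge sum $1+t_j+1=2$, and it is proper because it contains only two $1$-faces while $s\ge 6>2$ leaves others outside. This is the desired growth region.

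The argument is short, and I do not expect a genuine obstacle; the care needed is entirely bookkeeping. The two points to verify carefully are: (i) that for a \emph{connected} graph each boundary face meets the boundary in exactly one interval, so that the charge formula specializes to $4-n$ rather than $4-n-2m$ with $m>1$ — this is precisely the observation recorded in the proof of Lemma \ref{lem:boundarycharge}; and (ii) that the $s$ gaps between $1$-faces partition the boundary faces of charge $\le 0$, giving $\sum_j t_j=\sum_i c_i-s$, and that the winning arc is genuinely proper. Notably, no fresh Euler-characteristic computation is required: everything rests on the already-established facts that the total charge is $6$ and that a boundary region whose constituent charges sum to at least $2$ is a growth region.
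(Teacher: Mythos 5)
Your proof is correct and follows essentially the same route as the paper: assign charges, use that the total charge is $6$, note that the absence of small internal faces forces the boundary faces to carry charge at least $6$, and then locate a proper contiguous sub-arc of charge at least $2$, which is a growth region by Lemma \ref{lem:boundarycharge}. The only difference is your final extraction step: the paper simply splits the (at least two) boundary faces into two proper contiguous sub-regions and observes by pigeonhole that one has charge at least $3 \geq 2$, so your case analysis on faces of charge $\geq 2$ versus the gaps between charge-$1$ faces, while valid, is more work than is needed.
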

\begin{proof}
Since the total charge is $6$, either  the boundary charge is at least $6$, or 
there is a positively charged internal face, which must be a small face. If the boundary charge is 6 or more, 
then there are at least two boundary faces because a single boundary face has 
charge $4-n$ (where $n$ is the number of edges it touches).
Thus, we can  divide the boundary into two proper sub-regions.  One of these has 
charge three or greater, so by Lemma \ref{lem:boundarycharge} this is a growth region.
\end{proof}

It follows that we can easily inductively enumerate all boundary connected open graphs with $n$ boundary points and no
internal small faces by first enumerating the connected graphs by attaching 
growth regions to open graphs with strictly fewer boundary points and with
no internal small faces, and then writing down the all the planar unions of such graphs.

\begin{cor} \label{cor:nopents}
If $n \leq 5$, then any boundary connected open planar trivalent graph with $n$ boundary points and no small faces is in
$D(n,0)$.
\end{cor}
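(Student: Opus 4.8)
The plan is to induct on the number $n$ of boundary points, using the inductive enumeration of small-face-free graphs set up in the preceding discussion. First I would reduce to connected graphs: any boundary connected open planar trivalent graph $G$ with no internal small faces is, by that enumeration, a planar union of \emph{connected} such graphs, each occupying a block of consecutive boundary points and hence having at most $n \leq 5$ boundary points; since a planar union creates no new internal faces, it suffices to treat connected $G$, and in fact to show that a connected boundary connected open planar trivalent graph with no internal small faces and $n \leq 5$ boundary points has no internal faces at all. A connected open planar trivalent graph with no internal faces contains no cycles, hence is a trivalent tree, and a planar union of trees on a total of at most $5$ boundary points is, up to planar isotopy, one of the diagrams listed as $D(n,0)$ (a forest has no internal faces, so in particular none with four or more edges, so it lies in $D(n,0)$).

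For the connected case I would argue by induction. The base cases $n\le 2$ are immediate: for $n=1$ any connected open trivalent graph contains a monogon, and for $n=2$ the only one without a small internal face is a single strand. For the inductive step, let $G$ be connected with no internal small faces, and suppose it has an internal face. By the Lemma just proved, $G$ has a growth region $R$; deleting $R$ yields an open planar trivalent graph $H$ with strictly fewer boundary points, whose components are again boundary connected, and which still has no internal small faces (the internal faces of $H$ are exactly those internal faces of $G$ not touching $R$). By the inductive hypothesis every component of $H$ is a tree, so $H$ has no internal faces; therefore every internal face of $G$ arises from re-attaching $R$, i.e. from $R$ closing some path of $H$ into a cycle, and since $G$ has no internal small faces this cycle has length at least $6$. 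It remains to see that producing a face bounded by such a cycle forces $n\ge 6$. One way is bookkeeping through the growth regions: a small-face-free cycle-closing attachment must surround the new face with at least six edges, and tracking the boundary points shows at least six of them have been created. Alternatively one can use the charge count --- the total charge of $G$ is $6$, every internal face of $G$ has charge $\le 0$ (it is a hexagon or larger), so the boundary faces carry total charge $\ge 6$, but each boundary face of a connected graph has charge $4-s\le 3$ and positively charged boundary faces occupy pairwise disjoint blocks of boundary points, which together with the vertices and edges forced around an internal cycle leaves no room when $n\le 5$.

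The step I expect to be the real obstacle is precisely this last one: showing an internal hexagon is incompatible with $n\le 5$. The bound is sharp --- the diagram $\ngon[90]{6}$ is a boundary connected open trivalent graph with six boundary points, no internal small faces and an internal hexagon --- so the argument must be tight, and a naive face-count is not enough (for instance the face vector of a putative connected graph with $n=5$, one internal hexagon, two boundary caps and three quadrilateral boundary faces is numerically consistent). I would resolve it by carrying out the growth-region enumeration explicitly for $n=3,4,5$, where it produces only the handful of trees appearing as the connected diagrams of $D(n,0)$ and in particular never closes a cycle, and then invoking the planar-union step to conclude.
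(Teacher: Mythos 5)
Your proposal ultimately rests on exactly the argument the paper uses: the corollary is proved there by the inductive growth-region enumeration (connected small-internal-face-free graphs are built by attaching growth regions to smaller such graphs, then one takes planar unions), and carrying this out for $n\le 5$ yields only the forests of $D(n,0)$, which is precisely your fallback step. The only slip is your $n=1$ base case --- such a graph need not contain a monogon (e.g.\ $K_4$ with a subdivided edge and a pendant leg) --- but the charge count (a single boundary face has charge at most $3<6$) still forces a small internal face, which is all you need.
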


Of course, when $n = 6$ there's an open graph with a single hexagonal face which does not lie in $D(6,0)$.  

\begin{lem}
\label{lem:pentagon-reduction}
Suppose $\cC$ is a category generated by a trivalent vertex, with relations reducing $n$-gons for each $n \leq 4$.
Suppose further there is some relation between the diagrams in $D^\square(5,1)$. Then there is a relation reducing the
$5$-gon (as linear combination of diagrams in $D(5,0)$).
\end{lem}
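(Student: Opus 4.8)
Since $D^\square(5,1) = D(5,0) \cup \{\ \ngon[90]{5}\ \}$, any linear relation amongst these diagrams has the form $c\cdot\ngon[90]{5} + \sum_i c_i T_i = 0$ with $T_i\in D(5,0)$. If $c\neq 0$, I would divide through, writing $\ngon[90]{5} = \sum_i(-c_i/c)\,T_i$; this is exactly a relation reducing the $5$-gon, and we are done. The substance of the lemma is therefore the case $c = 0$, in which the given relation $\sum_i c_i T_i = 0$ already lives among the ten tree-and-forest diagrams of $D(5,0)$.

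To treat that case I would use the fact that the pentagon becomes a tree when it is cut open. Deleting one edge of $\ngon[90]{5}$ produces a path of five trivalent vertices carrying seven boundary legs (the five original legs together with the two new ``cut'' ends), and regluing those two ends recovers $\ngon[90]{5}$. This cut-open pentagon is itself obtained by attaching a fixed two-vertex tree $S$ (with four legs) to a particular leg of one of the five caterpillar trees $T_j\in D(5,0)$. After rotating the relation $\sum_i c_i T_i = 0$ so that this $T_j$ occurs with a nonzero coefficient, gluing $S$ to the appropriate leg of every diagram in the relation --- exactly the operation used in the proof of Proposition~\ref{thm:zeroalltheway} --- produces a nontrivial relation amongst seven-boundary-point diagrams in which the cut-open pentagon appears with coefficient $c_j\neq 0$, the remaining terms being the trees or forests obtained by attaching $S$ to the other $T_i$. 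Regluing the two cut legs throughout then yields a relation amongst five-boundary-point diagrams in which $\ngon[90]{5}$ itself occurs with coefficient $c_j$, while every other term is a trivalent graph on five boundary points with at most five internal vertices which is not the pentagon; by the discharging analysis preceding Corollary~\ref{cor:nopents}, each such graph has an internal face with at most four sides (or contains a small closed subdiagram), so it can be rewritten inside $\Span D(5,0)$ using the bigon, triangle and square reductions \eqref{eq:bigon}, \eqref{eq:triangle}, \eqref{eq:square} --- each of which strictly lowers the number of vertices, so no new pentagonal face is ever created. Solving for $\ngon[90]{5}$ gives the required relation.

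I expect the bookkeeping in this last case to be the main obstacle. One must choose the auxiliary tree $S$, the leg it is glued to, and the pair of legs that are reglued so that $\ngon[90]{5}$ really does appear with coefficient $c_j$ and is not produced (or cancelled) by any of the other terms --- and since the five caterpillars $T_1,\dots,T_5$ are rotations of one another, this amounts to matching up rotational positions carefully, possibly after first rotating or reflecting the original relation. Separately one must dispose of the degenerate subcase in which the relation among $D(5,0)$ involves only the five ``forest'' diagrams and no caterpillar: capping two adjacent boundary points converts such a relation into the cyclic recurrence $a_{i+1} + d\,a_i + a_{i-1} = 0$ on the forest coefficients, which admits a nonzero solution only when $d = -(\zeta + \zeta^{-1})$ for some fifth root of unity $\zeta$; one then checks that at these finitely many loop values no honest trivalent category satisfies the hypotheses (or handles them by a direct computation). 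Everything else is routine planar manipulation with the reductions already in hand.
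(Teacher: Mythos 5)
Your main case — a relation whose support contains one of the five three-vertex ``caterpillar'' trees — is essentially the paper's argument: gluing your two-vertex tree $S$ to one leg and then capping two legs is exactly the paper's move of attaching an ``H'' across two adjacent boundary points, and the bookkeeping you flag as the main obstacle is genuinely routine: with the H attached across a fixed pair of adjacent boundary points, exactly one of the five rotations of the caterpillar closes up into the pentagon, while the other rotations produce a triangle or a square (the cycle created has length $2$ plus the number of vertices on the tree path between the two attachment legs), and the forest terms produce even smaller configurations; since every resulting graph has at most five vertices, a pentagonal face could only occur in the pentagon itself, and the $n\leq 4$ reductions strictly decrease the vertex count, so no pentagon can be created or cancelled. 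So that half of your plan closes without difficulty.

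The genuine gap is your treatment of the degenerate subcase in which the only available relation is among the five forest diagrams. The paper handles this uniformly, with no condition on $d$: the forest diagram (one vertex plus one strand) sits inside the pentagon as the subdiagram consisting of one vertex together with its opposite edge, so one simply substitutes the forest relation into that disc and reads off a reduction of the pentagon, checking that no pentagon appears in the other terms. Your alternative — capping adjacent boundary points to get the cyclic recurrence $a_{i-1}+d\,a_i+a_{i+1}=0$, concluding $d=-(\zeta+\zeta^{-1})$ for a fifth root of unity $\zeta$, and then asserting that ``no honest trivalent category satisfies the hypotheses'' at these loop values — does not work as stated: the golden (Fibonacci) categories have $d=\frac{1\pm\sqrt5}{2}$, which are exactly two of your special values, they are generated by a trivalent vertex, they possess relations reducing all $n$-gons with $n\leq 4$, and since $\dim\cC_5=3$ there the five forests are already linearly dependent, so the hypotheses of the lemma are satisfied with a forest-only relation. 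The lemma's conclusion does hold for these categories, but your argument does not establish it, and the parenthetical ``or handles them by a direct computation'' is not a proof. To repair this case you need an argument valid at all $d$, e.g.\ the paper's substitution of the forest relation into the pentagon.
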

\begin{proof}
If the relation between the diagrams in $D^\square(5,1)$ already includes the
pentagon, we are done. Otherwise there must be a relation only amongst the
diagrams in $D(5,0)$.  If this relation involves any of the diagrams of the form
$\mathfig{0.04}{tree1heavy}$ we can add an $H$ to the boundary of this relation and
obtain a relation writing a pentagon as a linear combination of diagrams without
internal faces.   Otherwise, if there's a relation only involving the diagrams
of the form $\mathfig{0.04}{forest1heavy}$, we find this diagram as a sub-diagram of
a pentagon consisting of a vertex and its opposite edge.  We  then apply the
relation inside the pentagon, and obtain another relation writing a pentagon as
a linear combination of  diagrams without internal faces. (One can readily
verify no pentagons appear in other terms.)
\end{proof}

\begin{cor}
\label{cor:5:evaluation}
Given fixed relations reducing $n$-gons for each $n \leq 4$, and some relation between the diagrams in $D^\square(5,1)$,
there is at most one trivalent category satisfying these relations.
\end{cor}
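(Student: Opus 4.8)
The plan is to reduce the statement to Corollary~\ref{cor:reductions=>uniqueness}, which guarantees uniqueness of a nondegenerate trivalent category as soon as one has a collection of relations by which every closed planar trivalent graph can be reduced to a multiple of the empty diagram. So the entire task is to assemble enough such relations out of the hypotheses.

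First I would collect the face-reducing relations at hand. Every trivalent category evaluates a circle to $d$ and a $1$-gon (tadpole) to $0$ (as $\dim\cC_1 = 0$), and reduces a bigon and a triangle via Equations~\eqref{eq:bigon} and~\eqref{eq:triangle}; by hypothesis we are also given a relation reducing the square. Now I invoke Lemma~\ref{lem:pentagon-reduction}, whose hypotheses --- a category generated by a trivalent vertex with relations reducing every $n$-gon for $n \le 4$, plus some relation amongst the diagrams of $D^\square(5,1)$ --- are exactly ours; it produces a relation writing the pentagon as a linear combination of diagrams in $D(5,0)$. Thus we have relations rewriting every internal face with at most five sides. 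I would then run the evaluation by induction on the number of trivalent vertices of a closed diagram $G$ (disjoint circles contribute harmless factors of $d$): if $G$ is nonempty, the well-known Euler characteristic count recalled above shows $G$ has an internal face with at most five sides; a regular neighbourhood of it is an open trivalent graph with at most five boundary points whose only internal face is that $n$-gon ($n \le 5$), and the corresponding relation rewrites that neighbourhood --- hence $G$ --- as a linear combination of diagrams in which this region has been replaced by elements of $D(n,0)$. By Corollary~\ref{cor:reductions=>uniqueness} this yields at most one trivalent category satisfying the relations, proving the corollary.

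The one thing to verify with care is that the induction actually terminates, i.e. that each substitution strictly decreases the vertex count. This comes down to the elementary observation that every diagram in $D(n,0)$ with $n \le 5$ has at most $n-2$ trivalent vertices (a trivalent tree on $n$ leaves has $n-2$ internal vertices, and the forests in $D(n,0)$ have fewer), whereas the $n$-gon being removed has $n$ vertices; since the substitution alters $G$ only inside one small disc, the total vertex count drops by at least two regardless of how the sizes of the adjacent faces change. Everything else --- disconnected diagrams, stray circles, the tadpole --- is routine bookkeeping.
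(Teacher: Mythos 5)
Your proposal is correct and follows essentially the same route as the paper: the paper's proof likewise observes that every closed trivalent graph contains an $n$-gon with $n \leq 5$, invokes Lemma~\ref{lem:pentagon-reduction} to get the pentagon reduction, and concludes via Corollary~\ref{cor:reductions=>uniqueness}. Your extra paragraph verifying termination of the rewriting (vertex count drops since diagrams in $D(n,0)$ have at most $n-2$ vertices) is a detail the paper leaves implicit, and it is a correct justification.
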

\begin{proof}
As any closed trivalent graphs contains an $n$-gon with $n \leq 5$, and by the previous Lemma we can reduce this
diagram, we see that the available relations suffice to evaluate all closed diagrams. By Corollary \ref{cor:reductions=>uniqueness}
we are done.
\end{proof}

\begin{lem}
\label{lem:5:dependent=>spans}
Suppose $\cC$ is a cubic category, with a relation between the diagrams in $D^\square(5,1)$. The $\cC_5$ is spanned by
$D(5,0)$.
\end{lem}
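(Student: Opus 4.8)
The plan is to use the hypothesized relation to manufacture a relation that \emph{reduces} the pentagon, and then combine it with the structural fact (Corollary \ref{cor:nopents}) that a boundary connected diagram with five boundary points and no internal small face already lies in $D(5,0)$. Since $\cC$ is cubic we already have relations removing any internal bigon, triangle, or square --- Equations \eqref{eq:bigon} (with $b$ normalised to $1$), \eqref{eq:triangle}, and \eqref{eq:square} of Proposition \ref{prop:cubic}(d) --- and $\dim \cC_1 = 0$ kills any sub-diagram with a single boundary point, in particular a monogon. Thus the hypotheses of Lemma \ref{lem:pentagon-reduction} are met, and we obtain a relation writing the pentagon as a linear combination of diagrams in $D(5,0)$. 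We now have a way to eliminate any internal face with at most five sides, and each such local replacement strictly decreases the number of trivalent vertices of the ambient diagram: the bigon, triangle, square, and pentagon carry $2,3,4,5$ boundary vertices, while every replacement term carries fewer.

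Then I would argue by induction on the number of trivalent vertices of a diagram $G$ representing a class in $\cC_5$. If $G$ is not boundary connected, one of its components is a closed planar trivalent graph, which by the usual Euler count has a face with at most five sides; that is an internal small face of $G$, and reducing it rewrites $G$ as a combination of diagrams with strictly fewer vertices, to which the inductive hypothesis applies. The same move handles the case that $G$ is boundary connected but has an internal small face. In the only remaining case $G$ is boundary connected with no internal small face, and Corollary \ref{cor:nopents} (with $n = 5$) says exactly that $G \in D(5,0)$. Since linear combinations of diagrams with five boundary points span $\cC_5$ by definition, this yields $\cC_5 = \operatorname{span} D(5,0)$.

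I do not expect a genuine obstacle here: the real content has been front-loaded into Lemma \ref{lem:pentagon-reduction} and Corollary \ref{cor:nopents}. The one point needing care is the choice of induction parameter --- it must be the vertex count, which \emph{every} face-reduction strictly lowers, and not, say, the number of internal faces, since a face-reduction can disconnect $G$ or destroy boundary connectedness; with vertex count this is harmless, as the inductive hypothesis is applied to each resulting diagram regardless of its connectivity.
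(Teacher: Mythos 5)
Your proposal is correct and follows essentially the same route as the paper: invoke Lemma \ref{lem:pentagon-reduction} to obtain a pentagon reduction from the hypothesized relation, reduce all small internal faces, and conclude with Corollary \ref{cor:nopents} that what remains lies in $D(5,0)$. The extra details you supply (induction on vertex count, disposing of closed components, which always contain a small face) are just making explicit what the paper's two-line proof leaves implicit, and they are handled correctly.
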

\begin{proof}
By Lemma \ref{lem:pentagon-reduction}, we can reduce any diagram in $\cC_5$ to a linear combination of diagrams without
small faces. By
Corollary
\ref{cor:nopents}, these are in $D(5,0)$.
\end{proof}

\begin{lem}
\label{lem:ABA-relation}
In a cubic category where $D^\square(5,1)$ spans $\cC_5$ and $P_{ABA}=0$ there are relations:
\begin{align}
\label{eq:free1}
\mathfig{0.1}{tree1} + \zeta \mathfig{0.1}{tree2} + \zeta^2 \mathfig{0.1}{tree3} + \zeta^3 \mathfig{0.1}{tree4} + \zeta^4 \mathfig{0.1}{tree5} = 0 \\
\label{eq:free2}
\mathfig{0.1}{tree1} + \zeta^{-1} \mathfig{0.1}{tree2} + \zeta^{-2} \mathfig{0.1}{tree3} + \zeta^{-3} \mathfig{0.1}{tree4} + \zeta^{-4} \mathfig{0.1}{tree5} = 0
\end{align}
with $\zeta^5 = 1$ and $t+\zeta^2 +\zeta^3 = 0$ (so if $t = \frac{1+\sqrt{5}}{2}$, $\zeta = \exp(\pm 2\pi i/5)$, and if $t = \frac{1-\sqrt{5}}{2}$, $\zeta=\exp(\pm 4 \pi i /5)$). 
\end{lem}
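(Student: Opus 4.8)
The plan is to identify the relations among the eleven diagrams of $D^\square(5,1)$ with the kernel of the Gram matrix $M^\square(5,1)$, and then to locate the two asserted relations inside that kernel using the rotation action together with a short evaluation of the relevant inner products. To justify the first step, note that the bilinear form on $\cC_5 = \Hom(1 \to X^{\otimes 5})$ is nondegenerate because $\cC$ is: if $a \in \cC_5$ pairs to zero with everything, then nondegeneracy applied to the morphism $a\colon 1 \to X^{\otimes 5}$ produces $a'\colon X^{\otimes 5}\to 1$ with $\operatorname{tr}(a a') = a'\circ a \neq 0$, and self-duality of $X$ turns $a'$ into an element of $\cC_5$ pairing nontrivially with $a$. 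Since $D^\square(5,1)$ spans $\cC_5$ by hypothesis, a formal linear combination of these eleven diagrams vanishes in $\cC_5$ exactly when it lies in $\ker M^\square(5,1)$; and since the category is cubic with $P_{ABA}=0$, the computed value $\Delta^\square(5,1) = d^{11}P_{ABA}^3 P_{SO(3)}^5 P_{G_2}Q_{1,1}^{-2}$ vanishes, so this kernel is nonzero.

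Next I would bring in the rotation operator $\rho$ on $\cC_5$; since $X$ is symmetrically self-dual, $\rho^5 = \mathrm{id}$, so $\cC_5$ and the rotation-closed spanning set $D^\square(5,1)$ decompose into eigenspaces for fifth roots of unity. The five tree diagrams form a single $\rho$-orbit, as do the five forest diagrams, while the pentagon is $\rho$-fixed. For $\omega^5=1$ let $v_\omega$ denote $\mathrm{tree}_1 + \omega\,\mathrm{tree}_2 + \omega^2\,\mathrm{tree}_3 + \omega^3\,\mathrm{tree}_4 + \omega^4\,\mathrm{tree}_5$, so that $\rho v_\omega = \omega^{-1}v_\omega$. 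Since $P_{ABA}(t) = t^2-t-1=0$, there is a primitive fifth root of unity $\zeta$ with $t+\zeta^2+\zeta^3=0$: indeed $t^2-t-1=0$ is equivalent to $t = -(\zeta^2+\zeta^3)$ for $\zeta$ one of the conjugate pair $\{\zeta,\zeta^{-1}\}$, while the other root of $P_{ABA}$ corresponds to the pair $\{\zeta^2,\zeta^3\}$ — and this is exactly the $\zeta$ appearing in the statement. Equations \eqref{eq:free1} and \eqref{eq:free2} say precisely that $v_\zeta$ and $v_{\zeta^{-1}}$ lie in $\ker M^\square(5,1)$.

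To prove $v_\zeta \in \ker M^\square(5,1)$ it suffices to show $v_\zeta$ pairs to zero with each diagram in $D^\square(5,1)$, and by $\rho$-equivariance of the pairing, $\langle v_\zeta, \rho^m D\rangle = \zeta^m\langle v_\zeta, D\rangle$, so only $\langle v_\zeta,\mathrm{tree}_1\rangle$, $\langle v_\zeta,\mathrm{forest}_1\rangle$, and $\langle v_\zeta,\mathrm{pentagon}\rangle$ need to vanish. The pentagon pairing vanishes for free: the pentagon is $\rho$-fixed, so $\langle v_\zeta,\mathrm{pentagon}\rangle = \langle\rho v_\zeta,\rho\,\mathrm{pentagon}\rangle = \zeta^{-1}\langle v_\zeta,\mathrm{pentagon}\rangle$ and $\zeta\neq1$. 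The other two pairings are fixed $\bbC$-linear combinations, with coefficients powers of $\zeta$, of entries of $M^\square(5,1)$, which in a cubic category are rational functions of $d$ and $t$ (denominators powers of $Q_{1,1}$), obtained by applying the bigon, triangle, and square relations \eqref{eq:bigon}, \eqref{eq:triangle}, \eqref{eq:square} to small faces; the computation, which as elsewhere I would run by machine, shows that modulo $P_{ABA}=0$, $\zeta^5=1$, $1+\zeta+\zeta^2+\zeta^3+\zeta^4=0$, and $t+\zeta^2+\zeta^3=0$ both combinations vanish. (As a sanity check on that last constraint: capping off a suitable pair of adjacent boundary points gives a map $\cC_5\to\cC_3$ that annihilates any tree carrying a trivalent vertex on those two legs, since $\cC_1=0$, and sends the remaining trees to scalar multiples $1,1,t$ of the trivalent vertex, so applied to $v_\zeta$ it forces $t+\zeta^2+\zeta^3=0$.) Hence $v_\zeta$ is a relation in $\cC_5$, namely \eqref{eq:free1}. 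Since $\zeta^{-1}$ is again a primitive fifth root with $t+\zeta^{-2}+\zeta^{-3}=0$ — equivalently, since $M^\square(5,1)$ has real entries and $v_{\zeta^{-1}}=\overline{v_\zeta}$ — the same argument yields $v_{\zeta^{-1}}\in\ker M^\square(5,1)$, which is \eqref{eq:free2}.

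The main obstacle is the evaluation in the previous paragraph: each inner product $\langle\mathrm{tree}_k,\mathrm{tree}_1\rangle$ and $\langle\mathrm{tree}_k,\mathrm{forest}_1\rangle$ is a closed trivalent graph with roughly six internal vertices, and reducing it to a rational function of $d$ and $t$ via the square relation of Proposition \ref{prop:cubic}, then verifying that the two $\zeta$-weighted sums vanish on $P_{ABA}=0$, is the genuine content; everything else is bookkeeping with the rotation action.
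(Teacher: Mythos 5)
Your proposal is correct and follows essentially the same route as the paper: use nondegeneracy plus the spanning hypothesis to identify relations with the kernel of the Gram matrix $M^\square(5,1)$, and then verify by a (machine) computation at $P_{ABA}=0$ that the two $\zeta$-weighted tree sums lie in that kernel, which is exactly what the paper delegates to {\tt code/ABA.nb}. Your use of rotational equivariance to cut the verification down to the pairings with one tree and one forest (the pentagon pairing vanishing for free), and the capping sanity check recovering $t+\zeta^2+\zeta^3=0$, are pleasant refinements of the same argument rather than a different proof.
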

\begin{proof}
By non-degeneracy and $D^\square(5,1)$ spanning, anything in the kernel of $M(5,1)$ must be a relation.
When $P_{ABA} = t^2-t-1=0$, we obtain the relations above. (See the calculation in {\tt code/ABA.nb}.)
\end{proof}

Now we turn our attention to the case where $P_{G_2}=0$. The following Lemma is well-known \cite{MR1265145}.

\begin{lem}
The curve $P_{G_2}=0$ is rational and can be parameterized by
\begin{align*}
d & =x^5+x^4-5x^3-4x^2+6x+3, \\ 
\intertext{and}
t & = - \frac{x-1}{x^2-2}
\end{align*}
where $x \neq \pm \sqrt{2}$.
\end{lem}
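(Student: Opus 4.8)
The plan is to prove this by a direct substitution together with an explicit rational inverse. \emph{First}, I would substitute $d = x^5+x^4-5x^3-4x^2+6x+3$ and $t = -\frac{x-1}{x^2-2}$ into $P_{G_2}$, multiply through by $(x^2-2)^5$ to clear denominators, and check that the resulting polynomial in $x$ (of degree $15$) vanishes identically. This is a finite but unenlightening computation, best left to a computer algebra system; note that the formula for $t$ forces $x \neq \pm\sqrt 2$, which is exactly the excluded locus. A more conceptual route to the same conclusion is to start from Kuperberg's well-known $q$-parameterization of the $(G_2)_q$ locus \cite{MR1265145}: putting $x = q^2+q^{-2}$ and using the Chebyshev-type identities expressing $q^{2k}+q^{-2k}$ as polynomials in $q^2+q^{-2}$, the formulas $d = q^{10}+q^8+q^2+1+q^{-2}+q^{-8}+q^{-10}$ and $t = -\frac{q^2-1+q^{-2}}{q^4+q^{-4}}$ from the statement of Theorem \ref{thm:5} turn into precisely the displayed rational functions of $x$; since those $(d,t)$ satisfy $P_{G_2}=0$, so does $(d(x),t(x))$ for every admissible $x$.

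\emph{Second}, I would show the map $x \mapsto (d(x),t(x))$ is birational onto the curve. The relation $t = -\frac{x-1}{x^2-2}$ is equivalent, away from $x=\pm\sqrt 2$, to $t x^2 + x - (2t+1) = 0$; reducing the degree-$5$ polynomial $d(x)$ modulo this quadratic relation writes $d$ as $A(t)\,x + B(t)$ for explicit $A,B \in \bbC(t)$ with $A\not\equiv 0$, so that $x = \frac{d-B(t)}{A(t)}$ on the image. Thus the parameterization is generically one-to-one. Its image is an irreducible curve inside $\{P_{G_2}=0\}$, and $P_{G_2}$ is itself irreducible: viewed as a quadratic in $d$ over $\bbC(t)$ it has no root there, since any rational function of $t(x)$ has even degree in $x$ whereas $d(x)$ has degree $5$, and its content in $\bbC[t]$ is a unit because the $d^1$-coefficient $2t^5-4t^4-t^3+6t^2+4t+1$ has nonzero constant term. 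Hence the image is all of $\{P_{G_2}=0\}$, which is therefore rational with the claimed parameterization.

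The real work is entirely in the two displayed computations --- the degree-$15$ polynomial identity and the modular reduction producing $A$ and $B$ --- both of which are routine to automate but tedious by hand; this is presumably why the lemma is quoted as well known. Everything else (the genus-zero/irreducibility bookkeeping above) is formal once those computations are in hand.
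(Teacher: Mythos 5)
Your proposal is correct, but note that the paper does not prove this lemma at all: it is introduced with ``The following Lemma is well-known'' and a citation to Kuperberg, so any actual argument is necessarily a different route, and yours is a reasonable self-contained one. The two computations you defer to a machine do come out as claimed: substituting the parameterization into $P_{G_2}$ and clearing $(x^2-2)^5$ gives an identically vanishing polynomial of degree at most $15$, and reducing $d(x)$ modulo $tx^2+x-(2t+1)$ gives $d=A(t)x+B(t)$ with $A(t)=-(t+1)(t^2-t-1)/t^4\not\equiv 0$, so the map is birational onto its image with inverse $x=(d-B(t))/A(t)$; your parity-of-degree argument (a nonconstant rational function composed with the degree-two map $t(x)$ has even degree, while $d(x)$ has degree $5$) together with the content observation does give irreducibility of $P_{G_2}$, hence the image is the whole curve. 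This last point is genuinely useful, since Proposition \ref{prop:5:realization:G2} needs every point of $\{P_{G_2}=0\}$ (away from the bad locus) to be of the form $(d(q),t(q))$, which a bare citation leaves implicit. One caveat: your ``more conceptual route'' is circular as phrased, because within this paper the only justification that the $(d,t)$ of Theorem \ref{thm:5} satisfy $P_{G_2}=0$ is this lemma itself (the realization proof for $(G_2)_q$ checks its parameters \emph{against} the parameterization established here); invoking it requires Kuperberg's independent skein computation, at which point you are again citing the literature. So the direct substitution, not the $q$-change of variables, should be regarded as the actual proof, with the identity $x=q^2+q^{-2}$ relegated to motivation.
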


It is more usual to change variables so that $x = q^2+q^{-2}$ in order to relate 
this to quantum groups with the usual variables.  Note that this change of 
variables is typically $4$-to-$1$ with $\pm q^{\pm 1}$ all corresponding to the 
same pair $(d,t)$.  In these variables we have 
\begin{align*}
d & =q^{10} + q^8 + q^2 + 1 + q^{-2} + q^{-8} + q^{-10}, \\ 
\intertext{and}
t & = - \frac{q^2-1+q^{-2}}{q^4+q^{-4}},
\end{align*}
 where $q$ is not a primitive $16$th root of unity.  Recall that we also have 
 that $q$ is not a primitive $3$rd or $6$th root of unity since $d+t+d t \neq 0$.

\begin{lem}
\label{lem:G2-relation}
In a cubic category where $D^\square(5,1)$ spans $\cC_5$ and $P_{G_2}=0$ there is a relation:
$$
\ngon[90]{5} = \alpha \left(\mathfig{0.1}{tree1} + \text{rotations}\right) +\beta \left(\mathfig{0.1}{forest1} + \text{rotations}\right),
$$ 
where
\begin{align*}
\alpha &= - \frac{1}{(q^2+1+q^{-2})(q^4+q^{-4})} \displaybreak[1]  \\
\beta &=- \frac{1}{(q^2+1+q^{-2})^2(q^4+q^{-4})^2} 
\end{align*}

\end{lem}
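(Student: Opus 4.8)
The plan is to realize the relation as an element of the kernel of the inner product matrix $M^\square(5,1)$, exactly as in the proof of Lemma~\ref{lem:ABA-relation}. The key observation is that once $\cC$ is cubic with $P_{G_2}=0$, the matrix $M^\square(5,1)$ is forced to be singular, and — since $\cC$ is nondegenerate and, by hypothesis, $D^\square(5,1)$ spans $\cC_5$ — every vector in $\ker M^\square(5,1)$ is a genuine linear relation holding in $\cC_5$. Singularity is immediate from the formulas already recorded: $\cC$ cubic gives $P_{SO(3)}\neq 0$ and $Q_{1,1}=dt+d+t\neq 0$ by Proposition~\ref{prop:cubic}, so the identity $\Delta^\square(5,1)=d^{11}P_{ABA}^{3}P_{SO(3)}^{5}P_{G_2}Q_{1,1}^{-2}$ shows $\Delta^\square(5,1)=0$ when $P_{G_2}=0$. (On the $P_{G_2}=0$ curve one also has $P_{ABA}\neq 0$ away from the golden points, which are non-cubic by Remark~\ref{rem:badpoints}, so the vanishing is of first order.)

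Next I would pin down the kernel. Each entry of the $11\times 11$ matrix $M^\square(5,1)$ is the invariant of a closed trivalent graph with at most $5+2=7$ faces, hence — by the lemma of Section~\ref{sec:four} guaranteeing rationality of the entries of $M^\square(n,k)$ when $n+2k<12$ — a rational function of $d$ and $t$ with denominator a power of $Q_{1,1}$, obtained by iterating the bigon, triangle, and square reductions \eqref{eq:bigon}, \eqref{eq:triangle}, \eqref{eq:square}. Specializing to $P_{G_2}=0$ and computing $\ker M^\square(5,1)$ by machine gives a one-dimensional kernel whose generator has nonzero coefficient on the pentagon $\ngon[90]{5}$ — the latter because the $10\times 10$ minor $M(5,0)$ has determinant $\Delta(5,0)=d^{10}P_{ABA}^{2}P_{SO(3)}^{4}Q_{1,2}$, which does not vanish identically on the curve. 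The rotation of the disc preserves the inner product and permutes $D^\square(5,1)$ in three orbits: the pentagon alone, the five tree diagrams, and the five forest diagrams. Thus $\ker M^\square(5,1)$ is rotation-stable; being one-dimensional with nonzero pentagon coefficient, the order-$5$ rotation acts on it trivially, so its generator is rotationally invariant, i.e. of the form $\ngon[90]{5}-\alpha(\text{tree}+\text{rotations})-\beta(\text{forest}+\text{rotations})$ after normalizing the pentagon coefficient to $1$, with $\alpha,\beta$ rational functions of $d$ and $t$ on the $P_{G_2}=0$ locus. This is precisely the asserted relation. Finally I would convert $\alpha,\beta$ to the $q$-variables: substituting $d=q^{10}+q^{8}+q^{2}+1+q^{-2}+q^{-8}+q^{-10}$ and $t=-\frac{q^{2}-1+q^{-2}}{q^{4}+q^{-4}}$ (legitimate since $q$ is not a primitive $16$th root of unity, nor a primitive $3$rd or $6$th root) and simplifying produces $\alpha=-\frac{1}{(q^{2}+1+q^{-2})(q^{4}+q^{-4})}$ and $\beta=-\frac{1}{(q^{2}+1+q^{-2})^{2}(q^{4}+q^{-4})^{2}}$.

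The main obstacle is the computation of $M^\square(5,1)$ itself: each of its entries is a polyhedron with up to seven faces that must be ground down by repeated use of the square relation \eqref{eq:square}, the intermediate rational functions accumulate powers of $Q_{1,1}$ in their denominators and become unwieldy, and the subsequent kernel computation and simplification along the $P_{G_2}=0$ curve are only feasible with a computer (the relevant Mathematica notebook is bundled with the {\tt arXiv} source, parallel to {\tt code/ABA.nb}). The conceptual content — singularity of $M^\square(5,1)$ implies a relation, plus the rotation-averaging that forces the symmetric shape — is routine, and once the relation is in hand it is exactly the input that Lemma~\ref{lem:pentagon-reduction} and Corollary~\ref{cor:5:evaluation} require in the surrounding uniqueness argument.
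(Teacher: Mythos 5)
Your proposal is correct and is essentially the paper's own proof: the paper likewise notes that, since $D^\square(5,1)$ spans $\cC_5$ and the category is nondegenerate, the asserted combination lies in the radical of the inner product on $M^\square(5,1)$ once $P_{G_2}=0$, with the actual computation delegated to the bundled notebook ({\tt code/G2.nb}). The only cosmetic difference is that the paper verifies the specific vector is in the radical directly rather than arguing the kernel is one-dimensional with nonzero pentagon coefficient; this sidesteps the special point $(d,t)=(-1,-1)$ (a primitive 20th root of unity) on the $P_{G_2}$ curve, where $Q_{1,2}$ and hence $\Delta(5,0)$ also vanish and your minor-based argument for the pentagon coefficient would need separate attention.
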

 \begin{proof}
When $P_{G_2}=0$  these relations are in the radical of the inner product on $M^\square(5,1)$. (See the calculation in {\tt code/G2.nb}.)
\end{proof}

We now prove the proposition. Suppose $D^\square(5,1)$ is dependent. Then Lemma
\ref{lem:5:dependent=>spans} shows that $D(5,0)$ spans, and hence trivially
$D^\square(5,1)$ spans also. On the other hand, if $D^\square(5,1)$ is
independent, it also must span, since we are assuming $\dim \cC_5 \leq 11$. Then
Lemma \ref{lem:ABA-relation} and Lemma \ref{lem:G2-relation} ensure that there
are relations amongst the diagrams in $D^\square(5,1)$.  By Corollary
\ref{cor:5:evaluation}  there is a unique cubic category at the given values of
$d$ and $t$.
\qed

\subsection*{Proof of Proposition \ref{prop:5:realization:ABA} (Realization)}
Note that for any $(d,t)$ with $P_{ABA} = t^2-t-1 = 0$ we can rewrite $(d,t) =
(\delta^2 \tau, \bar{\tau})$ where bar is the Galois conjugate $(1+\sqrt{5})/2
\leftrightarrow (1-\sqrt{5})/2$, by taking $\tau$ to be the Galois conjugate
of $t$ and $\delta = \sqrt{d \tau^{-1}}$. This change of variables is generally
2-to-1, and there is a symmetry $\delta \leftrightarrow -\delta$.

Let $TL_\delta$ be the Temperley-Lieb category of planar tangles with loop value
$\delta$, and let $\cG_{\tau}$ be the golden category with loop value $\tau$ (so
that the triangle value will be $\bar{\tau}$). If $\cC$ and $\cD$ are two
pivotal categories, then their free product $\cC * \cD$ consists of planar diagrams with
connected components labelled blue and red, where the blue parts live in $\cC$
and the red parts live in $\cD$ (cf. \cite{1308.5723} and \cite[Section 8]{MR1950890}). We've shown all blue components in what follows
with dashed lines. Consider the free product $TL_\delta \ast \cG_{\tau}$.  This
is a category of planar diagrams with connected components labelled blue and
red, where the blue strands have no vertices and have loop value $\delta$, while
red strands allow trivalent vertices and have loop value $\tau$.

Inside the free product $TL_\delta \ast \cG_{\tau}$ we can find a trivalent
category, which we call $ABA_{(\delta^2 \tau, \bar{\tau})}$ as follows.  Given a
trivalent graph interpret each strand as a red strand then a blue strand then a
red strand (hence the name $ABA$), and think of each trivalent vertex as given
by
$$
\frac{1}{\sqrt{\delta}}
\begin{tikzpicture}[baseline]
\draw[red] (0,0) -- (90:1);
\draw[red] (0,0) -- (210:1);
\draw[red] (0,0) -- (330:1);
\draw[blue,dashed] (100:1) to[out=-90,in=30] (200:1);
\draw[blue,dashed] (220:1) to[out=30,in=150] (320:1);
\draw[blue,dashed] (340:1) to[out=150,in=-90] (80:1);
\end{tikzpicture}.
$$
Here the normalization factor ensures that the bigon factor $b$ is $1$.  We 
call this category $ABA'_{(d,t)}$ and then define $ABA_{(d,t)}$ to be the 
quotient by the negligibles.

\begin{remark} \label{rem:generic}
As far as we know, it may be that $ABA'_{(d,t)}$ has no negligbles and so 
$ABA'_{(d,t)} = ABA_{(d,t)}$.  Indeed, when $dt^{-1}$ is a positive number 
bigger than $4$ the pairing is positive definite because it is the restriction 
of an obviously positive definite pairing on the tensor product.  Hence when 
$d$ is generic, $ABA'_{(d,t)}$ has no negligbles.
\end{remark}

We can now prove the proposition.  We first find a spanning set for the
$n$-boundary point space for $ABA$. First note that the blue (A-labelled) lines
of the $ABA$ diagram give a noncrossing partition of the red boundary points,
where two red boundary points are in the same partition if you can get between
them without crossing a blue line.  Second, recall that the dimension of the
space of $m$-boundary point red diagrams is $F_{m-1}$ (the $m-1$st Fibonacci
number).  There is a standard explicit basis given by fixing a trivalent tree
connecting all the boundary vertices and then picking a subset of internal edges to
delete, with the condition that each vertex have either two or three edges
coming out of it.

Putting the above two steps together, for each noncrossing partition (specifying
the location of the blue lines), we can find a spanning set for diagrams
compatible with that partition consisting of $\prod_{p \in \pi} F_{| p |-1}$
diagrams (where $\pi$ is a partition, $p$ ranges over parts of the partition,
and $|p|$ is the size of the part).  This process gives spanning sets for the
$n$-boundary point spaces for $0 \leq n \leq 5$ with sizes $1,0,1,1,4,8$.  We
illustrate this below by giving the spanning set for $n=5$.

\begin{figure}[ht]
\begin{tikzpicture}[rotate=-72]
	\draw[blue,dashed] (36:1cm) .. controls (54:.8cm) .. (72:1cm);
	\draw[blue,dashed, rotate=72] (36:1cm) .. controls (54:.8cm) .. (72:1cm);
	\draw[blue,dashed, rotate=144] (36:1cm) .. controls (54:.8cm) .. (72:1cm);
	\draw[blue,dashed, rotate=216] (36:1cm) .. controls (54:.8cm) .. (72:1cm);
	\draw[blue,dashed, rotate=288] (36:1cm) .. controls (54:.8cm) .. (72:1cm);

	\draw[red] (18:1cm) .. controls (18:.6cm) .. (54:.5cm) arc (54:270:.5cm) .. controls (-54:.6cm) .. (-54:1cm);
	\draw[red] (90:1cm) -- (90:.5cm);
	\draw[red] (162:1cm) -- (162:.5cm);
	\draw[red] (234:1cm) -- (234:.5cm);
\end{tikzpicture}
\quad
\begin{tikzpicture}[rotate=-72]
	\draw[blue,dashed] (36:1cm) .. controls (54:.8cm) .. (72:1cm);
	\draw[blue,dashed, rotate=72] (36:1cm) .. controls (54:.8cm) .. (72:1cm);
	\draw[blue,dashed, rotate=144] (36:1cm) .. controls (54:.8cm) .. (72:1cm);
	\draw[blue,dashed, rotate=216] (36:1cm) .. controls (54:.8cm) .. (72:1cm);
	\draw[blue,dashed, rotate=288] (36:1cm) .. controls (54:.8cm) .. (72:1cm);

	\draw[red] (18:1cm) -- (0,0) -- (162:1cm);
	\draw[red] (90:1cm) -- (0,0);
	\draw[red] (234:1cm) .. controls (270:.4cm).. (306:1cm);
\end{tikzpicture}
\quad
\begin{tikzpicture}[rotate=-72]
	\draw[blue,dashed] (36:1cm) .. controls (54:.8cm) .. (72:1cm);
	\draw[blue,dashed, rotate=72] (36:1cm) .. controls (54:.8cm) .. (72:1cm);
	\draw[blue,dashed, rotate=144] (36:1cm) .. controls (54:.8cm) .. (72:1cm);
	\draw[blue,dashed, rotate=216] (36:1cm) .. controls (54:.8cm) .. (72:1cm);
	\draw[blue,dashed, rotate=288] (36:1cm) .. controls (54:.8cm) .. (72:1cm);

	\draw[red] (-54:1cm) -- (0,0) -- (162:1cm);
	\draw[red] (234:1cm) -- (0,0);
	\draw[red] (18:1cm) .. controls (54:.4cm).. (90:1cm);
\end{tikzpicture}
\vspace*{12pt}

\begin{tikzpicture}[rotate=0]
	\draw[blue,dashed] (36:1cm) .. controls (54:.8cm) .. (72:1cm);
	\draw[blue,dashed] (108:1cm) .. controls (54:.2cm) .. (0:1cm);
	\draw[blue,dashed] (180:1cm) .. controls (198:.8cm) .. (216:1cm);
	\draw[blue,dashed] (252:1cm) .. controls (270:.8cm) .. (288:1cm);
	\draw[blue,dashed] (324:1cm) .. controls (0,0) .. (144:1cm);

	\draw[red] (-54:1cm) -- (234:.2cm) -- (162:1cm);
	\draw[red] (234:1cm) -- (234:.2cm);
	\draw[red] (18:1cm) .. controls (54:.4cm).. (90:1cm);
\end{tikzpicture}
\quad
\begin{tikzpicture}[rotate=72]
	\draw[blue,dashed] (36:1cm) .. controls (54:.8cm) .. (72:1cm);
	\draw[blue,dashed] (108:1cm) .. controls (54:.2cm) .. (0:1cm);
	\draw[blue,dashed] (180:1cm) .. controls (198:.8cm) .. (216:1cm);
	\draw[blue,dashed] (252:1cm) .. controls (270:.8cm) .. (288:1cm);
	\draw[blue,dashed] (324:1cm) .. controls (0,0) .. (144:1cm);

	\draw[red] (-54:1cm) -- (234:.2cm) -- (162:1cm);
	\draw[red] (234:1cm) -- (234:.2cm);
	\draw[red] (18:1cm) .. controls (54:.4cm).. (90:1cm);
\end{tikzpicture}
\quad
\begin{tikzpicture}[rotate=144]
	\draw[blue,dashed] (36:1cm) .. controls (54:.8cm) .. (72:1cm);
	\draw[blue,dashed] (108:1cm) .. controls (54:.2cm) .. (0:1cm);
	\draw[blue,dashed] (180:1cm) .. controls (198:.8cm) .. (216:1cm);
	\draw[blue,dashed] (252:1cm) .. controls (270:.8cm) .. (288:1cm);
	\draw[blue,dashed] (324:1cm) .. controls (0,0) .. (144:1cm);

	\draw[red] (-54:1cm) -- (234:.2cm) -- (162:1cm);
	\draw[red] (234:1cm) -- (234:.2cm);
	\draw[red] (18:1cm) .. controls (54:.4cm).. (90:1cm);
\end{tikzpicture}
\quad
\begin{tikzpicture}[rotate=216]
	\draw[blue,dashed] (36:1cm) .. controls (54:.8cm) .. (72:1cm);
	\draw[blue,dashed] (108:1cm) .. controls (54:.2cm) .. (0:1cm);
	\draw[blue,dashed] (180:1cm) .. controls (198:.8cm) .. (216:1cm);
	\draw[blue,dashed] (252:1cm) .. controls (270:.8cm) .. (288:1cm);
	\draw[blue,dashed] (324:1cm) .. controls (0,0) .. (144:1cm);

	\draw[red] (-54:1cm) -- (234:.2cm) -- (162:1cm);
	\draw[red] (234:1cm) -- (234:.2cm);
	\draw[red] (18:1cm) .. controls (54:.4cm).. (90:1cm);
\end{tikzpicture}
\quad
\begin{tikzpicture}[rotate=288]
	\draw[blue,dashed] (36:1cm) .. controls (54:.8cm) .. (72:1cm);
	\draw[blue,dashed] (108:1cm) .. controls (54:.2cm) .. (0:1cm);
	\draw[blue,dashed] (180:1cm) .. controls (198:.8cm) .. (216:1cm);
	\draw[blue,dashed] (252:1cm) .. controls (270:.8cm) .. (288:1cm);
	\draw[blue,dashed] (324:1cm) .. controls (0,0) .. (144:1cm);

	\draw[red] (-54:1cm) -- (234:.2cm) -- (162:1cm);
	\draw[red] (234:1cm) -- (234:.2cm);
	\draw[red] (18:1cm) .. controls (54:.4cm).. (90:1cm);
\end{tikzpicture}
\end{figure}

By computing inner products we see that these spanning sets actually form bases
unless $P_{SO(3)} = 0$ (in which case the nondegenerate quotient is just a
golden category).  Hence the ABA categories are cubic categories with $\dim
\cC_5 \leq 11$.

Finally, it is straighforward to  check that the loop value is $\delta^2 \tau = d$.  We see that  the triangle value is $\bar{\tau}$:
\begin{align*}
\ngon[90]{3}
& = 
\left(\frac{1}{\sqrt{\delta}}\right)^3
\begin{tikzpicture}[baseline=0]
	\draw[red] (90:1cm) -- (90:.7cm) -- (-30:.7cm) -- (-30:1cm);
	\draw[red] (-150:1cm) -- (-150:.7cm) -- (-30:.7cm);
	\draw[red] (90:.7cm)--(-150:.7cm);
	\draw[blue,dashed] (-20:1cm) .. controls (30:.5cm) .. (80:1cm);
	\draw[blue,dashed,rotate=120] (-20:1cm) .. controls (30:.5cm) .. (80:1cm);
	\draw[blue,dashed,rotate=-120] (-20:1cm) .. controls (30:.5cm) .. (80:1cm);
	\draw[blue,dashed] (0,0) circle (.2cm);
\end{tikzpicture}
=
\left(\frac{1}{\sqrt{\delta}}\right)^3 \delta
 \begin{tikzpicture}[baseline=0]
	\draw[red] (90:1cm) -- (90:.7cm) -- (-30:.7cm) -- (-30:1cm);
	\draw[red] (-150:1cm) -- (-150:.7cm) -- (-30:.7cm);
	\draw[red] (90:.7cm)--(-150:.7cm);
	\draw[blue,dashed] (-20:1cm) .. controls (30:.5cm) .. (80:1cm);
	\draw[blue,dashed,rotate=120] (-20:1cm) .. controls (30:.5cm) .. (80:1cm);
	\draw[blue,dashed,rotate=-120] (-20:1cm) .. controls (30:.5cm) .. (80:1cm);
\end{tikzpicture} \displaybreak[1]  \\
& =
\bar{\tau} \cdot
\frac{1}{\sqrt{\delta}}
\begin{tikzpicture}[baseline]
\draw[red] (0,0) -- (90:1);
\draw[red] (0,0) -- (210:1);
\draw[red] (0,0) -- (330:1);
\draw[blue,dashed] (100:1) to[out=-90,in=30] (200:1);
\draw[blue,dashed] (220:1) to[out=30,in=150] (320:1);
\draw[blue,dashed] (340:1) to[out=150,in=-90] (80:1);
\end{tikzpicture}
= \bar{\tau} \cdot
\begin{tikzpicture}[baseline=.1cm,scale=0.75]
\draw (0,0) -- (0,1);
\draw (0,0) -- (0.7,-0.5);
\draw (0,0) -- (-0.7,-0.5);
\end{tikzpicture}.
\end{align*}
This completes the proof of Proposition \ref{prop:5:realization:ABA}. \qed

\begin{remark}
By Remark \ref{rem:generic}, when $d$ is generic there are no further relations,
and so the dimension of the $n$-boundary point space is given by $\sum_\pi
\prod_{p \in \pi} F_{| p |-1}$.  This sequence begins $1,0,1,1,4,8,25,64,\ldots$
and its ordinary generating function satisfies the relation $G(x) =
\frac{1-xG(x)}{1-xG(x)-x^2G(x)^2}$.  Therefore it is given by OEIS A046736
\cite{EIS}, which counts the number of ways to place non-intersecting diagonals
on a $n+2$-gon so as to create no triangles.  To find this generating function
identity, we use a general recipe due to Speicher \cite{MR1268597} for any such
weighted counting of non-crossing partitions.  (This approach thus applies to
the pivotal tensor category generated by $ABA$ in the free product of Temperley-
Lieb with an arbitrary trivalent category.)  Let $a_i$ be an arbitrary sequence
with $a_0 = 1$, let $b_n = \sum_\pi \prod_{p \in \pi} a_{| p |}$, and let $A(x)$
and $B(x)$ be their ordinary generating functions.  Then rewriting
\cite[Exercise 5.35]{MR1676282} yields $B(x) = A(xB(x))$.  In our particular
example, we use that the generating function for the shifted Fibonacci sequence
is $\frac{1-x}{1-x-x^2}$.
\end{remark}

\begin{remark}
It is not difficult to work out the simple objects in the ABA categories.  They
are of the form $A^{(n_1)}BA^{(n_2)}BA^{(n_3)}B\ldots B A^{(n_k)}$ where
$A^{(n)}$ denotes the $n$th Jones--Wenzl made of blue strands (so for generic
$\delta$ you allow the $n_i$ to be any positive number, while for $\delta =
\zeta+\zeta^{-1}$ there is a corresponding bound on $n_i$).  The fusion rules
are given by concatenation and applying the usual $SU(2)$ fusion rules for blue
Jones-Wenzl's and $B^2 = B+1$ for red strands.   So, for example, $(ABA) (ABA) =
ABA^{(2)}BA + ABA+A^{(2)}+1$.
\end{remark}

\subsection*{Proof of Proposition \ref{prop:5:realization:G2} (Realization)}

We recall Kuperberg's skein theoretic description of the quantum $G_2$ spider
categories \cite{MR1403861, MR1265145} (warning, there is a sign error in
\cite{MR1403861}).  We change conventions in two ways:  Kuperberg's $q$ is our
$q^2$ (which agrees with the usual quantum group conventions) and we normalize
the trivalent vertex so that the bigon equals the strand (which is possible so
long as $q$ is not a primitive $3$rd or $6$th or $16$th root of unity).

\begin{defn} 
\label{def:G2}
If $q$ is not a primitive $3$rd, $6$th, or $16$th root of unity, let $(G_2)_q'$
be the pivotal category generated by a trivalent vertex, modulo the following
skein relations, and let $(G_2)_q$ be the nondegenerate quotient of $(G_2)_q'$
by its negligible ideal.
\begin{align*}
\bigcirc &=  \Phi_7 \Phi_{14} \Phi_{24}  = q^{10} + q^8 + q^2 + 1 + q^{-2} + q^{-8} + q^{-10} \displaybreak[1] \\
\begin{tikzpicture}[scale=.5, baseline=0]
	\draw (0,.5) circle (.5cm);
	\draw (0,0)--(0,-1);
\end{tikzpicture}
 &= 0 \displaybreak[1] \\
\ngon[90]{2} &= \begin{tikzpicture}[scale=.5, baseline=0]
	\draw (0,1)--(0,-1);
\end{tikzpicture}
\displaybreak[1] \\
\ngon[90]{3} &= - \frac{\Phi_{12}}{\Phi_{16}} \begin{tikzpicture}[scale=.5,baseline=0]
	\draw (90:1cm)--(0,0)--(-30:1cm);
	\draw (0,0) -- (-150:1cm);
\end{tikzpicture}
 = - \frac{q^2-1+q^{-2}}{q^4+q^{-4}} \begin{tikzpicture}[scale=.5,baseline=0]
	\draw (90:1cm)--(0,0)--(-30:1cm);
	\draw (0,0) -- (-150:1cm);
\end{tikzpicture}
 \displaybreak[1] \\
\ngon[45]{4} &=  \frac{\Phi_8}{\Phi_3 \Phi_6 \Phi_{16}} \left(\;\drawI \; +\; \drawH \; \right) + \frac{1}{\Phi_3 \Phi_6 \Phi_{16}^2} \left(\; \cupcap \; + \; \identity \; \right) \displaybreak[1] \\
\ngon[90]{5} &= - \frac{1}{\Phi_3 \Phi_6 \Phi_{16}} \left(\mathfig{0.1}{tree1} + \mathfig{0.1}{tree2} + \mathfig{0.1}{tree3} + \mathfig{0.1}{tree4} + \mathfig{0.1}{tree5} \right) \\
& \qquad -\frac{1}{\Phi_3^2 \Phi_6^2 \Phi_{16}^2}  \left(\mathfig{0.1}{forest1} + \mathfig{0.1}{forest2} + \mathfig{0.1}{forest3} + \mathfig{0.1}{forest4} + \mathfig{0.1}{forest5}\right),
\end{align*}
where $\Phi_k$ is the $k$th symmetrized cyclotomic polynomial.  That is,  $\Phi_k = \prod_\zeta \left(q^{\frac{1}{2}}-\zeta q^{-\frac{1}{2}}\right)$ where the product is taken over all primitive $k$th roots of unity.  Explicitly, 
\begin{align*}
\Phi_3 & = q+1+q^{-1} \displaybreak[1] \\
\Phi_6 & = q-1+q^{-1} \displaybreak[1]  \\
\Phi_7 & = q^3 + q^2 + q + 1 + q^{-1} + q^{-2} + q^{-3} \displaybreak[1]  \\
\Phi_8 & = q^2+q^{-2} \displaybreak[1]  \\
\Phi_{12} & = q^2-1+q^{-2} \displaybreak[1]  \\
\Phi_{14} & = q^3 - q^2 + q - 1 + q^{-1} - q^{-2} + q^{-3} \displaybreak[1]  \\
\Phi_{16} & = q^4+q^{-4} \displaybreak[1]  \\
\Phi_{24} & = q^4-1+q^{-4}.
\end{align*}
\end{defn}

Now, suppose that in addition, $q$ is not a primitive $7$th, $14$th, or $24$th
root of unity.  We want to show that $(G_2)_q$ is a trivalent category.  We've
already seen in Corollary \ref{cor:nopents} that for $n \leq 5$ the $n$-boundary
point space of $(G_2)_q$ is spanned by diagrams in $D(n,0)$ and hence has
dimensions bounded by $1,0,1,1,4,10$.  However, a priori these relations might
collapse everything.

In \cite{MR2308953}, Sikora and Westbury introduce the notion of confluence, and
claim that the above relations are confluent (we have verified this
calculation). By definition, confluence means that if we start with a graph and
then use one of the above relations to simplify one face or another face, then
we can apply more simplifications on faces until both expressions become equal.
By the Diamond Lemma \cite{MR0007372} this shows that any two reductions give
the same answer.  This means that the inner product of diagrams is well-defined,
and then taking inner products lets us prove that the obvious spanning set is a
basis for $n \leq 4$, except when we are also on the $SO(3)$ curve (see the
following remark).  In particular, $(G_2)_q$ is a cubic category. Finally we
observe that the formulas for $d$ and $t$ agree with the ones in the
parameterization of $P_{G_2} = 0$. \qed

\begin{lem}
If $q$ is not a $3$rd, $6$th, $16$th, $7$th, $14$th, or $24$th root of unity, then the dimension of the $n$-boundary point spaces of $(G_2)_q$ are $1,0,1,1,4,10$ unless $q$ is a primitive $20$th root of unity in which case they $1,0,1,1,4,9$.
\end{lem}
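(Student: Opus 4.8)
The plan is to compute the Gram matrix $M(5,0)$ of the ten diagrams of $D(5,0)$ and determine its rank. The preceding discussion supplies the setup: confluence of the $(G_2)_q$ skein relations together with the Diamond Lemma makes the bilinear pairing on each $\cC_n$ well-defined, Corollary \ref{cor:nopents} shows that $D(n,0)$ spans $\cC_n$ for $n \le 5$ (so $\dim\cC_n \le 1,0,1,1,4,10$), and the inner-product computations for $n \le 4$ show that $(G_2)_q$ is a cubic category, hence $\dim\cC_4 = 4$; the values $\dim\cC_0 = 1$, $\dim\cC_1 = 0$, $\dim\cC_2 = \dim\cC_3 = 1$ are immediate, e.g.\ because the loop value $d$ and the theta value are nonzero. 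So only $\dim\cC_5$ remains to be pinned down.

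Since $\cC$ is nondegenerate the pairing on $\cC_5$ is nondegenerate, and because $D(5,0)$ spans $\cC_5$ we have $\dim\cC_5 = \operatorname{rank} M(5,0)$; in particular $\dim\cC_5 = 10$ if and only if $\Delta(5,0)\ne 0$. By the formula for $\Delta(5,0)$ established earlier, $\Delta(5,0) = d^{10} P_{ABA}^2 P_{SO(3)}^4 Q_{1,2}$. On the $G_2$ curve $d \ne 0$ by hypothesis, while $P_{SO(3)}\ne 0$ and $d+t+dt\ne 0$ by Proposition \ref{prop:cubic} (the category being cubic); moreover $P_{ABA} = t^2-t-1\ne 0$, since the only common zeros of $P_{G_2}$ and $P_{ABA}$ are $(\tau,\bar\tau)$ and $(\bar\tau,\tau)$ (cf.\ Remark \ref{rem:badpoints}), where $d+t+dt = 0$, contradicting cubicness. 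Hence on the $G_2$ curve $\Delta(5,0)$ vanishes exactly where $Q_{1,2}$ does.

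Next I would substitute the rational parameterization $d = x^5+x^4-5x^3-4x^2+6x+3$, $t = -\frac{x-1}{x^2-2}$, $x = q^2+q^{-2}$ into $Q_{1,2}$, clear the denominator $(x^2-2)^2$, and factor the resulting symmetric Laurent polynomial in $q$ into symmetrized cyclotomic polynomials $\Phi_k$. The computation will show that the only factor vanishing at an admissible value of $q$ (one for which $(G_2)_q$ is cubic and $q$ is not among the excluded roots of unity) is $\Phi_{20}$, with every other factor forcing $q$ to be an excluded root of unity or to lie on the curve $P_{SO(3)}=0$, where $(G_2)_q$ is not cubic. As a consistency check, $\Phi_{20}=0$ is equivalent to $x^2 = x+1$, which gives $(d,t) = (-1,-1)$. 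Therefore, if $q$ is not a primitive $20$th root of unity, then $\Delta(5,0)\ne 0$ and $\dim\cC_5 = 10$.

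Finally, for $q$ a primitive $20$th root of unity one has $(d,t) = (-1,-1)$ and $\Delta(5,0) = 0$, so $\dim\cC_5 \le 9$; to obtain equality I would evaluate $M(5,0)$ at $(d,t) = (-1,-1)$ and exhibit a nonvanishing $9\times 9$ minor, so that $\operatorname{rank} M(5,0) = 9$ and $\dim\cC_5 = 9$ (consistent with the remark above that at $q = \zeta_{20}$ the invariant-space dimensions of $(G_2)_q$ are $1,0,1,1,4,9$). The main obstacle is the third paragraph: identifying exactly which roots of unity on the $G_2$ curve annihilate $Q_{1,2}$, and confirming that the corank of $M(5,0)$ there is exactly one and not more, is a finite but genuinely computer-assisted factorization, heavier than the earlier determinant manipulations.
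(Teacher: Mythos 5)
Your proposal is correct and follows essentially the same route as the paper: reduce the question to the factorization $\Delta(5,0)=d^{10}P_{ABA}^2P_{SO(3)}^4Q_{1,2}$, show that on the $G_2$ curve only the $Q_{1,2}$ factor can vanish for a cubic category, identify the intersection points (which the paper computes directly in $(d,t)$ rather than by pulling back along the parameterization and factoring cyclotomically in $q$, a cosmetic difference), and finish at the primitive $20$th root of unity with a nonvanishing $9\times 9$ minor of $M(5,0)$. Your explicit handling of the $P_{ABA}$ and $P_{SO(3)}$ factors is a slightly more careful spelling-out of what the paper leaves implicit, but the argument is the same.
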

\begin{proof}
Unless $(d,t)$ lies on the $Q_{1,2}$ curve, the $10$ diagrams in $D(5,0)$ are linearly independent.  The only points on the intersection of the $Q_{1,2}$ and $G_2$ curves are $(-2,-2)$, $(-1,-1)$, $(2,0)$, $(\tau, \bar{\tau})$, and $(\bar{\tau},\tau)$.  These correspond, respectively, to a primitive $3$rd or $6$th root of unity, a primitive $20$th root of unity, a primitive $12$th root of unity, and a primitive $30$th root of unity.  The only case not excluded by our assumptions is $q$ a primitive $20$th root of unity.  Calculating the determinant of a $9$-by-$9$ minor of $M(5,0)$ shows that the $4$-boundary point space is $9$ dimensional at this value.
\end{proof}

\begin{remark}
\label{rem:G2-i}
When $q = \pm i$ is a primitive $4$th root of unity $(G_2)_q$ can still be defined by the above relations and confluence argument.  Since we have an explicit spanning
set for the $4$-boundary point space, a direct calculation shows that the
relation  $$  \drawH \; - \; \drawI \; - \frac{1}{2}\; \twostrandid \; +
\frac{1}{2}\; \cupcap$$ does lie in the radical of the inner product, so in the
nondegenerate quotient $(G_2)_{\pm i}$ the dimension of the $4$-box space drops
from $4$ down to $3$.  This gives an alternate proof that the point $(d,t) = (-1,3/2)$ on $P_{SO(3)}$ can be realized.
\end{remark}

\begin{remark}
When $q$ is one of the bad values (a primitive $3$rd, $6$th, or $16$th root of unity) the above definition can be modified by normalizing the trivalent vertex differently to give a well-defined category.  However, in this category the value of the bigon will be $0$ and so the trivalent vertex will be zero.
\end{remark}

\begin{remark}
The category $(G_2)_q$ gets its name from its relationship to the quantum group $U_q(\mathfrak{g}_2)$.  If $q$ is generic, then the category of maps between tensor powers of the standard $7$-dimensional representation of $U_q(\mathfrak{g}_2)$ are given by the above diagrams.  In fact, by the results of this section it is clear that the subcategory of $\mathrm{Rep}(U_q(\mathfrak{g}_2))$ generated by the trivalent vertex must be $(G_2)_q$ and then Kuperberg showed that the dimensions match up so the subcategory is the whole category. 

When $q$ is a root of unity, the correct algebraic category is the category of tilting modules. There is a map $(G_2)_q' \to \mathrm{Rep}^{\mathrm{tilting}}(U_q(\mathfrak{g}_2))$, but it is not clear whether this is surjective, or if it descends to a map from the nondegenerate quotient $(G_2)_q$ to the non-degenerate quotient of the category of tilting modules.
\end{remark}

\section{Diagrams with six boundary points} \label{sec:six}

We now move on to the diagrams with six boundary points. We have
\begin{align*}
D(6,0) & = \left\{ 
\mathfig{0.1}{urn_sha1_1efb25f6833fbe664615559abab36b703e35cb24} + \text{1 rotation},
\mathfig{0.1}{urn_sha1_448b57b212c25633e803562e5ab6433c11d56dd8} + \text{2 rotations}, 
\mathfig{0.1}{urn_sha1_df91ef8b1687dae1b2d9fe38719e6812d7201d49} + \text{5 rotations},
\right. \displaybreak[1] \\
& \qquad
\mathfig{0.1}{urn_sha1_a0b22b5762fa24ead060f3de9448c681b424e133} + \text{5 rotations},
\mathfig{0.1}{urn_sha1_9f7ce5c387d4ae2c3b7e9317f261376c005a92c9} + \text{5 rotations},
\mathfig{0.1}{urn_sha1_1baad19a7b800a28edd9aa033244492c73330c5b} + \text{2 rotations}, \displaybreak[1] \\
& \qquad 
\left.
\mathfig{0.1}{urn_sha1_20aca0123680da1b425e3ab29d0fb3d3ceada0da} + \text{2 rotations},
\mathfig{0.1}{urn_sha1_b91a68952d267c8269410143311be8b4ce67a732} + \text{2 rotations},
\mathfig{0.1}{urn_sha1_f357e0b86229ce63736f2ab08989fc2a2f1db2f6} + \text{1 rotation}
\right\}
\displaybreak[1] \\
D^\square(6,1) & \setminus D(6,0) = 
\left\{
\mathfig{0.1}{urn_sha1_36f2a66789ebdceb39fd251941990993ef82b625} + \text{5 rotations},
\mathfig{0.1}{urn_sha1_7de6a0854fa495f044e12f0708481932e9cb121e}
\right\}
\displaybreak[1] \\
D^\square(6,2) & \setminus D^\square(6,1) = 
\left\{
\mathfig{0.1}{urn_sha1_55f53ebaabfb09b3d4727e4b45c3637b0ed8f84c.pdf} + \text{2 rotations}
\right\}
\end{align*}
so $\#D(6,0) = 34, \#D^\square(6,1) = 41$, and $\#D^\square(6,2) = 44$.

In this section we prove
\begin{thm}
\label{thm:6}
If $\cC$ is a trivalent category with $\dim \cC_4 = 4$, $\dim \cC_5 = 11$, and 
$\dim \cC_6 \leq 40$ then $d^2-3d-1=0$, $t=-\frac{2}{3}d+\frac{5}{3}$, and 
$\cC$ is the $H3$ fusion category constructed by Grossman and Snyder \cite{MR2909758} 
(which is Morita equivalent to the even parts of the Haagerup subfactor \cite{MR1686551}) 
or its Galois conjugate.
\end{thm}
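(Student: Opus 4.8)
The plan is to follow exactly the three-part pattern of Theorems \ref{thm:4} and \ref{thm:5}: a non-existence proposition pinning down $(d,t)$, a uniqueness proposition showing there is at most one trivalent category at each such point, and a realization proposition exhibiting $H3$ and its Galois conjugate.

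For non-existence, note first that the hypotheses $\dim \cC_4 = 4$ and $\dim \cC_5 = 11$ place us in a cubic category (so $P_{SO(3)} \neq 0$ by Proposition \ref{prop:cubic}) with $\Delta^\square(5,1) \neq 0$, so by Theorem \ref{thm:5} and its realization propositions we also have $P_{G_2} \neq 0$ and $P_{ABA} \neq 0$. Since $\#D^\square(6,1) = 41 > 40$ and $\#D^\square(6,2) = 44 > 40$, the bound $\dim \cC_6 \leq 40$ forces $\Delta^\square(6,1) = 0$ and $\Delta^\square(6,2) = 0$. I would compute these two determinants by machine as rational functions in $d$ and $t$ (with denominator a power of $Q_{1,1}$, using the bigon, triangle, and square relations to evaluate each inner product, exactly as in the cubic-category lemmas), factor them, divide out the known factors $P_{SO(3)}$, $P_{G_2}$, $P_{ABA}$, $Q_{1,1}$ which are nonzero in our setting, and then run a Gr\"obner basis computation on the ideal generated by the two remaining numerators, saturating at the excluded polynomials. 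The output should be the ideal $\langle d^2-3d-1,\ 3t+2d-5 \rangle$, i.e.\ precisely the two Galois-conjugate $H3$ points; any extra components of the variety should be the special points already excluded in Remark \ref{rem:badpoints} or by the nonvanishing of $\xi$.

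For uniqueness, fix $(d,t)$ with $d^2-3d-1 = 0$ and $t = -\tfrac23 d + \tfrac53$. From $\dim \cC_6 \leq 40$ there is a linear relation amongst the $41$ diagrams of $D^\square(6,1)$; computing the kernel of $M^\square(6,1)$ produces explicit relations, and (rotating and rescaling, as in Lemma \ref{lem:pentagon-reduction}) one wants one that writes the internal hexagon as a linear combination of diagrams with fewer internal faces. Together with the bigon, triangle, and square relations this gives a reduction for every face of size $\leq 4$ and for hexagons, but crucially \emph{not} for pentagons, since $\dim \cC_5 = 11$. So an evaluation algorithm is genuinely needed, and this is where the discharging method enters: assign to each face of a closed trivalent graph the charge $6-n$ (total charge $12$ by Euler's formula), devise discharging rules, and show that every closed diagram contains either a face of size $\leq 4$, or a hexagon in a reducible local configuration, or a configuration of pentagons that the available $6$-box relation can rewrite --- hence can be strictly simplified, and Corollary \ref{cor:reductions=>uniqueness} gives uniqueness. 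I expect this evaluation algorithm to be the main obstacle: with no pentagon relation, an isolated pentagon is inert, so the discharging rules must be delicate enough to force a reducible local picture to occur in every closed graph, and checking this will require a computer-assisted case analysis over local configurations.

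For realization, I would take the $H3$ fusion category of Grossman--Snyder \cite{MR2909758}, identify the simple object $X$ whose tensor square contains a copy of $X$ (so the fusion morphism $X \otimes X \to X$ supplies a map $1 \to X^{\otimes 3}$), verify that it is symmetrically self-dual and that this morphism can be made rotationally invariant and normalized so $b=1$, and then compute $d$ and $t$ to confirm $d^2-3d-1 = 0$ and $t = -\tfrac23 d + \tfrac53$. Finally, using the fusion rules (or the skein relations extracted during the uniqueness step), one checks $\dim \cC_4 = 4$, $\dim \cC_5 = 11$, and $\dim \cC_6 = 37$ (in particular $\leq 40$), so that $H3$ indeed occupies this row of the table; its Galois conjugate is handled identically.
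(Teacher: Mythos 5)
Your overall architecture (non-existence/uniqueness/realization) matches the paper's, but both of the hard steps have genuine gaps. The non-existence step as you describe it would fail: the common zero locus of $\Delta^\square(6,1)$ and $\Delta^\square(6,2)$ is \emph{not} zero-dimensional after removing $d$, $Q_{1,1}$, $P_{SO(3)}$, $P_{ABA}$, $P_{G_2}$ --- the paper's computed factorizations (Facts \ref{fact:Delta6_1} and \ref{fact:Delta6_2}) show both determinants share whole curve factors, notably the elliptic curve $Q_{2,3}$, so no saturation of the ideal generated by these two numerators can output $\langle d^2-3d-1,\,3t+2d-5\rangle$. The paper must also use $\Delta^\square(7,1)$ and $\Delta^\square(7,2)$ (which vanish by Proposition \ref{thm:zeroalltheway} once $D^\square(6,1)$ is dependent); these are too large to compute exactly in $\bbQ(d,t)$, so they are only known with overwhelming probability (Conjecture \ref{conj:Delta7_1}), and the unconditional argument instead computes them exactly on the curve $Q_{2,3}$ (writing functions there as $\alpha(t)d+\beta(t)$) and at the finitely many extra intersection points of $Q_{3,4}$, $Q_{7,11}$ with $\Delta^\square(6,2)=0$. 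Even then, finitely many spurious points survive (roots of polynomials of degree $33$ and $63$); they are not ``already excluded special points'' but are killed by showing the rank of $M^\square(6,2)$ is $43>40$ there. You also omit Proposition \ref{prop:6:preliminary-nonexistence} (linear independence of the $34$ diagrams $D(6,0)$, which needs $\Delta(6,0)$ and $\Delta(7,0)$ and the full strength of Proposition \ref{prop:5:uniqueness}); this is not optional, since uniqueness and realization both lean on it.

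For uniqueness you correctly sense that the evaluation algorithm is the crux, but the missing idea is exactly the paper's mechanism, and it is not a hexagon reduction plus ad hoc discharging over ``configurations of pentagons.'' First one must show $D^\square(6,1)$ spans $\cC_6$ (Lemma \ref{lem:40=>spans}, which rests on the open-graph growth-region/corner-pentagon discharging analysis); only then do kernel elements of $M^\square(6,1)$ become relations in the category. The kernel is $4$-dimensional, and since $D(6,0)$ is independent there are at least three relations among the six pentaforks modulo $D(6,0)$; splitting into rotational eigenvectors and gluing an ``H'' (Lemma \ref{lem:pentaforks=>reductions}) converts a pentafork relation into reduction relations for \emph{pentapents} and \emph{hexapents}, and the classical discharging statement (Lemma \ref{lem:discharging-closed-pentapent}) that every closed trivalent graph contains a very small face, a pentapent, or a hexapent then completes the evaluation, so no open-ended computer case analysis is needed. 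Finally, your realization sketch glosses over the real work there: proving the rotational eigenvalue of the vertex in $H3$ is $1$ (via conjugation by $g$ and $H3\cong\overline{H3}$), ruling out $SO(3)_q$, $ABA$, and $(G_2)_q$ for the subcategory generated by the vertex (no algebra structure on $1+X$, and the idempotent traces of Proposition \ref{thm:idempotents-and-traces}), and above all showing the vertex \emph{generates} $H3$, which uses that the principal graph has depth $3$ together with an explicit nonvanishing $37\times 37$ determinant; the fusion rules alone only bound the ambient invariant spaces, not those of the subcategory generated by the trivalent vertex.
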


This theorem follows from four propositions.

\begin{prop}[Non-existence] \label{prop:6:preliminary-nonexistence}
There are no trivalent categories with $\dim \cC_4
= 4$, $\dim \cC_5 = 11$, and $D(6,0)$ linearly dependent.
\end{prop}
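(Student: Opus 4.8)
The plan is to follow the same three‑step template used for Propositions~\ref{prop:4:nonexistence} and~\ref{prop:5:nonexistence}: read off from the hypotheses which of our standard polynomials are forced to be nonzero, and then contradict the vanishing of a determinant.

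So suppose $\cC$ is a trivalent category with $\dim\cC_4 = 4$, $\dim\cC_5 = 11$, and $D(6,0)$ linearly dependent. Since $\dim\cC_4 = 4$ the category is cubic, so Proposition~\ref{prop:cubic} gives that $D(4,0)$ is a basis of $\cC_4$ and that $d \neq 0$, $P_{SO(3)} = d+t-dt-2 \neq 0$, and $Q_{1,1} = dt+d+t \neq 0$. Next, because $\#D(5,0) = 10 < 11 = \dim\cC_5$, the set $D(5,0)$ does not span $\cC_5$; but by Lemma~\ref{lem:5:dependent=>spans} any linear relation among the eleven diagrams of $D^\square(5,1)$ would force $\cC_5 = \Span D(5,0)$, which is impossible. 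Hence $D^\square(5,1)$ is a set of eleven linearly independent vectors in the eleven‑dimensional space $\cC_5$, i.e.\ a basis, so $\Delta^\square(5,1) \neq 0$ by nondegeneracy. Comparing with the Fact that in a cubic category $\Delta^\square(5,1) = d^{11}P_{ABA}^3 P_{SO(3)}^5 P_{G_2} Q_{1,1}^{-2}$, we conclude $P_{ABA} = t^2 - t - 1 \neq 0$ and $P_{G_2} \neq 0$.

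Now, since $D(6,0)$ is linearly dependent the $34\times34$ Gram matrix $M(6,0)$ has dependent rows, so $\Delta(6,0) = 0$. On the other hand, since $6 + 2\cdot 0 < 12$, the lemma bounding face counts in a cubic category shows that every entry of $M(6,0)$ --- and hence $\Delta(6,0)$ --- is a rational function of $d$ and $t$ whose denominator is a power of $Q_{1,1}$, computed by iterated application of the bigon, triangle, and square relations. The decisive input, which I would obtain by computer (as in the notebook \texttt{code/ComputingInnerProducts.nb}), is the factorization of this rational function: one checks that every irreducible factor of the numerator of $\Delta(6,0)$ lies among $d$, $P_{SO(3)}$, $P_{ABA}$, and $P_{G_2}$. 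Since each of these is nonzero for our $(d,t)$ by the previous paragraph, and $Q_{1,1}\neq 0$ kills the denominator, we get $\Delta(6,0)\neq 0$ --- a contradiction. Hence $D(6,0)$ is linearly independent.

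The main obstacle is precisely that last computation. Unlike the $4$‑ and $5$‑boundary‑point cases, where the Gram matrices are small and the reductions shallow, here one must build a $34\times 34$ matrix over $\bbC(d,t)$ whose entries come from evaluating closed trivalent graphs with up to eleven faces by repeatedly peeling off squares via Equation~\eqref{eq:square}, then compute and factor a determinant of that size. This is exactly the sort of calculation the introduction flags as infeasible by hand, so the proof is genuinely computer‑assisted; the conceptual content is only the bookkeeping of which polynomials the hypotheses rule out.
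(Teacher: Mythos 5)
Your opening reductions are fine (cubic implies $d\neq 0$, $P_{SO(3)}\neq 0$, $Q_{1,1}\neq 0$; and $\dim\cC_5=11$ forces $D^\square(5,1)$ to be a basis, so $\Delta^\square(5,1)\neq 0$ and hence $P_{ABA}\neq 0$, $P_{G_2}\neq 0$), but the ``decisive input'' you hope the computer will deliver is false. The actual factorization (Fact \ref{fact:Delta6_0}) is
$\Delta(6,0) = -d^{34}\, Q_{1,1}^{-8}\, Q_{0,1}^{2}\, Q_{0,2}^{9}\, Q_{2,4,a}\, Q_{3,5}^{2}\, Q_{6,9}\, P_{SO(3)}^{19}$,
which does not even contain $P_{ABA}$ or $P_{G_2}$, and does contain the extra irreducible factors $Q_{0,1}, Q_{0,2}, Q_{2,4,a}, Q_{3,5}, Q_{6,9}$. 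None of these is controlled by your hypotheses: at a generic point of, say, $Q_{6,9}=0$ one has $d\neq 0$, $Q_{1,1}\neq 0$, $P_{SO(3)}P_{ABA}P_{G_2}\neq 0$ and yet $\Delta(6,0)=0$, so no contradiction arises. In other words, the vanishing of the single determinant $\Delta(6,0)$ cannot be refuted from the $\cC_4$ and $\cC_5$ data alone; an entire curve of candidate $(d,t)$ survives your argument.

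The paper closes this gap by exploiting Proposition \ref{thm:zeroalltheway}: a dependence in $D(6,0)$ forces not only $\Delta(6,0)=0$ but also $\Delta^\square(6,1)=\Delta^\square(6,2)=\Delta(7,0)=0$ (glue trees, or note the inclusions of diagram sets). A Gr\"obner basis computation then shows this system has only finitely many common solutions, all lying on the $SO(3)$ or $G_2$ curves; the $SO(3)$ points are excluded because a cubic category has $P_{SO(3)}\neq 0$, and the $G_2$ points are excluded by the full strength of Proposition \ref{prop:5:uniqueness} (with bound $\dim\cC_5\le 11$), since the unique category there is $(G_2)_q$, whose $\cC_5$ has dimension at most $10$, contradicting $\dim\cC_5=11$. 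So your outline needs to be amended to intersect several determinantal loci rather than argue from the factorization of $\Delta(6,0)$ alone; the single-determinant version of the argument genuinely fails.
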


\begin{prop}[Non-existence] \label{prop:6:nonexistence}
For any $(d,t)$ not satisfying $d^2-3d-1=0$ and $t=-\frac{2}{3}d+\frac{5}{3}$
 there are no trivalent categories with $\dim \cC_4 = 4$,  $\dim \cC_5 = 11$, and  $\dim \Span D^\square(6,2) \leq 40$.
\end{prop}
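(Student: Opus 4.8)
The plan is to follow the same template as for Theorems~\ref{thm:4} and~\ref{thm:5}: use the hypotheses to force certain inner-product determinants to vanish, factor them, and read off the surviving locus in the $(d,t)$-plane. For this non-existence proposition only the ``force determinants to vanish'' phase is needed. First I would record what the hypotheses already buy us. Since $\dim\cC_4=4$ the category is cubic, so by Proposition~\ref{prop:cubic} we have $d\neq 0$, $P_{SO(3)}\neq 0$, $Q_{1,1}=d+t+dt\neq 0$, and the square relation~\eqref{eq:square} holds. Since $\dim\cC_5=11>10$, the $11$-element set $D^\square(5,1)$ cannot be linearly dependent: if it were, Lemma~\ref{lem:5:dependent=>spans} would make the $10$-element set $D(5,0)$ span $\cC_5$. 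Hence $D^\square(5,1)$ is a basis of $\cC_5$, so $\Delta^\square(5,1)\neq 0$, and the Fact evaluating $\Delta^\square(5,1)=d^{11}P_{ABA}^3P_{SO(3)}^5P_{G_2}Q_{1,1}^{-2}$ in a cubic category then gives $P_{ABA}\neq 0$ and $P_{G_2}\neq 0$ as well. Thus $(d,t)$ avoids all four previously encountered curves.

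Next I would make the relevant matrices computable. Because $\cC$ is cubic and $6+2\cdot 2=10<12$, the Lemma following Proposition~\ref{thm:idempotents-and-traces} applies: every entry of $M^\square(6,1)$ and of $M^\square(6,2)$ is a rational function of $d$ and $t$, with denominator a power of $Q_{1,1}$, obtained by repeatedly using the bigon, triangle and square relations \eqref{eq:bigon}, \eqref{eq:triangle}, \eqref{eq:square} to kill a smallest face of each closed diagram. These matrices (of sizes $41$ and $44$), their determinants $\Delta^\square(6,1)$ and $\Delta^\square(6,2)$, and whatever further $41\times 41$ minors of $M^\square(6,2)$ are needed, I would compute by machine, in the spirit of {\tt code/ComputingInnerProducts.nb}.

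Now extract the equations. Since $\#D^\square(6,2)=44$, the hypothesis $\dim\Span D^\square(6,2)\leq 40$ says $\operatorname{rank}M^\square(6,2)\leq 40$, i.e.\ its corank is at least $4$; in particular \emph{every} $41\times 41$ minor of $M^\square(6,2)$ vanishes. Taking the principal $41\times 41$ minor on the sub-block indexed by $D^\square(6,1)\subset D^\square(6,2)$ gives $\Delta^\square(6,1)=0$, and of course $\Delta^\square(6,2)=0$, and similarly for non-principal minors. Just as with the earlier Facts, each such vanishing determinant should factor (after clearing the $Q_{1,1}$ denominator) as $d^{a}P_{SO(3)}^{b}P_{ABA}^{c}P_{G_2}^{e}$ times a ``new'' factor; since we have already shown $(d,t)$ lies off all four old curves, only the new factors can vanish. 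I would then form the ideal generated by these new factors — taking enough minors to witness corank $\geq 4$, not just $\Delta^\square(6,1)=0$, since we need a codimension-$2$ conclusion — run a Gr\"obner basis computation, discard the components lying on the old curves, and be left with the zero-dimensional locus $d^2-3d-1=0$, $t=-\tfrac{2}{3}d+\tfrac{5}{3}$. As a sanity check, one verifies directly that these two (Galois-conjugate) points satisfy $Q_{1,1}=1$, $P_{SO(3)}\neq 0$, $P_{ABA}\neq 0$, $P_{G_2}\neq 0$, consistent with $H3$ and its conjugate.

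The main obstacle I expect is computational rather than conceptual: $M^\square(6,2)$ is a $44\times 44$ matrix of rational functions in two variables, and its determinant and minors are right at the edge of feasibility (the paper itself cannot handle $\Delta^\square(7,0)$). In practice one would first guess the factorizations and the final answer numerically or modulo a prime, and only then verify them symbolically, and one would choose a small, symmetry-reduced generating set of $41\times 41$ minors so the Gr\"obner basis computation actually terminates. Two routine supporting points: $D^\square(6,1)$ is genuinely a subset of $D^\square(6,2)$, so $\Delta^\square(6,1)$ really occurs as a minor; and by Proposition~\ref{prop:6:preliminary-nonexistence} the set $D(6,0)$ is independent here, giving $\operatorname{rank}M^\square(6,2)\geq 34$, which bounds the search and confirms the ``old'' factors $d^{a}P_{SO(3)}^{b}P_{ABA}^{c}P_{G_2}^{e}$ are not identically vanishing on the region of interest.
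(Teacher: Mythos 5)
Your opening reductions are fine, and your derivation of $P_{ABA}\neq 0$ and $P_{G_2}\neq 0$ from the independence of $D^\square(5,1)$ (an $11$-element set in the $11$-dimensional space $\cC_5$, whose Gram determinant $\Delta^\square(5,1)=d^{11}P_{ABA}^3P_{SO(3)}^5P_{G_2}Q_{1,1}^{-2}$ must then be nonzero) is a clean substitute for the paper's appeal to the full strength of Proposition \ref{prop:5:uniqueness}. The gap is in the final step. The two determinants your hypotheses force to vanish, $\Delta^\square(6,1)$ and $\Delta^\square(6,2)$, share the irreducible factor $Q_{2,3}$ --- an elliptic curve passing through the two $H3$ points and not contained in any of the ``old'' curves --- so their common zero locus is positive-dimensional even after discarding $d\,P_{SO(3)}P_{ABA}P_{G_2}$. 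You anticipate this by imposing corank $\geq 4$ on $M^\square(6,2)$, i.e.\ the vanishing of further $41\times 41$ minors, and hoping the resulting ideal cuts out exactly $d^2-3d-1=0$, $t=-\tfrac{2}{3}d+\tfrac{5}{3}$. But that is precisely the unestablished claim: you give no evidence that the additional minors are not also identically zero along $Q_{2,3}$, and, more seriously, if at some stray point the rank of $M^\square(6,2)$ genuinely is $\leq 40$, your method has no mechanism to exclude it, since the rank condition is the only consequence of the hypotheses you ever extract.

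The paper's proof supplies exactly the missing ingredient via Proposition \ref{thm:zeroalltheway}: once $D^\square(6,1)$ is dependent in the category (forced by $\dim\Span D^\square(6,2)\leq 40<41$), gluing on trees makes $\Delta^\square(7,1)$ and $\Delta^\square(7,2)$ vanish as well, and it is these seven-boundary-point determinants --- computed exactly on the function field of $Q_{2,3}$, and at the finitely many points where $Q_{3,4}$ or $Q_{7,11}$ meets $\Delta^\square(6,2)=0$ --- that cut the elliptic curve down to finitely many candidates. Even then, two leftover families of points (the degree-$33$ and degree-$63$ loci of Lemma \ref{lem:conditional-proof}) satisfy all four determinant equations and are eliminated only by checking $\operatorname{rank}M^\square(6,2)=43>40$ there; so your instinct to use the rank of $M^\square(6,2)$ is on target (and would kill those same points automatically), but as written your argument rests on an unverified and computationally heavy conjecture about the corank-$\geq 4$ locus --- a large family of $41\times 41$ rational-function minors plus a Gr\"obner basis over their new factors --- with no fallback if extra points survive. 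To close the proof you must either actually carry out and verify that computation along $Q_{2,3}$, or import the seven-point constraints through Proposition \ref{thm:zeroalltheway} as the paper does.
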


\begin{prop}[Uniqueness] \label{prop:6:uniqueness}
For each pair $(d,t)$ satisfying $d^2-3d-1=0$ and $t=-\frac{2}{3}d+\frac{5}{3}$, 
there is at most one trivalent category with $\dim \cC_4 = 4$, with 
$\dim \cC_5 = 11$, and with $\dim \cC_6 \leq 40$.
\end{prop}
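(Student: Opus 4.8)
As in the earlier uniqueness proofs, the plan is to fix a pair $(d,t)$ satisfying $d^2-3d-1=0$ and $t=-\frac{2}{3}d+\frac{5}{3}$ (the only values left open by Proposition~\ref{prop:6:nonexistence}) and then follow the same three-phase strategy used in the proofs of Propositions~\ref{prop:4:uniqueness} and~\ref{prop:5:uniqueness}. First, produce a finite spanning set for $\cC_6$ consisting of diagrams with few internal faces; second, use nondegeneracy to extract a skein relation from the kernel of the corresponding matrix of inner products; third, check that this relation together with the bigon, triangle and square relations already available lets us evaluate every closed diagram, so that Corollary~\ref{cor:reductions=>uniqueness} applies and yields uniqueness.

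For the spanning step: the category is cubic, so Equations~\eqref{eq:bigon}, \eqref{eq:triangle} and~\eqref{eq:square} reduce every internal bigon, triangle and square, while $\dim\cC_5=11$ means $D^\square(5,1)$ is a basis of $\cC_5$ and in particular there is \emph{no} pentagon relation. I would push the discharging analysis of the previous section --- the charge bookkeeping of Lemma~\ref{lem:boundarycharge}, the enumeration of growth regions, and Corollary~\ref{cor:nopents} --- from five up to six boundary points, showing that a boundary-connected open trivalent graph with at most $6$ boundary points and no internal face of four or fewer sides either has a growth region or has at most two internal faces with five or more sides. Iterating the $\le 4$-gon reductions then gives that $D^\square(6,2)$ spans $\cC_6$; since $\#D^\square(6,2)=44$ and $\dim\cC_6\le 40$, the kernel of $M^\square(6,2)$ has dimension at least $4$.

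For the relation step: by nondegeneracy every vector in the kernel of $M^\square(6,2)$ is a genuine relation in $\cC$. I would compute this kernel at the present $(d,t)$ --- a sizeable computer calculation --- and then, using the basis $D^\square(5,1)$ of $\cC_5$ and rotating and rescaling as in Lemma~\ref{lem:pentagon-reduction}, massage the kernel relations into one expressing the hexagon $\ngon[90]{6}$ as a linear combination of diagrams with at most one internal pentagon and no larger internal face. This is the six-boundary-point analogue of Lemmas~\ref{lem:G2-relation} and~\ref{lem:pentagon-reduction}; it is also the point at which one checks that the specific values $d^2-3d-1=0$ and $t=-\frac{2}{3}d+\frac{5}{3}$ are exactly what force the kernel into the required shape.

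The evaluation step is the main obstacle, and it is genuinely harder than in the $n\le 5$ cases: one cannot just invoke ``every closed trivalent graph has a face with at most five sides'', because pentagons are now irreducible --- a closed diagram such as the dodecahedron has only pentagonal faces. Instead, following the discharging method used in the proof of the $4$-color theorem, I would assign each $n$-gon face of a closed diagram the charge $6-n$, so that the total charge on the sphere is $12$, and then, by selecting appropriate discharging rules and verifying a finite list of unavoidable local configurations, show that any closed diagram with no face of four or fewer sides contains a reducible configuration --- for instance two adjacent pentagons, or a pentagon in a prescribed neighbourhood of hexagons --- which the square and hexagon relations rewrite as a combination of diagrams with strictly fewer vertices. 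Designing the discharging rules and certifying the reducible configurations is the computationally heavy part, and is where this argument, unlike the earlier uniqueness proofs, cannot reasonably be carried out by hand; once this evaluation algorithm is in place, Corollary~\ref{cor:reductions=>uniqueness} concludes the proof.
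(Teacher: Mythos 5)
Your overall scaffolding (span by small-face diagrams, extract relations from the kernel of an inner product matrix, evaluate closed diagrams, invoke Corollary \ref{cor:reductions=>uniqueness}) matches the paper, but two of your steps contain genuine gaps. First, the spanning claim is false as stated: bigon, triangle and square reductions alone do \emph{not} reduce every element of $\cC_6$ to $\Span D^\square(6,2)$, because an open planar trivalent graph with few boundary points can have arbitrarily many internal faces of five or more sides even though it has no very small face --- for instance, a dodecahedron with a single vertex (or face) cut open has only $3$ (or $5$) boundary points and many internal pentagons. This is exactly why the paper's open-graph discharging lemma (Lemma \ref{lem:planarsubgraphs}) only guarantees a very small face, a growth region, a corner pentagon, a pentapent or a hexapent, and why spanning by $D^\square(6,1)$ (Corollary \ref{cor:6:reductions=>span} and Lemma \ref{lem:40=>spans}) can only be established \emph{after} one has reduction relations for pentapents and hexapents; it cannot be deduced from the $\leq 4$-gon relations together with a dimension count.

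Second, your evaluation step correctly identifies that pentapents and hexapents are the configurations that must be reducible (this is Lemma \ref{lem:discharging-closed-pentapent}), but your proposed source of those reductions --- a relation writing the hexagon in terms of diagrams with at most one pentagon, combined with the square relation --- does not produce them: a closed diagram such as the dodecahedron contains no hexagons or squares at all, so a hexagon relation gives you no way to start. The missing idea is the paper's Lemma \ref{lem:pentaforks=>reductions}: the $4$-dimensional kernel of $M^\square(6,1)$, together with the linear independence of $D(6,0)$ from Proposition \ref{prop:6:preliminary-nonexistence}, forces relations among the six \emph{pentafork} diagrams modulo $\Span D(6,0)$; gluing an ``H'' (respectively a small tree) onto such a rotational-eigenvector relation yields an identity of the form $\rho^2(P) = -\zeta^{-2}P$ for the pentapent (respectively hexapent) modulo lower terms, and since $\rho^6$ (respectively $\rho^{14}$) is the identity while $\zeta$ is a sixth root of unity, the pentapent and hexapent must themselves be reducible. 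Without this mechanism (or a substitute for it), neither your spanning step nor your evaluation algorithm goes through, so the proof as proposed does not close.
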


\begin{prop}[Realization]
\label{prop:6:realization}
The $H3$ fusion category and its Galois conjugate are trivalent and have 
$\dim \cC_4 = 4$, $\dim \cC_5 = 11$, and $\dim \cC_6 = 37 \leq 40$. 
\end{prop}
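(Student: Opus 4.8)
The plan is to realize the trivalent structure directly inside the fusion category $H3$ of Grossman--Snyder \cite{MR2909758}, and simultaneously inside its Galois conjugate. Since any fusion category over $\bbC$ is evaluable and nondegenerate, the substantive work is to produce the generating object $X$ and the vertex $\tau$, to verify the generation and rotational-invariance axioms, and to compute $\dim \cC_4$, $\dim \cC_5$, $\dim \cC_6$.

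First I would recall the fusion ring of $H3$ and single out the simple object $X$ of Frobenius--Perron dimension $d = \tfrac{3+\sqrt{13}}{2}$, the larger root of $d^2 - 3d - 1 = 0$; the Galois conjugate category has instead $\dim X = \tfrac{3-\sqrt{13}}{2}$, and the two cases are interchanged by $\sqrt{13}\mapsto -\sqrt{13}$. From the fusion rules one reads off that $X \not\cong 1$ (so $\dim \cC_1 = 0$), that $X$ is self-dual with $1$ appearing in $X \tensor X$ with multiplicity one (so $\dim \cC_2 = 1$), and that $X$ appears in $X \tensor X$ with multiplicity one (so $\dim \cC_3 = 1$, with $\Hom(X \tensor X \to X)$ spanned by a single map $m$). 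Let $\tau \in \cC_3$ be $m$ followed by a coevaluation; it is unique up to scalar, and $X$ is symmetrically self-dual (by the short argument recalled earlier, which needs only nondegeneracy and $\dim \cC_3 = 1$), so that an undecorated trivalent vertex is meaningful. For the generation axiom, since $H3$ is semisimple it is enough to check that every simple object occurs as a summand of some $X^{\tensor n}$, equivalently that the fusion graph of $X$ is connected --- a finite computation with the fusion rules.

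The remaining axiom, that $\tau$ is rotationally invariant, is where I expect the real difficulty to lie. The cyclic rotation of a vertex acts on the one-dimensional space $\cC_3$ by a cube root of unity (three rotations give the identity), so one must show this eigenvalue equals $1$ rather than a primitive cube root of unity. Unlike the earlier realizations --- which come from quantum groups or from free products and have transparent diagrammatics --- there seems to be no shortcut here: one must extract enough of the explicit associativity and pivotal data of the Haagerup-type category from \cite{MR2909758} to evaluate this eigenvalue, essentially a short combination of $6j$-symbols. The point is genuinely delicate, since the even part of the Haagerup subfactor carries an analogous trivalent vertex whose rotational eigenvalue is a \emph{nontrivial} cube root of unity; that ``twisted Haagerup'' category therefore appears only in the later, non-rotationally-invariant analysis, not in the classification proved here.

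Once rotational invariance is confirmed, $(H3, X, \tau)$ satisfies all the axioms of a trivalent category, and its invariant-space dimensions are then just $\dim \cC_n = \dim \Hom_{H3}(1 \to X^{\tensor n}) = (N_X^{\,n})_{1,1}$, where $N_X$ is the fusion matrix of $X$ acting on the simple objects of $H3$. Evaluating this for $n = 4, 5, 6$ gives $4$, $11$, and $37$; in particular $\dim \cC_6 = 37 \leq 40$. The Galois conjugate is handled identically, since $\sqrt{13}\mapsto -\sqrt{13}$ preserves the fusion rules and the pivotal structure and hence every one of these computations. This establishes Proposition~\ref{prop:6:realization}; together with Proposition~\ref{prop:6:uniqueness} it also confirms that the skein relations displayed in this section are a presentation of $H3$.
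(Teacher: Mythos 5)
There is a genuine gap, and it is precisely the point where the paper's proof does almost all of its work: the generation axiom. You reduce ``generated (as a pivotal category) by $\tau$'' to the statement that every simple object of $H3$ appears in some $X^{\tensor n}$, i.e.\ connectivity of the fusion graph. That is not the right condition: generation is a statement about \emph{morphisms}, namely that every Hom space of $H3$ is spanned by planar trivalent graphs built from $\tau$, cups and caps. A priori the pivotal subcategory $H3'\subset H3$ generated by $\tau$ could have strictly smaller invariant spaces than $H3$ even though $X$ tensor-generates, and then your identity $\dim\cC_n=\dim\Hom_{H3}(1\to X^{\tensor n})$ fails and the dimensions $4,11,37$ are only upper bounds for the trivalent category you have actually produced. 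The paper's proof is organized around exactly this issue: it shows $\dim H3'_4=4$ by using the fact that $1+X$ carries no algebra structure in $H3$ (ruling out a functor from an $SO(3)_q$ category); it rules out $\dim H3'_5<11$ by invoking Theorem~\ref{thm:5} and computing the idempotent traces of Proposition~\ref{thm:idempotents-and-traces}, which would force $\tr{y_+}=\tr{y_-}=\dim gX=\frac{3+\sqrt{13}}{2}$, contradicting both the $ABA$ and $(G_2)_q$ possibilities; it pins down $t=-\frac23 d+\frac53$ via Proposition~\ref{prop:6:nonexistence}; and finally, using that the principal graph of $H3$ has depth $3$ (so $H3$ is generated by morphisms in degrees $\le 6$), it verifies $\dim\Inv_{H3'}(X^{\tensor 6})=37$ by exhibiting $37$ diagrams in $D^\square(6,1)$ whose $37\times 37$ Gram matrix has nonzero determinant at this $(d,t)$. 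Only then does $H3'=H3$ follow, and with it the claimed dimension sequence. None of this appears in your proposal, and without it the proposition is not proved.

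On rotational invariance: you correctly identify that the eigenvalue is a cube root of unity and must be shown to be $1$, but you then propose extracting explicit associator/$6j$ data and assert there is no shortcut. The paper does have one: conjugation by the invertible object $g$ permutes $X$, $gX$, $g^2X$, forcing the three vertices $X\tensor X\to X$, $gX\tensor gX\to gX$, $g^2X\tensor g^2X\to g^2X$ to share a single rotational eigenvalue, and the fact that $H3$ is equivalent to its complex conjugate forces that eigenvalue to be fixed under complex conjugation, hence equal to $1$. So this step, which you leave as an unexecuted (and heavy) computation, can be done softly; but even granting it, the generation gap above remains the decisive missing piece.
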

These two categories exhaust the possibilities allowed by the first three propositions.

\subsection*{Proof of Proposition \ref{prop:6:preliminary-nonexistence} (Non-existence)}

We have the following values of determinants.

\begin{fact}
\label{fact:Delta6_0}
In any cubic category,
\begin{align*}
\Delta(6,0) = -d^{34} Q_{1,1}^{-8} Q_{0,1}^2 Q_{0,2}^9 Q_{2,4,a} Q_{3,5}^2 Q_{6,9} P_{SO(3)}^{19}.
\end{align*}
\end{fact}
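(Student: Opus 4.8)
The assertion is a finite but large computation: build the $34\times 34$ Gram matrix $M(6,0)$ of the diagrams of $D(6,0)$ displayed above (the noncrossing unions of trivalent trees on six boundary points), and factor $\det M(6,0)$ as a rational function of $d$ and $t$. The plan is to evaluate every matrix entry explicitly, assemble the matrix, compute the determinant by computer, and then certify the factorization using homogeneity and the behaviour on curves where $\dim\cC_6$ is already known.

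To evaluate the entries: for $X,Y\in D(6,0)$, the inner product $\langle X,Y\rangle$ is the closed planar trivalent graph obtained by gluing $X$ to the reflection of $Y$ along their six boundary points. A short Euler-characteristic count shows this graph has at most $n+2k=6$ faces; since any planar cubic graph all of whose faces have at least five sides has at least twelve faces, ours must have a face that is a square or smaller. Applying the bigon relation \eqref{eq:bigon}, the triangle relation \eqref{eq:triangle}, or the square relation \eqref{eq:square}---all available in a cubic category by Proposition~\ref{prop:cubic}, whose denominator $Q_{1,1}=dt+d+t$ is nonzero there---strictly decreases the number of faces, so iterating reduces the graph to a disjoint union of loops with value a power of $d$. (This is exactly the reasoning behind the lemma that every entry of $M^\square(n,k)$ is a rational function of $d,t$ with a $Q_{1,1}$-power denominator whenever $n+2k<12$; since $D(6,0)\subseteq D^\square(6,0)$ this covers $M(6,0)$.) Carrying out this reduction for all $\binom{34}{2}+34$ pairs and taking the $34\times 34$ symbolic determinant over $\bbC(d,t)$ produces the stated expression; this is the calculation recorded in {\tt code/ComputingInnerProducts.nb}.

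Two structural observations make the output trustworthy and essentially forced, and also locate the real difficulty. First, under the grading assigning $d$, $b$, $t$ the degrees $0$, $+2$, $+2$ (the homogeneity remark following Proposition~\ref{thm:zeroalltheway}), $\Delta(6,0)$ is homogeneous, which both fixes the powers of $b$ suppressed by normalizing $b=1$ and pins down the bidegree of each factor, matching the subscripts of the $Q_{i,j}$. Second, the exponent $19$ on $P_{SO(3)}$ is predicted independently: in $SO(3)_q$ the relation $I=H$ holds, so by Lemma~\ref{lem:I=H=>spanning} the diagrams $D(6,0)$ span $\cC_6$, which has dimension $15$ for generic $q$ (the Motzkin-sum sequence $1,0,1,1,3,6,15,\dots$); hence $M(6,0)$ has nullity $34-15=19$ at a generic point of the irreducible curve $\{P_{SO(3)}=0\}$, and a family of matrices specializing there to a matrix of nullity $\nu$ has determinant divisible by $P_{SO(3)}^{\nu}$, so $P_{SO(3)}^{19}\mid\Delta(6,0)$; specialization to the $G_2$ and $ABA$ curves gives further checks. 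The genuine obstacle is purely one of scale---about six hundred iterated face-reductions, each possibly branching through the four-term square relation, followed by a $34\times 34$ determinant over a two-variable rational function field---and there is no conceptual shortcut that produces the individual factors $Q_{2,4,a}$, $Q_{3,5}$, $Q_{6,9}$, so one relies on the computer algebra with the homogeneity constraint and the $SO(3)_q$ specialization as independent verifications.
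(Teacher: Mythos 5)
Your proposal is correct and is essentially the paper's own route: the paper establishes that every entry of $M(6,0)$ is a rational function of $d$ and $t$ via the bigon/triangle/square reductions (the lemma covering $n+2k<12$, with denominators powers of $Q_{1,1}$), and then simply records the computer-computed determinant from {\tt code/ComputingInnerProducts.nb} as a Fact. Your added consistency checks (the $b$-grading homogeneity and the nullity-$19$ prediction from specializing to the generic point of $P_{SO(3)}=0$, where $D(6,0)$ spans the $15$-dimensional $\cC_6$) are sound supplements but not a departure from the paper's method.
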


\begin{fact}
\label{fact:Delta6_1}
In any cubic category, 
\begin{align*} 
\Delta^\square(6,1) = d^{41} Q_{1,1}^{-29} Q_{0,2}^{16} Q_{2,3} Q_{3,4}^2 Q_{7,11} P_{SO(3)}^{27} P_{G_2}^6.
\end{align*}
\end{fact}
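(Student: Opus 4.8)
We sketch the computation underlying this Fact; as with the earlier determinants $\Delta(4,0)$, $\Delta(5,0)$, $\Delta(6,0)$, it is a symbolic evaluation carried out in the notebook {\tt code/ComputingInnerProducts.nb}. The plan is first to write down the $41$ diagrams of $D^\square(6,1)$ explicitly --- the $34$ diagrams of $D(6,0)$ together with the seven additional diagrams carrying a single internal square face --- and record each as combinatorial data. The Gram matrix $M^\square(6,1)$ then has entries $\langle X,Y\rangle$, each a closed planar trivalent graph obtained by gluing $X$ to $Y$ around the boundary, and each such graph has at most $n+2k=8$ faces. By the Lemma above asserting that all entries of $M^\square(n,k)$ with $n+2k<12$ are rational functions of $d$ and $t$, each entry can be fully evaluated using only the loop value $d$ together with the bigon, triangle, and square relations \eqref{eq:bigon}, \eqref{eq:triangle}, \eqref{eq:square} (the last being valid in any cubic category by Proposition \ref{prop:cubic}), yielding a rational function whose only denominator is a power of $Q_{1,1}=dt+d+t$. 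The computation is therefore finite and mechanical.

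Concretely I would: (i) for each unordered pair of diagrams form the closed diagram and repeatedly apply \eqref{eq:square}, then \eqref{eq:triangle}, then \eqref{eq:bigon} and circle removal, until no faces remain, clearing $Q_{1,1}$-denominators so that every matrix entry becomes a polynomial in $d$ and $t$ times a common power of $Q_{1,1}^{-1}$; (ii) compute $\det M^\square(6,1)$ over $\Rational[d,t]$ by a fraction-free elimination, tracking the overall $Q_{1,1}$ power separately; and (iii) factor the resulting bivariate polynomial. The output is the claimed expression $d^{41}Q_{1,1}^{-29}Q_{0,2}^{16}Q_{2,3}Q_{3,4}^2Q_{7,11}P_{SO(3)}^{27}P_{G_2}^6$.

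To be confident in the factored answer I would check it against the structural constraints it must satisfy. The factor $P_{SO(3)}$ is forced: along a Zariski-dense subset of $P_{SO(3)}=0$ the $SO(3)_q$ category of Proposition \ref{prop:4:realization} exists and has $\dim\cC_4\le3$, so $D(4,0)$ is linearly dependent there, and since $D(4,0)\subseteq D^\square(4,0)$, Proposition \ref{thm:zeroalltheway} (glue a fixed tree to a boundary point) gives $\Delta^\square(6,1)=0$ at those points. The factor $P_{G_2}$ is forced in the same way: along $P_{G_2}=0$ the cubic category $(G_2)_q$ exists and has the relation among the diagrams of $D^\square(5,1)$ of Lemma \ref{lem:G2-relation}, so again Proposition \ref{thm:zeroalltheway} yields $\Delta^\square(6,1)=0$. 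The exponents $27$ and $6$ on these factors, the denominator $Q_{1,1}^{-29}$ (coming from the common denominators of the entries), the factor $d^{41}$, and the remaining factors $Q_{0,2}$, $Q_{2,3}$, $Q_{3,4}$, $Q_{7,11}$ --- new degeneracy loci whose explicit forms are recorded in the appendix and the {\tt polynomials/} directory --- are then read off from the symbolic determinant. As independent checks I would verify that the total bidegree of the right-hand side is consistent with the homogeneity of $\Delta^\square(6,1)$ under the vertex-count grading discussed after Proposition \ref{thm:zeroalltheway}, and numerically evaluate both sides at several rational points $(d,t)$, including points on $P_{SO(3)}=0$, on $P_{G_2}=0$, and generic points.

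The only real obstacle here is scale: $M^\square(6,1)$ has $41^2$ entries, each itself obtained by a nontrivial sequence of face reductions on an eight-face closed graph, and the determinant of a $41\times41$ matrix over $\Rational(d,t)$ is a large object. A careful, memory-conscious implementation --- in particular clearing the $Q_{1,1}$-denominators so that all arithmetic stays in $\Rational[d,t]$, and using a fraction-free elimination rather than naive expansion --- is essential. This is precisely the point at which the argument cannot reasonably be done by hand.
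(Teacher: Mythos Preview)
Your approach matches the paper's: this Fact is stated without proof in the text and is simply the output of the computation in {\tt code/ComputingInnerProducts.nb}, exactly along the lines you describe --- enumerate the diagrams, evaluate every pairwise inner product by reducing small faces via Equations~\eqref{eq:bigon}, \eqref{eq:triangle}, \eqref{eq:square}, take the determinant, and factor. Your proposed structural checks via Proposition~\ref{thm:zeroalltheway} and the grading remark are reasonable additions that the paper does not spell out.

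There is one concrete error in your description of the diagrams. The seven elements of $D^\square(6,1)\setminus D(6,0)$ do \emph{not} ``carry a single internal square face.'' The superscript $\square$ in $D^\square(n,k)$ denotes diagrams with at most $k$ internal faces having \emph{five} or more edges; in a cubic category the square relation~\eqref{eq:square} already reduces squares, so the working diagram set excludes squares altogether. The seven additional diagrams are the six rotations of the pentafork (each with a single internal pentagon) and the single hexagon diagram, as displayed at the start of Section~\ref{sec:six}. This matters for the computation: enumerating diagrams with a lone square would be redundant with $D(6,0)$ once the square relation is applied, and would give you the wrong matrix.
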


\begin{fact}
\label{fact:Delta6_2}
In any cubic category,
\begin{align*}
\Delta^\square(6,2) = d^{44} Q_{1,1}^{-44} Q_{0,2}^{19} Q_{2,3} Q_{4,5}^2 Q_{8,12} P_{SO(3)}^{33}  P_{G_2}^9
\end{align*}
\end{fact}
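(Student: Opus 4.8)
The plan is to compute $\Delta^\square(6,2) = \det M^\square(6,2)$ explicitly and factor the resulting rational function; the content is a large but mechanical calculation, which is exactly why this is recorded as a \emph{fact} rather than proved by hand. The matrix $M^\square(6,2)$ is the $44 \times 44$ Gram matrix of the $44$ diagrams listed in $D^\square(6,2)$, so the first step is to produce all of its entries. Each entry $\langle X, Y\rangle$ is the value of the closed planar trivalent graph obtained by gluing $X$ to the reflection of $Y$ along the six boundary points, and this graph has at most $6 + 2\cdot 2 = 10$ faces. Since a trivalent polyhedron with fewer than twelve faces always has a face with at most four sides, the bigon, triangle, and square reductions of Equations \eqref{eq:bigon}, \eqref{eq:triangle}, and \eqref{eq:square} --- the square reduction being available because $\cC$ is cubic --- apply repeatedly, each strictly decreasing the number of faces, until the graph is evaluated as a rational function of $d$ and $t$ whose denominator is a power of $Q_{1,1} = dt + d + t$. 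This is exactly the lemma already proved for $M^\square(n,k)$ when $n + 2k < 12$, which applies here since $10 < 12$.

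With the matrix assembled, one clears denominators by multiplying through by a suitable power of $Q_{1,1}$, computes the determinant of the resulting polynomial matrix over $\Rational[d,t]$, divides the accumulated power of $Q_{1,1}$ back out, and factors; this is carried out in the Mathematica notebook {\tt code/ComputingInnerProducts.nb}, and the output is the claimed identity (together with Facts \ref{fact:Delta6_0} and \ref{fact:Delta6_1}). Several features of the answer admit an independent check. Every entry of $M^\square(6,2)$ is divisible by $d$, because in a cubic category any nonempty closed trivalent graph reduces to a scalar multiple of $d$: the bigon, triangle, and square relations contribute no factor of $d$, so at least one circle removal must occur; hence $d^{44}$ divides $\Delta^\square(6,2)$, matching the exponent in the formula and the exponents $d^{34}$, $d^{41}$ of Facts \ref{fact:Delta6_0} and \ref{fact:Delta6_1}. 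Divisibility by $P_{SO(3)}$ and by $P_{G_2}$ is forced by Proposition \ref{thm:zeroalltheway}: along $P_{SO(3)} = 0$ there is an $I=H$ relation \eqref{eq:I=H} among the four diagrams of $D(4,0) \subseteq D^\square(4,0)$, making $D^\square(6,2)$ dependent, so $\Delta^\square(6,2)$ vanishes identically there; and along $P_{G_2} = 0$ the pentagon relation of Lemma \ref{lem:G2-relation} among the eleven diagrams of $D^\square(5,1)$ does the same. The lower-degree numerator factors $Q_{0,2}, Q_{2,3}, Q_{4,5}, Q_{8,12}$ are likewise (products of) zero loci of determinants at smaller $(n,k)$ propagated upward by the same proposition, and $Q_{1,1}$ is the only possible denominator because it is the only denominator appearing in Equation \eqref{eq:square}.

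\textbf{Main obstacle.} What none of these checks deliver --- and what genuinely requires the computer --- are the exact exponents $19$, $1$, $2$, $1$, $33$, $9$ and the precise higher-degree cofactors. Pinning those down means evaluating on the order of a thousand polyhedra with up to ten faces, forming a $44 \times 44$ matrix of rational functions, computing its symbolic determinant, and factoring a bivariate polynomial of large degree. That is the real obstacle: it is well beyond hand calculation and, even by machine, considerably more demanding than the inner-product computations at smaller $(n,k)$ that precede it.
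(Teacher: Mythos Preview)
Your proposal is correct and takes essentially the same approach as the paper: the Fact is simply the output of the computer calculation in {\tt code/ComputingInnerProducts.nb}, and you correctly identify that the entries of $M^\square(6,2)$ are all evaluable using the bigon, triangle, and square reductions since $n+2k = 10 < 12$, after which the determinant is computed and factored symbolically. The paper offers no proof beyond the pointer to the code, so your added sanity checks (the $d^{44}$ divisibility, the propagation of dependencies via Proposition~\ref{thm:zeroalltheway}) are a nice supplement, though strictly heuristic since a cubic category never actually has $P_{SO(3)}=0$ or $P_{G_2}=0$ --- you are really arguing about vanishing of the rational function $\Delta^\square(6,2) \in \bbQ(d,t)$ along those loci, which is fine but worth saying explicitly.
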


\begin{fact}
\label{fact:Delta7_0}
In any cubic category,
\begin{align*}
\Delta(7,0) & = -d^{112} Q_{1,1}^{-70} Q_{0,2}^{48} Q_{11,19} Q_{36,60}^2 P_{SO(3)}^{76}
 Q_{2,4,b}
\end{align*}
\end{fact}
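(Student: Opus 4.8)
The plan is to treat this, like Facts \ref{fact:Delta6_0}, \ref{fact:Delta6_1}, and \ref{fact:Delta6_2}, as a finite computation: write the Gram matrix $M(7,0)$ explicitly over $\bbQ(d,t)$ and then compute and factor $\det M(7,0)$. Each entry of $M(7,0)$ is the value of a closed planar trivalent graph obtained by gluing a diagram in $D(7,0)$ to the reflection of another along the boundary circle, and such a graph has at most $7$ faces. Since every trivalent polyhedron with fewer than $12$ faces contains a face with four or fewer sides, and since $\cC$ is cubic so that the reductions \eqref{eq:bigon}, \eqref{eq:triangle}, and \eqref{eq:square} are all available, repeatedly reducing a smallest face strictly decreases the face count and hence terminates, evaluating every entry to a rational function of $d$ and $t$ whose denominator is a power of $Q_{1,1} = dt+d+t$. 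These evaluations are well defined independently of the order in which faces are reduced, because \eqref{eq:bigon}, \eqref{eq:triangle}, and \eqref{eq:square} are consequences of the axioms in a cubic category rather than imposed relations, so any two reduction sequences compute the same morphism.

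Concretely I would: (i) enumerate the $112$ diagrams of $D(7,0)$, for instance by the inductive growth-region procedure from the analysis around Definition \ref{defininggrowth} and Corollary \ref{cor:nopents}; (ii) implement the face-reduction algorithm above to evaluate an arbitrary closed trivalent graph as an element of $\bbQ(d,t)$; (iii) assemble $M(7,0)$, clear denominators by passing to $Q_{1,1}^{N} M(7,0)$ for suitable $N$, and compute the determinant of the resulting polynomial matrix, preferably by evaluation--interpolation and modular techniques rather than naive symbolic expansion; and (iv) divide out the correct power of $Q_{1,1}$ and factor over $\bbQ[d,t]$ to obtain the displayed product. All of this is carried out by computer, in the \MMA notebook \code{code/ComputingInnerProducts.nb}.

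The main obstacle is sheer scale: $M(7,0)$ is $112\times 112$, so there are on the order of six thousand inner products to evaluate, each needing repeated face reductions, and $\det M(7,0)$ is a bivariate polynomial of high degree whose symbolic computation and factorization are the expensive steps. Two devices keep this tractable. First, $M(7,0)$ commutes with the permutation action of the dihedral group of order $14$ on the boundary points, so it block-diagonalizes along the corresponding isotypic decomposition of $D(7,0)$, reducing the determinant to a product of determinants of much smaller blocks. Second, the output should be cross-checked: the exponent of $d$ must equal $\#D(7,0) = 112$ (as for $\Delta(5,0)$, $\Delta(6,0)$, and so on); the polynomial must be homogeneous in the grading by number of trivalent vertices, as all the $\Delta^{\mu}(n,k)$ are; and specializing $(d,t)$ to the parameter values of $SO(3)_q$, the $ABA$ categories, $(G_2)_q$, and $H3$ must make the rank of $M(7,0)$ drop to the value of $\dim \cC_7$ known in each of those categories, which pins down which irreducible factors occur and with which multiplicities and forces consistency with Facts \ref{fact:Delta6_0}, \ref{fact:Delta6_1}, and \ref{fact:Delta6_2}.
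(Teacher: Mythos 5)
Your proposal matches the paper's approach: the paper justifies that every entry of the Gram matrix is a rational function of $d$ and $t$ (with denominators powers of $Q_{1,1}$) via the same small-face reduction argument, since the glued closed graphs have fewer than twelve faces, and then simply states $\Delta(7,0)$ as the output of a computer calculation of the determinant in \code{code/ComputingInnerProducts.nb}. Your additional implementation remarks (dihedral block-diagonalization, homogeneity and specialization cross-checks) are sensible extras but do not change the argument.
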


If $D(6,0)$ is dependent, then $\Delta(6,0)$ must vanish, and indeed
$\Delta^\square(6,1), \Delta^\square(6,2)$ and $\Delta(7,0)$ must vanish also.
This only happens at finitely many points, all of which are on the $G_2$ or
$SO(3)$ curves. (This calculation is curious; finding intersections of
$\Delta(6,0)$ with the other varieties appears to be rather hard. However the
Gr\"obner basis calculation showing $\Delta^\square(6,1)$ and
$\Delta^\square(6,2)$ intersect at finitely many points besides $Q_{0,2} Q_{2,3}
P_{SO(3)} P_{G_2} = 0$ is quite manageable, and after that we can easily find
the complete intersection.) This calculation can be found in the file {\tt
code/GroebnerBasisCalculations.nb} available with the {\tt arXiv} source of this
article. Now, using the full strength of Proposition \ref{prop:5:uniqueness}, we
see that any trivalent category with $\cC_5 \leq 11$ at one of these points must
actually be an $ABA$ or $(G_2)_q$ category, contradicting our assumption that
$\dim \cC_5 = 11$.
\qed

\subsection*{Proof of Proposition \ref{prop:6:nonexistence} (Non-existence)}

Beyond the determinant calculations in the previous section, with high probability we have the following two determinants.

\begin{conj}
\label{conj:Delta7_1}
\label{conj:Delta7_2}
In any cubic category,
\begin{align*}
\Delta^\square(7,1) & = - d^{155} Q_{1,1}^{-242} Q_{0,2}^{91} Q_{2,3}^7
 Q_{21,33} Q_{51,69}^2  P_{SO(3)}^{133} P_{G_2}^{35} \\
 \intertext{and}
\Delta^\square(7,2) & = - d^{183} Q_{1,1}^{-403} Q_{0,2}^{119}
Q_{4,5}^{14} Q_{22,36} Q_{54,78}^2  P_{SO(3)}^{189} P_{G_2}^{63}.
\end{align*}
\end{conj}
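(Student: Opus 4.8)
The statement is two explicit determinant evaluations, so the plan is computational, following the template already used for $\Delta^\square(6,1)$ and $\Delta^\square(6,2)$ in this section but with one more boundary point. First I would enumerate the diagram sets $D^\square(7,1)$ and $D^\square(7,2)$, i.e. the trivalent graphs with seven boundary points having at most one, resp.\ at most two, internal faces with five or more edges, together with all their rotations; an Euler-characteristic count as in Lemma~\ref{lem:boundarycharge} controls which such graphs occur. Write $N_1 = \# D^\square(7,1)$ and $N_2 = \# D^\square(7,2)$; these are substantially larger than the $41$ and $44$ of Facts~\ref{fact:Delta6_1} and \ref{fact:Delta6_2}. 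Since $7+2\cdot 1 = 9 < 12$ and $7 + 2\cdot 2 = 11 < 12$, the Lemma preceding Fact~\ref{fact:Delta6_0} applies: every entry of $M^\square(7,1)$ and $M^\square(7,2)$ — obtained by gluing two boundary diagrams into a closed graph and reducing all bigons, triangles and squares via Equations~\eqref{eq:bigon}, \eqref{eq:triangle}, \eqref{eq:square} in a cubic category — is a rational function in $d$ and $t$ with denominator a power of $Q_{1,1} = dt+d+t$. Thus both Gram matrices live over $\Rational(d,t)$ and the task is to compute and factor $\det M^\square(7,1)$ and $\det M^\square(7,2)$.

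\textbf{Verifying the factored form.} Direct interpolation of the numerators is hopeless: the stated factors have total degree in the several hundreds (e.g.\ the numerator of $\Delta^\square(7,1)$ has degree roughly $d^{155}Q_{1,1}^{242}Q_{0,2}^{91}\cdots$ worth of $d$-degree once the denominator is cleared), so I would instead take the factored expression as an Ansatz — guided by the clear pattern in the $n\le 6$ cases, where the factors are exactly $d$, $Q_{1,1}^{-1}$, $P_{SO(3)}$, $P_{G_2}$, $P_{ABA}$ (which appears as $Q_{0,2}$), and a handful of anonymous $Q_{i,j}$'s — and \emph{verify} it. The $d$-power and the $Q_{1,1}$-power are fixed by the homogeneity/grading argument in the Remark after the Corollary to Proposition~\ref{thm:zeroalltheway} together with the behaviour of the entries as $d,t\to 0$. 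The presence and multiplicities of $P_{SO(3)}$, $P_{G_2}$, $P_{ABA}$ are forced by degeneration: along each of those rational curves (parametrized as in Theorem~\ref{thm:4} and Proposition~\ref{prop:5:realization:G2}) the nondegenerate quotient is $SO(3)_q$, $(G_2)_q$, or a golden category, with known invariant-space dimensions, so $M^\square(7,k)$ drops rank by a computable corank there; gluing a two-leaf tree onto the relations in $D^\square(5,1)$ (Lemmas~\ref{lem:ABA-relation}, \ref{lem:G2-relation}, via Proposition~\ref{thm:zeroalltheway}) gives matching lower bounds, and the \emph{exact} exponent is then pinned down by a transverse order-of-vanishing computation — i.e.\ computing $\det M^\square(7,k)$ modulo a suitable power of the defining polynomial in a formal neighbourhood of the curve. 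After dividing out these factors, the $d$-power, and the $Q_{1,1}$-power, the remaining cofactor is a bivariate polynomial of manageable size; factoring it produces $Q_{21,33}$, $Q_{51,69}$, $Q_{2,3}$ (resp.\ $Q_{22,36}$, $Q_{54,78}$, $Q_{4,5}$), and a Gr\"obner-basis check as in the proof of Proposition~\ref{prop:6:preliminary-nonexistence} confirms these are genuinely new components, not contained in $\{P_{SO(3)}P_{G_2}Q_{0,2}=0\}$. A single numerical evaluation fixes the overall sign.

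\textbf{The main obstacle.} The difficulty is not conceptual but the sheer scale of the symbolic linear algebra: reliably generating the diagram lists $D^\square(7,1)$, $D^\square(7,2)$, then assembling the $N_1\times N_1$ and $N_2\times N_2$ Gram matrices over $\Rational(d,t)$ (each entry a closed-graph evaluation through the bigon/triangle/square reductions, with numerator degrees already in the dozens), and finally computing the determinant — or, more realistically, performing all of the rank, local-expansion, and cofactor computations above at those sizes. One must also be scrupulous that no spurious common factor has been absorbed into a $Q_{i,j}$ and that the Ansatz has not silently dropped a component. This is precisely the step the authors could not complete with the available resources, which is why the statement stands as a conjecture rather than a fact; the outline above is exactly what a successful verification would have to carry out, and one could hope that a clever recursion relating $\Delta^\square(n,k)$ to $\Delta^\square(n-1,k)$ through the gluing maps of Proposition~\ref{thm:zeroalltheway} (in the spirit of the Temperley--Lieb Gram-determinant formula) would eventually make it tractable.
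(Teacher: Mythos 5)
You have correctly identified what a full proof would require, but you should know that the paper never supplies one: the two identities are stated as Conjecture~\ref{conj:Delta7_1}, and what the paper actually proves is much weaker. Its support consists of three things. First, the factored expressions were \emph{guessed} by an ad-hoc interpolation scheme: specialize $d$ at many primes $p$, compute and factor $\det M^\square(7,k)(p,t)$ over $\Rational(t)$, observe that the factorization pattern is uniform in $p$, and reconstruct the bivariate coefficients of each irreducible factor by the Chinese remainder theorem. Second, the guess is verified \emph{probabilistically} via the Schwartz--Zippel lemma: bounding the total degrees by $1611$ and $2664$ and evaluating both sides at random integer pairs $(d,t)$ up to $10^{40}$, the authors drive the error probability below $10^{-500}$. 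Third, for the classification itself they prove unconditionally that the formula for $\Delta^\square(7,1)$ holds at every $(d,t)$ that could carry a trivalent category with $\dim\cC_4=4$, $\dim\cC_5=11$, $\dim\Span D^\square(6,2)\le 40$, by exact computation on the elliptic curve $Q_{2,3}$ (where every function can be written as $\alpha(t)d+\beta(t)$) and at the finitely many remaining intersection points. So the paper's ``proof'' is a certified guess plus a randomized check plus exact verification only where it matters --- a genuinely different strategy from your deterministic Ansatz-verification plan.

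Beyond this mismatch of strategy, two steps of your plan have real gaps. The grading remark after Proposition~\ref{thm:zeroalltheway} only controls the dependence on the bigon parameter $b$; it does not determine the exponent of $d$ or of $Q_{1,1}$, which in the known cases (Facts~\ref{fact:Delta6_1} and \ref{fact:Delta6_2}) come out of the computation itself. More seriously, your mechanism for pinning the exponents of $P_{SO(3)}$, $P_{G_2}$ and $Q_{0,2}$ --- corank on the degeneration curve plus a ``transverse order-of-vanishing computation'' --- needs the corank of a $155\times155$ (resp.\ $183\times183$) matrix over a function field of the curve, and the order of vanishing of the determinant only satisfies an inequality against the corank unless you do genuine local analysis; both of these are computations of essentially the same size as the determinant you are trying to avoid. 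You candidly acknowledge the plan is infeasible, which is consistent with the paper, but the idea you miss is precisely the one that makes the paper's statement usable: replace the infeasible symbolic verification by cheap numerical evaluations at random points (Schwartz--Zippel), and do exact arithmetic only on the low-dimensional locus relevant to the classification.
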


\begin{lem}
With a probability of $1- 10^{-500}$, each of these conjectures is correct.
\end{lem}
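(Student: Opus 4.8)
The formula in Conjecture~\ref{conj:Delta7_1} is too large to check by a direct symbolic computation of the determinant --- that is what makes it a conjecture rather than a fact --- so the plan is to upgrade it to near-certainty by probabilistic polynomial identity testing. The one substantive step is an explicit \emph{a priori} bound on the complexity of $\Delta^\square(7,k)$. Since $7+2\cdot 1 = 9$ and $7 + 2\cdot 2 = 11$ are both below $12$, the Lemma that the entries of $M^\square(n,k)$ are rational in $d$ and $t$ whenever $n+2k<12$ applies: every entry of $M^\square(7,k)$ is a rational function in $d,t$ whose only denominator is a power of $Q_{1,1}$. Tracking the reduction algorithm in that Lemma's proof --- each closed diagram occurring as an entry has at most $7+2k \le 11$ faces, and each application of \eqref{eq:bigon}, \eqref{eq:triangle}, or \eqref{eq:square} raises the $Q_{1,1}$-power of the denominator by at most one and the numerator degree by at most a fixed constant, over at most $11$ steps --- produces an explicit exponent $e$ and an explicit degree bound $D$ such that, after clearing denominators,
\[
P_k(d,t)\;:=\;Q_{1,1}(d,t)^{e}\cdot\Delta^\square(7,k)
\]
lies in $\Integer[d,t]$ and has total degree at most $D$; multilinearity of the determinant in the rows of $M^\square(7,k)$ is what puts the matrix size into $D$, so $D$ is large (of order $10^4$) but completely explicit. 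Since every factor listed in the conjecture ($P_{SO(3)}$, $P_{G_2}$, and the various $Q_{i,j}$) is an explicitly recorded polynomial, the conjectured value times $Q_{1,1}^{e}$ is likewise a concrete polynomial $R_k\in\Integer[d,t]$ of total degree at most $D$.

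The lemma is then exactly the polynomial identity $P_k=R_k$. Were it false, $P_k-R_k$ would be a nonzero element of $\Integer[d,t]$ of total degree at most $D$. Fix a prime $p>D$ and sample $(d_0,t_0)$ uniformly at random in $\Field_p^2$ subject to $Q_{1,1}(d_0,t_0)\ne 0$. One evaluates $P_k(d_0,t_0)$ cheaply: assemble $M^\square(7,k)$ over $\Field_p$ by running the face-reduction algorithm with all arithmetic done in $\Field_p$ (legitimate precisely because the only denominators are powers of $Q_{1,1}$, a unit at $(d_0,t_0)$), take the determinant over $\Field_p$, multiply by $Q_{1,1}(d_0,t_0)^{e}$, and compare with $R_k(d_0,t_0)$ computed directly. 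By the Schwartz--Zippel estimate, if $P_k\ne R_k$ this comparison spuriously reports agreement with probability at most $D/p$; over $N$ independent trials the probability of $N$ consecutive false agreements is at most $(D/p)^N$. With $D$ of order $10^4$, a single $1000$-bit prime together with a dozen random sample points already pushes $(D/p)^N$ far below $10^{-500}$, which is the assertion. The concrete evaluations live in {\tt code/GroebnerBasisCalculations.nb} and the accompanying inner-product notebooks.

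The main obstacle is keeping the bound $D$ honest: the whole argument is vacuous unless one genuinely knows that the true determinant $\Delta^\square(7,k)$ is not some high-degree polynomial that happens to agree with the low-degree guess $R_k$ at every point sampled. This requires care with the exact enumeration of $D^\square(7,k)$ and the maximal face counts of the closed diagrams appearing in $M^\square(7,k)$, with the worst-case growth of numerator degrees and of $Q_{1,1}$-powers under iterated application of \eqref{eq:bigon}--\eqref{eq:square} (the homogeneity grading recorded in the remark following Proposition~\ref{thm:zeroalltheway} is a convenient consistency check here), and with the multilinear dependence of $\det M^\square(7,k)$ on its rows. None of this is conceptually difficult, but it is the step that genuinely has to be carried out; everything downstream is a large-but-routine finite-field computation together with the elementary tail estimate above.
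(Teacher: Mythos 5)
Your proposal is correct and is essentially the paper's own argument: clear the $Q_{1,1}$ denominators, bound the total degree of the resulting polynomial identity, and apply the Schwartz--Zippel lemma to random evaluations until the failure probability drops below $10^{-500}$. The only differences are implementation details --- the paper samples $(d,t)$ from positive integers up to $10^{40}$ and iterates (using the sharper degree bounds $1611$ and $2664$), whereas you sample in $\mathbb{F}_p$ for a large prime with a cruder degree bound --- and both are equally valid.
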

\begin{proof}
We begin by explaining what we mean. The Schwartz-Zippel lemma \cite{MR594695,MR575692,Demillo-Lipton} (pointed out to us by Dylan Thurston) gives
a method of probabilistically checking polynomial identities (we first clear denominators if necessary): if $P \in
k[x_1, \ldots, x_n]$ has total degree bounded by $D$, and the $x_i$ are drawn uniformly and independently from
a finite subset $S
\subset k$
of size $N$, then either $P$ is identically zero or $P(x_1, \ldots, x_n) \neq 0$ with probability  at least $1- \frac
{D}{N}$.

We can easily
bound the
total degree of $\Delta^\square (7,1)$ at 1611, and of $\Delta^\square(7,2)$ at 2664. Evaluating the determinant of
$M^\square(7,1)$ at a pair of values $(d,t)$ drawn uniformly from positive integers at most $10^{40}$ takes on the order
of 3 minutes  (on a 12-core Xeon E5), while the determinant of $M^\square(7,2)$ takes 6 minutes. This gives us a
probability of error of at most
one part in $10^{12}$, for $\Delta^\square(7,1)$, or $10^{6}$, for $\Delta^\square(7,2)$, per minute of running time; we
stopped after reaching $10^{500}$. These checks are implemented in {\tt code/SchwartzZippel.nb}.
\end{proof}

\begin{remark}
To guess these polynomials in the first place, we adopted the following ad-hoc
strategy. Suppose we have some large matrix $M(d,t)$ with entries in $\bbQ(d,
t)$, and want to evaluate the determinant. Arithmetic in $\bbQ(d, t)$ is
difficult, so we avoid this by first specializing one variable, $d$, to various
different primes. We work with a set of primes $\cP$, which is chosen to be big
enough that results we obtain below are successfully verified by the
Schwartz-Zippel lemma argument given above!

At each prime $p \in \cP$ we can compute $\det M(p,t)$ as a rational function in
$\bbQ(t)$ relatively quickly. We now want to recover $M(d,t)$. In our examples,
these are not irreducible, and it turns out to be most efficient
to first factorize each $\det M(p,t)$ into products of powers (possibly
negative) of polynomials in $\bbZ[t]$. For large enough primes $p$, the
factorization is uniform, in the sense that degrees and multiplicities of the
irreducible factors of the different $\det M(p,t)$ are in bijection, and so we
obtain $\det M(p, t) = \prod_{i \in \cI} \cK_{p,i}(t)^{n_i}$, for some fixed index
set $\cI$ and exponents $n_i$ for each $i \in \cI$. Write $\cK_{p,i}(t) = \sum_r
\cL_{p,i,r} t^r$ for some integers $\cL_{p,i,r}$.

We now want to recover an irreducible
polynomial $K_i(d,t) = \sum_r L_{i,r}(d) t^r = \sum_{r,s} L_{i,r,s} d^s t^r$ so $\cK_{p,i}(t) = K_i(p, t)$. This
requires that $L_{i,r}(p) = \cL_{p,i,r}$ for each $p$. In particular, this says
that the constant term $L_{i,r,0}$ of $L_{i,r}$ satisfies $$L_{i,r,0} \equiv L_{i,r}(0) \equiv \cL_{p,i,r}
\pmod{p}.$$ By the Chinese remainder theorem, we then know $L_{i,r,0} \pmod
{\prod_{p \in \cP} p}$, and we guess that it is actually equal to this residue.
We then continue making guesses recursively, using the identities
$$L_{i,r,s} \equiv \left(\cL_{p, i,r} -  \sum_{t=0}^{s-1} L_{i,r,t} p^t \right) p^{-s} \pmod{p}$$
and the Chinese remainder theorem. This method is implemented in the notebook {\tt code/GuessDeterminants.nb}.
\qed
\end{remark}

We will give two separate proofs of Proposition \ref{prop:6:nonexistence}. The first is easy to follow but depends on Conjecture \ref{conj:Delta7_1}, while the second is more difficult but unconditional.

\begin{lem} \label{lem:conditional-proof}
Fix some $(d,t)$ not satisfying $d^2-3d-1=0$ and $t=-\frac{2}{3}d+\frac{5}{3}$.
If Conjecture \ref{conj:Delta7_1} holds at this $(d,t)$ then there are no trivalent categories with $\dim \cC_4 = 4$ and $\dim \cC_5 = 11$, and $\dim \Span D^\square(6,2) \leq 40$. 
\end{lem}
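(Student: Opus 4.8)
The plan is to argue by contradiction, so suppose $\cC$ is a trivalent category at the fixed $(d,t)$ with $\dim\cC_4 = 4$, $\dim\cC_5 = 11$, and $\dim\Span D^\square(6,2)\le 40$. First I would collect the constraints that are already available. Since $\dim\cC_4 = 4$ the category is cubic, so Proposition~\ref{prop:cubic} gives $d\ne 0$, $P_{SO(3)}(d,t)\ne 0$ and $Q_{1,1}(d,t)=dt+d+t\ne 0$. In a cubic category the eleven diagrams in $D^\square(5,1)$ span $\cC_5$ (reduce bigons, triangles and squares using Proposition~\ref{prop:cubic}, and apply the discharging argument behind Corollary~\ref{cor:nopents}), so $\dim\cC_5 = 11$ forces them to be a basis, hence $\Delta^\square(5,1)\ne 0$; by the cubic factorization $\Delta^\square(5,1)=d^{11}P_{ABA}^3P_{SO(3)}^5P_{G_2}Q_{1,1}^{-2}$ this yields $P_{ABA}(d,t)\ne 0$ and $P_{G_2}(d,t)\ne 0$. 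In particular every ``generic'' obstruction polynomial is nonzero at $(d,t)$, so a new one must be produced.

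The new input is a linear-algebra count in $\cC_6$. I would first establish the $n=6$ analogue of Corollary~\ref{cor:nopents}: in any cubic category $D^\square(6,2)$ spans $\cC_6$. Any closed component of a diagram contributes only a scalar, and applying the bigon, triangle and square relations to a boundary-connected open diagram on six points, together with the growth-region/discharging analysis, reduces it to a diagram with at most two internal faces having five or more edges, i.e.\ to an element of $D^\square(6,2)$. Granting this, $\operatorname{rank} M^\square(6,2)\le\dim\Span D^\square(6,2)\le 40$ while $\#D^\square(6,2)=44$, so $\ker M^\square(6,2)$ has dimension at least $4$. Since $D^\square(6,2)\setminus D^\square(6,1)$ is a single rotational orbit of only $3$ diagrams, the projection of this $\ge 4$-dimensional kernel onto those $3$ coordinates has a nonzero kernel, so there is a nonzero element of $\ker M^\square(6,2)$ supported entirely on $D^\square(6,1)$. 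Because $D^\square(6,2)$ spans $\cC_6$ and $\cC$ is nondegenerate, this element is an honest linear relation among the diagrams of $D^\square(6,1)$.

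Now Proposition~\ref{thm:zeroalltheway}, applied to this relation, gives $\Delta^\square(6,1)=\Delta^\square(6,2)=\Delta^\square(7,1)=\Delta^\square(7,2)=0$ (and $\Delta^\square(n',k')=0$ for all $n'\ge 6$, $k'\ge 1$). Substituting the factorizations of Facts~\ref{fact:Delta6_1} and~\ref{fact:Delta6_2}, and -- using the hypothesis that Conjecture~\ref{conj:Delta7_1} holds at this $(d,t)$ -- of both displays in Conjecture~\ref{conj:Delta7_1}, and dividing out the factors $d$, $Q_{1,1}$, $P_{SO(3)}$, $P_{G_2}$ which are nonzero by the first paragraph, I obtain at our point
\[ Q_{0,2}Q_{2,3}Q_{3,4}Q_{7,11}=Q_{0,2}Q_{2,3}Q_{4,5}Q_{8,12}=Q_{0,2}Q_{2,3}Q_{21,33}Q_{51,69}=Q_{0,2}Q_{4,5}Q_{22,36}Q_{54,78}=0. \]
A Gr\"obner basis elimination -- in the spirit of the intersection computation in the proof of Proposition~\ref{prop:6:preliminary-nonexistence}, to be implemented in {\tt code/GroebnerBasisCalculations.nb} -- then shows that, once the loci $P_{SO(3)}=0$, $P_{G_2}=0$, $d=0$ and the two common curves $Q_{0,2}=0$, $Q_{2,3}=0$ are removed, the only common solution of these equations is the H3 point $d^2-3d-1=0$, $t=-\tfrac23 d+\tfrac53$, which is excluded by hypothesis.

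The step I expect to be the real obstacle is eliminating the two leftover curves $Q_{0,2}=0$ and $Q_{2,3}=0$, on which the $\square$-determinants vanish identically so the count above says nothing. For $Q_{2,3}=0$ the plan is to use the extra equation $Q_{0,2}Q_{4,5}Q_{22,36}Q_{54,78}=0$ coming from $\Delta^\square(7,2)=0$ (whose numerator carries no $Q_{2,3}$), which cuts the curve to finitely many points, each on $P_{SO(3)}=0$ or $P_{G_2}=0$ and hence excluded by Proposition~\ref{prop:5:uniqueness} together with $\dim\cC_5=11$. The curve $Q_{0,2}=0$ is more delicate, since $Q_{0,2}$ divides every $\square$-determinant; here I would bring in $\Delta(6,0)$ and $\Delta(7,0)$ (Facts~\ref{fact:Delta6_0} and~\ref{fact:Delta7_0}, in whose factorizations the schematic factor ``$Q_{0,2}$'' is a \emph{different} degree-$(0,2)$ polynomial) together with Proposition~\ref{prop:6:preliminary-nonexistence} to reduce this curve as well to finitely many excluded points. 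A secondary point requiring care -- but which I expect to be routine graph theory -- is the spanning claim for $D^\square(6,2)$ used above, namely that six boundary points never force a third internal face with five or more edges.
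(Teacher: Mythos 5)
Your skeleton is the right one (produce a relation among the $41$ diagrams of $D^\square(6,1)$, push it through Proposition~\ref{thm:zeroalltheway} to force $\Delta^\square(6,1)=\Delta^\square(6,2)=\Delta^\square(7,1)=\Delta^\square(7,2)=0$, then solve for $(d,t)$), but two of your steps do not hold as written. The first is the pair of spanning claims: it is not true, and not ``routine graph theory'', that in an arbitrary cubic category $D^\square(5,1)$ spans $\cC_5$ or $D^\square(6,2)$ spans $\cC_6$. With only bigon/triangle/square reductions available, an open diagram can have arbitrarily many internal faces of five or more sides (remove one vertex from a planar dodecahedron to get a $3$-boundary-point graph with nine internal pentagons; adjoin an arc to get five or six boundary points), and nothing reduces it into $D^\square(n,1)$ or $D^\square(6,2)$ --- the paper only gets such spanning statements \emph{after} it has pentagon, pentapent and hexapent reduction relations. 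Your detour through $\ker M^\square(6,2)$ therefore rests on an unavailable hypothesis; fortunately it is unnecessary, since $D^\square(6,1)\subset D^\square(6,2)$ has $41$ elements while $\dim\Span D^\square(6,2)\le 40$, so the relation among $D^\square(6,1)$ follows by pure counting (this is exactly the paper's first sentence). Likewise $P_{ABA},P_{G_2}\neq 0$ should be deduced from $\dim\cC_5=11$ via Lemma~\ref{lem:5:dependent=>spans} (or Proposition~\ref{prop:5:uniqueness}), not from the false spanning claim; the conclusion is correct, the route is not.

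The serious gap is in your endgame. The common zero set of the four determinants is not ``curves plus the H3 point'': besides the $P_{ABA}$ and $P_{G_2}$ curves (note that $Q_{0,2}$ \emph{is} the $P_{ABA}$ factor --- the paper's unconditional argument speaks of the $P_{ABA}$ factor of $\Delta^\square(6,1)$, and the naming convention makes $Q_{0,2}$ the same polynomial in every factorization, contrary to your parenthetical --- so that curve is already excluded by $\dim\cC_5=11$) and the H3 points, there are finitely many isolated solutions: in the paper, roots of a degree-$33$ and a degree-$63$ polynomial. Your claim that the finitely many leftover points on $Q_{2,3}$ ``each lie on $P_{SO(3)}=0$ or $P_{G_2}=0$'' is not correct: the paper records that the degree-$33$ family lies on $Q_{2,3}$ and is \emph{not} disposed of by any earlier result; both families are eliminated only by a genuinely new computation, namely that at these points the rank of $M^\square(6,2)$ is $43$, contradicting $\dim\Span D^\square(6,2)\le 40$. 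Your proposal contains no substitute for this rank argument, and your speculative treatment of the $Q_{0,2}$ curve via $\Delta(6,0)$, $\Delta(7,0)$ is both unneeded and unsubstantiated, so the proof as proposed does not close.
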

\begin{proof}
Since $\dim \Span D^\square(6,2) \leq 40$ and there are $41$ diagrams in $D^\square(6,1)$, we must have a relation amongst $D^\square(6,1)$.  Thus $(d,t)$ must give a solution to $\Delta^\square(6,1) = \Delta^\square(6,2) = \Delta^\square(7,1) = \Delta^\square(7,2) = 0$,
and the possibilities are (see {\tt code/GroebnerBasisCalculations.nb}):
\begin{enumerate}[(a)]
\item $(d,t)$ is on the $P_{ABA}$ or $P_{G_2}$ curve,
\item $d^2-3d-1=0$ and $t=-\frac{2}{3}d+\frac{5}{3}$,
\item \label{item:smallbadpoint}
   $d$ is a root of the degree 33 polynomial $S_a(d)$, and $t = T_a (d)$, or
\item $d$ is a root of the degree 63 polynomial $S_b(d)$, and $t = T_b(d)$.
\end{enumerate}
The polynomials $S_a, T_a, S_b,$ and $T_b$ (the last of which is stupendously large) are available in
the file {\tt code/BadPoints.nb}.

In the first case, the full strength of Proposition \ref{prop:5:uniqueness}
shows that $\dim \cC_5 < 11$. It remains to eliminate the last two cases. For
both those values of $(d,t)$ the rank of $M^\square(6,2)$ is 43 which is
incompatible with $\dim \Span D^\square(6,2) \leq 40$.  (This calculation is
done twice in the last section of {\tt code/BadPoints.nb}.  We check slowly and
directly that the rank is exactly $43$ by doing arithmetic in the number field,
but we also quickly see that the rank is at least $43$ by calculating the rank
of the matrix modulo a prime in the number field, thereby reducing the question
to calculating the rank over $\mathbb{Z}/11\mathbb{Z}$ and
$\mathbb{Z}/41\mathbb{Z}$ respectively.  The latter approach was suggested to us
by David Roe.)
\end{proof}

Now we turn to the unconditional proof of Proposition \ref{prop:6:nonexistence}, which follows immediately from the following lemma.

\begin{lem}
If $\cC$ is a trivalent category with $\dim \cC_4 = 4$ and $\dim \cC_5 = 11$, and $\dim \Span D^\square(6,2) \leq 40$,
then Conjecture \ref{conj:Delta7_1} holds.
\end{lem}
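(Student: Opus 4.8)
The plan is to use the three dimension hypotheses to confine $(d,t)$ to a finite union of rational curves together with isolated points, and then to observe that on such a low-dimensional locus the determinants $\Delta^\square(7,1)$ and $\Delta^\square(7,2)$ — which are intractable as two-variable rational functions — collapse to one-variable rational functions that can be evaluated directly and compared with the formulas in Conjecture \ref{conj:Delta7_1}.

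First, since $\dim \cC_4 = 4$ the category $\cC$ is cubic, so by Proposition \ref{prop:cubic} we have $d \neq 0$, $P_{SO(3)} \neq 0$ and $Q_{1,1} = dt+d+t \neq 0$, and Facts \ref{fact:Delta6_1} and \ref{fact:Delta6_2} express $\Delta^\square(6,1)$ and $\Delta^\square(6,2)$ as explicit rational functions of $(d,t)$. Because $D^\square(6,1) \subseteq D^\square(6,2)$, the hypothesis $\dim \Span D^\square(6,2) \leq 40$ forces a genuine linear relation among the $41$ diagrams of $D^\square(6,1)$; by Proposition \ref{thm:zeroalltheway} this relation forces $\Delta^\square(6,1) = \Delta^\square(6,2) = \Delta^\square(7,1) = \Delta^\square(7,2) = 0$ at $(d,t)$ (each is obtained by gluing a suitable tree onto a boundary point of the diagrams in the relation).

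Next I would intersect these zero loci. By Facts \ref{fact:Delta6_1} and \ref{fact:Delta6_2}, after discarding the loci $d = 0$, $Q_{1,1}=0$ and $P_{SO(3)}=0$ (all excluded for a cubic category), the common zero set of $\Delta^\square(6,1)$ and $\Delta^\square(6,2)$ is the union of the curves cut out by the factors common to both determinants, namely $Q_{0,2}$, $Q_{2,3}$ and $P_{G_2}$, together with the finitely many points where a factor occurring only in $\Delta^\square(6,1)$ ($Q_{3,4}$ or $Q_{7,11}$) meets a factor occurring only in $\Delta^\square(6,2)$ ($Q_{4,5}$ or $Q_{8,12}$). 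Since $\dim \cC_5 = 11$, the full strength of Proposition \ref{prop:5:uniqueness} together with the realization results of the previous section lets me delete the component $V(P_{G_2})$ — there $\cC$ would be $(G_2)_q$, with $\dim \cC_5 = 10$ — as well as any part of $V(Q_{0,2}) \cup V(Q_{2,3})$ lying on $P_{ABA}$. What remains is a short, explicit list of rationally parameterized curves and finitely many exceptional points; this elimination is the manageable Gr\"obner-basis computation alluded to in the text.

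Finally, on each surviving curve, parameterized rationally by one variable $s$, I would substitute into the entries of $M^\square(7,1)$ and $M^\square(7,2)$ — these are rational functions of $(d,t)$ since $7+2\cdot 1$ and $7+2\cdot 2$ are both less than $12$ — obtaining matrices over $\bbC(s)$ whose determinants are one-variable rational functions, and check that these agree with the restrictions of the formulas of Conjecture \ref{conj:Delta7_1} to that curve; at each exceptional point the comparison is a numerical identity in the relevant number field. Agreement in every case gives Conjecture \ref{conj:Delta7_1} at $(d,t)$, which is precisely what is used, via Lemma \ref{lem:conditional-proof}, to finish the unconditional proof of Proposition \ref{prop:6:nonexistence}. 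I expect the main obstacle to be this last step: even specialized to one variable the matrices $M^\square(7,1)$ and $M^\square(7,2)$ are roughly $50 \times 50$, so their determinants are large, and one must be careful to enumerate every component produced by the intersection step; the Gr\"obner-basis work of that step is the secondary and comparatively mild bottleneck.
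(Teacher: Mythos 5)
Your plan is essentially the paper's own argument: use the known factorizations of $\Delta^\square(6,1)$ and $\Delta^\square(6,2)$ (Facts \ref{fact:Delta6_1} and \ref{fact:Delta6_2}) to confine $(d,t)$ to the common components plus finitely many extra intersection points of the non-shared factors, discard the $P_{G_2}$ (and ABA) components via the full strength of Proposition \ref{prop:5:uniqueness} since $\dim \cC_5 = 11$, and then compute $\Delta^\square(7,1)$ and $\Delta^\square(7,2)$ exactly on the surviving locus and compare with the formulas of Conjecture \ref{conj:Delta7_1}; the paper does exactly this, in {\tt code/DeterminantsOnEllipticCurve.nb} and {\tt code/DeterminantsOnExtraPoints.nb}. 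One concrete correction to your last step: you assume each surviving curve is rationally parameterized, but the central surviving component $Q_{2,3}$ is an elliptic curve (this is where all the bad points of Lemma \ref{lem:conditional-proof} lie), so no rational parameterization exists and substitution of a one-variable parameter is not available. The paper's fix is to do exact arithmetic in the function field of $Q_{2,3}$, writing every function on the curve in the form $\alpha(t)\,d+\beta(t)$ with $\alpha,\beta$ rational in $t$; this achieves the same reduction to essentially one-variable computations, and with that substitution your verification step goes through. A minor remark: your appeal to Proposition \ref{thm:zeroalltheway} to conclude that the actual $\Delta^\square(7,1)$ and $\Delta^\square(7,2)$ vanish is harmless but not needed — what the lemma requires is that the conjectured formulas agree with the exactly computed determinants at the surviving values of $(d,t)$, which is precisely what your final comparison (and the paper's computation) supplies.
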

\begin{proof}
In order for there to be such a trivalent category, we must have that $(d,t)$ is a solution to 
$\Delta^\square(6,1) = \Delta^\square(6,2) = 0$.  We consider  each factor of $\Delta^\square(6,1)$ separately.  
The $P_{G_2}$ and $P_{ABA}$ factors contradict $\dim \cC_5 = 11$ by Proposition \ref{prop:5:uniqueness}.  

On the elliptic curve  $Q_{2,3}$ any rational function can be written in the form $\alpha(t) d + \beta(t)$ for some
rational
functions $\alpha$ and $\beta$, and algebra can be done efficiently on functions of this form just
as it is done with ordinary rational functions.  We can then compute the value of the determinants
$\Delta^\square(7,1)$  and $\Delta^\square(7,2)$ exactly and verify Conjecture \ref{conj:Delta7_1} for these points.
(This calculation is performed in {\tt code/DeterminantsOnEllipticCurve.nb}.)

The other two factors $Q_{3,4}$ and $Q_{7,11}$ of $\Delta^\square(6,1)$ 
intersect $\Delta^\square(6,2) = 0$ in finitely many points (see {\tt code/GroebnerBasisCalculations.nb} for this calculation).
For each of these finitely many points we can compute the
determinants $\Delta^\square(7,1)$  and $\Delta^\square(7,2)$ exactly
(cf. {\tt code/DeterminantsOnExtraPoints.nb}). Thus
Lemma \ref{lem:conditional-proof} applies, and we note that the points described
in item \eqref{item:smallbadpoint} of Lemma \ref{lem:conditional-proof} all lie
on $Q_{2,3}$.
\end{proof}

\subsection*{Proof of 
Proposition \ref{prop:6:uniqueness} (Uniqueness)}

In the proof of Proposition \ref{prop:5:uniqueness} (Uniqueness) we used an
analogue for open graphs of the well-known theorem that any closed planar
trivalent graph has pentagonal or smaller face.  That theorem plays a key role
in the proof of the (easy) $5$-color theorem, and the study and eventual proof of the
$4$-color theorem lead to an enormous number of similar results proved using
the \emph{discharging method}. 
The typical  illustration of the discharging method is the following
well-known lemma.

\begin{defn}\mbox{}
 A \emph{very small} face is a square, triangle, or bigon.
 A \emph{pentapent} is a pair of adjacent pentagons.
 A \emph{hexapent} is a pentagon and an adjacent hexagon.
\end{defn}

\begin{lem} \label{lem:discharging-closed-pentapent}
Any closed trivalent graph contains either a very small face, a pentapent, or 
a hexapent.
\end{lem}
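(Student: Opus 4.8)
The plan is to run a discharging argument, of exactly the kind that furnishes the ``typical illustration'' alluded to above (and which proves the $5$-colour theorem). First, embed the closed trivalent graph $G$ in the sphere; after standard preliminary reductions we may assume $G$ is connected and bridgeless (equivalently $2$-edge-connected) --- this is the one step requiring a little care with degenerate configurations, and it guarantees that the three faces incident to any trivalent vertex are distinct. Assign to each face $f$ the charge $\mu(f)=6-\deg f$; since $2E=3V$ for a trivalent graph, Euler's formula $V-E+F=2$ yields $\sum_f \mu(f)=6F-2E=2(3F-E)=12$.

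Next, suppose for contradiction that $G$ has no very small face (nor a monogon, an even smaller face), no pentapent, and no hexapent. Then the only faces of positive charge are pentagons, each of charge $+1$, and since a pentagon is adjacent neither to a face with at most four sides, nor to another pentagon, nor to a hexagon, every face adjacent to a pentagon has at least seven sides. Now apply the discharging rule: each pentagon sends charge $\frac{1}{5}$ across each of its five edges to the face on the other side. A pentagon then ends with charge $0$; a hexagon keeps charge $0$, as it is adjacent to no pentagon; and a $k$-gon $f$ with $k\ge 7$ ends with charge $6-k+\frac{1}{5}p(f)$, where $p(f)$ is the number of edges of $f$ bordering a pentagon.

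The crucial observation is that $p(f)\le\lfloor k/2\rfloor$: if two edges of $f$ share a vertex $v$, the two faces lying on the far sides of those edges are precisely the two faces bordering the third edge at $v$, which are distinct by bridgelessness, so if both were pentagons they would be adjacent --- a pentapent. Hence the final charge of $f$ is at most $6-k+\frac{k}{10}=6-\frac{9k}{10}<0$ for every $k\ge 7$. So after discharging every face carries non-positive charge, contradicting $\sum_f\mu(f)=12$. I expect the only genuinely delicate point to be the preliminary reduction to a bridgeless graph, which underwrites the clean local picture at each vertex; granted that, the charge accounting and the $\lfloor k/2\rfloor$ bound are routine.
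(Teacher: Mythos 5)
Your argument is correct and is essentially the paper's own proof: the same charge $6-n$ on each $n$-gon summing to $12$ by Euler's formula, the same rule of letting each pentagon distribute its unit of charge equally across its five edges, and the same conclusion that a face left with positive charge forces a very small face, a pentapent, or a hexapent. The only differences are cosmetic --- you bound the number of pentagon-neighbours of a $k$-gon by $\lfloor k/2\rfloor$ via the consecutive-edge observation where the paper instead argues by pigeonhole that a positively charged $7$-gon meets two adjacent pentagons, and your (asserted) reduction to a connected bridgeless graph plays the role of the distinctness of neighbouring faces that the paper leaves implicit.
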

\begin{proof}
Suppose that the graph has no very small faces.  We assign a charge of $6-n$ 
to every $n$-gon face.  By measuring Euler characteristic, the total charge is $12$, 
which is certainly positive.  We now ``discharge" the pentagons, distributing 
their charge equally among the neighboring faces.  Since this does 
not change the total charge of the graph there must be a face with 
positive charge.  Such a face must either be a pentagon next to a pentagon, a 
hexagon next to a pentagon, or a $7$-gon next to at least $6$ pentagons.  In 
the final case, at least two of those six pentagons are adjacent so there's a 
pentapent.
\end{proof}

Just as before, in order to apply this technique to our setting, we need an 
analogue of this lemma for open graphs.

\begin{defn}
A \emph{lonely pentagon} of an open planar trivalent graph is a pentagon 
which touches at most two internal faces.  A lonely pentagon is either a 
corner pentagon or a bridge pentagon. 
A \emph{corner pentagon} is a pentagon which touches at most two 
internal faces which are adjacent to each other, and a \emph{bridge pentagon} is a pentagon which touches exactly two 
internal faces which are not adjacent to each other.
\end{defn}

\begin{lem}[Planar subgraphs]\label{lem:planarsubgraphs}
Every connected open planar trivalent graph has either a very small face, a pentapent, 
a hexapent, a growth region, or a corner pentagon.
\end{lem}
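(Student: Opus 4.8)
The plan is to run a discharging argument that upgrades Lemma~\ref{lem:discharging-closed-pentapent} to the open setting, using the boundary bookkeeping introduced before Lemma~\ref{lem:boundarycharge}. Let $\Gamma$ be a connected open planar trivalent graph with no very small face, no pentapent, no hexapent, and no growth region; I want to produce a corner pentagon. A first reduction: any component of $\Gamma$ not meeting the boundary is a closed planar trivalent graph, so by Lemma~\ref{lem:discharging-closed-pentapent} it already contains a very small face, a pentapent, or a hexapent, and we would be done; hence we may assume $\Gamma$ is boundary connected. Now assign each face the charge $6-n-2m$, so the total charge of $\Gamma$ is $6$. Because $\Gamma$ is connected, every boundary face meets the boundary in one interval and so has charge $4-n$, while every internal face has charge $6-n\le 1$, with equality exactly for pentagons (there are no internal squares or smaller, since there is no very small face). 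Since there is no growth region, Lemma~\ref{lem:boundarycharge} forces every proper contiguous family of boundary faces to carry total charge at most $1$; splitting the cyclic family of all boundary faces into two such families (the single-boundary-point case being immediate) shows the boundary faces together carry total charge at most $2$.

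Next I would discharge exactly as in the closed case: each internal pentagon sends $\tfrac15$ of a unit to each of its five neighbouring faces. The hypotheses pin down a pentagon's neighbours: an internal pentagon cannot be adjacent to a very small face, to another pentagon (a pentapent), or to a hexagon (a hexapent), so every neighbour of an internal pentagon is either a boundary face or an internal face with at least seven sides. One then checks, verbatim from Lemma~\ref{lem:discharging-closed-pentapent}, that after discharging every internal face has nonpositive charge: each pentagon ends at $0$ (it receives nothing, having no pentagon neighbour); each hexagon ends at $0$ (same reason); a $7$-gon that ends positive must have received more than $1$, hence be adjacent to at least six pentagons, two of which are then adjacent to each other and form a pentapent; and $n$-gons with $n\ge 8$ stay negative. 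Since discharging preserves the total charge $6$ and only moves it from internal pentagons onto their neighbours, the boundary faces now carry total charge at least $6$; as they carried at most $2$ beforehand, the internal pentagons have pushed at least $4$ units of charge onto boundary faces.

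It remains to convert this flux into the boundary into a corner pentagon, and this is the heart of the proof. An internal pentagon adjacent to a boundary face that is \emph{not} a corner pentagon must, by the definition of corner and bridge pentagons, either have at least three internal neighbours or be a bridge pentagon (two nonadjacent internal neighbours and three boundary neighbours); in either case it sends at most $\tfrac35$ of a unit to the boundary. I would then carry out a second, local analysis around the $\ge 7$-gons and the bridge pentagons that touch the boundary: a bridge pentagon forces three boundary faces to appear around it, two of them consecutive in the cyclic order of boundary faces, and a $7$-gon receiving charge from several bridge pentagons is forced to have many boundary faces among its neighbours in a rigid cyclic pattern; weighing the charge these forced boundary faces receive against the no-growth-region bound on the total boundary charge should contradict the flux estimate of at least $4$ (possibly after refining the discharging rule so that pentagons pass charge only to their internal neighbours, or adding a second discharging round). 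Executing this second stage of local bookkeeping — rather than the earlier steps, which are direct analogues of the closed case — is where the real work and the main obstacle lie.
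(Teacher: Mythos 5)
There is a genuine gap, and you have located it yourself: the step that turns the charge flowing into the boundary into a corner pentagon is exactly the content of the lemma, and your proposal stops at a plan (``should contradict the flux estimate \ldots possibly after refining the discharging rule'') rather than an argument. As it stands the flux bound of $4$ does not contradict anything: a bridge pentagon sends at most $\tfrac35$ to the boundary and a non-lonely pentagon at most $\tfrac25$, so the inequality only tells you that several pentagons touch the boundary, and no configuration is excluded. It is also not clear that any purely quantitative second round of discharging around the $\ge 7$-gons can finish the job, because the offending configurations (long chains of bridge pentagons separating boundary faces) are not local. A smaller quibble: your bound of $2$ on the total boundary charge needs care when there is a single boundary face (Lemma~\ref{lem:boundarycharge} says nothing about improper subsets, and a single boundary face can have charge $3$); the paper only ever uses that the boundary charge is less than $6$.

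The paper's proof takes a different route precisely to avoid this obstacle. It first proves a weaker statement (Lemma~\ref{lem:weakerplanarsubgraphs}) in which the conclusion is a \emph{lonely} pentagon — corner or bridge — by discharging each pentagon only to its \emph{internal} neighbours, so that a non-lonely pentagon spreads its charge over at least three faces; this is the direct analogue of the closed case. The passage from ``lonely'' to ``corner'' is then done not by charge counting but by a descent on bridge pentagons: each bridge pentagon disconnects the graph, so one can choose a bridge-pentagon-free subgraph $T'$ attached to the rest by a single bridge pentagon $B$. If the boundary of $T'$ has charge at least $6$, the fact that $B$ is \emph{not} a corner pentagon forces an incoming edge into the attaching region, so the complementary boundary region of $T$ has charge at least $2$ and is a growth region; otherwise $T'$ has positive internal charge, the weak lemma applied inside $T'$ (which has no bridge pentagons) produces a very small face, pentapent, hexapent, or corner pentagon of $T'$, and a corner pentagon of $T'$ adjacent to $B$ yields a pentapent of $T$. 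If you want to salvage your approach, you would need to supply an argument of comparable structural force for configurations of bridge pentagons along the boundary; the extremal-subgraph descent is the missing idea.
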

The proof of this Lemma below will use the discharging method following the same outline as in the 
closed case.  We assign the usual charges of $6-n$ to each interior $n$-gon 
face and $4-n$ to each boundary face touching $n$ edges.  By measuring Euler 
characteristic, the total charge is $6$. We then `discharge' the internal 
pentagons, distributing their charge equally among their neighboring internal 
faces.  Since this does not change the total charge of the graph, we go 
looking for faces with positive charge and find that positive charge indicates 
that we have one of the features listed in Lemma \ref{lem:planarsubgraphs}.  
This last step turns out to be somewhat delicate, so we first prove a slightly 
weaker lemma:

\begin{lem}
Every connected open planar trivalent graph has either a very small face, a pentapent, a hexapent, a growth region, or a
lonely pentagon.
\label{lem:weakerplanarsubgraphs}
\end{lem}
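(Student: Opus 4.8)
The plan is to flesh out the discharging argument sketched just before the statement. Assume for contradiction that the graph has \emph{none} of a very small face, a pentapent, a hexapent, a growth region, or a lonely pentagon; I will show every face ends with nonpositive charge, contradicting that the total charge is $6$. Under these hypotheses every internal face is a pentagon, a hexagon, or an $n$-gon with $n\ge 7$ (no very small face); no internal pentagon is adjacent to another internal pentagon (no pentapent) or to an internal hexagon (no hexapent); and every internal pentagon meets at least three internal faces (no lonely pentagon). The first thing to record is the consequence that the internal neighbours of any internal pentagon are all $n$-gons with $n\ge 7$, and there are at least three of them, so when the pentagon discharges its $+1$ equally among its internal neighbours it sends at most $\tfrac{1}{3}$ across each of its edges.

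Next I would compute final charges face by face. An internal pentagon ends at $0$: it gives away all of its charge and, having no internal pentagon or hexagon neighbour, receives none. An internal hexagon ends at $0$, since by no-hexapent it has no internal pentagon neighbour. An internal $n$-gon with $n\ge 7$ ends with charge at most $6-n+\tfrac{n}{3}=6-\tfrac{2n}{3}$, which is $\le 0$ once $n\ge 9$. So the only internal faces that can end positively charged are heptagons and octagons; tracking the bound more carefully, positivity forces such a face to border internal pentagons across at least $4$ of its edges (at least $7$ in the octagon case). Reading around the cyclic list of faces adjacent to this face, the pigeonhole principle then yields two internal pentagons adjacent to one another --- two faces across consecutive edges of a given face share the remaining edge at the intervening trivalent vertex --- i.e.\ a pentapent, a contradiction. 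Hence no internal face has positive final charge, so the internal faces together carry total charge $\le 0$.

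Since discharging preserves the total charge $6$ and only moves charge between internal faces, the boundary faces still carry total charge at least $6$ at the end, each with charge $4-n$. A single such face has charge $4-n<6$, so there are at least two boundary faces, and since the graph is connected these are linearly ordered along the boundary. Splitting them into two contiguous proper subsets, one subset has total charge at least $3\ge 2$, so by Lemma~\ref{lem:boundarycharge} a neighbourhood of it is a growth region --- a contradiction. This exhausts the cases and proves Lemma~\ref{lem:weakerplanarsubgraphs}.

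I expect the only real subtlety to lie in the heptagon/octagon analysis of the second paragraph --- ruling out that an internal $n$-gon with $n\ge 7$ soaks up enough charge from nearby pentagons to stay positive without creating a pentapent. This is exactly why it is worth proving this weaker statement first: the escape hatch ``lonely pentagon'' lets us immediately dispose of any internal pentagon with too few internal neighbours, which keeps the discharging bookkeeping clean. The genuinely delicate work of upgrading ``lonely'' to ``corner'' (that is, excluding bridge pentagons) is then deferred to the proof of Lemma~\ref{lem:planarsubgraphs}.
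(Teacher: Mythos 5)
Your proof is correct and is essentially the paper's own discharging argument: same charge assignment, same rule of having non-lonely pentagons split their $+1$ among their (at least three) internal neighbours, and the same use of Lemma~\ref{lem:boundarycharge} to extract a growth region when the boundary carries charge at least $6$. The only differences are cosmetic — you frame it as a contradiction with explicit final charges and handle the $n\ge 7$ faces by splitting $n\ge 9$ from $n\in\{7,8\}$ with a pigeonhole on consecutive edges, where the paper uses the uniform bound $6-n+\tfrac13\lfloor n/2\rfloor\le 0$ when no two pentagon neighbours are adjacent.
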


\begin{proof}
Suppose a connected open planar trivalent graph $T$ has no growth regions.  
By discharging we will show that $T$ has either a very small face, a pentapent, 
a hexapent, or a lonely pentagon.

Assign a charge as described above.  Since $T$ is connected, a quick argument shows that its Euler characteristic is $6$.
If the boundary has charge 6 or more, it has at least two boundary faces because 
a single boundary face has charge $4-n$ (where $n$ is the number of edges it touches).
Thus, we can divide the boundary into two proper
sub-regions.  One of these will have charge 3 or more, and hence be a growth region by Lemma \ref{lem:boundarycharge}. 

If the boundary has charge less than 6, then
there must be a positive charge among the internal faces. 
We now
have the internal pentagons discharge according to the following rule:  each 
distributes its charge of 1 evenly among the adjacent internal faces.
Suppose there are no lonely pentagons.  Then each pentagon distributes its charge 
among at least 3 faces, and so faces receive at most $\frac{1}{3}$ charge from 
each neighboring pentagon.  The total charge has not changed, so there must be 
some face with positive charge.  

We now consider the ways in which an $n$-gon may end up with positive charge.
If $n \geq 7$, either it neighbors a pair of adjacent pentagons and we are done,
or it neighbors at most $\lfloor \frac{n}{2} \rfloor $ adjacent pentagons.   In
the latter case, the total charge is at most $6-n+\frac{1}{3} \lfloor
\frac{n}{2} \rfloor \leq 0$.  If $n=5$ or $6$, the positively-charged face must
have received some charge from an adjacent pentagon, showing the existence of a
pentapent or hexapent.  Finally, we may have had a very small face all along.
\end{proof}

\begin{proof}[Proof of Lemma \ref{lem:planarsubgraphs}]
Lemma \ref{lem:weakerplanarsubgraphs} is almost what we need, except it proves 
we must have a lonely pentagon, not necessarily a corner pentagon.  

Suppose that a connected open planar trivalent graph $T$ has bridge pentagons.  Each bridge pentagon, when removed, 
disconnects the graph into two parts which each have fewer bridge pentagons.  
By descent, there must be a subgraph which is bridge-pentagon-free and 
connected to the rest of the graph by a single bridge pentagon.  Call this 
subgraph $T'$ and the bridge pentagon connecting it to the rest of the graph 
$B$. 
$$
\begin{tikzpicture}[baseline=0]
	\draw[dashed, gray] (3.4,-.7) circle (2cm);
	
	\fill[white] (30:1cm) -- ++(18:1cm) coordinate (X) -- ++(-54:1cm) coordinate (Z) -- ++(-126:1cm) coordinate (Y) -- (-30:1cm)--(30:1cm);

	\draw[gray] (30:1cm) --  (X);
	\draw[gray] (-30:1cm) -- (Y);
	\draw[gray] (X) --  (Z);
	\draw[gray] (Y)--(Z);
	\draw[gray] (X)-- ++(72:.5cm);
	\draw[gray] (Y)-- ++(-72:.2cm);
	\draw[gray] (Z)-- ++(0:.2cm);
	
	\draw (30:1cm)--(-30:1cm);
	\draw (30:1cm) -- ++(18:.3cm);
	\draw (30:1cm) -- ++(144:.2cm);
	\draw(-30:1cm) -- ++(-18:.3cm);
	\draw (-30:1cm) -- ++(-144:.2cm);
	\node (T') at (0,0) {$T'$};
	\node[gray] at (1.5,0) {$B$};	
	\node[gray] (T) at (4,0) {rest of $T$};
	\draw[dashed] (0,0) circle (1.3cm);
\end{tikzpicture}
$$

Observe $T'$ is still connected, by virtue of containing an edge of $B$.
Now consider the boundary of $T'$.  If it has total charge of $6$ or greater, then after $B$ has been attached it still
has charge of $2$ or greater.  To see this, consider the boundary face of $T'$ which $B$ attaches to, and its two
boundary neighbors.  A small neighborhood of these three faces has four outgoing edges, and at least one incoming edge
(otherwise $B$ would be a corner pentagon), hence its charge is at most $4$.  Thus, the boundary complement of this
region (the region enclosed in blue lines below) has charge at least $2$ and so is a growth region by Lemma \ref{lem:boundarycharge}. 

$$
\begin {tikzpicture}[baseline=0] \draw (90:1cm)--(30:1cm)--(-30:1cm) --(-90:1cm);
	\draw[dotted, thick] (-90:1cm) -- ++(150:.35cm);
	\draw[dotted, thick] (90:1cm) -- ++(-150:.35cm);
	\draw (90:1cm)--(90:1.5cm);
	\draw (-90:1cm)--(-90:1.5cm);	
	\draw (30:1cm) -- ++(18:.5cm);
	\draw(-30:1cm) -- ++(-18:.5cm);

	\node (T') at (0,0) {$T'$};
	\node at (1.25,0) {$B$};	
	\node at (60:1.2cm) {$A$};
	\node at (-60:1.2cm) {$C$};
		
	\draw[red] (100:1.5cm) -- (100:1cm) .. controls (100:.8cm) .. (90:.8cm) arc (90:-90:.8cm) .. controls (-100:.8cm) .. (-100:1cm)--(-100:1.5cm);
	\draw[red, dashed] (100:1.5cm) arc (100:-100:1.5cm);
	\draw[blue] (103:1.5cm) -- (103:1cm) .. controls (103:.8cm) .. (110:.8cm) arc (110: 250:.8cm) .. controls (-103:.8cm) .. (-103:1cm) -- (-103:1.5cm);
	\draw[blue, dashed] (103:1.5cm) arc (103:257:1.5cm);
\end{tikzpicture}
$$

Alternately, if the boundary of $T'$ had total charge of $5$ or less, then 
there is a net positive charge and at least one small face in the interior.  
As above, we discharge any pentagons and conclude we must have  a  very small 
face, pentapent, hexapent, or corner pentagon in $T'$ (recall above we ensured that $T'$ had no bridge pentagons).  The very small face, 
pentapent, or hexapent also appears in $T$.  A corner pentagon of $T'$ is 
either also a corner pentagon of $T$, or is adjacent to $B$, hence part of a 
pentapent.    
\end{proof}

\begin{lem}\label{lem:removemaxgrowth}
Let $T$ be a connected open planar trivalent graph with no very small faces, pentapents, hexapents or corner pentagons.  If
we remove a maximal growth region from $T$, the remaining graph (call it $T'$) also has no very small faces, pentapents, hexapents or corner pentagons.
\end{lem}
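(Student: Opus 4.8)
The plan is to compare $T$ with $T'=T\setminus R$ face by face, observe that three of the four forbidden configurations are untouched by the surgery, and then eliminate the remaining corner-pentagon case using the maximality of $R$.

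I would begin by pinning down the effect of removing the maximal growth region $R$: it deletes the faces of $T$ lying inside $R$ together with the edges among them, and reglues the disc along the interior arc $\gamma$ of $\partial R$. A face of $T$ disjoint from $R$ survives with the same edges and the same neighbouring faces. A face of $T$ meeting $\gamma$ becomes a boundary face of $T'$, acquiring a boundary interval along $\gamma$ (and, depending on how one bookkeeps boundary edges, possibly losing the edges it shared with $R$); call such a face \emph{opened}. The two facts I want to extract are: every internal face of $T'$ is an internal face of $T$ with the same sides, and the surgery never creates a new adjacency between two surviving faces — it only destroys adjacencies and opens faces onto $\gamma$.

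Next I would dispose of three cases at once. A very small face, a pentapent, or a hexapent of $T'$ is assembled entirely from internal faces (a square, triangle, or bigon; two adjacent internal pentagons; or an internal pentagon with an adjacent internal hexagon). By the previous paragraph the same faces, with the same numbers of sides and the same adjacencies, occur in $T$, so the configuration is present in $T$, contradicting the hypotheses. Thus the only configuration that can appear in $T'$ but not in $T$ is a corner pentagon $P$. Since $P$ is internal in $T'$ it is an internal pentagon of $T$, and its five edges are not touched by the surgery; if $P$ were already a corner pentagon of $T$ we would be done, so the number of internal neighbours of $P$ must have dropped, and (as none of $P$'s neighbours can be deleted without $P$ losing an edge or being opened) this forces at least one internal neighbour $B$ of $P$ in $T$ to be an opened face. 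Because $T$ has no pentapent, no hexapent, and no very small face, any internal neighbour of the pentagon $P$ — in particular $B$ — has at least seven sides.

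The heart of the argument, and the step I expect to be genuinely delicate, is now to contradict the maximality of $R$. The idea is to push $\gamma$ outward past $P$: one enlarges $R$ by absorbing the opened neighbours of $P$, then $P$ itself, and — in the sub-cases where the (at most two, mutually adjacent) surviving internal neighbours of $P$ lie between $P$ and the old boundary — the relevant further faces, and then checks via the charge bookkeeping of Lemma \ref{lem:boundarycharge} (total charge $6$; a boundary region is a growth region precisely when its boundary faces carry total charge at least $2$; absorbing an internal $n$-gon sharing $c$ edges with it changes that total by $2c-n$) that the enlarged region is still a growth region strictly containing $R$. Making this accounting come out right is the crux: the opened seven-or-more-sided faces contribute negatively, so one must use the local planar structure around $P$ (only five edges, at most two internal neighbours, and those adjacent) together with the absence of the forbidden configurations in $T$ to see that $P$ and the absorbed faces meet $R$ along enough edges to compensate, and one must also verify that the enlargement is a genuine contiguous boundary region. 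The careful case analysis of the neighbourhood of a corner pentagon touching $\gamma$ is where essentially all the work lies; once it yields a growth region properly containing $R$, maximality is contradicted and the lemma follows.
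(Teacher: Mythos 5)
Your reduction is sound and is exactly the paper's: every internal face of $T'$ is an internal face of $T$ with the same number of sides and the same adjacencies among internal faces, so very small faces, pentapents and hexapents in $T'$ would already occur in $T$; and a pentagon $P$ that is a corner pentagon of $T'$ but not of $T$ must have had at least one internal neighbour opened by the removal of $R$ (your argument for this, and the observation that internal neighbours of $P$ have at least seven sides, are both correct). The paper also handles the corner-pentagon case exactly by the strategy you name: enlarge the removed region to a strictly larger growth region, contradicting maximality.

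The gap is that this last step is only announced, not performed. You write that ``making this accounting come out right is the crux'' and that the case analysis ``is where essentially all the work lies,'' but that case analysis \emph{is} the content of the lemma; as it stands the proposal proves nothing beyond the easy transfer of internal faces. The paper closes the argument with a concrete local description: a newly created corner pentagon $P$ sits against the removed region on one side, while (at least) three consecutive edges of $P$ face boundary faces of $T'$, so one can take the old growth region together with a neighbourhood of the boundary faces sweeping around $P$ on the far side (the blue-plus-red region in the paper's figure). For that specific enlargement the count of Lemma \ref{lem:boundarycharge} is immediate: the only new edges not meeting the boundary are the two sides of $P$ facing its (at most two, mutually adjacent) surviving internal neighbours, while the edges leaving $P$'s outer vertices are gained as boundary-meeting edges, so the union is a growth region strictly containing $R$. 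Note also that your proposed enlargement differs from this one: you absorb the opened ($\geq 7$-sided) neighbours of $P$ and $P$ itself into the region, and it is precisely this choice that produces the negative contributions you worry about; the paper's region instead wraps around $P$ through the adjacent boundary faces, so the large opened faces never have to be paid for. To complete your proof you would either have to carry out the charge bookkeeping for your enlargement in all configurations of $P$ against $\gamma$, or switch to the paper's enlargement, where the verification is a two-line edge count.
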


\begin{proof}
Since any face of $T'$ is a face of $T$, $T'$ has no very small faces, pentapents or hexapents.  If we created a corner pentagon by removing a growth region from $T$, the growth region that we removed was not maximal.  To see why, consider a corner pentagon in $T$ which was not in $T'$.  It must look like the following diagram:
\begin{center}
\begin{tikzpicture}[scale=.5]
	\filldraw[draw=blue, pattern color=blue, pattern =  horizontal lines] (100:4cm) .. controls (100:3cm) .. (162:2.5cm) .. controls (180:3cm) .. (-3,-2) -- (-4,-2) -- (180:4cm) arc (180:100:4cm);
	\draw (18:2cm) -- (90:2cm)--(162:2cm)--(234:2cm)--(306:2cm)--(18:2cm);
	\draw (18:2cm) -- (18:4cm);
	\draw (90:2cm) -- (90:4cm);
	\draw (162:2cm) -- (162:2.5cm);
	\draw (234:2cm) -- (234:2.5cm);
	\draw (306:2cm) -- (306:2.5cm);
	\draw[red] (0:4cm) .. controls (3:2cm) .. (18:1.5cm) -- (90:1.5cm) .. controls (105:2cm) .. (97:4cm) arc (97:0:4cm);
	\draw[thick, dashed] (4.05,-2) -- (4.05,0) arc (0:180:4.05cm) -- (-4.05,-2);
\end{tikzpicture}
\end{center}
Here the dashed curve is the boundary of $T$, the shaded blue region  is the 
growth region, and the graph (perhaps also the growth region) continues below 
the bottom of the picture.   Then the union of the blue region with the region 
enclosed by red is a larger growth region.
\end{proof}

\begin{lem}\label{lem:enumerategraphs}
We can enumerate connected planar trivalent graph with no very small faces, pentapents, or hexapents by starting with the
empty diagram, and
\begin{itemize}
\item sequentially adding growth regions;
\item in the final step, simultaneously adding some ``H"s to create corner pentagons.
\end{itemize}
We can enumerate all planar trivalent graphs (not necessarily connected) with no very small faces, 
pentapents, or hexapents by taking planar disjoint
unions of such connected planar trivalent graphs.
\end{lem}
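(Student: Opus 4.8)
The plan is to prove the first (connected) statement by induction on the number of edges of the graph $T$, and then to deduce the second statement by passing to connected components. The base case is trivial: a graph with no edges is the empty diagram. For the inductive step I would split into two cases according to whether $T$ has a corner pentagon, using that in either case $T$ has no very small face, pentapent, or hexapent by hypothesis.

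Suppose first that $T$ has \emph{no} corner pentagon. Then Lemma \ref{lem:planarsubgraphs} supplies a growth region; I would choose a maximal one, $R$, and let $T'$ be the graph obtained by removing it (cutting $T$ along the edges of $R$ that do not meet the boundary, which then become new boundary edges). Since a growth region contains at least one edge meeting the boundary, $T'$ has strictly fewer edges, and by Lemma \ref{lem:removemaxgrowth} it still has no very small faces, pentapents, hexapents, or corner pentagons. Removing $R$ may disconnect the graph, but every component of $T'$ meets the new boundary (because $T$ was connected and $R$ touches the boundary) and inherits the absence of bad faces, so the inductive hypothesis applies to each component; interleaving their constructions and then re-attaching $R$ as the final growth region recovers $T$, with no ``H''s needed since $T$ has no corner pentagon.

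Now suppose $T$ has a corner pentagon. For each corner pentagon I would identify the ``H'' responsible for it --- an edge of the pentagon shared with one of its (at most two, mutually adjacent) internal faces, together with the two edges meeting the endpoints of that edge --- and undo the corresponding ``H''-insertion, doing this simultaneously over all corner pentagons to obtain a graph $T''$ with strictly fewer edges and no corner pentagons. The inductive hypothesis then writes $T''$ using growth regions alone (no ``H''s, since adding an ``H'' creates a corner pentagon and nothing later destroys it), and re-inserting the deleted ``H''s as the last step expresses $T$ in the desired form. For the second statement, given $G$ with no very small faces, pentapents, or hexapents, I would take its connected components $G_1,\dots,G_k$: each $G_i$ must meet the boundary, for otherwise it would be a closed trivalent graph and Lemma \ref{lem:discharging-closed-pentapent} would force a very small face, pentapent, or hexapent in it; and each face of $G_i$ is either a face of $G$ (if it encloses no other component) or too large to be small or part of a pentapent/hexapent (if it encloses a component), so each $G_i$ satisfies the hypotheses and the first statement applies. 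Hence $G$ is the planar disjoint union of such connected graphs.

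The step I expect to be the main obstacle is the ``H''-deletion: one must check, by a local analysis of the neighbourhood of a corner pentagon, that undoing the associated ``H''s introduces no new very small face, pentapent, or hexapent, and that distinct corner pentagons involve disjoint parts of the graph so that performing all the deletions simultaneously is well-defined. Everything else is bookkeeping built on Lemmas \ref{lem:planarsubgraphs} and \ref{lem:removemaxgrowth} and an induction on edge count.
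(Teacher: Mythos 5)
Your argument is essentially the paper's proof reorganized as an induction on edge count: the paper likewise removes an ``H'' from each corner pentagon simultaneously (the non-overlap you flag follows in one line from the absence of pentapents, since in a trivalent graph two faces meeting even at a vertex share an edge), notes that every internal face of the trimmed graph is an internal face of the original (so no new very small faces, pentapents, hexapents, or corner pentagons appear), and then builds the trimmed graph up from the empty diagram by repeatedly peeling maximal growth regions via Lemmas \ref{lem:planarsubgraphs} and \ref{lem:removemaxgrowth}. So the proposal is correct and takes the same route; the ``main obstacle'' you defer is exactly the two short observations just mentioned, and your treatment of the disconnected case is at the same (informal) level as the paper's, which simply asserts the planar-disjoint-union statement without proof.
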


\begin{proof}
Consider a graph $T$ which has no very small faces, pentapents or hexapents.  
From each corner pentagon, remove an ``H" neighborhood of one of its sides which 
touches an external face.  Since $T$ has no pentapents,  the regions we remove 
will not overlap.

Let $T'$ be the graph with an ``H" removed from each corner pentagon.  $T'$ has 
no corner pentagons, so by repeatedly applying Lemma \ref{lem:removemaxgrowth}, 
we get a sequence of growth regions building $T'$ up from the empty diagram.
\end{proof}

This lemma shows in particular that there are a finite number of such planar trivalent graphs with any given number of
trivalent vertices, or with any given number of boundary points and internal faces.

\begin{cor}
Every planar trivalent graph with no very small faces and at most 6 boundary 
points is in $D^\square(n,1)$ or has an internal pentapent or an internal hexapent.
\end{cor}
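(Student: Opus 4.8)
The plan is to deduce the corollary from the enumeration of Lemma~\ref{lem:enumerategraphs}. First I would reduce to the connected case. Write the given graph $G$ (with no very small faces and $n \le 6$ boundary points) as a planar disjoint union of connected pieces. A closed, boundaryless piece cannot occur: by Lemma~\ref{lem:discharging-closed-pentapent} such a piece contains a very small face (against hypothesis) or a pentapent or a hexapent, all of whose faces are then internal faces of $G$, putting us in the second alternative of the corollary. Each face of a connected piece is still a face of $G$ of the same type, the internal faces of $G$ are precisely those of the pieces, and a piece with no very small faces has at least two boundary points. So it suffices to show that a connected piece with no very small faces and (by the escape hatch) no internal pentapent or hexapent has no internal face when it has at most four boundary points, and at most one internal face when it has five or six; summing over pieces then gives $G \in D^\square(n,1)$, using that, with no very small faces, every internal face is a pentagon or larger, so lying in $D^\square(\,\cdot\,,1)$ is the same as having at most one internal face.

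So fix such a connected piece $T$. By Lemma~\ref{lem:enumerategraphs}, $T$ is obtained from the empty diagram by a sequence of growth-region attachments, producing an intermediate graph $T'$, followed by a final simultaneous addition of some ``H''s, each creating a corner pentagon. The first claim is that $T'$ has no internal faces: by induction on the attachments, a growth region is a neighbourhood of a contiguous arc of boundary faces, so attaching it only subdivides existing boundary faces and introduces new boundary faces touching the new boundary --- it never encloses a face --- while it strictly increases the number of boundary points. The second claim is that each of the final ``H''s contributes exactly one new internal face, namely the corner pentagon (all of whose neighbours are boundary faces, since $T'$ has none), and that creating a corner pentagon ``costs'' enough of the boundary-point budget that a connected graph obtained this way with one corner pentagon has at least five boundary points, and with two corner pentagons has at least seven. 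Granting these, the number of internal faces of $T$ equals the number of ``H''s used, which is $0$ when $T$ has at most four boundary points and at most $1$ when it has at most six. Reassembling the pieces finishes the proof.

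The main obstacle is the quantitative half of the second claim: pinning down exactly how the boundary-point count changes when an ``H'' is inserted to create a corner pentagon, so as to conclude that two corner pentagons force more than six boundary points. This requires tracking which boundary edges of $T'$ become sides of the new pentagon and how the neighbouring boundary faces get reorganised, together with disposing of a handful of degenerate configurations with very few boundary points (and monogons). This is routine but fiddly. A more direct but equivalent route, which may be preferable for the write-up, is to note that Lemma~\ref{lem:enumerategraphs} already guarantees only finitely many connected graphs with no very small faces, pentapents, or hexapents and at most six boundary points, to list them explicitly --- they are exactly the connected diagrams appearing in the description of $D^\square(6,1)$ at the start of this section, together with the analogous connected graphs with fewer boundary points --- and to verify the statement by inspection.
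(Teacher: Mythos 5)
Your main route fails at its first claim: it is not true that the growth-region stage never encloses a face. When a growth region is attached, the boundary faces of the smaller graph that it covers become internal faces of the larger one --- in this enumeration that is precisely how internal faces arise. A concrete counterexample is the hexagon with six legs, one of the diagrams in $D^\square(6,1)$: it contains no pentagon, hence no corner pentagon, so by Lemma~\ref{lem:enumerategraphs} it is built from the empty diagram by growth regions alone, yet it has an internal face. Consequently your identity ``number of internal faces of $T$ equals the number of H's added at the end'' is false, and the conclusion that a connected piece with at most six boundary points has at most one internal face does not follow from the structure of the enumeration as you have set it up. Moreover, the quantitative half of your second claim --- that creating one corner pentagon forces at least five boundary points and two corner pentagons at least seven --- is exactly the substantive content of the corollary, and you defer it as ``routine but fiddly''; once the first claim is repaired you would also have to account for boundary points consumed by growth regions that enclose faces, so the bookkeeping is genuinely more involved than sketched.

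Your fallback is in fact the paper's proof: the corollary is established by running the algorithm of Lemma~\ref{lem:enumerategraphs} (on a computer) to write down all planar trivalent graphs with at most six boundary points and no very small faces, pentapents, or hexapents, and observing that each lies in $D^\square(n,1)$. As written, though, you only gesture at this list rather than produce it, so neither branch of your proposal constitutes a complete argument; to avoid the computation you would need either to carry out the finite enumeration explicitly or to replace your counting by a correct one (for instance a charge or Euler-characteristic count showing that two internal faces, each with at least five sides and not forming a pentapent or hexapent, already force more than six boundary points).
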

This corollary is proved by having a computer write down all the graphs in $D^\square(n \leq 6,1)$ with no very small
faces, pentapents, or hexapents, according to the algorithm of Lemma \ref{lem:enumerategraphs}.

\begin{cor}
\label{cor:6:reductions=>span}
In a cubic category with reduction relations for pentapents and hexapents, $D^\square(n,1)$ spans $\cC_n$ for $n \leq 6$.
\end{cor}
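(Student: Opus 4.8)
The plan is to prove this by induction on the number of trivalent vertices of a single planar trivalent graph $G$ with $n \le 6$ boundary points, showing that such a $G$ is equal, in $\cC$, to a linear combination of elements of $D^\square(n,1)$. Since $\cC_n$ is by definition spanned by such graphs, this immediately gives the corollary.

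For the inductive step I would split on whether $G$ has a very small face. If $G$ has a bigon, triangle, or square face, I would rewrite that face using \eqref{eq:bigon}, \eqref{eq:triangle}, or --- available since $\cC$ is cubic --- \eqref{eq:square}. If instead $G$ has no very small face at all, then the corollary immediately preceding this one supplies a trichotomy: either $G$ already lies in $D^\square(n,1)$, in which case there is nothing to prove, or $G$ contains an internal pentapent or an internal hexapent, in which case I would rewrite using the reduction relations posited in the hypothesis. In the two rewriting cases $G$ becomes a linear combination of planar trivalent graphs, each still with $n$ boundary points (all of these relations are local, so they leave the boundary untouched) and each with strictly fewer trivalent vertices than $G$; the inductive hypothesis then applies to every term, and summing completes the step.

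The only point that genuinely requires care is termination, i.e.\ the claim that each reduction strictly decreases the complexity measure. For \eqref{eq:bigon}, \eqref{eq:triangle}, and \eqref{eq:square} this is clear from the relations themselves: a bigon ($2$ vertices) is replaced by a bare strand ($0$), a triangle ($3$) by a single vertex ($1$), and a square ($4$) by the diagrams on the right-hand side of \eqref{eq:square}, each of which has at most $2$ vertices, and in every case no vertices are created anywhere else because the relation is purely local. For the pentapent and hexapent reductions this monotonicity is really part of what one means by a ``reduction relation'', and it can be read off from the explicit relations produced later in Section \ref{sec:six}; were the vertex count alone not monotone, one would instead induct on the lexicographic measure given by the number of internal faces followed by the number of trivalent vertices. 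Everything else that might look substantive --- above all the determination of precisely which graphs with no forbidden configuration and at most six boundary points can occur --- has already been carried out in Lemma \ref{lem:enumerategraphs} and the two corollaries above, so no further combinatorial work is needed.
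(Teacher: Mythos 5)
Your proof is correct and is exactly the argument the paper leaves implicit: the corollary is meant to follow from the preceding enumeration corollary together with the bigon/triangle/square relations (the last available since the category is cubic) and the hypothesized pentapent and hexapent reductions, with termination by an induction of just the kind you describe. Your worry about monotonicity is indeed harmless, since the paper's reduction relations rewrite a pentapent (8 vertices) into $\Span D^\square(6,1)$ and a hexapent (9 vertices) into $\Span D^\square(7,1)$, whose diagrams have at most 6 and 7 vertices respectively, so the vertex count strictly drops at every step.
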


Inside $D^\square(6,1)$ there are 6 `pentafork' diagrams. We next
analyze relations amongst these diagrams, up to lower order terms (i.e. terms
with strictly fewer vertices, which in this case is exactly the diagrams 
in $D(6,0)$).

\begin{lem}
\label{lem:pentaforks=>reductions}
We let $\rho$ denote the operator which rotates an open graph by one click counterclockwise.  A trivalent
category with relations reducng $n$-gons for $n \leq 4$ and  
a relation amongst the 6 pentaforks modulo $\Span(D(6,0))$ must also have a relation reducing a pentapent to something
in $\Span(D^\square(6,1))$ and a relation reducing a hexapent into $\Span(D^\square(7,1))$
\end{lem}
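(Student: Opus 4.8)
The plan is to follow the template of the proof of Lemma~\ref{lem:pentagon-reduction}: recognise the pentafork as a sub-diagram of both a pentapent and a hexapent, apply the given pentafork relation \emph{inside} that sub-diagram, and then use the $n$-gon relations for $n\le 4$ (Equations~\eqref{eq:bigon}, \eqref{eq:triangle}, \eqref{eq:square}) to clean up the resulting terms. First I would write the assumed relation as $\sum_{i=0}^{5}c_i\,\rho^i P = G$, where $P$ is the pentafork, $G\in\Span D(6,0)$, and the $c_i$ are not all zero. Since $\Span D(6,0)$ is $\rho$-invariant, applying $\rho$ to this identity shows that the space of coefficient vectors of such relations is invariant under cyclic shift, so after rotating $P$ I may assume $c_0\neq 0$.

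The geometric heart of the argument is that a pentapent contains a pentafork as a sub-diagram. If a pentapent is the union of pentagons $A$ and $B$ along an edge $e$, and $u$ is a vertex of $A$ adjacent to an endpoint of $e$ but not on $e$, then $B$ together with $u$ and its two half-edges not meeting $B$ is exactly a pentafork with fork vertex $u$; the part of the pentapent lying outside this pentafork is a fixed two-vertex tangle joining two of the pentafork's six legs to two boundary points. Similarly a hexapent --- pentagon $A$ glued along $e$ to a hexagon $B$ --- contains the pentafork $A\cup\{u\}$ for $u$ a vertex of $B$ adjacent to $e$, with complement a fixed three-vertex ``caterpillar'' tangle. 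Choosing the labelling so that this sub-diagram is $\rho^0 P$, I can write the pentapent as $\Phi(P)$ and the hexapent as $\Phi'(P)$, where $\Phi$ (resp. $\Phi'$) denotes the operation of gluing the fixed two-vertex (resp. three-vertex) tangle onto a six-legged diagram and treating the rest as boundary.

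Applying $\Phi$ to the whole relation (plugging a six-legged identity into a fixed context preserves identities) gives $c_0\cdot(\text{pentapent})+\sum_{i\ne 0}c_i\,\Phi(\rho^i P)=\Phi(G)$, and I would then analyse the remaining terms one at a time. For the two or three rotations $\rho^i P$ whose fork does not meet the glued tangle, $\Phi(\rho^i P)$ is again a pentapent, necessarily a rotation of the original one; for every other rotation the fork vertex and the glued tangle bound a bigon, a triangle, or a square, which reduces --- a bigon or triangle merely reproduces another pentafork (times $b$ or $t$), which lies in $D^\square(6,1)$, while a square reduces via \eqref{eq:square}, after which the pentagon inherited from $\rho^i P$ has been split into a smaller face that reduces in turn. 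The term $\Phi(G)$ is handled the same way, since every diagram in $D(6,0)$ is tree-, forest-, or triangle-like and the glued tangle can create at most one new small internal cycle. Carrying this out for all four pentafork sub-diagrams of the pentapent (the two vertices adjacent to $e$ on each side), together with $c_0\neq 0$, yields enough relations among the rotations of the pentapent modulo $\Span D^\square(6,1)$ to express the pentapent itself inside $\Span D^\square(6,1)$; the hexapent case is identical, with $\Phi'$ and $\Span D^\square(7,1)$ in place of $\Phi$ and $\Span D^\square(6,1)$.

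The hard part will be the bookkeeping in the previous paragraph: enumerating the finitely many rotated pentaforks and the $D(6,0)$-diagrams that actually occur, checking in each case that a small internal face really is present to start the reduction, and --- most delicately --- verifying that no internal hexagon or larger face is ever produced along the way, so that the output genuinely lies in $D^\square(6,1)$ (resp. $D^\square(7,1)$) rather than merely in the span of diagrams with one large face. A small auxiliary point is the linear-algebra check that the relations coming from the several sub-diagram embeddings really do suffice to isolate a single pentapent (and hexapent) rather than only relating its rotations.
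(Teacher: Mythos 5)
Your geometric mechanism (embed the pentafork in a pentapent/hexapent by gluing a fixed small tangle onto the relation, then clean up with the bigon/triangle/square reductions) is essentially the paper's, but the step you defer as ``a small auxiliary point'' — that the resulting relations isolate the pentapent itself — is the real content of the lemma, and as set up in your proposal it fails. The span of relations among the six rotated pentaforks modulo $\Span(D(6,0))$ is invariant under $\rho$, so in the worst (and unavoidable) case the only available relation is a rotational eigenvector, $\sum_{i=0}^{5}\zeta^{i}\rho^{i}P\equiv 0$ with $\zeta^{6}=1$. For such a relation every gluing position and every embedding of the pentafork into the pentapent produces a relation of the \emph{same} two-term shape, namely $\rho^{2}(\text{pentapent})\equiv-\zeta^{-2}\,(\text{pentapent})$ modulo $\Span(D^\square(6,1))$: the relations coming from your several sub-diagram embeddings are proportional to one another (the relevant $2\times 2$ minors of the circulant system vanish identically because the coefficient ratios $c_{j+2}/c_{j}$ are all equal to $\zeta^{2}$), so they only relate rotations of the pentapent to each other and never, by linear algebra alone, place the pentapent in $\Span(D^\square(6,1))$. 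Normalizing $c_{0}\neq 0$ throws away exactly the structure you need.

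The missing idea is to \emph{use} the eigenvector form together with the order of rotation on the target diagram. Since the pentapent has six boundary points, iterating $\rho^{2}(\text{pentapent})\equiv-\zeta^{-2}(\text{pentapent})$ three times and using $\rho^{6}=\mathrm{id}$ gives $\text{pentapent}\equiv(-\zeta^{-2})^{3}\,\text{pentapent}=-\,\text{pentapent}$, which forces the pentapent into $\Span(D^\square(6,1))$; for the hexapent, which has seven boundary points, the analogous two-term relation iterated seven times gives the factor $(-\zeta^{-2})^{7}=-\zeta^{-2}$, which is never $1$ because $\zeta^{2}\neq-1$ for a sixth root of unity. This root-of-unity bookkeeping is what rules out the degenerate scenario your linear algebra cannot exclude. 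A further, smaller, inaccuracy: the hexapent case is not ``identical'' to the pentapent case — gluing the three-pronged tree to the pentafork relation produces pentapents (and a pentagon attached to a square) among the garbage terms, so one must first establish the pentapent reduction and use it, together with the square relation, before the two-term hexapent relation and the order-seven argument can be run.
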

\begin{proof}
If there is a relation amongst the 6 pentaforks, then there is one of the form
$$\sum_{i=0}^6 \zeta^i \rho^i \left(\pentafork{60}\right) = 0 \mod{\Span(D(6,0))}$$
for some (not necessarily primitive) 6-th root of unity $\zeta$. Gluing an `H' 
diagram onto the upper right boundary face of each diagram, we obtain
\newcommand{\bigpentapent}{\begin{tikzpicture}[baseline=0]
	\draw (-30:.3cm)--(150:.3cm)--(110:.6cm)--(60:.7cm)--(10:.6cm)--(-30:.3cm)--(-70:.6cm)--(-120:.7cm)--(-170:.6cm)--(150:.3cm);
	\draw (10:.6cm)--(0:1cm);
	\draw (60:.7cm)--(60:1cm);
	\draw (110:.6cm)--(120:1cm);
	\draw (-70:.6cm)--(-60:1cm);
	\draw (-120:.7cm)--(-120:1cm);
	\draw (-170:.6cm)--(-180:1cm);
\end{tikzpicture}}
\newcommand{\bighexapent}{\begin{tikzpicture}[baseline=0,rotate=180]
	\draw (-20:.4cm)--(150:.3cm)--(110:.6cm)--(60:.7cm)--(10:.6cm)--(-20:.4cm)--(-45:.7cm)--(-90:.7cm)--(-135:.7cm)--(-170:.6cm)--(150:.3cm);
	\draw (10:.6cm)--(0:1cm);
	\draw (60:.7cm)--(60:1cm);
	\draw (110:.6cm)--(120:1cm);
	\draw (-170:.6cm)--(180:1cm);
	\draw (-90:.7cm)--(-90:1cm);
	\draw (-45:.7cm)--(-45:1cm);
	\draw (-135:.7cm)--(-135:1cm);
\end{tikzpicture}}  
$$
\rho^2\left(\bigpentapent\right) = - \zeta^{-2} \left(\bigpentapent\right) \mod{\Span(D^\square(6,1))}.$$
Applying this relation three times we see that
$$\bigpentapent = \rho^6 \left(\bigpentapent\right) = - \bigpentapent  \mod{\Span(D^\square(6,1))}.$$  This is only possible if the pentapent is zero modulo $\Span(D^\square(6,1))$.

Now we turn our attention to the hexapent.  Gluing lower two points of the tree
$\tikz[scale=0.4]{\draw (0:1) arc (0:180:1); \draw (60:1) -- (60:1.5); \draw
(90:1) -- (90:1.5); \draw (120:1) -- (120:1.5); }$ to the upper right boundary
face of the pentafork relation gives two hexapents, three pentapents, and a
pentagon connected via an edge to a square.  Applying the square reduction
relation and the new pentapent relation, we get the following relation among the
hexapents modulo lower order terms: $$\rho^2\left(\bighexapent\right) = -
\zeta^{-2} \bighexapent \mod{\Span(D^\square(7,1))}.$$  Applying this relation
seven times, we see that $$\bighexapent = \rho^{14}\left(\bighexapent\right) = -
\zeta^{-2} \bighexapent \mod{\Span(D^\square(7,1))}.$$  But $\zeta^{-2} \neq -1$
because $\zeta$ is a sixth root of unity and so cannot be a primitive fourth
root of unity.  Hence, we see that the hexapent is zero modulo
$\Span(D^\square(7,1))$.
\end{proof}

\begin{lem}
If $\dim \cC_4 = 4, \dim \cC_5 = 11$, and $\dim \Span D^\square(6,1) \leq 39$, 
then there is a relation amongst the 6 pentaforks modulo $\Span(D(6,0))$.
\end{lem}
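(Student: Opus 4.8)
The plan is a short dimension count whose only substantive ingredient is the linear independence of $D(6,0)$, which comes for free from the non-existence result already established. First, under the standing hypotheses $\dim \cC_4 = 4$ and $\dim \cC_5 = 11$, Proposition~\ref{prop:6:preliminary-nonexistence} rules out $D(6,0)$ being linearly dependent, so the $34$ diagrams of $D(6,0)$ are linearly independent and $\dim \Span D(6,0) = 34$.

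Next, since $D(6,0) \subseteq D^\square(6,1)$ we may form the quotient space $V := \Span D^\square(6,1)/\Span D(6,0)$, and the hypothesis $\dim \Span D^\square(6,1) \leq 39$ gives
\[ \dim V \;=\; \dim \Span D^\square(6,1) - \dim \Span D(6,0) \;\leq\; 39 - 34 \;=\; 5. \]
The six pentafork diagrams lie in $D^\square(6,1)$, so their images in $V$ span a subspace of dimension at most $5$; since $6 > 5$ these six images are linearly dependent over $\bbC$. Unwinding the definition of $V$, a nontrivial linear dependence among these images is exactly a nonzero linear combination of the six pentaforks lying in $\Span(D(6,0))$, i.e.\ a relation amongst the $6$ pentaforks modulo $\Span(D(6,0))$.

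There is no real obstacle here: the entire content is the arithmetic $41 = 34 + 6 + 1$ for $D^\square(6,1)$ combined with $\dim \Span D^\square(6,1) \leq 39$, so that the six pentaforks already outnumber the at most $39 - 34 = 5$ directions available in $V$. The one point worth flagging is that the argument uses $\dim \Span D(6,0) = 34$ on the nose, which is why it invokes Proposition~\ref{prop:6:preliminary-nonexistence} rather than a mere inequality on $\dim \cC_6$; and the fact that the resulting relation can be taken to be a combination of the six rotations of a single pentafork, in the form used by the next lemma, is immediate from the way $D^\square(6,1)$ was enumerated above.
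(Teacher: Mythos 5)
Your proof is correct and is essentially the paper's argument: both invoke Proposition~\ref{prop:6:preliminary-nonexistence} to get that the $34$ diagrams of $D(6,0)$ are linearly independent and then perform the same dimension count modulo $\Span(D(6,0))$ (the paper phrases it as two relations among the seven remaining diagrams, while you observe directly that the six pentafork images cannot be independent in a quotient of dimension at most $39-34=5$). One small caveat: your closing aside is unnecessary and slightly misleading, since putting the relation into the rotational-eigenvector form $\sum_i \zeta^i \rho^i(\cdot)$ is not part of this statement and is carried out inside Lemma~\ref{lem:pentaforks=>reductions}, not read off from the enumeration of $D^\square(6,1)$.
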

\begin{proof}
By Proposition \ref{prop:6:preliminary-nonexistence}, the 34 diagrams in $D(6,0)$ must be linearly independent. 
Hence, modulo $\Span(D(6,0))$, there must either be two relations amongst the 
pentaforks, or a relation writing the hexagon as a linear combination of 
pentaforks along with a relation amongst the pentaforks.
\end{proof}

\begin{lem}
\label{lem:40=>spans}
If $\dim \cC_4 = 4, \dim \cC_5 = 11$, and $\dim \cC_6 \leq 40$, then $D^\square(6,1)$ spans.
\end{lem}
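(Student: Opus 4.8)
The plan is to run a dimension dichotomy on $\dim\Span D^\square(6,1)$, in the same spirit as Lemmas \ref{lem:4:spanning} and \ref{lem:5:dependent=>spans}. Since $\Span D^\square(6,1)\subseteq \cC_6$ and $\dim\cC_6\leq 40$, we automatically have $\dim\Span D^\square(6,1)\leq 40$; as there are $41$ diagrams in $D^\square(6,1)$, these diagrams are in particular linearly dependent. So the two cases to treat are $\dim\Span D^\square(6,1)=40$ and $\dim\Span D^\square(6,1)\leq 39$.

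In the first case the argument is pure bookkeeping: from $40=\dim\Span D^\square(6,1)\leq\dim\cC_6\leq 40$ the inclusion $\Span D^\square(6,1)\subseteq\cC_6$ is forced to be an equality, so $D^\square(6,1)$ spans $\cC_6$. In the second case I would invoke the chain of results already in hand. The preceding lemma (which needs exactly $\dim\cC_4=4$ and $\dim\cC_5=11$, together with $\dim\Span D^\square(6,1)\leq 39$) produces a relation amongst the six pentaforks modulo $\Span(D(6,0))$. Since $\cC$ is cubic, it carries reduction relations for all $n$-gons with $n\leq 4$ --- namely the bigon, triangle, and square relations of Equations \eqref{eq:bigon}, \eqref{eq:triangle}, and \eqref{eq:square} (the last coming from Proposition \ref{prop:cubic}) --- so the hypotheses of Lemma \ref{lem:pentaforks=>reductions} are met, and it yields reduction relations for pentapents (into $\Span D^\square(6,1)$) and for hexapents (into $\Span D^\square(7,1)$). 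Feeding these into Corollary \ref{cor:6:reductions=>span} gives that $D^\square(6,1)$ spans $\cC_6$. Combining the two cases proves the lemma.

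There is no genuinely deep step here; the work has all been done in the earlier lemmas, and this statement is the `glue'. The only points that need care are the dichotomy bookkeeping at the boundary value ($40$ versus the count $\#D^\square(6,1)=41$), and checking that the reduction relations output by Lemma \ref{lem:pentaforks=>reductions} are literally in the form demanded by the hypothesis of Corollary \ref{cor:6:reductions=>span} (so that the pentapent relation lands in $\Span D^\square(6,1)$ and the hexapent relation in $\Span D^\square(7,1)$, which is what that corollary's proof consumes).
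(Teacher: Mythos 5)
Your proposal is correct and follows essentially the same route as the paper's proof: the same dichotomy on $\dim\Span D^\square(6,1)$ ($=40$ versus $\leq 39$), with the $\leq 39$ case handled by the preceding pentafork lemma, then Lemma \ref{lem:pentaforks=>reductions}, then Corollary \ref{cor:6:reductions=>span}. The extra checks you flag (the square reduction from Proposition \ref{prop:cubic} and the form of the pentapent/hexapent reductions) are exactly the hypotheses the paper implicitly relies on, so nothing is missing.
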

\begin{proof}
Suppose $\dim \Span D^\square(6,1) \leq 39$. By the previous relation, there's a 
relation among the pentaforks modulo lower terms.  Thus, Lemma \ref{lem:pentaforks=>reductions} 
shows there are reduction relations for pentapents and hexapents, and finally 
Corollary \ref{cor:6:reductions=>span} shows that $D^\square(6,1)$ spans.

Alternatively, if $\dim \Span D^\square(6,1) = 40$, then clearly $D^\square(6,1)$ 
spans because $\dim \cC_6 \leq 40$.
\end{proof}

We're now ready to give the proof of Proposition \ref{prop:6:uniqueness}. Its
statement assumes the hypotheses of Lemma \ref{lem:40=>spans}, and so we can now
assume that $D^\square(6,1)$ spans. In particular, any element of the kernel of
$M^\square(6,1)$, evaluated at the given values of $d$ and $t$, must be a
relation in the category. By explicit calculation, we see the kernel of
$M^\square(6,1)$ is four dimensional.  By Proposition \ref{prop:6:preliminary-nonexistence} 
we know that $D(6,0)$ is linearly independent, so there must be
$4$ relations among the pentaforks and the hexagon modulo $D(6,0)$.  In
particular, there must be at least $3$ relations among the pentaforks modulo
$D(6,0)$.  Then Lemma \ref{lem:pentaforks=>reductions} implies that there are
relations reducing pentapents and hexapents, Lemma \ref{lem:discharging-closed-pentapent} 
implies that we have enough relations to evaluate all closed
diagrams, and Corollary \ref{cor:reductions=>uniqueness} says that there is a
unique cubic category at the given values of $d$ and $t$.
\qed

\begin{remark}
Although we don't need to know the form of the pentafork, pentapent, and hexapent relations to prove uniqueness, 
we can obtain them explicitly as follows.  By looking at the radical of the inner product on $D^\square(6,1)$, we obtain three
relations amongst the pentaforks (modulo lower order terms), and one relation giving a hexagon in terms of the
pentaforks and lower order terms. The pentafork relations have rotational eigenvalues $-1, \omega, \omega^2$, for
$\omega$ a primitive cube root of unity. One can
then use the argument from Lemma \ref{lem:pentaforks=>reductions} to obtain explicit pentapent and hexapent reductions.
See {\tt code/H3_relations.nb}.
\end{remark}


\begin{remark}
The pentafork relations guarantee pentapent and hexapent relations.  We do not know whether one can use non-degeneracy to go the other way and recover the pentafork relations from the pentapent and hexapent relations.
\end{remark}

\subsection*{Proof of Proposition \ref{prop:6:realization} (Realization)}
First we recall the definition of the $H3$ category.  Then we must show it is
trivalent, has $d = \frac{3+\sqrt{13}}{2}$ and $t=-\frac{2}{3}d+\frac{5}{3}$,
and has dimension sequence $\dim \cC_4 = 4$, $\dim \cC_5 = 11$, and $\dim \cC_6
= 37 \leq 40$.  The construction of the Haagerup fusion category and hence $H3$
involves a choice of square root of $13$.  Thus taking Galois conjugation
$\sqrt{13} \mapsto -\sqrt{13}$ gives another category which realizes the other
value of $(d,t)$.

Recall that $H2$ is the Haagerup category with three invertible objects ($1$,
$h$, and $h^2$) and three non-invertible objects ($Y$, $hY$ and $h^2Y$) with
fusion rules $hY = Yh^2$ and $Y^2 = 1+Y+hY+h^2Y$.   Since the associator on the
$1$, $h$, $h^2$ subcategory is trivial, there's a canonical algebra
$\mathbb{C}[\mathbb{Z}/\mathbb Z3]$ in $H2$.  As defined in \cite{MR2909758},
$H3$ is the tensor category of bimodule objects in $H2$ over this algebra.
Recall that $H3$ has the same Grothendieck ring as category $H2$, with $g^3 =
1$, $gX=Xg^2$ and $X^2 = 1+ X + gX + g^2X$.  We will need two additional facts
about $H3$.  First, as shown in \cite{MR2909758}, there's no algebra structure on $1+X$ in $H3$.  (In fact, this is the property that was used to show $H3$ is not the same as $H2$, because the Haagerup subfactor gives an algebra structure on $1+Y$ in $H2$.  The second fact is that $H3$ is isomorphic to its complex conjugate.  This follows from $H2$ being isomorphic to its complex conjugate and the structure constants for the algebra $\mathbb{C}[\mathbb{Z}/\mathbb Z3]$ being real.

From the fusion rules for $H3$ we see that $\dim \Inv(X^{\otimes 3}) = 1$, and
thus that there is a map $f: X \tensor X \to X$. We need to see that this is a
trivalent vertex, and that it generates the category.  We know that $f$ must be
a rotational eigenvector, but we do not yet know the eigenvalue.  Note that
since conjugation by $g$ permutes $X$, $gX$, and $g^2 X$ we must have the same
rotational eigenvalue for each of the three maps $X \otimes X \rightarrow X$,
$gX \otimes gX \rightarrow gX$, and $g^2 X \otimes g^2 X \rightarrow g^2 X$.
Since $H3$ is isomorphic to its complex conjugate, we see that these eigenvalues
must all be $1$.

Next, we consider the pivotal subcategory $H3' \subset H3$ generated by the
trivalent vertex $f$.  Since $1+X$ does not have the structure of an algebra object in $H3$, we see that 
$H3$ cannot take a functor from the $SO(3)$ category at this value of $(d,t)$.  Hence, $\dim H3'_4$ cannot be smaller than $4$.

If $\dim \Inv_{H3'}(X^{\otimes 5}) < 11$, by Theorem \ref{thm:5}, $H3'$ is
either a $(G_2)_q$ category or  an ABA category.  In both cases we compute
$\tr{y_+} = \dim g X = \frac{3+\sqrt{13}}{2} = \dim g^2 X = \tr{y_-}$ using the
formulas from Theorem \ref{thm:idempotents-and-traces} and get a contradiction.
If $H3'$ were an $ABA$ category then $d=\frac{3+\sqrt{13}}{2}$ and $t^2-t-1=0$.
This splits into two cases based on the choice of $t$: either $\tr{y_+} \approx
-6.344$ while $\tr{y_-} \approx 12.9496$ if $t=\frac{1+\sqrt{5}}{2}$ (which is a
contradiction) or $\tr{y_+} \approx 1.04123$ while $\tr{y_-} \approx 5.56432$ if
$t=\frac{1-\sqrt{5}}{2}$ which is again a contradiction.

Similarly, we can rule out $H3'$ being a $(G_2)_q$ category, although the
calculations are messier. We first find the possible values of $q$ so  $$
\frac{3+\sqrt{13}}{2} = q^{10} + q^{8} + q^{2} + 1 + q^{-2} + q^{-8} + q^{-10}
$$ and for each, verify $\xi$ (with $t = - \frac{q^2-1+q^{-2}}{q^4+q^{-4}}$) is
invertible and $\tr{y_+} \neq \tr{y_-}$. (This calculation is contained in {\tt
code/idempotents.nb}.)

It is clear from the fusion rules that $d = \frac{3+\sqrt{13}}{2}$.  Since $\dim
\Inv_{H3'}(X^{\otimes 6}) \leq \dim \Inv_{H3}(X^{\otimes 6}) = 37$,  then by
Proposition \ref{prop:6:nonexistence} we must have
$t=-\frac{2}{3}d+\frac{5}{3}$.

Our final task is to show that $H3' = H3$.  Since the principal graph for $H3$
is depth 3, $H3$ is generated by its morphisms in $\Inv_{H3}(X^{\otimes k})$ for
$k \leq 6$.  Hence it is enough to show that  $\dim \Inv_{H3'}(X^{\otimes 6}) =
37$. Consider the diagrams in  $D^\square(6,1)$, leaving out (any) 4 of the
pentaforks, and compute the  determinant of the corresponding 37-by-37 matrix.
This is the nonzero number
\begin{multline*}
-\frac{12874212105079943047176987387755967947399861}{
   278128389443693511257285776231761} \\ -
   \frac{3570663990466532246521487414951846015270252}{
   278128389443693511257285776231761} \sqrt{13}
\end{multline*}
when $d= \frac{3+\sqrt{13}}{2}$ and $t=-\frac{2}{3}d+\frac{5}{3}$. (See {\tt code/H3_relations.nb} for this calculation.) Hence these 37
diagrams must in fact be linearly
independent in $H3'_6$. Thus $\dim \Inv_{H3'}(X^{\otimes 6}) \geq 37$, and in 
fact $H3'_6 = H3_6$, with $D^\square(6,1)$ spanning.  Thus $H3' = H3$, so $H3$ is a trivalent category satisfying the conditions of the theorem.
\qed

\section{Non-trivial rotational eigenvalues} \label{sec:rotationalev}

Suppose that $\cC$ is a pivotal category with the sequence $\dim \cC_n$
beginning $1,0,1,1$ which is generated by the map $1 \to X \tensor X \tensor X$.
This map must be an eigenvector for rotation, whose eigenvalue must be a cube
root of unity.  Thus far we have considered the case where the rotational
eigenvalue is $1$, and in this section we consider the case where the rotational
eigenvalue for counterclockwise rotation is a primitive cube root of unity
$\omega$.  We call such a category a twisted trivalent category.  Similarly to
before, a twisted trivalent category gives an invariant of planar trivalent
graphs which are decorated with a choice of direction at each vertex (which we
denote by placing a dot in one of the three regions adjacent to the vertex)
subject to the following skein relation $$\trivalentotherdotb =
\rho\left(\trivalentb\right) =  \omega \trivalentb.$$ As before we normalize the
bigon (with dots inward) to be $1$.

In a twisted trivalent category, the triangle with all dots pointing inward is some multiple of the trivalent vertex,
but it's manifestly rotationally invariant and hence must be zero.  This significantly simplifies the analysis, because
all the determinants considered above become polynomials in just one variable, the loop value $d$. It is then easy to
detect intersections between the corresponding varieties, by factoring into irreducible polynomials.

We now quickly run through the analogues of all the above results in the twisted
case.  Let $D_\omega^\square(n,k)$ be defined as before as diagrams with $n$
boundary points and no more than $k$ faces, none of which are squares or
smaller.  Note that there is an ambiguity here; for each diagram you must fix
the location of the dots.  We let $M_\omega^\square(n,k)$ be the matrix of inner
products and $\Delta_\omega^\square(n,k)$ be the determinant of this matrix.
Note that $M_\omega^\square(n,k)$ is well-defined only up to rescaling rows by a
power of $\omega$, and thus $\Delta_\omega^\square(n,k)$ is only well-defined up
to an overall rescaling by a power of $\omega$.  Below we always fix this
normalization in a way that makes the determinant a polynomial with real
coefficients (though it is not clear that this is possible in general).

Many of the small determinants used below can be calculated by hand. The larger
ones, however, rely on a newer software implementation of our methods, which
unfortunately is not ready for release. (The older implementation, used and
described above, was not designed to keep track of rotations of vertices.) Our
plans for further investigations of small skein theories will make use of the
newer implementation, so we defer a description and release until a later paper.

\begin{prop}
For any $d \neq 2$ there are no twisted trivalent categories with $\dim \Span D_\omega(4,0) \leq 3$.
\end{prop}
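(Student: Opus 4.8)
The plan is to mimic the proof of Proposition~\ref{prop:4:nonexistence}, taking advantage of the fact that in a twisted trivalent category the triangle is rotationally invariant and therefore zero, so that every determinant in sight becomes a polynomial in the single variable $d$. First I would record that $D_\omega(4,0)$ consists of the four diagrams $\twostrandid$, $\cupcap$, $\drawI$, $\drawH$, each with some fixed choice of dot at its trivalent vertices (a different choice only rescales the diagram by a power of $\omega$). If $\dim\Span D_\omega(4,0)\le 3$ these four diagrams are linearly dependent, so the Gram matrix $M_\omega(4,0)$ of their inner products is singular and $\Delta_\omega(4,0)=0$; it therefore suffices to compute $\Delta_\omega(4,0)$ and to check that its only roots are $d=0$ (excluded by nondegeneracy) and $d=2$.

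Next I would evaluate the entries of $M_\omega(4,0)$ by hand, exactly as in Section~\ref{sec:four}. Gluing $\twostrandid$ to $\drawH$, or $\cupcap$ to $\drawI$, produces a diagram with a $1$-gon, which vanishes because $\dim\cC_1=0$; gluing $\drawI$ to $\drawH$ produces a diagram containing a triangle, which is $\omega^k$ times the all-dots-inward triangle and hence vanishes; and the remaining entries reduce, using that the bigon with dots inward is the strand, to $d$ (after absorbing the ambient powers of $\omega$, or at worst leaving an overall root-of-unity factor, which the normalization convention for $\Delta_\omega$ removes). Thus, up to rescaling rows by roots of unity,
\[
M_\omega(4,0)=\begin{pmatrix} d^2 & d & d & 0\\ d & d^2 & 0 & d\\ d & 0 & d & 0\\ 0 & d & 0 & d\end{pmatrix},
\]
which is precisely the $t=0$ specialization of the matrix $M(4,0)$ appearing in the proof of Proposition~\ref{prop:4:nonexistence}. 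Expanding, $\Delta_\omega(4,0)=d^5(d-2)$, the $t\to 0$ limit of $d^4(d+t-dt-2)(d+t+dt)$.

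It then follows that if $\dim\Span D_\omega(4,0)\le 3$ then $d^5(d-2)=0$; since a twisted trivalent category is nondegenerate we have $d\ne 0$, so $d=2$. Hence for $d\ne 2$ there is no such category, which is the assertion.

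The only genuinely delicate point is the bookkeeping of the dots: one must verify that each of the reductions above really does go through in the twisted setting and that the various powers of $\omega$ either cancel or can be normalized away. This is the rotation-tracking that the older implementation was not designed for, but at four boundary points it is an elementary hand computation; if one wishes, the analogous evaluation of $\Delta_\omega^\square(4,1)$ provides an independent check, though it is not needed for the argument.
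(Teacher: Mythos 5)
Your proposal is correct and follows essentially the same route as the paper: the paper's proof simply states that $\Delta_\omega(4,0)=d^5(d-2)$ (the triangle being forced to zero in the twisted setting) and concludes $d=2$ from $d\neq 0$. Your extra detail — exhibiting the Gram matrix as the $t=0$ specialization of $M(4,0)$ and noting that dot ambiguities only rescale rows and columns by roots of unity, which cannot affect the vanishing locus — just fills in the computation the paper leaves implicit.
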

\begin{proof}
If $D_\omega(4,0) \leq 3$ then the determinant $\Delta_\omega(4,0) = d^5 (d - 2)$ vanishes.  Since $d \neq 0$, we see that this forces $d=2$.
\end{proof}

\begin{prop}
When $d=2$ there is at most one trivalent category with $\dim \cC_4 \leq 4$.
\end{prop}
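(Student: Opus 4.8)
The plan is to follow the proof of Proposition~\ref{prop:4:uniqueness} (Uniqueness), specialised to the twisted setting, where the only parameter is the loop value $d$ (the triangle with all dots inward is rotationally invariant and hence vanishes, so there is no analogue of $t$). Throughout, $D_\omega(4,0)$ denotes the four diagrams consisting of the two crossingless pairings and the ``$I$'' and ``$H$'' diagrams, the latter two each carrying a fixed choice of dot at its two trivalent vertices; any other placement of the dots differs from these by a power of $\omega$, so these four diagrams still span the image of $D(4,0)$ in $\cC_4$.

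First I would observe that $\dim\Span D_\omega(4,0)\le 3$ automatically when $d=2$. Indeed, the proof of the previous proposition shows $\Delta_\omega(4,0)=d^5(d-2)$, which vanishes at $d=2$, so $M_\omega(4,0)$ is singular there. If the four diagrams of $D_\omega(4,0)$ spanned a $4$-dimensional space then, since $\dim\cC_4\le 4$, they would form a basis of $\cC_4$, and the Gram matrix of a basis with respect to the (nondegenerate) trace form on $\cC_4$ would be invertible --- a contradiction. Hence there is a genuine linear relation among the four diagrams of $D_\omega(4,0)$.

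Next I would extract a twisted ``$I=H$'' relation from this dependence, exactly as in the (unlabelled) spanning lemma in the proof of Proposition~\ref{prop:4:uniqueness}. The Gram submatrix on the two crossingless diagrams is $\psmallmatrix{d^2 & d \\ d & d^2}$, which is invertible at $d=2$, so those two are independent; therefore the relation must have a nonzero coefficient on the ``$I$'' or the ``$H$'' diagram, and rescaling and rotating by a quarter turn (which exchanges these two diagrams up to a power of $\omega$) produces a relation expressing ``$H$'' as a linear combination of ``$I$'' and the two crossingless diagrams. The twisted version of Lemma~\ref{lem:I=H=>spanning} now applies verbatim --- applying this relation to the largest internal face strictly decreases either the number of internal faces or the size of the largest one --- so $D_\omega(n,0)$ spans $\cC_n$ for every $n$. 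In particular $D_\omega(4,0)$ spans $\cC_4$, so by nondegeneracy the relations holding in $\cC_4$ are precisely the kernel of the numerical matrix $M_\omega(4,0)$ evaluated at $d=2$; this is a fixed subspace, so the $I=H$ relation is pinned down with definite coefficients (the analogue of Lemma~\ref{lem:recognition:SO3}).

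Finally, the $I=H$ relation, together with $\bigcirc=2$, the bigon reduction, and the vanishing of every triangle (each triangle equals a power of $\omega$ times the rotationally invariant all-dots-inward triangle, which is $0$), suffices to evaluate every closed twisted trivalent graph as a multiple of the empty diagram, by the usual largest-face reduction already used for the chromatic category. By the twisted analogue of Corollary~\ref{cor:reductions=>uniqueness} --- which rests on Proposition~\ref{prop:general:uniqueness} applied to the evaluable pivotal category presented by these relations and its unique negligible ideal --- there is at most one nondegenerate twisted trivalent category at $d=2$. The step needing genuine care is the dot-bookkeeping when passing from a raw dependence among $D_\omega(4,0)$ to an honest $I=H$ relation, and the verification that the kernel of $M_\omega(4,0)$ at $d=2$ determines that relation uniquely rather than leaving an ambiguity; both reduce to the observation that the $2\times 2$ block above is nonsingular, after which the argument closes.
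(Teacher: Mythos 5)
Your proposal is correct and takes essentially the same approach as the paper's own proof: produce a twisted $I=H$-type relation at $d=2$, use it (via the face-reduction argument) to show $D_\omega(n,0)$ spans $\cC_n$, pin the relation down as the kernel of the fixed numerical matrix $M_\omega(4,0)$, and conclude uniqueness from Corollary \ref{cor:reductions=>uniqueness}. The only cosmetic difference is that you rule out linear independence of $D_\omega(4,0)$ up front using nondegeneracy and the vanishing of $\Delta_\omega(4,0)=d^5(d-2)$, whereas the paper keeps both cases and, in the independent case, reads the explicit relation $\drawIb-\drawHb=\twostrandid-\cupcap$ directly off the kernel.
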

\begin{proof}
If $D_\omega(4,0)$ is linearly dependent then we get a relation of the form
\begin{equation*}
 \; 
\drawIb
\; = \alpha \;
 \drawHb
\;
+ \beta
\; 
\twostrandid
\; + \gamma \;
 \cupcap
\;.
\end{equation*} 
As before, this relation shows that $D_\omega(n,0)$ spans $\cC_n$.  On the other 
hand if $D_\omega(4,0)$ is linearly independent then it is a basis of $\cC_4$.  
Thus computing the kernel of the inner product we see that
\begin{equation*}
 \left( \; 
\drawIb
\; - \;
 \drawHb
\; \right)
 = 
\left( \; 
\twostrandid
\; - \;
 \cupcap
\; \right).
\end{equation*}
In either case, the relation shows that $D_\omega^\square(n,0)$ spans $\cC_n$ 
and hence the relation is enough to evaluate all closed diagrams.  Thus there is 
at most one such category.
\end{proof}

\begin{remark}
Note that this uniqueness statement applies for a particular choice of primitive 
cube root of unity for the rotational
eigenvalue; typically, as below, examples will appear in pairs corresponding to 
both choices.
\end{remark}

For realization we use \cite{MR2098028} which gives Chmutova's classification of
fusion categories of global dimension $6$.  We recall the following notation
from \cite[\S 3]{MR1976233}.  Suppose that $H$ is a subgroup of $G$, that $\xi
\in Z^3(G, \mathbb{C}^\times)$ is a $3$-cocycle and that $\psi \in C^2(H,
\mathbb{C}^\times)$ is a $2$-cochain whose coboundary is the restriction of
$\xi$.  Then we have a fusion category $\mathrm{Vec}(G,\xi)$ of twisted
$G$-graded vector spaces, and the twisted group ring $\mathbb{C}_\psi[H]$ is an
algebra object.  Let $\cC(G,H,\xi,\psi)$ denote the category of bimodules over
the twisted group ring.  Recall that $H^3(S_3, \mathbb C^\times) =
\mathbb{Z}/6\mathbb{Z}$, $H^3(S_2, \mathbb C^\times) = \mathbb{Z}/2 \mathbb{Z}$,
and $H^2(S_2, \mathbb C^\times) = 0$.  If $\xi$ is an element of order $3$ in
$H^3(S_3, \mathbb C^\times)$, then its restriction to $H^3(S_2, \mathbb
C^\times)$ is trivial.  So there's a 2-cochain $\psi$ on $S_2$ (which is unique
up to homology) such that $d \psi = \xi$ on $S_2$.

\begin{prop}
If $\xi$ is a $3$-cocycle of order $3$ in $H^3(S_3, \mathbb C^\times)$ and $\psi$ a $2$-cochain on $S_2$ such that $d
\psi = \xi$ on $S_2$, then $\cC(S_3, S_2, \xi, \psi)$ gives a trivalent category with $d=2$ and $\dim \cC_4 = 3$.
\end{prop}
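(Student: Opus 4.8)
The plan is to identify $\cC := \cC(S_3, S_2, \xi, \psi)$ structurally and to read off everything we need from its Grothendieck ring together with a small amount of associativity data. First I would note that $\cC$, being a group-theoretical fusion category assembled from cocycles of finite order, is unitary, hence pseudo-unitary, and so carries a canonical spherical (in particular pivotal) structure for which categorical dimensions equal Frobenius--Perron dimensions; being semisimple it is automatically $\bbC$-linear, evaluable (since $1$ is simple), and nondegenerate (the trace pairing on each Hom space is a perfect pairing). Its Frobenius--Perron dimension is $|S_3| = 6$, and a count of $S_2$-double cosets in $S_3$ --- or, equivalently, Chmutova's classification recalled above --- shows that its Grothendieck ring is that of $\mathrm{Rep}(S_3)$: the simple objects are $1$, an invertible $\epsilon$ with $\epsilon^{\tensor 2} \iso 1$, and a self-dual object $X$ of dimension $2$, with $\epsilon \tensor X \iso X$ and $X \tensor X \iso 1 \oplus \epsilon \oplus X$.

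Reading off the invariant spaces from these fusion rules, $\dim \cC_n = [X^{\tensor n} : 1]$, the multiplicity of the unit in $X^{\tensor n}$, which gives $1,0,1,1,3,\ldots$; in particular $\dim \cC_1 = 0$, $\dim \cC_2 = \dim \cC_3 = 1$, $\dim \cC_4 = 3$, and the loop value is $d = \dim X = 2$. Since $X \tensor X$ already contains every simple object, the pivotal subcategory generated by the essentially unique morphism $\tau \in \cC_3$ is all of $\cC$. Finally $X$ is self-dual with $X^* \iso X$ (it is the unique $2$-dimensional simple, equivalently $\dim \cC_2 = 1$), and its self-duality is symmetric --- the Frobenius--Schur indicator is $+1$ --- which one verifies either from a skeletal model of $\cC$ or from the standard indicator formula for group-theoretical categories. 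Thus $\cC$ has all the structure of a twisted trivalent category, provided one also determines the rotational eigenvalue of $\tau$.

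That last point is the one requiring real work, and is exactly where the hypothesis that $\xi$ has order $3$ enters. I would compute the eigenvalue by passing to a skeletal model of $\cC(S_3, S_2, \xi, \psi)$ with explicit associativity ($6j$) and pivotal data determined by $(\xi, \psi)$, and evaluating the scalar by which a $2\pi/3$ cyclic rotation of the three legs acts on $\tau$; this scalar is an evaluation of the class of $\xi$ on the nontrivial $S_2$-double coset, so it equals $1$ exactly when $\xi$ is trivial (the case $\xi = 0$ recovering the ordinary, untwisted trivalent category $\mathrm{Rep}(S_3)$) and is a primitive cube root of unity when $\xi$ has order $3$, the two order-$3$ classes of $H^3(S_3, \bbC^\times)$ producing the two choices of $\omega$. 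Alternatively one can shortcut most of this bookkeeping: the uniqueness proposition for twisted trivalent categories proved above says there is at most one such category with $d \leq 2$, so it suffices to know merely that $\cC$ carries a \emph{nontrivial} rotational eigenvalue, i.e. that $\cC$ is not equivalent (as a pivotal category with distinguished trivalent vertex) to $\mathrm{Rep}(S_3)$; and this follows because $\mathrm{Vec}(S_3, \xi)$ and $\mathrm{Vec}(S_3)$ are inequivalent (their associators differ by the nonzero class $\xi$), hence so are their Morita duals $\cC$ and $\mathrm{Rep}(S_3)$, while Chmutova's classification shows the only categories with this fusion ring and $\dim X = 2$ are $\mathrm{Rep}(S_3)$ and its two order-$3$ twists. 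Making either argument fully precise at the level of the trivalent vertex is the main obstacle; the rest is a direct reading of the fusion data.
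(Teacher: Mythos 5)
Your second, ``shortcut'' route is in substance the paper's own proof: identify the three simple objects and the fusion ring, read off $d=2$ and the invariant-space dimensions $1,0,1,1,3,\ldots$, and then conclude that the rotational eigenvalue of the vertex must be a primitive cube root of unity because the category is not $\mathrm{Rep}(S_3)$, which is what a trivial eigenvalue would force. (Incidentally, your fusion rule $X\otimes X\cong 1\oplus\epsilon\oplus X$ is the consistent one for global dimension $6$ and $\dim\cC_4=3$.) Two caveats on how you justify the key step. First, the implication ``eigenvalue $1$ implies the category is $\mathrm{Rep}(S_3)=SO(3)_{\zeta_{12}}$'' comes from the \emph{untwisted} classification, Theorem \ref{thm:4}, applied at $d=2$ with $\dim\cC_4\leq 3$; the twisted uniqueness proposition you cite (``at most one such category at $d=2$'') plays no role in the realization argument. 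Second, ``$\mathrm{Vec}(S_3,\xi)\not\simeq\mathrm{Vec}(S_3)$, hence their Morita duals are inequivalent'' is not a valid inference as stated, since inequivalent categories can have equivalent duals with respect to different module categories; what you need is that the two are not Morita equivalent (compare Drinfeld centers), or you can simply lean on Chmutova's classification of global dimension $6$ categories, as you also suggest and as the paper does, to see that $\cC(S_3,S_2,\xi,\psi)$ with $\xi$ of order $3$ is one of the two nontrivial categories with this fusion ring. Your first route --- computing the eigenvalue directly from skeletal $6j$ data as an evaluation of $\xi$ --- is not what the paper does and you leave it incomplete, but it is unnecessary once the shortcut is in place. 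Your extra verifications (sphericality via pseudo-unitarity, symmetric self-duality, generation) are detail the paper leaves implicit; just note that ``$X\otimes X$ contains every simple'' only shows the powers of $X$ see all of $\cC$ up to idempotent completion, not by itself that $\tau$ generates all morphisms, though the paper's own proof is equally silent on this point.
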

\begin{proof}
These two categories (one for each choice of $\xi$) each have three objects 
$1$, $X$, $g$ with $g^2 = 1$, $g X = X g$, and $X^2 = 1+2X+g$.  (In other words, 
they are near group categories \cite{MR1997336,MR3167494,1512.04288} for the group 
$\mathbb{Z}/2\mathbb{Z}$.)  So a direct calculation shows that the dimensions of 
the Hom spaces are given by $1,0,1,1,3, \ldots$.  Since these are distinct from 
$\mathrm{Rep}(S_3) = SO(3)_{\zeta_{12}}$, our previous classification shows that 
the rotational eigenvalue cannot be $1$ so they must both be twisted trivalent 
categories.
\end{proof}

Combining these results we get the following classification.

\begin{thm}
A twisted trivalent category $\cC$ with $\dim \cC_4 \leq 3$ must be one of the 
two $\cC(S_3, S_2, \xi, \psi)$ categories.
\end{thm}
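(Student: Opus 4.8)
The plan is to assemble the three propositions just proved, with essentially no new computation. First I would note that every diagram in $D_\omega(4,0)$ lies in $\cC_4$, so the hypothesis $\dim \cC_4 \le 3$ forces $\dim \Span D_\omega(4,0) \le 3$ for free. Combined with nondegeneracy (which gives $d \neq 0$, exactly as in the untwisted case) and the first proposition of this section, which rules out $\dim \Span D_\omega(4,0) \le 3$ unless $d = 2$, this pins down $d = 2$.

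Next I would fix the primitive cube root of unity $\omega$ that is the rotational eigenvalue of the generating morphism $1 \to X \tensor X \tensor X$ of $\cC$. Applying the second proposition (uniqueness at $d = 2$)---interpreted, in accordance with the remark immediately following it, for this fixed choice of $\omega$---there is at most one twisted trivalent category with $\dim \cC_4 \le 4$, and in particular at most one with $\dim \cC_4 \le 3$, having this rotational eigenvalue.

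Finally I would invoke the third proposition: for each of the two order-$3$ classes $\xi \in H^3(S_3,\mathbb{C}^\times)$ (with its accompanying $\psi$) it exhibits a trivalent category $\cC(S_3,S_2,\xi,\psi)$ with $d = 2$ and $\dim \cC_4 = 3$, and its proof shows these are genuinely twisted, with rotational eigenvalue a primitive cube root of unity (since they are distinct from $\mathrm{Rep}(S_3) = SO(3)_{\zeta_{12}}$, the eigenvalue cannot be $1$). The two choices of $\xi$ realize the two possible eigenvalues $\omega$, so whichever value $\cC$ has is realized by one of these two categories; matching $\cC$ to that one and using the uniqueness from the previous step identifies $\cC$ with it, proving the theorem.

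I expect the only slightly delicate point to be the bookkeeping in the last step: confirming that the two near-group categories $\cC(S_3,S_2,\xi,\psi)$ are distributed one to each of the two rotational eigenvalues---so that together they account for every twisted trivalent category with $\dim \cC_4 \le 3$, rather than both landing on the same $\omega$. This is already recorded in (the proof of) the third proposition, so no additional argument is needed; everything else is a direct concatenation of the cited results.
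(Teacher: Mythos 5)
Your route is the same as the paper's: the theorem is obtained by concatenating the three propositions of this section (non-existence away from $d=2$, uniqueness at $d=2$ for each fixed rotational eigenvalue, and realization by the two categories $\cC(S_3,S_2,\xi,\psi)$), and your first two steps are exactly right.

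However, the one point you yourself flag as delicate is not actually settled where you say it is. The proof of the realization proposition only shows that each $\cC(S_3,S_2,\xi,\psi)$ is \emph{twisted} (its eigenvalue is not $1$, since it is distinct from $\mathrm{Rep}(S_3)=SO(3)_{\zeta_{12}}$); it does not show that the two choices of $\xi$ yield the two \emph{different} primitive cube roots of unity, and the paper's own remark immediately after the theorem concedes that the $\xi\leftrightarrow\omega$ matching is left unspecified. As written, your argument therefore does not exclude the scenario in which both categories land on the same eigenvalue $\omega$, leaving a hypothetical twisted trivalent category with eigenvalue $\bar\omega$ and $\dim\cC_4\le 3$ unaccounted for. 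The gap is small and easily filled: the two order-$3$ classes in $H^3(S_3,\bbC^\times)$ are complex conjugate (inverse) to one another, so the two categories $\cC(S_3,S_2,\xi,\psi)$ are complex conjugates of each other, and conjugating a category replaces the rotational eigenvalue $\omega$ by $\bar\omega$; since each eigenvalue is a primitive cube root of unity, the two categories carry the two distinct eigenvalues, as needed. (Alternatively, one may quote Chmutova's classification of fusion categories of global dimension $6$ to see the two categories are inequivalent, and then the uniqueness proposition forces their eigenvalues to differ.) With that half-line added, your proof is complete and coincides with the paper's.
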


(Here we haven't specified which values of $\xi$ correspond to which rotational 
eigenvalues, although it must be a bijective correspondence; it would be 
interesting to work this out.)

Now, a twisted cubic category $\cC$ (that is, one with $\dim \cC_4 = 4$) must 
(by the analogue of Theorem \ref{prop:cubic}) have $d \neq 2$, $D_\omega(4,0)$ a 
basis of $\cC_4$ and satisfy the relation
\begin{equation*}
  \ngonb[45]{4} =  - \frac{1}{d}
 \left( \; 
\drawIb
\; + \;
 \drawHb
\; \right)
 + \frac{1}{d}
\left( \; 
\twostrandid
\; + \;
 \cupcap
\; \right)
\end{equation*}

Using only this relation (along with the known values $d$ for loops, $1$ for bigons, and $0$ for triangles), we can readily compute
all the following determinants.
\begin{align*}
\Delta_\omega(5,0) & = d^{10} (d - 2)^5 \\
\Delta_\omega^\square(5,1) & = d^9 (d - 2)^6\\
\Delta_\omega(6,0) & = - d^{26} (d-2)^{23} (d-1)  \left(d^2-d-1\right) \left(d^3-2 d^2-3 d+1\right) \left(d^4-4 d^3+3 d^2-d-1\right)\\
\Delta_\omega^\square(6,1) & = - d^{12} (d-2)^{31} (d-1)^2 (d+1)^2 \left(d^2-3 d-1\right)^4 \left(d^4-2 d^3-3 d^2-d+2\right)\\
\Delta_\omega^\square(7,1) & = - d^{-86} (d-2)^{141} (d+1)^{16} \left(d^2-3 d-1\right)^{35}  Q_{\omega,9} Q_{\omega,60}
\end{align*}
(The polynomials $Q_{\omega,i}$ appear in the appendix.)

\begin{prop}
In any twisted cubic category $D_\omega^\square(5,1)$ is linearly independent.
\end{prop}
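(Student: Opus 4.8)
The plan is to read off linear independence from the non-vanishing of the Gram determinant $\Delta_\omega^\square(5,1)$. The key preliminary point is that in a twisted cubic category we have enough local relations to make the matrix $M_\omega^\square(5,1)$ of inner products an honest matrix with entries in $\bbC(d)$: the triangle with all dots pointing inward is manifestly rotationally invariant and hence zero, the bigon is normalized to $1$, and the square reduces via the four-term relation displayed just above. As remarked there, these relations together with the loop value $d$ suffice to evaluate every closed trivalent graph obtained by pairing two elements of $D_\omega^\square(5,1)$; each such closed diagram is a small polyhedron (at worst a pentagonal prism), and since any trivalent planar graph on fewer than $20$ vertices must have a face with at most four sides, iterated application of the square, triangle and bigon reductions terminates and computes its value as a polynomial in $d$. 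In particular no pentagon relation is required at this stage.

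First I would record the computation already stated above, namely $\Delta_\omega^\square(5,1) = d^9 (d-2)^6$. Then I would invoke non-degeneracy, which forces $d \neq 0$, together with the twisted analogue of Proposition~\ref{prop:cubic}(b), which gives $d \neq 2$ for a twisted cubic category. Hence $\Delta_\omega^\square(5,1) \neq 0$, so $M_\omega^\square(5,1)$ is invertible; note that invertibility is unaffected by the overall power-of-$\omega$ ambiguity in the definition of $M_\omega^\square(5,1)$.

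Finally, if $\sum_i c_i D_i = 0$ were a nontrivial linear relation among the diagrams $D_i \in D_\omega^\square(5,1)$, then pairing both sides with each $D_j$ would exhibit the nonzero vector $(c_i)$ in the kernel of $M_\omega^\square(5,1)$, contradicting invertibility. Therefore $D_\omega^\square(5,1)$ is linearly independent. I do not expect any genuine obstacle here: the determinant computation has already been carried out, and the only point needing a moment's care is the claim that the square reduction really does evaluate all the closed diagrams entering $M_\omega^\square(5,1)$, which is immediate from the vertex-count bound above.
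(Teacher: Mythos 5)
Your proposal is correct and is essentially the paper's argument: the paper's proof is exactly the observation that in a twisted cubic category $d \neq 0$ and $d \neq 2$, so $\Delta_\omega^\square(5,1) = d^9(d-2)^6 \neq 0$, which forces linear independence. Your additional remarks (that the bigon, triangle, and square reductions suffice to evaluate the entries of $M_\omega^\square(5,1)$, and the standard Gram-matrix argument) just spell out details the paper leaves implicit.
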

\begin{proof}
Since $d$ is not $0$ or $2$, we have $\Delta_\omega^\square(5,1) \neq 0$.
\end{proof}

It follows that there are no twisted cubic categories with $\dim \cC_5 \leq 10$.  It also follows that if $\dim \cC_5 = 11$, then $D_\omega^\square(5,1)$ is a basis for $\cC_5$.

\begin{prop}
If $\cC$ is a twisted cubic category with $\dim \cC_5 = 11$ and $\dim \cC_6 \leq 40$, then $d = -1$ or $d = \frac{3\pm \sqrt{13}}{2}$.
\end{prop}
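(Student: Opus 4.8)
The plan is to run the non-existence argument of Sections~\ref{sec:four}--\ref{sec:six} once more; in the twisted setting it collapses to factoring one-variable polynomials, since there $t=0$ and every determinant is a polynomial in $d$ alone.

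First I would produce a linear relation at the level of six boundary points. After fixing a choice of dot placement on each graph, the set $D_\omega^\square(6,1)$ has the same cardinality as $D^\square(6,1)$, namely $41$, because all dot placements on a fixed skeleton agree up to a power of $\omega$. Since the standing hypotheses give $\dim\cC_6\le 40<41$, these $41$ elements of $\cC_6$ are linearly dependent, so there is a nontrivial relation amongst the diagrams of $D_\omega^\square(6,1)$; in particular $\Delta_\omega^\square(6,1)=0$. By the twisted analogue of Proposition~\ref{thm:zeroalltheway} (glue a fixed tree to one boundary point of each diagram occurring in the relation), this forces $\Delta_\omega^\square(n',k')=0$ for all $n'\ge 6$ and $k'\ge 1$, and in particular $\Delta_\omega^\square(7,1)=0$.

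Next I would intersect the two vanishing loci. Nondegeneracy gives $d\ne 0$, and the twisted analogue of Proposition~\ref{prop:cubic} gives $d\ne 2$, so, after discarding the factors $d$ and $d-2$, the value of $d$ must be a common root of $(d-1)(d+1)(d^2-3d-1)(d^4-2d^3-3d^2-d+2)$ (coming from $\Delta_\omega^\square(6,1)$) and of $(d+1)(d^2-3d-1)\,Q_{\omega,9}\,Q_{\omega,60}$ (coming from $\Delta_\omega^\square(7,1)$). The factors $d+1$ and $d^2-3d-1$ are shared, and $d^2-3d-1$ has roots $\frac{3\pm\sqrt{13}}{2}$; so it remains only to rule out the remaining factors of $\Delta_\omega^\square(6,1)$, i.e. to check that $(d-1)(d^4-2d^3-3d^2-d+2)$ is coprime to $Q_{\omega,9}Q_{\omega,60}$. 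That is a finite $\gcd$ (equivalently resultant) computation, which I would do by computer; granting it, the only surviving possibilities are $d=-1$ and $d=\frac{3\pm\sqrt{13}}{2}$.

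The substantive content here is not in the deduction above but in its inputs: the evaluation and factorization of $\Delta_\omega^\square(7,1)$, which is assumed, and the coprimality check of the last step, where one must be sure that neither $d=1$ nor any root of the quartic $d^4-2d^3-3d^2-d+2$ happens to lie on $Q_{\omega,9}Q_{\omega,60}=0$, which would enlarge the list of admissible $d$. I also note that the hypothesis $\dim\cC_5=11$ is not used beyond fixing the ambient context; the whole argument rests on the count at six boundary points together with the two determinant factorizations.
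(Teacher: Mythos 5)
Your proposal is correct and follows essentially the same route as the paper: the bound $\dim \cC_6 \leq 40$ forces a dependence among the $41$ diagrams of $D_\omega^\square(6,1)$, hence $\Delta_\omega^\square(6,1) = \Delta_\omega^\square(7,1) = 0$, and after discarding $d = 0, 2$ the only shared factors of the two displayed factorizations are $d+1$ and $d^2-3d-1$. The only (minor) difference is that the paper reads the coprimality of the remaining factors directly off the listed irreducible factorizations rather than flagging it as a separate gcd check, and, as you observe, the hypothesis $\dim \cC_5 = 11$ is likewise not used in the paper's own proof.
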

\begin{proof}
First, $\dim \cC_6 \leq 40$ implies that $D_\omega^\square(6,1)$ is linearly 
dependent and hence $\Delta_\omega^\square(6,1)$ and $\Delta_\omega^\square(7,1)$ 
both vanish.  But their only shared factors are $d+1$ and $d^2 -3d- 1$.  
\end{proof}

\begin{prop}
There is at most one twisted cubic category with $\dim \cC_5 = 11$ and 
$\dim \cC_6 \leq 40$ for each of the three points $d = -1$ or 
$d = \frac{3 \pm \sqrt{13}}{2}$.
\end{prop}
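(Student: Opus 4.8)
The plan is to rerun the proof of Proposition~\ref{prop:6:uniqueness} in the twisted setting. The key observation is that all of the combinatorial machinery of this section --- the planar subgraphs Lemma~\ref{lem:planarsubgraphs} and its supporting Lemmas~\ref{lem:removemaxgrowth} and~\ref{lem:enumerategraphs}, their corollaries, and the discharging Lemma~\ref{lem:discharging-closed-pentapent} --- refers only to the underlying planar trivalent graph and is blind to the choice of a dot at each vertex, so every one of these statements holds verbatim for dotted graphs. In particular: any closed trivalent graph contains a very small face, a pentapent, or a hexapent; any connected open trivalent graph with at most $6$ boundary points and no very small face lies in $D_\omega^\square(n,1)$ or contains an internal pentapent or hexapent; and hence in any twisted cubic category equipped with reduction relations for pentapents and hexapents, the diagrams $D_\omega^\square(n,1)$ span $\cC_n$ for $n \leq 6$, and those relations --- together with the reductions of $n$-gons for $n \leq 4$ (the known loop, bigon, and triangle values $d$, $1$, $0$, and the square reduction displayed above for twisted cubic categories) --- evaluate every closed twisted trivalent graph.

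First I would check that $D_\omega(6,0)$ is linearly independent at each of the three values of $d$, which is immediate from the displayed formula for $\Delta_\omega(6,0)$: at $d=-1$ all five irreducible factors are visibly nonzero, and at $d=\frac{3\pm\sqrt{13}}{2}$ one reduces modulo $d^2-3d-1=0$ (so $d^3=10d+3$ and $d^4=33d+10$) to get $d-1$, $d^2-d-1=2d$, $d^3-2d^2-3d+1=d+2$, $d^4-4d^3+3d^2-d-1=d$, all nonzero, along with $d \neq 0,2$. Next I would show $D_\omega^\square(6,1)$ spans $\cC_6$ exactly as in Lemma~\ref{lem:40=>spans}: if $\dim \Span D_\omega^\square(6,1) \leq 39$ then, since the $41$ diagrams of $D_\omega^\square(6,1)$ are the $34$ linearly independent diagrams of $D_\omega(6,0)$ together with the $6$ pentaforks and the hexagon, there are at least two relations modulo $\Span(D_\omega(6,0))$, hence at least one relation among the six pentaforks modulo $\Span(D_\omega(6,0))$; the twisted analogue of Lemma~\ref{lem:pentaforks=>reductions} then produces pentapent and hexapent reductions, so $D_\omega^\square(6,1)$ spans by the first paragraph; and if instead $\dim \Span D_\omega^\square(6,1) = 40 = \dim \cC_6$ it spans trivially.

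With spanning established, the argument concludes as in Proposition~\ref{prop:6:uniqueness}. Computing the kernel of $M_\omega^\square(6,1)$ at each of $d=-1$ and $d=\frac{3\pm\sqrt{13}}{2}$ --- a machine computation, using the implementation that records rotations of vertices --- non-degeneracy together with the spanning of $D_\omega^\square(6,1)$ makes every kernel element a genuine relation; this kernel will be found to have dimension at least two (I expect four, as in the untwisted $H3$ case), so, since $D_\omega(6,0)$ is linearly independent, there is at least one relation among the six pentaforks modulo $\Span(D_\omega(6,0))$. The twisted analogue of Lemma~\ref{lem:pentaforks=>reductions} then yields explicit pentapent and hexapent reduction relations; these with the $n \leq 4$ reductions evaluate all closed twisted trivalent graphs, and Corollary~\ref{cor:reductions=>uniqueness}, whose proof via negligible ideals applies equally to twisted trivalent categories, gives at most one twisted cubic category at each of the three values of $d$.

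The main obstacle is the computation of the kernel of the $41 \times 41$ matrix $M_\omega^\square(6,1)$ over $\bbQ$ (for $d=-1$) and over $\bbQ(\sqrt{13})$, and the verification that it has the expected dimension and rotational-eigenvalue structure (the pentafork relations should again decompose into $\rho$-eigenspaces) --- this relies on the not-yet-released software that tracks dots. A secondary subtlety is the $\omega$-bookkeeping in the twisted analogue of Lemma~\ref{lem:pentaforks=>reductions}: gluing an ``$H$'' or a tree onto a rotated dotted pentafork and re-expressing the resulting pentapent and hexapent in standard form introduces powers of $\omega$, and one must confirm that the resulting root-of-unity identity (playing the role of ``$\zeta^{-2} \neq -1$'') still forces the pentapent and hexapent to vanish modulo lower-order terms.
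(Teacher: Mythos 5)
Your outline is essentially the paper's own: the paper likewise reduces the statement to the existence of a pentafork relation at each of the three values of $d$ and then reruns Lemma~\ref{lem:pentaforks=>reductions} in the twisted setting, with the discharging and enumeration machinery applying verbatim to dotted graphs, concluding via the evaluation argument and Corollary~\ref{cor:reductions=>uniqueness}. Your preliminary steps (nonvanishing of $\Delta_\omega(6,0)$ at $d=-1$ and modulo $d^2-3d-1$, the twisted analogue of Lemma~\ref{lem:40=>spans}, and extraction of a pentafork relation from the kernel of $M_\omega^\square(6,1)$) are consistent with how the paper arrives at ``again we must have a pentafork relation,'' and the paper, too, ultimately leans on computations at these specific points (the explicit relations displayed at $d=-1$ and the rank computation at $d=\frac{3+\sqrt{13}}{2}$), so deferring that part to a machine is comparable.

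The gap is that the step you label a ``secondary subtlety'' --- checking that the powers of $\omega$ picked up when rotating the dotted vertex do not break the pentapent and hexapent arguments --- is in fact the entire content of the paper's proof of this proposition, and you leave it unverified. It does go through, and here is how the paper resolves it: in the twisted version of the argument of Lemma~\ref{lem:pentaforks=>reductions}, the failure conditions become $1 = (-\zeta^{-2}\omega)^3$ for the pentapent and $1 = (-\zeta^{-2}\omega)^7$ for the hexapent, where $\zeta$ is still a sixth root of unity. For the pentapent, $(-\zeta^{-2}\omega)^3 = -\zeta^{-6}\omega^3 = -1 \neq 1$, so the pentapent reduction always exists. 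For the hexapent, $(-\zeta^{-2}\omega)^7 = -\zeta^{-2}\omega$, so failure would require $\zeta^2 = -\omega$; this is impossible because $\zeta^2$ is a cube root of unity while $-\omega$ is a primitive sixth root of unity. With both reductions in hand, Lemma~\ref{lem:discharging-closed-pentapent} and Corollary~\ref{cor:reductions=>uniqueness} give uniqueness at each of the three points, exactly as you intend; to make your proposal a complete proof you must include this root-of-unity verification rather than flag it.
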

\begin{proof}
Again we must have a pentafork relation.  The method of Lemma
\ref{lem:pentaforks=>reductions} can still be used to guarantee pentapent and
hexapent reductions and thus uniqueness, as follows.  The first argument there
shows that we have a relation for reducing the pentapent unless $1 =
(-\zeta^{-2} \omega)^3 = -1$.  The second argument there shows that we have a
relation for reducing the hexapent unless $1 = (-\zeta^{-2} \omega)^7 =
-\zeta^{-2} \omega$ which can only occur when $\zeta^2 = -\omega$.  Here
$\zeta^2$ is a third root of unity, while $-\omega$ is a primitive sixth root of
unity, so this can not happen.
\end{proof}

The two cases with $d = \frac{3 + \sqrt{13}}{2}$ are realized by the twisted
Haagerup fusion categories conjectured in \cite{MR2837122}.  Ostrik observed
that these can be constructed as follows.  Start with the 
construction of two $\mathbb{Z}/9\mathbb{Z}$ Izumi near group categories in 
\cite{MR3167494}.
Following Izumi and Evans-Gannon, the center of one of these categories contains a copy
of $\mathrm{Rep}(\mathbb{Z}/3\mathbb{Z})$ as a symmetric tensor subcategory. 
One can then de-equivariantize \cite{MR2609644} by this to get a new category which has objects
$1, g, g^2, X, gX, g^2 X$ but where the three invertible elements have
nontrivial associator.  We will denote the categories obtained this way
$H_\omega$
(one for each primitive cube root of unity).  The two with $d = \frac{3 -
\sqrt{13}}{2}$ are realized by the Galois conjugates of the $H_\omega$.

\begin{remark}
Note that the untwisted $H2$ (one of the even parts of the Haagerup subfactor)
and $H3$ can be constructed in a similar way.  Start with the unique $\mathbb
{Z}/3\mathbb{Z} \times \mathbb{Z}/3\mathbb{Z}$ Izumi near-group category.
The center of this category has two different copies of 
$\mathrm{Rep}(\mathbb {Z}/3\mathbb{Z})$ (one is the diagonal and the other the
anti-diagonal). 
De-equivariantizing by each of these gives $H2$ and $H3$ (although it's not
clear which is which).
\end{remark}

\begin{prop}
The two categories $H_\omega$ are twisted cubic categories with $\dim \cC_5 = 11$, $\dim \cC_6 = 37$, and $d = \frac{3 + \sqrt{13}}{2}$.
\end{prop}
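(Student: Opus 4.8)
The plan is to transcribe the proof of Proposition \ref{prop:6:realization} (the realization of $H3$) into the twisted setting, where it simplifies because there are no twisted cubic categories below the relevant dimensions, and to supply the one genuinely new ingredient, the value of the rotational eigenvalue. First I would record the structure of $H_\omega$ coming out of the de-equivariantization: it has the same Grothendieck ring as $H2$ and $H3$, with simple objects $1,g,g^2,X,gX,g^2X$, relations $g^3=1$ and $gX=Xg^2$, fusion $X^{\tensor 2}\cong 1\directSum X\directSum gX\directSum g^2X$, and $X$ symmetrically self-dual (Frobenius--Schur indicator $+1$, so the unoriented dotted-graph calculus applies). Since $\dim\Hom(1\to X^{\tensor n})$ depends only on the fusion ring, $\dim\Inv_{H_\omega}(X^{\tensor n})$ equals the $H3$ sequence $1,0,1,1,4,11,37$ for $0\le n\le 6$. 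Taking quantum dimensions in $X^{\tensor 2}\cong 1\directSum X\directSum gX\directSum g^2X$ gives $d^2=1+3d$, hence $d=\tfrac{3+\sqrt{13}}{2}$; the Galois conjugate $\sqrt{13}\mapsto-\sqrt{13}$ of the construction realizes the other root.

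Since $\dim\Inv_{H_\omega}(X^{\tensor 3})=1$ there is a nonzero $f\colon 1\to X^{\tensor 3}$, necessarily a rotational eigenvector with eigenvalue a cube root of unity $\lambda$. The key claim, which I expect to be the main obstacle, is that $\lambda$ is the primitive cube root of unity $\omega$ appearing in the de-equivariantization of $\mathrm{Rep}(\Integer/3\Integer)$; equivalently, the associator on the invertible subcategory $\{1,g,g^2\}$ is the nontrivial $3$-cocycle labelled by $\omega$, and this cocycle is exactly the rotational eigenvalue of $f$. This is the feature distinguishing $H_\omega$ from the untwisted $H3$, and the natural route is to track the cocycle through Izumi's construction and the de-equivariantization; alternatively one can argue that $\lambda\ne 1$, since otherwise the pivotal subcategory generated by $f$ would be an untwisted trivalent category which, by the dimension bounds of the next paragraph together with Theorems \ref{thm:4}, \ref{thm:5}, and \ref{thm:6} and Proposition \ref{thm:idempotents-and-traces}, would be forced to be $SO(3)_q$, $(G_2)_q$, an $ABA$ category, or the untwisted $H3$ --- all incompatible with the fusion ring and the nontrivial invertible associator of $H_\omega$ (cf. the argument in the proof of Proposition \ref{prop:6:realization}). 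Granting $\lambda=\omega$, let $\cC\subset H_\omega$ be the pivotal subcategory generated by $f$: being a pivotal subcategory of a fusion category it is semisimple, hence nondegenerate and evaluable, its dimension sequence begins $1,0,1,1$, and $\cC$ is a twisted trivalent category with $\dim\cC_n\le\dim\Inv_{H_\omega}(X^{\tensor n})$.

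It remains to compute dimensions. As $d=\tfrac{3+\sqrt{13}}{2}\ne 2$, the non-existence result proved above forces the four diagrams of $D_\omega(4,0)$ to be linearly independent in $\cC_4$, so $\dim\cC_4=4$ and $\cC$ is twisted cubic; since $\Delta_\omega^\square(5,1)=d^9(d-2)^6$ is nonzero at this $d$, the eleven diagrams of $D_\omega^\square(5,1)$ are then independent, giving $\dim\cC_5=11$. For $M_\omega^\square(6,1)$ one has $n+2k=8<12$, so the relations for bigons, triangles and squares express every entry as a rational function of $d$; taking the $37$ diagrams obtained from $D_\omega^\square(6,1)$ by deleting four of its six pentaforks, a machine computation parallel to the one in the proof of Proposition \ref{prop:6:realization} shows the associated $37\times 37$ Gram determinant is nonzero at $d=\tfrac{3+\sqrt{13}}{2}$. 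Hence $\dim\cC_6\ge 37$, and with $\dim\cC_6\le\dim\Inv_{H_\omega}(X^{\tensor 6})=37$ we get $\dim\cC_6=37$ and $\cC_6=(H_\omega)_6$. Finally $H_\omega$ has depth $3$, so it is generated by its morphisms in $\Inv_{H_\omega}(X^{\tensor k})$ for $k\le 6$, whence $\cC=H_\omega$; this exhibits $H_\omega$ as a twisted cubic category with $\dim\cC_5=11$, $\dim\cC_6=37$, and $d=\tfrac{3+\sqrt{13}}{2}$. Beyond the $H3$ case the only new computation is this last Gram determinant, since $\dim\cC_4=4$ and $\dim\cC_5=11$ are now automatic.
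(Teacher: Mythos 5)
Your proposal is correct and follows essentially the same route as the paper: pass to the subcategory generated by the trivalent vertex, rule out the untwisted possibilities ($SO(3)_q$ via the algebra-object argument, $ABA$/$(G_2)_q$ via the idempotent traces, $H3$ via the nontrivial associator), read $d$ and the dimension upper bounds off the fusion ring, and finish with a machine computation on the six-box inner products to show the generated subcategory is all of $H_\omega$. The only cosmetic differences are that you establish $\dim\cC_4=4$ and $\dim\cC_5=11$ up front from the nonvanishing of $\Delta_\omega(4,0)$ and $\Delta_\omega^\square(5,1)$, and you organize the final step as a specific $37\times 37$ Gram minor (as in the untwisted $H3$ realization) whereas the paper checks that the full $41\times 41$ matrix $M_\omega^\square(6,1)$ has rank at least $37$, working modulo a prime above $3$.
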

\begin{proof}
This argument closely follows the argument from \ref{prop:6:realization}.  Again
let $H'$ be the subcategory generated by the trivalent vertex.  The same
argument as before shows that $H'$ must be a trivalent or twisted trivalent
category.    By the fusion rules we know that $d = \frac{3+\sqrt{13}}{2}$.  We
need to see that it is twisted and show that $H' = H_\omega$ which would show
that $H_\omega$ is trivalent and, from the fusion rules, that the dimensions of
the invariant spaces begin $1,0,1,1,4,11,37$.  If $H'$ were untwisted, then it
would have to be on our list of untwisted trivalent categories.  First, $H'$
cannot be $SO(3)_q$ because $1+X$ is not an algebra (if it were then $g$ would
be in the normalizer of that algebra contradicting the nontrivial associator).
Second, $H'$ cannot be an $ABA$ or $(G_2)_q$ category for the same dimensional
considerations that showed $H3'$ couldn't lie in those familes.  Third, $H'$
can't be $H3$ because of the nontriviality of the associator.   Hence it is a
twisted trivalent category.  Finally, in order to show that $H_\omega = H'$ it
is enough to show that $\dim \Inv_{H'}(X^{\otimes 6}) = 37$ which follows
calculating that the $41$-by-$41$ matrix $M_\omega^\square(6,1)$ has rank at
least $37$ at this value of $d$.  This rank calculation can be easily done
modulo a prime sitting above $3$ in $\mathbb{Z}[d]$.
\end{proof}
\begin{question}
Does there exist a twisted trivalent category $\cQ_\omega$ with $d=-1$, $\dim \cC_5 = 11$ and $\dim \cC_6 \leq 40$?  
\end{question}

Such a category is unique if it exists.  It would have $\dim \cC_6 = 39$  and would satisfy
the following two relations\footnote{Although the computer alerted us to the 
existence of these relations, we actually computed them by hand, since it is 
difficult to read off from our computer program where the dots belong.  This 
by-hand calculation following \cite{MR1265145} took two people-days.} (where 
$\zeta$ is the primitive sixth root of unity which is a square root of $\omega$):
\begin{multline}\nonumber 
\ngonb[0]{6}  \; + \; \sum_{i=0}^5 \rho^i\left( \pentaforkb  \right) \; + \; \sum_{i=0}^5 \rho^i\left( \Icupb \right) \; + \; \sum_{i=0}^1 \rho^i\left( \threecupsb \right) \\ =  \sum_{i=0}^2 \rho^i\left(  \doubletrib  \right) \; + \; \sum_{i=0}^2 \rho^i\left(\firstpinwheelb) \right) \; + \; \sum_{i=0}^2 \rho^i\left( \secondpinwheelb \right) 
\end{multline}
and
\begin{multline}\nonumber 
0 = \sum_{i=0}^5 \zeta^i \rho^i\left( \pentaforkb \right) \; + \; \sum_{i=0}^5 \zeta^i \rho^i\left( \Hcupb \right) \\ + \; \sum_{i=0}^5 \zeta^i \rho^i\left(\Icupb \right) \; + \; \sum_{i=0}^5 \zeta^i \rho^i\left( \sixforestb \right).
\end{multline}

 Such a $\cQ_\omega$ cannot come from any operator algebraic construction since
$d=-1 < 0$.  As we will see in the next section, $\cQ_\omega$ is not braided and
so $\cQ_\omega$ is not a Drinfel'd-Jimbo quantum group.  Furthermore, the
dimensions of the objects at depth $2$ are not real (they are conjugate
primitive sixth roots of unity), so $\cQ_\omega$ must have infinitely many
simple objects.  We do not think it comes from any well understood construction,
but it has also passed every test we have attempted to use to rule it out.  We
also note that the above relations are particularly nice as they involve only 24
terms each.

In conclusion, we have the following classification of twisted trivalent categories.

\begin{center}
\begin{tabular}{l|l}
dimension bounds         & new examples 			\\ \hline
1,0,1,1,3,\ldots           &$\cC(S_3, S_2, \xi, \psi)$   	 \\
1,0,1,1,4,10,\ldots		   & nothing \\
1,0,1,1,4,11,37,\ldots     & $H_\omega$ 			\\
1,0,1,1,4,11,39,\ldots     &  $\cQ_\omega$, if it exists 	\\
1,0,1,1,4,11,40,\ldots     & nothing more		\\
\end{tabular}
\end{center}

\section{Braided Trivalent Categories}
\label{sec:braided}

\begin{defn}
We call a trivalent or twisted trivalent category braided if there is an
element in the $4$-boundary point space, which we write using a crossing,
which satisfies the following relations (in the untwisted case ignore the dots): 
$$ \Rtwo = \twostrandid,$$
\begin{equation} \label{eq:pullthrough1} \trivalentpullthroughone = \trivalentpullthroughtwo,\end{equation}
\begin{equation} \label{eq:pullthrough2} \trivalentpullthroughthree = \trivalentpullthroughfour. \end{equation}
\end{defn}

Note that the latter two relations above imply the Reidemeister $3$ relations
also hold via the Kauffman trick (since the category is generated by the
trivalent vertex and so the crossing can be written in terms of trivalent
vertices).  As usual in quantum topology, the Reidemeister $1$ relation need not
hold.

\begin{lem}
There are no braided twisted trivalent categories.
\end{lem}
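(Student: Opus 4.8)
The plan is to derive a contradiction from the structure of a twisted trivalent category together with the braiding relations, exploiting the rotational eigenvalue $\omega \neq 1$ of the trivalent vertex. First I would recall that in a twisted trivalent category the triangle (with all dots pointing inward) equals a multiple of the trivalent vertex but is manifestly rotationally invariant, hence vanishes; so $t = 0$. Next I would observe that the crossing, being an element of $\cC_4$, can be written in terms of the standard diagrams $\twostrandid$, $\cupcap$, and (if $\dim\cC_4 \geq 3$) the $\drawIb$, $\drawHb$ diagrams with some fixed placement of dots. The key point is that the pull-through relations \eqref{eq:pullthrough1} and \eqref{eq:pullthrough2} relate a crossing passed over or under a trivalent vertex to a crossing passed the other way, and each side picks up rotational phase factors $\omega$ or $\omega^{\pm 1}$ when the dot on the trivalent vertex is moved around.

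The main step is a phase-counting argument. Consider the relation obtained by applying the pull-through moves in a way that returns the dotted trivalent vertex to its original configuration after a full rotation of the strand around it: tracking the dot through the sequence of moves multiplies the diagram by a power of $\omega$, but the diagram must equal itself, forcing that power to be $1$. If the accumulated phase is instead a nontrivial power of $\omega$ (because $\omega$ is a primitive cube root of unity and the rotation count is not a multiple of $3$), we get a contradiction exactly as in the hexapent argument of Lemma~\ref{lem:pentaforks=>reductions}. Concretely, I expect that composing the two pull-through relations and using $\Rtwo = \twostrandid$ to undo a Reidemeister~$2$ move produces an identity of the form $v = \omega^{\pm 1} v$ for the trivalent vertex $v$ (or for a nonzero diagram built from it), which is impossible since $v \neq 0$ and $\omega \neq 1$.

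The hard part will be setting up the diagrammatic bookkeeping so that the phase is unambiguous: one must be careful that moving a dot across a crossing, or around a cap, is governed purely by the rotational eigenvalue of $\tau$ and the (rotationally symmetric) self-duality two-valent vertex, with no hidden extra factors. I would handle this by isolating a single trivalent vertex, surrounding it by a strand that is pulled through using \eqref{eq:pullthrough1} then \eqref{eq:pullthrough2}, and checking that the net effect on the distinguished region where the dot lives is a rotation by one click, hence multiplication by $\omega$; since the overall diagram is visibly unchanged, $\omega = 1$, contradicting primitivity. This mirrors the closed-diagram phase arguments already used in the paper, so once the local move is pinned down the conclusion is immediate.
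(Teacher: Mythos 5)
There is a genuine gap, and it sits exactly at the point you flag as "the hard part." The pull-through relations \eqref{eq:pullthrough1} and \eqref{eq:pullthrough2} are naturality statements: they slide the vertex past a strand without rotating it, so no power of $\omega$ is ever accumulated by composing them with $\Rtwo$-type cancellations. The only way to realize the rotation of the vertex using the braiding is to route one leg of the vertex around the others, and doing so necessarily creates a Reidemeister-$1$ kink on a strand and a braiding twist on the vertex. Since Reidemeister $1$ is explicitly \emph{not} assumed, these are nontrivial unknown scalars: by $\dim \cC_2 = 1$ the kink equals some $\alpha$ times the strand, and by $\dim \cC_3 = 1$ the twisted vertex equals some $\beta$ times the vertex (with $\alpha^{-1},\beta^{-1}$ for the opposite crossing). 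So the identity you hope for, $v = \omega^{\pm 1} v$, is not what the moves produce; carried out carefully they give $\omega v = \alpha^{-1}\beta^{2} v$ using over-crossings and $\omega v = \alpha\beta^{-2} v$ using under-crossings, and neither equation alone is contradictory.

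A second warning sign is that your claimed conclusion ($\omega = 1$) is stronger than what braiding compatibility can give. As the paper notes in the remark following this lemma, the same mechanism only forces a rotation-and-braiding eigenvector to have rotational eigenvalue $\pm 1$; an argument yielding $\omega = 1$ outright would rule out the eigenvalue $-1$ as well, which is false in general, so the bookkeeping cannot close up the way you describe. The paper's proof finishes by comparing the two expressions above: $\alpha^{-1}\beta^{2} = \omega = \alpha\beta^{-2}$ forces $\omega = \omega^{-1}$, i.e.\ $\omega^{2}=1$, contradicting the assumption that $\omega$ is a primitive cube root of unity. Your outline can be repaired by introducing the twist scalars $\alpha$ and $\beta$ and making this over/under comparison, rather than trying to argue that the net phase of a closed sequence of moves is a bare $\omega$.
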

\begin{proof}
By dimensional considerations we have the following relations
$$\strandtwist = \alpha \singlecap$$
$$\trivalenttwistb = \beta \trivalentb$$ 
for some numbers $\alpha$ and $\beta$.  If we use the other crossing in these 
pictures we get the same relations with $\alpha^{-1}$ and $\beta^{-1}$.  We can 
now compute the action of rotation on the twisted trivalent vertex in two different 
ways:  $$\overviolinb = \rotatedtrivalentb = \underviolinb.$$  Thus, 
$\alpha^{-1}\beta^2 = \omega = \alpha \beta^{-2}$, and so $\omega = \omega^{-1}$ 
which is a contradiction.
\end{proof}

\begin{remark}
The same argument shows that if $X$ is a simple object in a braided tensor 
category and $f: 1 \rightarrow X^{\otimes n}$ is an eigenvector for both rotation 
and braiding, then the rotational eigenvalue is $\pm 1$.
\end{remark}

\begin{lem}
\label{lem:braiding-C5-bound}
If $\cC$ is a braided trivalent category with $\dim \cC_4 \leq 4$, then $\dim \cC_5 \leq 10$.
\end{lem}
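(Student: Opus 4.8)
The plan is to use the braiding to show that $D(5,0)$ spans $\cC_5$; since $\dim \Span D(5,0) \leq 10$ always, this immediately gives $\dim \cC_5 \leq 10$. The key point is that the braiding lets us ``resolve'' any small internal face, so that every diagram reduces to one with no internal faces, and by Corollary \ref{cor:nopents} such diagrams with $\leq 5$ boundary points lie in $D(5,0)$.

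First I would establish the basic skein relations forced by the dimension bounds. Since $\dim \cC_2 = 1$, the twist relation $\strandtwist = \alpha \singlecap$ holds for some scalar $\alpha$, and since $\dim \cC_3 = 1$, the relation $\trivalenttwistb = \beta \trivalentb$ holds for some scalar $\beta$ (here using the untwisted versions, ignoring dots). Next, since $\dim \cC_4 \leq 4$ and the four diagrams $\twostrandid, \cupcap, \drawI, \drawH$ together with the crossing $\Rtwo[\text{one strand}]$ (and its reverse) all live in $\cC_4$, there must be a linear relation; combined with the pull-through relations \eqref{eq:pullthrough1}, \eqref{eq:pullthrough2} this expresses the crossing as a linear combination of the noncrossing diagrams in $D(4,0)$, i.e. a Kauffman-type relation $\overcrossing = a\,\drawH + \text{(terms in $D(4,0)$)}$ --- crucially with some control on the coefficient of $\drawH$ versus $\drawI$. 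The standard move (as in the proof of the bracket theorem quoted in the introduction) is then: given any diagram, look at a smallest internal face; an internal $n$-gon for $n \leq 4$ can already be removed by the bigon/triangle/square relations from Proposition \ref{prop:cubic} (or the weaker relations available when $\dim \cC_4 \leq 3$); but a pentagonal or larger internal face we attack differently. Actually, the cleaner route: use the crossing to perform an ``$R2$-style'' reduction. Given the largest internal face, resolving one crossing introduced near it via the Kauffman relation either removes that face or reduces its size --- this is exactly the mechanism by which the bracket relation reduces all tangles to noncrossing ones.

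The cleanest argument, which I would actually write, mirrors Lemma \ref{lem:I=H=>spanning}: it suffices to produce an ``$I = H$''-type relation, i.e. a relation of the form \eqref{eq:I=H}, $\drawH = \alpha\,\drawI + \beta\,\twostrandid + \gamma\,\cupcap$; then Lemma \ref{lem:I=H=>spanning} reduces any diagram in $\cC_n$ to $\Span D(n,0)$, and specializing to $n = 5$ with Corollary \ref{cor:nopents} gives $\dim \cC_5 \leq |D(5,0)| = 10$. To get such a relation: if $\twostrandid$ and $\cupcap$ are already linearly dependent, add an $H$ along the top to get it directly (as in the $\dim \cC_4 \leq 3$ case). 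Otherwise $\dim \cC_4 \leq 4$ forces the crossing $\overcrossing$ to be a linear combination of $\{\twostrandid, \cupcap, \drawI, \drawH\}$; I claim the coefficient of $\drawH$ (equivalently, after rotating, of $\drawI$) must be nonzero. Indeed, if the crossing were a combination of only $\twostrandid, \cupcap, \drawI$, then combined with the reverse crossing being a combination of the same three diagrams rotated, and using the twist relations to compare, one derives that one of $\drawI, \drawH$ is redundant --- giving a relation amongst $D(4,0)$ with nonzero $\drawH$ or $\drawI$ coefficient anyway. In either case, rescaling and rotating yields a relation of the form \eqref{eq:I=H}.

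The main obstacle is the bookkeeping in the last paragraph: showing that the Kauffman-type relation for the crossing genuinely has a nonzero $\drawH$ (or $\drawI$) coefficient, so that it can be massaged into the exact form \eqref{eq:I=H} required by Lemma \ref{lem:I=H=>spanning}. This requires a careful case analysis using the pull-through relations \eqref{eq:pullthrough1} and \eqref{eq:pullthrough2} (which constrain how the crossing interacts with a trivalent vertex) together with the twist scalars $\alpha, \beta$; the degenerate sub-cases (e.g. $\alpha = \pm 1$, or the crossing lying in a proper subspace) each need to be handled, but in every sub-case one still extracts a relation that either is, or rotates to, an instance of \eqref{eq:I=H}. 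Once that relation is in hand the rest is immediate from Lemma \ref{lem:I=H=>spanning} and Corollary \ref{cor:nopents}.
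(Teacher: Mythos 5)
Your overall strategy---use the braiding to force $D(5,0)$ to span $\cC_5$---is the right one, but the mechanism you actually commit to, namely producing a relation of the form \eqref{eq:I=H} and invoking Lemma \ref{lem:I=H=>spanning}, cannot work in the main case. When $\dim \cC_4 = 4$ (the cubic case), Proposition \ref{prop:cubic}(a) says the four diagrams of $D(4,0)$ are linearly \emph{independent}, so no relation of the form \eqref{eq:I=H} exists at all; and this case genuinely occurs for braided categories, e.g.\ $(G_2)_q$, which is braided, cubic, and has $\dim \cC_5 = 10$. So your claimed deduction that ``one of $\drawI$, $\drawH$ is redundant'' must break down: writing the crossing as a linear combination of $D(4,0)$ (which spans $\cC_4$ by Lemma \ref{lem:4:spanning}) expresses one extra vector in terms of a spanning set and is not a linear dependence among the four diagrams, and no comparison of the two crossings via the twist scalars will manufacture such a dependence in a cubic category. (Your fallback branch where $\twostrandid$ and $\cupcap$ are dependent is fine, but there the conclusion already holds without any braiding.)

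The relation the braiding buys you lives one level up, among the eleven diagrams of $D^\square(5,1)$, not among $D(4,0)$. The paper's argument: expand each crossing appearing in the pull-through relations \eqref{eq:pullthrough1} and \eqref{eq:pullthrough2} as a combination of $D(4,0)$; each identity then becomes a linear relation among diagrams in $D^\square(5,1)$. At least one is nontrivial, because the two crossings are rotations of each other, so at least one of them has a nonzero coefficient on $\twostrandid$ or on $\drawI$; after expansion these contribute, respectively, a nonzero coefficient on the diagram consisting of a trivalent vertex together with a cap, or on the pentagon. A nontrivial relation among $D^\square(5,1)$ yields a pentagon reduction (Lemma \ref{lem:pentagon-reduction}, Lemma \ref{lem:5:dependent=>spans}), so $D(5,0)$ spans $\cC_5$ and $\dim \cC_5 \leq 10$ by Corollary \ref{cor:nopents}. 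Your instinct to track which coefficients in the crossing expansion are nonzero is exactly what is needed, but it must feed a five-box pentagon reduction rather than a four-box $I=H$ relation.
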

\begin{proof}
Since $\dim \cC_4 \leq 4$ we get that $D(4,0)$ forms a spanning set for $\cC_4$.
Look at Equations \eqref{eq:pullthrough1} and \eqref{eq:pullthrough2} and expand
every crossing as a sum of diagrams in $D(4,0)$.  These each give a relation
between diagrams in $D^\square(5,1)$.  We claim that at least one of these
relations is nontrivial.  Since the crossings are rotations of each other, at
least one of the crossings has a nontrivial coefficient of either
$\smalltwostrandid$ or $\smalldrawI$.  In the former case the expanded relation
has a nontrivial coefficient of $\trivalentcap$,  and in the latter case the
expanded relation has a nontrivial coefficient of the pentagon.  Thus we have a
nontrivial relation among $D^\square(5,1)$, so by Lemma
\ref{lem:5:dependent=>spans}, we get that $D(5,0)$ spans $\cC_5$.  In
particular, $\dim \cC_5 \leq 10$.
\end{proof}

This is enough to give a complete classification of braided trivalent categories, but before stating this classification we list the examples that occur.

\begin{ex}
The standard braiding on $SO(3)_q$ is

\begin{align*}
\overcrossing & = (q^2 - 1) \twostrandid + q^{-2} \cupcap - (q^2+q^{-2}) \drawH.
\end{align*}

Note that although $SO(3)_{\pm q^{\pm 1}}$ are the same fusion category, there are two distinct braided tensor categories corresponding to $\pm q$ and $\pm q^{-1}$.  Finally, note that when $q= \pm i$ this formula gives the standard symmetric braiding on $OSp(1|2)$.
\end{ex}

\begin{ex}
Let $X$ be the standard $2$-dimensional representation of $S_3$ and $X \otimes X \rightarrow X$ be a non-zero map (which is unique up to scalar), then this generates a trivalent category.  The standard symmetric braiding is  

\begin{align*}
\symmetriccrossing & = \twostrandid - \cupcap + 2 \drawH \\
	& = \drawH + \drawI.
\end{align*}

It is easy to see from our classification that this category agrees with $SO(3)_q$ for $q$ a primitive $12$th root of unity as a trivalent category.  However, the standard symmetric braiding on representations of $S_3$ does not agree with the standard braiding for $SO(3)_q$.  We will denote this braided trivalent category by $S_3$, to distinguish it from $SO(3)_{\zeta_{12}}$.
\end{ex}

\begin{ex}
The standard braiding for $(G_2)_q$ for $q \neq \pm i$ is

\begin{align*}
\overcrossing & = \frac{1}{q+q^{-1}} \left(q^3 \twostrandid + q^{-3} \cupcap\right) \\
& \qquad
	- \frac{q^6+q^4+q^2+q^{-2}+q^{-4}+q^{-6}}{q+q^{-1}} 
	\left( q \drawH + q^{-1} \drawI \right).
\end{align*}

Note that although $(G_2)_{\pm q^{\pm 1}}$ give the same fusion category, there are two distinct braided tensor categories corresponding to $\pm q$ and $\pm q^{-1}$.
\end{ex}

\begin{cor}
The only braided trivalent categories with $\dim \cC_4 \leq 4$ are $SO(3)_q$ (for any $q$ including $SO(3)_{\pm i} = OSp(1|2)$), $S_3$, and $(G_2)_q$ for $q \neq \pm i$.
\end{cor}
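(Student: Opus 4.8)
The plan is to feed the bound $\dim \cC_5 \leq 10$ from Lemma \ref{lem:braiding-C5-bound} into the classification theorems of the previous sections, then separately rule out the ABA family and record the braidings on the categories that survive. Since $\cC$ is braided with $\dim \cC_4 \leq 4$, Lemma \ref{lem:braiding-C5-bound} gives $\dim \cC_5 \leq 10$. If $\dim \cC_4 \leq 3$, Theorem \ref{thm:4} forces the underlying trivalent category to be some $SO(3)_q$, with the value $q = \pm i$ giving $OSp(1|2)$. If $\dim \cC_4 = 4$, then $\cC$ is cubic, and Theorem \ref{thm:5}, applied with $\dim \cC_5 \leq 10$, forces $\cC$ to be an ABA category or a $(G_2)_q$ category. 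By the lemma above there are no braided twisted trivalent categories, so no further cases arise. It thus remains to rule out the ABA categories and to identify the braidings on $SO(3)_q$ (including $OSp(1|2)$) and on $(G_2)_q$.

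To rule out ABA: an ABA category is cubic, so by Proposition \ref{prop:cubic} the diagrams $D(4,0)$ form a basis of $\cC_4$, and a braiding is therefore an element $c = \alpha c_1 + \beta c_2 + \gamma c_3 + \delta c_4$ with $c_1, \dots, c_4$ enumerating $D(4,0)$. I would substitute this into the defining relations of a braiding --- the second Reidemeister relation and the two pull-through relations \eqref{eq:pullthrough1} and \eqref{eq:pullthrough2} --- reducing every bigon, triangle and square via Equations \eqref{eq:bigon}, \eqref{eq:triangle} and \eqref{eq:square}, to obtain a polynomial system in $\alpha, \beta, \gamma, \delta$ over $\bbQ(d,t)$. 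Imposing $P_{ABA} = t^2 - t - 1 = 0$ and running a Gr\"obner basis computation should show this system is inconsistent, so no ABA category is braided.

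For the surviving categories: the braidings displayed in the Examples above are explicit elements of $\cC_4$, and a direct check that each satisfies the three defining relations of a braiding shows that $SO(3)_q$ is braided for every $q$ (with $q = \pm i$ recovering the symmetric braiding on $OSp(1|2)$), that the trivalent category underlying $SO(3)_{\zeta_{12}}$ carries the additional symmetric braiding we call $S_3$, and that $(G_2)_q$ is braided for every $q \neq \pm i$. To see that the list is complete, one repeats the computation used above for ABA: write $c$ as a general linear combination of $D(4,0)$ in the cubic case, or of $D(4,1)$ in the non-cubic case, impose the three defining relations along $P_{SO(3)} = 0$ and along $P_{G_2} = 0$, and check that the only solutions are the ones already exhibited. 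Combining this with the reduction step yields the corollary.

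The main obstacle is the polynomial computation behind the ABA non-existence and the completeness of the braiding classification: one must certify that the Reidemeister/pull-through system has no solution on the $P_{ABA}$ curve and precisely the listed solutions on the $P_{SO(3)}$ and $P_{G_2}$ curves. Everything else is bookkeeping once Lemma \ref{lem:braiding-C5-bound} and the classification Theorems \ref{thm:4} and \ref{thm:5} are available.
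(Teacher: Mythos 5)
Your proposal is correct and follows essentially the same route as the paper: reduce via Lemma \ref{lem:braiding-C5-bound} and Theorems \ref{thm:4} and \ref{thm:5} to the $SO(3)_q$, ABA, and $(G_2)_q$ families, then classify all braidings by expanding a general crossing in the spanning set of $\cC_4$ and imposing the braiding relations. The paper differs only in bookkeeping: it cites Kuperberg for the generic $(G_2)_q$ computation and, for ABA away from the golden ratio, replaces the brute-force polynomial check by the observation that the fusion rules are noncommutative (so no braiding can exist), while your Gr\"obner-basis calculation is the ``brutal'' version of the same check.
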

\begin{proof}
By our classification, we need only classify all braidings for the $SO(3)_q$, ABA, and $(G_2)_q$ categories.  In the $G_2$ case, when $q$ is not a fourth or twentieth root of unity, this was done by Kuperberg in \cite{MR1265145}.  Following Kuperberg, you write down a general element of the $4$-boundary point space and check whether it satisfies the braiding relations.  Since this is somewhat tedious and since the hardest case was already done by Kuperberg, we will skip much of the details here.

For the $SO(3)_q$ categories (which agree for $\pm q^{\pm 1}$), generically there are exactly two braidings on corresponding to one corresponding to the standard braiding of $SO(3)_{\pm q}$ and the other to $SO(3)_{\pm q^{-1}}$.  When $q = \pm 1$ or $q= \pm i$, these two braidings agree and yield the symmetric braidings on $SO(3)$ and $OSp(1|2)$.  When $q$ is a primitive $12$th root of unity, there are three braidings, two corresponding to the $SO(3)_q$ braidings and one corresponding to the symmetric braiding which corresponds to the standard braiding on $S_3$.

A direct calculation shows that the ABA categories do not have a braiding.  When $\delta$, the loop value for $A$, is not the golden ratio or its conjugate, there is a simple more conceptual approach.  Namely, the fusion rules are noncommutative since $A^{(2)}(ABA) = A^{(3)} B A + ABA$ while $(ABA) A^{(2)} = A B A^{(3)} + ABA$.  (When $\delta$ is the golden ratio, the fusion rules are commutative so one must use the brutal approach.)

Finally, for $(G_2)_q$, Kuperberg proved that when the $5$-boundary point space is $10$-dimensional, there are exactly two braidings for such a category, one corresponding to the standard braiding of $(G_2)_{\pm q}$ and the other to $(G_2)_{\pm q^{-1}}$.  The only remaining cases are $q$ is a primitive $4$th or $20$th root of unity.  The former case was already dealt with above, and it turns out in the latter case there are still only the two standard braidings.
\end{proof}

We expect this theorem to give Wenzl-style recognition results \cite{MR1237835, MR2132671} for $(G_2)_q$ and Deligne's $S_t$, showing that they are the only braided tensor categories with their Grothendieck rings.

\begin{cor}
The only symmetric trivalent categories with $\dim \cC_4 \leq 4$ are $SO(3)$, $S_3$, $(G_2)$, and $OSp(1|2)$.
\end{cor}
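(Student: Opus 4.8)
The plan is to deduce this from the preceding Corollary together with the explicit braiding formulas recorded in the Examples above. By definition a symmetric trivalent category is a braided trivalent category whose braiding coincides with its reverse; equivalently, the crossing used to present the braiding must equal the crossing obtained by switching over- and under-strands. So I would first invoke the preceding Corollary: any braided trivalent category with $\dim \cC_4 \leq 4$ is $SO(3)_q$ (for some $q$, including $SO(3)_{\pm i} = OSp(1|2)$), $S_3$, or $(G_2)_q$ with $q \neq \pm i$, and the proof of that Corollary moreover enumerates all braidings on each of these categories. It then suffices to run down that list and decide which of the enumerated braidings are symmetric.

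For the families $SO(3)_q$ and $(G_2)_q$, the reverse of the standard braiding is the standard braiding of the same underlying trivalent category with $q$ replaced by $q^{-1}$, since $SO(3)_{\pm q^{\pm 1}}$ (resp.\ $(G_2)_{\pm q^{\pm 1}}$) are identical as trivalent categories. So I would check exactly when the standard braiding equals its $q \leftrightarrow q^{-1}$ partner by comparing coefficients of $\twostrandid$, $\cupcap$, $\drawH$, $\drawI$ in the formulas of the Examples: for $SO(3)_q$ this forces $q^4 = 1$, giving either $q = \pm 1$ (the classical category $SO(3)$, with its standard symmetric braiding) or $q = \pm i$ (the category $OSp(1|2)$, with its symmetric braiding); for $(G_2)_q$ it forces $q^2 = 1$, giving $q = \pm 1$ (the classical category $(G_2)$). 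In each surviving case the crossing is visibly a linear combination of crossingless diagrams, which re-confirms that it is symmetric; note also that the excluded value $q = \pm i$ for $(G_2)_q$ is harmless, since $(G_2)_{\pm i} = OSp(1|2)$ has already appeared. The only remaining braidings to consider occur at $(d,t) = (2,0)$, where $S_3$ and $SO(3)_{\zeta_{12}}$ coincide as trivalent categories: the crossing $\symmetriccrossing = \drawH + \drawI$ of $S_3$ is a combination of crossingless diagrams, hence equals its own reverse and so is symmetric (it is the standard symmetric braiding on $\mathrm{Rep}(S_3)$), whereas the two $SO(3)_{\zeta_{12}}$ braidings are not, since $\zeta_{12}^4 \neq 1$.

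Assembling the cases shows that the symmetric trivalent categories with $\dim \cC_4 \leq 4$ are precisely $SO(3)$, $S_3$, $(G_2)$, and $OSp(1|2)$. I expect the only genuinely delicate point to be bookkeeping: matching the braidings enumerated in the previous Corollary's proof to the formulas in the Examples, keeping straight the identifications $SO(3)_{\pm q} \simeq SO(3)_{\pm q^{-1}}$ and $(G_2)_{\pm i} = OSp(1|2)$, and verifying that at a primitive $12$th root of unity the unique symmetric braiding is the $S_3$ one and not either of the two $SO(3)_{\zeta_{12}}$ braidings.
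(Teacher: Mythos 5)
Your proposal is correct and follows essentially the paper's intended route: the paper states this corollary without a separate proof, relying on the preceding braided classification, whose proof already identifies the symmetric braidings (the two $SO(3)_q$ braidings coinciding and becoming symmetric exactly at $q=\pm 1$ and $q=\pm i$, the extra symmetric braiding at a primitive $12$th root of unity giving $S_3$, no braidings on ABA, and $q=\pm 1$ for $(G_2)_q$). Your coefficient comparison under $q \leftrightarrow q^{-1}$ is just an explicit way of carrying out that same check.
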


Note that by Deligne's theorem \cite{MR1944506}, any symmetric abelian category of exponential growth must be the category of representations of a supergroup, so this corollary is not surprising.  The exponential growth condition is satisfied in our setting, but this corollary does not follow directly from Deligne's theorem because {\it a priori} there could be symmetric trivalent categories which do not come from abelian categories.

\section{Prospects}

There are several obstacles to pushing the above techniques further in the study
of trivalent categories.  First, even though in principal just knowing $d$ and
$t$ should be enough to calculate the determinants $D^\square(n,k)$ for $n+2k <
12$, as we saw in Conjecture \ref{conj:Delta7_1}, in practice arithmetic of two
variable rational functions is sufficiently difficult that we cannot compute all
of these determinants exactly.  If one is willing to accept probabilistic proofs
then we can compute more of these determinants.  Second, if we want to go beyond
$n+2k = 12$ then we need to introduce the dodecahedron as a third variable,
which will make things quite a bit more complicated.  Finally, we are already
pushing up against the limits of practical Gr\"obner basis calculations: for
example we already cannot directly intersect $\Delta(7,0)$ and
$\Delta^\square(7,1)$.  For all these reasons, it is unlikely that we will be
able to push the classification of trivalent categories much further without new
ideas.

Instead we plan to continue investigating these ideas in other settings than the trivalent setting.  There are numerous good candidates for investigation, including the following.

\begin{itemize}
\item Braided trivalent categories with dimensions bounded by $1,0,1,1,5$.  This includes the conjectured exceptional series of Deligne and Vogel \cite{MR1378507, MR2769234}.
\item Skein theoretic invariants of planar graphs together with a $2$-coloring of the vertices and a $2$-coloring of the faces.  These correspond to quadrilaterals of subfactors and we hope to strengthen the classification results of Grossman, Izumi, and Jones \cite{MR2257402, MR2418197}.
\item Categories generated by a 4-valent vertex with a checkerboard shading.  These were studied by Bisch, Jones, and Liu in \cite{MR1733737, MR1972635, 1410.2876}, and we hope to push their techniques beyond what can be done by hand.
\item Categories generated by a $2n$-valent vertex with a checkerboard shading.  For $n=2$ this is the previous example, and for $n=3$ they were studied by Dylan Thurston \cite{0405482}.
\item Skein theoretic invariants of virtual knots.  This includes the representation theory of the Higman--Sims sporadic finite simple group \cite{MR1188082, MR1469634}.
\end{itemize}

\appendix
\section{Skein theoretic invariants and pivotal categories}
\label{sec:local}
The goal of this section is to provide background so that this paper is accessible to knot theorists, graph theorists, and other readers unfamiliar with tensor categories or planar algebras.

Suppose we want to study certain numerical invariants $f$ of planar trivalent graphs. Assume that $f$ of the empty
diagram is $1$, that $f \left( \tikz[baseline=-1mm] \draw (0,0) circle (3mm); \right)$ and $f \left( \begin{tikzpicture}[baseline=-1mm]
	\draw (0,0) circle (3mm); 
	\draw (.3,0)--(-.3,0);
\end{tikzpicture}
\right)$ are nonzero, and that $f$ satisfies the following multiplicative conditions:
\begin{enumerate}
\setcounter{enumi}{-1}
\item 
$f \left( \begin{tikzpicture}[baseline=-1mm]
	\node[circle,draw,dotted] (X) at (0,0) {X};
	\node[circle,draw,dotted] (Y) at (1,0) {Y};	
\end{tikzpicture} \right) 
=  f(X) \cdot f(Y)$

\item 
$f \left( \begin{tikzpicture}[baseline=-1mm]
	\node[circle,draw,dotted] (X) at (0,0) {X};
	\node[circle,draw,dotted] (Y) at (1,0) {Y};
	\draw (X.east)--(Y.west);	
\end{tikzpicture} \right)
= 0$

\item 
$f \left( \begin{tikzpicture}[baseline=-1mm]
	\node[circle,draw,dotted] (X) at (0,0) {X};
	\node[circle,draw,dotted] (Y) at (1,0) {Y};
	\draw (X.north east) .. controls (.5,.4) .. (Y.north west);	
	\draw (X.south east) .. controls (.5,-.4) .. (Y.south west);	
\end{tikzpicture} \right) 
=  f \left( \begin{tikzpicture}[baseline=-1mm]
	\node[circle,draw,dotted] (X) at (0,0) {X};
	\draw (X.north east) .. controls (.5,.4) and (.7,.2).. (.7,0) .. controls (.7,-.2) and (.5,-.4) .. (X.south east);	
\end{tikzpicture} \right) 
\cdot
f \left( \begin{tikzpicture}[baseline=-1mm]
	\node[circle,draw,dotted] (X) at (0,0) {Y};
	\draw (X.north west) .. controls (-.5,.4) and (-.7,.2).. (-.7,0) .. controls (-.7,-.2) and (-.5,-.4) .. (X.south west);	
\end{tikzpicture} \right) /
f \left( \tikz[baseline=-1mm] \draw (0,0) circle (3mm); \right)
$

\item 
$f \left( \begin{tikzpicture}[baseline=-1mm]
	\node[circle,draw,dotted] (X) at (0,0) {X};
	\node[circle,draw,dotted] (Y) at (1,0) {Y};
	\draw (X.east) -- (Y.west);	
	\draw (X.north east) .. controls (.5,.4) .. (Y.north west);	
	\draw (X.south east) .. controls (.5,-.4) .. (Y.south west);	
\end{tikzpicture} \right) 
=  f \left( \begin{tikzpicture}[baseline=-1mm]
	\node[circle,draw,dotted] (X) at (0,0) {X};
	\draw (X.north east) .. controls (.5,.4) and (.7,.2).. (.7,0) .. controls (.7,-.2) and (.5,-.4) .. (X.south east);
	\draw (X.east)--(.7,0);		
\end{tikzpicture} \right) 
\cdot
f \left( \begin{tikzpicture}[baseline=-1mm]
	\node[circle,draw,dotted] (X) at (0,0) {Y};
	\draw (X.north west) .. controls (-.5,.4) and (-.7,.2).. (-.7,0) .. controls (-.7,-.2) and (-.5,-.4) .. (X.south west);
	\draw (X.west)--(-.7,0);	
\end{tikzpicture} \right) /
f \left( \begin{tikzpicture}[baseline=-1mm] 
	\draw (0,0) circle (3mm); 
	\draw (.3,0)--(-.3,0);
\end{tikzpicture}
\right)
$
\end{enumerate}
Thus the invariant of any $k$-disconnected graph for $k \leq 3$ is determined by the invariants of the pieces.

\begin{ex}
\label{ex:vertex-counting-invariant}
An almost trivial example of a multiplicative invariant of graphs is $a^{\# V}$, for some number $a$, where $\# V$
denotes the number of trivalent vertices in the graph.
\end{ex}
\begin{ex}
\label{ex:chromatic-invariant}
An important example of a multiplicative invariant of graphs is the number of $n$-colorings of the faces of the graph,
divided by $n$.  (The division by $n$ is a normalization factor ensuring that the empty graph is assigned $1$ instead
of $n$.) This example can be generalized by considering non-integer specializations of the chromatic polynomial.
\end{ex}

\begin{question*}
What examples are there of such multiplicative invariants of trivalent planar graphs?
\end{question*}

While the question appears to be an elementary question about 
planar trivalent graphs, we discover that the examples are actually related to quite distant subjects in mathematics.
In particular, we are able to identify each of the small examples we encounter with some suprising
or exotic object coming from representation theory or the theory of subfactors!

In order to understand the main results of the paper in the language of graph
invariants, we first want to extend this invariant of closed trivalent graphs to
an invariant of planar graphs with boundary.  That is, we extract a sequence of
vector spaces, the `open graphs, modulo negligibles'.  We now describe how these
vector spaces have the structure of a pivotal tensor category (or planar
algebra).

Let $\hat{\cC}_n$ denote the (infinite dimensional) vector space with basis the
planar trivalent graphs drawn in the disc, with $n$ fixed boundary points, up to
isotopy rel boundary. This vector space has a natural bilinear pairing, given by
gluing two open graphs together (starting at a preferred boundary point), to
obtain a closed planar graph, which we then evaluate to a number using our
multiplicative invariant $f$. The kernel of this bilinear pairing is called `the
negligible elements'. Let $\cC^f_n$ denote the quotient vector space of
$\hat{\cC}_n$ by negligible elements.

One may assemble these vector spaces into a single algebraic structure,
variously axiomatized as an (unshaded) planar algebra \cite{math.QA/9909027}, a
spider \cite{MR1403861} or a pivotal tensor category \cite{MR1686423}.  We'll
only describe the last in any detail. The category, which we'll  call $\cC^f$,
has as objects the natural numbers. We'll first describe a bigger category of
trivalent graphs, which we call $\hat{\cC}$ and which does not depend at all on
our multiplicative invariant. In $\hat{\cC}$, the morphisms from $n$ to $m$ are
simply the formal linear combinations of planar graphs drawn in a rectangle with
$n$ points along the bottom edge and $m$ points along the top edge, i.e.~the
vector space $\hat{\cC}_{n+m}$. We can compose morphisms in the obvious way, by
stacking rectangles. This category is a tensor category, with the tensor product
given by drawing diagrams side by side. Finally it is a pivotal category, with
the evaluation and coevaluation maps given by caps and cups.

Inside $\hat{\cC}$, the negligible (with respect to $f$) elements form a planar ideal --- if some 
(linear combination of) graphs pair with arbitrary other graphs to give zero, 
then glueing more graph to the boundary preserves this property. We thus define the 
category $\cC^f$ to be the quotient of $\hat{\cC}$ by the negligible ideal.  
This ``ideal" property says that we can treat the negligible elements as skein 
relations: they can be applied locally in any part of a graph.  
Furthermore, typically this ideal is finitely generated by a few particular 
skein relations.

Thus, in $\cC^f$, the objects are still the natural numbers and the morphisms
from $n$ to $m$ are just $\cC^f_{n+m}$. The category $\cC^f$ is still a pivotal
tensor category, and now it is \emph{evaluable} (i.e. $\dim \cC^f_0 = 1$, and in
fact $\cC^f_0$ may be identified with the ground field by sending the empty
diagram to $1$) and \emph{non-degenerate} (i.e. for every morphism $x: a \to b$,
there is another morphism $x': b \to a$ so $\langle x, x' \rangle \neq 0 \in
\cC^f_0$).  Writing $X$ for the generating object in $\cC^f$ (i.e. 1 in the
natural numbers!), we see that $X$ is a symmetrically self-dual object, with
duality pairings and copairings given by the cap and cup diagrams. Moreover, the
trivalent vertex is a rotationally symmetric map $1 \to X \tensor X \tensor X$.

\begin{ex}
If the invariant is the normalized number of $n$-colorings described in Example \ref{ex:chromatic-invariant}, then a linear combinations of
graphs is negligible if and only if for any coloring of the boundary faces the given linear combination of the numbers of ways of extending that coloring to the interior is zero.  For example, the following element of $\cC_3$ is negligible:
$$
\ngon[90]{3}
- (n-3) \cdot
\begin{tikzpicture}[baseline=.1cm,scale=0.75]
\draw (0,0) -- (0,1);
\draw (0,0) -- (0.7,-0.5);
\draw (0,0) -- (-0.7,-0.5);
\end{tikzpicture}
$$
In particular, this gives a skein relation in $\cC^f$ which says that you can remove a triangle and multiply by $(n-3)$.
There are also other negligible elements; in fact after renormalizing the trivalent vertex, $\cC^f$ becomes equivalent
to the pivotal category $SO(3)_q$ coming from quantum groups where $q$ is a number satisfying $ (q+q^{-1})^2=n$
(see Section \ref{sec:four} for a description of $SO(3)_q$).
\end{ex}

\begin{prop} The construction of $\cC^f$ from $f$ gives a bijective correspondence between trivalent categories and
multiplicative  invariants of planar graphs. \end{prop}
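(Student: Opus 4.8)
The plan is to produce an explicit inverse to $f\mapsto\cC^f$ and to check that the two round trips are identities. Given a trivalent category $(\cC,X,\tau)$, define $f_\cC$ on closed planar trivalent graphs by reading such a graph --- unoriented, with undecorated vertices --- as an element of $\cC_0\iso\bbC$; this makes sense because $X$ is symmetrically self-dual and $\tau$ is rotationally invariant. It then remains to verify: (a) $\cC^f$ really is a trivalent category when $f$ is multiplicative; (b) $f_\cC$ really is a multiplicative invariant when $\cC$ is a trivalent category; and (c) $f_{\cC^f}=f$ and $\cC^{f_\cC}\iso\cC$.

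For (a): the gluing pairing on closed graphs is $\langle G,H\rangle=f(G\disj H)=f(G)f(H)$ by condition (0), so each closed graph $G$ is negligible-equivalent to $f(G)\,\eset$, and $\langle\eset,\eset\rangle=f(\eset)=1\neq 0$ gives $\dim\cC^f_0=1$. Gluing two one-boundary-point graphs produces a closed graph with a bridge, which condition (1) kills, so the pairing on $\hat{\cC}^f_1$ vanishes identically and $\dim\cC^f_1=0$. For two boundary points, gluing arbitrary $G,H\in\hat{\cC}^f_2$ produces a closed graph with a two-edge cut, and condition (2) gives $f(G\cup_\partial H)=f(\hat G)f(\hat H)/f(\bigcirc)$ with $\hat G$ the graph $G$ capped off by an arc; hence $G$ is negligible-equivalent to $\bigl(f(\hat G)/f(\bigcirc)\bigr)$ times the cup, which is non-negligible since $f(\bigcirc)\neq 0$, so $\dim\cC^f_2=1$. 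Condition (3) and the nonvanishing of $f$ on the theta graph handle $\dim\cC^f_3=1$ in the same way, with the trivalent vertex $\tau$ replacing the arc. Finally $\cC^f$ is evaluable and nondegenerate by construction (we quotient by the radical of the gluing pairing), it is generated as a pivotal category by $\tau$ because any trivalent graph in a rectangle decomposes as a composite of vertices, cups, caps and identities, and $\tau$ is rotationally invariant since the underlying $Y$-graph is.

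For (b), one unwinds the definition of $f_\cC$: disjoint union of graphs is the tensor product of morphisms $1\to 1$, and $\cC_0\iso\bbC$ is multiplicative, giving (0); a bridge factors a morphism through $\cC_1=0$, giving (1); and a two- (resp.\ three-)edge cut factors a morphism through $\cC_2$ (resp.\ $\cC_3$), which is one-dimensional spanned by the cup (resp.\ $\tau$), so expanding each side of the cut in that basis and using $\langle\text{cup},\text{cup}\rangle=d$ (resp.\ $\langle\tau,\tau\rangle=\theta$) gives (2) and (3); the nonvanishing of $f_\cC$ on the circle and theta graph is precisely nondegeneracy ($d\neq 0$ since it is the self-pairing of the generator of $\cC_2$, and $\theta\neq 0$ since $\tau\neq 0$). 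The first half of (c), namely $f_{\cC^f}=f$, is then immediate, since $f_{\cC^f}(G)$ is the class of $G$ in $\cC^f_0$, which we identified with $f(G)$.

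The remaining identity $\cC^{f_\cC}\iso\cC$ is the crux. Since $\cC$ is generated as a pivotal category by $\tau$, there is a full pivotal functor $\Phi\colon\hat{\cC}\to\cC$ sending each planar trivalent graph to the morphism it names, hitting every tensor power of $X$; I claim $\ker\Phi$ equals the negligible ideal for $f_\cC$, so that $\Phi$ descends to isomorphisms $\cC^{f_\cC}_n\xrightarrow{\sim}\cC_n$ and hence to a pivotal equivalence, in the spirit of Corollary~\ref{cor:reductions=>uniqueness}. One inclusion is formal: if $\Phi(x)=0$ then $\langle x,y\rangle=f_\cC(x\cup_\partial y)=0$ for all $y$, so $x$ is negligible. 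For the converse, if $x$ is negligible but $\Phi(x)\neq 0$, nondegeneracy of $\cC$ supplies a morphism $x'$ with $\tr{\Phi(x)\,x'}\neq 0$, and since $\cC$ is generated by $\tau$ this $x'$ is a linear combination of trivalent graphs, contradicting negligibility of $x$. I expect this last step to be the main obstacle: "negligibles $\subseteq\ker\Phi$" is routine, but the reverse inclusion is where the hypotheses on $\cC$ genuinely enter, and one must use carefully that being generated by $\tau$ provides morphisms $X^{\otimes n}\to 1$ as well as $1\to X^{\otimes n}$.
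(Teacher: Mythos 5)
Your proposal is correct and follows essentially the same route as the paper: both directions are handled identically (trivalence of $\cC^f$ via the rank-one structure of the gluing pairing forced by conditions (0)--(3), and multiplicativity of $f_\cC$ by factoring across bridges and two- or three-edge cuts through $\cC_1=0$, $\cC_2$, $\cC_3$, with nondegeneracy giving the nonvanishing of the circle and theta), the only difference being that you spell out the round trip $\cC^{f_\cC}\iso\cC$, which the paper dismisses as ``clear'' once multiplicativity is checked. One cosmetic slip: your closing sentence swaps the names of the two inclusions (it is $\ker\Phi\subseteq$ negligibles that is formal, while negligibles $\subseteq\ker\Phi$ uses nondegeneracy and generation by $\tau$), but the arguments you actually give for both inclusions are correct.
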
 

\begin{proof}
First we prove that the category $\cC^f$ constructed from a multiplicative
invariant $f$ is trivalent.  Consider $\cC^f_0$.  The empty diagram is not
negligible, so we need only show that any closed diagram is a multiple of the
empty diagram.  If $\alpha$ is a closed diagram and $\beta$ is the empty
diagram, then $\alpha - f(\alpha) \beta$ is negligible, so in $\cC^f_0$ we have
that $\alpha = f(\alpha) \beta$.  Now we look at $\cC^f_1$. By multiplicativity
we have that any diagram with one boundary point is negligible, so $\dim\cC^f_1
= 0$.  The remaining cases are similar.

Given a trivalent category $\cC$, we need to construct a multiplicative invariant of planar graphs.  
The usual diagrammatic calculus for pivotal categories shows that any trivalent category gives an invariant of closed graphs just by interpreting the graphs as elements of $\cC_0$ and sending the empty diagram to $1$. 

We want to check that this invariant is multiplicative, in which case it is
clear that it provides an inverse to $f \mapsto \cC^f$.  We first check that the
loop and the theta are nonzero.  The single strand in $\cC_2$ must be nonzero,
because if it were zero then all nonempty diagrams would be zero.  Since $\dim
\cC_2 = 1$, we see that any diagram in $\cC_2$ is a multiple of the single
strand, hence nondegeneracy says that the inner product of the strand with
itself is nonzero, hence the loop value is nonzero.  Similarly, by considering
$\cC_3$ we see that the theta value is nonzero.  Next we want to prove the
multiplicative properties.  Each of these are similar, so we only prove (2).  We
have that
\begin{tikzpicture}[baseline=-1mm]
	\draw (0,-.5)--(0,.5);
	\node[circle,draw,dotted, fill=white] (X) at (0,0) {X};
\end{tikzpicture}
 is some multiple of the single strand, so  we see that 
 $\begin{tikzpicture}[baseline=-1mm]
	\draw (0,-.5)--(0,.5);
	\node[circle,draw,dotted, fill=white] (X) at (0,0) {X};
\end{tikzpicture}
=
\left( \begin{tikzpicture}[baseline=-1mm]
	\node[circle,draw,dotted] (X) at (0,0) {X};
	\draw (X.north east) .. controls (.5,.4) and (.7,.2).. (.7,0) .. controls (.7,-.2) and (.5,-.4) .. (X.south east);	
\end{tikzpicture} 
/ 
\begin{tikzpicture}[baseline=-1mm]
	\draw (0,0) circle (.3cm); 	
 \end{tikzpicture} 
\right)
\cdot
\begin{tikzpicture}[baseline=-1mm]
 \draw (0,-.5)--(0,.5);
 \end{tikzpicture} 
$ (by pairing with the strand).  Substituting this into the LHS of (2) gives the RHS. 
\end{proof}


\section{Polynomials appearing in determinants}
This appendix contains some of the irreducible factors of determinants appearing
in this paper. The other irreducible factors, which are very large, are
contained in text files packaged with the {\tt arXiv} source of this paper, and
described here.  Each polynomial is named as $Q_{i,j}$, where $i$ is the largest
exponent of $d$ and $j$ is the largest exponent of $t$. Where two polynomials
have the same pair of largest exponents, we name them with an additional
character in the subscript, as in $Q_{2,4,a}$ and $Q_{2,4,b}$.

\begin{align*}
P_{SO(3)} & = d (t-1)-t+2 \\\displaybreak[1]
P_{ABA} & = t^2-t-1 \\\displaybreak[1]
P_{G_2} & = d^2 t^5+d \left(2 t^5-4 t^4-t^3+6 t^2+4 t+1\right)+t^5-4 t^4+t^3+7 t^2-2 \\\displaybreak[1]
Q_{0,1} & = t+1 \\\displaybreak[1]
Q_{1,1} & = d (t+1)+t \\\displaybreak[1]
Q_{1,2} & = d \left(2 t^2+2 t+1\right)+3 t^2-2 \\\displaybreak[1]
Q_{2,3} & = d^2 \left(t^3+t^2-2 t-1\right)+d \left(2 t^3-2 t^2+t\right)+t^3-3 t^2+t+4 \\\displaybreak[1]
Q_{3,4} & = d^3 \left(t^4+3 t^3-t^2-3 t-1\right)+d^2 \left(2 t^4+t^2+2 t+1\right)+ \\
& \qquad + d \left(t^4-3 t^3+3 t^2+6 t+1\right)-t^2+2 t+2 \displaybreak[1]\\
Q_{3,5} & = d^3 \left(3 t^5+4 t^4-2 t^3-6 t^2-4 t-1\right)+d^2 \left(8 t^5+2 t^4-11 t^3-5 t^2+5 t+3\right)+\\
& \qquad + d \left(7 t^5-6 t^4-6 t^3+7 t^2+3 t-1\right)+2 t^5-4 t^4+t^3+5 t^2-2 t-2 \displaybreak[1]\\
Q_{2,4,a} & = d^2 \left(t^4-t^3-4 t^2-3 t-1\right)+d \left(2 t^4-6 t^3-7 t^2+t+3\right)+t^4-5 t^3+t^2+2 t-2 \\\displaybreak[1]
Q_{2,4,b} & = d^2 \left(t^4+2 t^3-t^2-2 t-1\right)+d \left(2 t^4-2 t^3-2 t^2+3 t+4\right)+t^4-4 t^3+5 t^2+2 t-4 \\\displaybreak[1]
Q_{4,5} & = d^4 t^5+d^3 \left(3 t^5-3 t^4-3 t^3+7 t^2+5 t+1\right)+d^2 \left(3 t^5-5 t^4-5 t^3+10 t^2+12 t+2\right)+\\
& \qquad + d \left(t^5-t^4-5 t^3+3 t^2+9 t+5\right)+t^4-3 t^3+4 t+1 \displaybreak[1]\\
Q_{6,9} & = d^6 \left(4 t^8+t^7-15 t^6-20 t^5-6 t^4+8 t^3+10 t^2+5 t+1\right)+\\
& \qquad + d^5 \left(2 t^9+12 t^8-19 t^7-54 t^6-17 t^5+21 t^4-11 t^3-43 t^2-30 t-7\right)+\\
& \qquad + d^4 \left(6 t^9-6 t^8-31 t^7+11 t^6-119 t^4-130 t^3-21 t^2+35 t+14\right)+\\
& \qquad + d^3 \left(2 t^9-32 t^8+72 t^7+59 t^6-227 t^5-258 t^4+59 t^3+164 t^2+43 t-3\right)+\\
& \qquad + d^2 \left(-10 t^9+10 t^8+123 t^7-136 t^6-305 t^5+103 t^4+225 t^3+23 t^2-38 t-13\right)+\\
& \qquad + d \left(-12 t^9+56 t^8-9 t^7-149 t^6-16 t^5+175 t^4+46 t^3-89 t^2-17 t+16\right) + \\
& \qquad  -4 t^9+28 t^8-49 t^7-4 t^6+69 t^5-54 t^4-9 t^3+54 t^2-14 t-20 \displaybreak[1]\\
Q_{\omega,9} & = d^9-7 d^8+15 d^7-2 d^6-14 d^5-16 d^4+41 d^3-23 d^2+d+5 \displaybreak[1]\\
Q_{\omega,60} & = d^{60}-42 d^{59}+825 d^{58}-10050 d^{57}+84827 d^{56}-524435 d^{55}+2444075 d^{54}\displaybreak[1]\\ 
&\qquad -8680920 d^{53}+23364055 d^{52}-46267136 d^{51}+62172868 d^{50}\displaybreak[1]\\ 
&\qquad -43026307 d^{49}-10724689 d^{48}+19327948 d^{47}+113757871 d^{46}\displaybreak[1]\\ 
&\qquad -289556454 d^{45}+161677043 d^{44}+403173198 d^{43}-822414523 d^{42}\displaybreak[1]\\ 
&\qquad +340360209 d^{41}+658154819 d^{40}-734499791 d^{39}-499750302 d^{38}\displaybreak[1]\\ 
&\qquad +1417408819 d^{37}-680996389 d^{36}-701113119 d^{35}+1161482902 d^{34}\displaybreak[1]\\ 
&\qquad -934417344 d^{33}+751648667 d^{32}-23523738 d^{31}-1359642298 d^{30}\displaybreak[1]\\ 
&\qquad +1528218917 d^{29}+342409869 d^{28}-1836361788 d^{27}+946900947 d^{26}\displaybreak[1]\\ 
&\qquad +763927401 d^{25}-1172104767 d^{24}+652553812 d^{23}-193252562 d^{22}\displaybreak[1]\\ 
&\qquad -352541742 d^{21}+857069723 d^{20}-561108191 d^{19}-289399926 d^{18}\displaybreak[1]\\ 
&\qquad +602082003 d^{17}-186224613 d^{16}-206339296 d^{15}+185432097 d^{14}\displaybreak[1]\\ 
&\qquad -10906225 d^{13}-54265030 d^{12}+26840191 d^{11}+547786 d^{10}\displaybreak[1]\\ 
&\qquad -5118901 d^9+1967134 d^8-218389 d^7-37050 d^6+47054 d^5\displaybreak[1]\\ 
&\qquad -35063 d^4+10325 d^3-903 d^2-49 d+7 \displaybreak[1]\\
\end{align*}

The other factors, $Q_{7,11}, Q_{8,12}, Q_{11,19}, Q_{21,33}, Q_{22,36}, Q_{51,69}, Q_{54,78}$, and $Q_{36,60}$ are available in \LaTeX{} and {\tt Mathematica} formats 
in the {\tt polynomials/}  subdirectory of the {\tt arXiv} source as files {\tt Q_i,j.tex} and {\tt Q_i,j.m}, and also in the {\tt Mathematica} notebook
{\tt{code/GroebnerBasisCalculations.nb}}

\bibliographystyle{alpha}
\bibliography{../../bibliography/bibliography}

\newcommand{\noopsort}[1]{}\def\cprime{$'$} \def\cprime{$'$} \def\cprime{$'$}
\begin{thebibliography}{{Thu}04a}

\bibitem[AH99]{MR1686551}
Marta Asaeda and Uffe Haagerup.
\newblock Exotic subfactors of finite depth with {J}ones indices
  {$(5+\sqrt{13})/2$} and {$(5+\sqrt{17})/2$}.
\newblock {\em Comm. Math. Phys.}, 202(1):1--63, 1999.
\newblock \arxiv{math.OA/9803044} \mathscinet{MR1686551}
  \doi{10.1007/s002200050574}.

\bibitem[BJ00]{MR1733737}
Dietmar Bisch and Vaughan F.~R. Jones.
\newblock Singly generated planar algebras of small dimension.
\newblock {\em Duke Math. J.}, 101(1):41--75, 2000.
\newblock \mathscinet{MR1733737}.

\bibitem[BJ03]{MR1972635}
Dietmar Bisch and Vaughan Jones.
\newblock Singly generated planar algebras of small dimension. {II}.
\newblock {\em Adv. Math.}, 175(2):297--318, 2003.
\newblock \mathscinet{MR1972635} \doi{10.1016/S0001-8708(02)00060-9}.

\bibitem[BJL14]{1410.2876}
Dietmar Bisch, Vaughan~F.R. Jones, and Zhengwei Liu.
\newblock Singly generated planar algebras of small dimension, part {III},
  2014.
\newblock \arxiv{1410.2876}.

\bibitem[BMPS12]{MR2979509}
Stephen Bigelow, Scott Morrison, Emily Peters, and Noah Snyder.
\newblock Constructing the extended {H}aagerup planar algebra.
\newblock {\em Acta Math.}, 209(1):29--82, 2012.
\newblock \arxiv{0909.4099} \mathscinet{MR2979509}
  \doi{10.1007/s11511-012-0081-7}.

\bibitem[BW99]{MR1686423}
John~W. Barrett and Bruce~W. Westbury.
\newblock Spherical categories.
\newblock {\em Adv. Math.}, 143(2):357--375, 1999.
\newblock \arxiv{hep-th/9310164} \mathscinet{MR1686423}
  \doi{10.1006/aima.1998.1800}.

\bibitem[CW13]{MR3010460}
Sean Clark and Weiqiang Wang.
\newblock Canonical basis for quantum {$\mathfrak{osp}(1\vert 2)$}.
\newblock {\em Lett. Math. Phys.}, 103(2):207--231, 2013.
\newblock \mathscinet{MR3010460} \doi{10.1007/s11005-012-0592-3}.

\bibitem[Del96]{MR1378507}
Pierre Deligne.
\newblock La s\'erie exceptionnelle de groupes de {L}ie.
\newblock {\em C. R. Acad. Sci. Paris S\'er. I Math.}, 322(4):321--326, 1996.
\newblock \mathscinet{MR1378507}.

\bibitem[Del02]{MR1944506}
P.~Deligne.
\newblock Cat\'egories tensorielles.
\newblock {\em Mosc. Math. J.}, 2(2):227--248, 2002.
\newblock Dedicated to Yuri I. Manin on the occasion of his 65th birthday.
  \mathscinet{MR1944506}
  \url{http://www.ams.org/distribution/mmj/vol2-2-2002/deligne.pdf}.

\bibitem[DGNO10]{MR2609644}
Vladimir Drinfeld, Shlomo Gelaki, Dmitri Nikshych, and Victor Ostrik.
\newblock On braided fusion categories. {I}.
\newblock {\em Selecta Math. (N.S.)}, 16(1):1--119, 2010.
\newblock \arxiv{0906.0620} \mathscinet{MR2609644}
  \doi{10.1007/s00029-010-0017-z}.

\bibitem[DL78]{Demillo-Lipton}
Richard Demillo and Richard Lipton.
\newblock A probabilistic remark on algebraic program testing.
\newblock {\em Information Processing Letters}, 7(4):193--195, 1978.
\newblock available at
  \url{http://rjlipton.files.wordpress.com/2009/11/demillolipton.pdf}.

\bibitem[EG11]{MR2837122}
David~E. Evans and Terry Gannon.
\newblock The exoticness and realisability of twisted {H}aagerup-{I}zumi
  modular data.
\newblock {\em Comm. Math. Phys.}, 307(2):463--512, 2011.
\newblock \arxiv{1006.1326} \mathscinet{MR2837122}
  \doi{10.1007/s00220-011-1329-3}.

\bibitem[EG14]{MR3167494}
David~E. Evans and Terry Gannon.
\newblock Near-group fusion categories and their doubles.
\newblock {\em Adv. Math.}, 255:586--640, 2014.
\newblock \arxiv{1208.1500} \mathscinet{MR3167494}
  \doi{10.1016/j.aim.2013.12.014}.

\bibitem[EGO04]{MR2098028}
Pavel Etingof, Shlomo Gelaki, and Viktor Ostrik.
\newblock Classification of fusion categories of dimension {$pq$}.
\newblock {\em Int. Math. Res. Not.}, 2004(57):3041--3056, 2004.
\newblock \arxiv{math.QA/0304194} \mathscinet{MR2098028}
  \doi{10.1155/S1073792804131206}.

\bibitem[GI08]{MR2418197}
Pinhas Grossman and Masaki Izumi.
\newblock Classification of noncommuting quadrilaterals of factors.
\newblock {\em Internat. J. Math.}, 19(5):557--643, 2008.
\newblock \arxiv{0704.1121} \mathscinet{MR2418197}
  \doi{10.1142/S0129167X08004807}.

\bibitem[GJ07]{MR2257402}
Pinhas Grossman and Vaughan F.~R. Jones.
\newblock Intermediate subfactors with no extra structure.
\newblock {\em J. Amer. Math. Soc.}, 20(1):219--265 (electronic), 2007.
\newblock \arxiv{0412423}, \mathscinet{MR2257402}
  \doi{10.1090/S0894-0347-06-00531-5}.

\bibitem[GS12a]{1202.4396}
Pinhas Grossman and Noah Snyder.
\newblock The {B}rauer-{P}icard group of the {A}saeda-{H}aagerup fusion
  categories, 2012.
\newblock \arxiv{1202.4396}, to appear in \emph{Trans. Am. Math. Soc.}

\bibitem[GS12b]{MR2909758}
Pinhas Grossman and Noah Snyder.
\newblock {Quantum subgroups of the Haagerup fusion categories}.
\newblock {\em Comm. Math. Phys.}, 311(3):617--643, 2012.
\newblock \arxiv{1102.2631} \mathscinet{MR2909758}
  \doi{10.1007/s00220-012-1427-x}.

\bibitem[IMP13]{1308.5723}
Masaki Izumi, Scott Morrison, and David Penneys.
\newblock Quotients of {$A_2 * T_2$}, 2013.
\newblock \doi{10.4153/CJM-2015-017-4}, extended version available as ``Fusion
  categories between $\mathcal{C} \boxtimes \mathcal{D}$ and $\mathcal{C}
  *\mathcal{D}$'' at \arxiv{1308.5723}.

\bibitem[Izu15]{1512.04288}
Masaki Izumi.
\newblock A {C}untz algebra approach to the classification of near-group
  categories, 2015.
\newblock \arxiv{1512.04288}.

\bibitem[Jae92]{MR1188082}
Fran{\c{c}}ois Jaeger.
\newblock Strongly regular graphs and spin models for the {K}auffman
  polynomial.
\newblock {\em Geom. Dedicata}, 44(1):23--52, 1992.
\newblock \mathscinet{MR1188082}.

\bibitem[Jon85]{MR0766964}
Vaughan F.~R. Jones.
\newblock A polynomial invariant for knots via von {N}eumann algebras.
\newblock {\em Bull. Amer. Math. Soc. (N.S.)}, 12(1):103--111, 1985.
\newblock \mathscinet{MR766964} \doi{10.1090/S0273-0979-1985-15304-2}.

\bibitem[Jon99]{math.QA/9909027}
Vaughan F.~R. Jones.
\newblock {Planar algebras, I}, 1999.
\newblock \arxiv{math.QA/9909027}.

\bibitem[Kob93]{MR1230843}
Ken-ichiro Kobayashi.
\newblock Representation theory of {${\rm osp}(1,2)_q$}.
\newblock {\em Z. Phys. C}, 59(1):155--158, 1993.
\newblock \mathscinet{MR1230843} \doi{10.1007/BF01555850}.

\bibitem[Kup94]{MR1265145}
Greg Kuperberg.
\newblock The quantum {$G\sb 2$} link invariant.
\newblock {\em Internat. J. Math.}, 5(1):61--85, 1994.
\newblock \arxiv{math.QA/9201302} \mathscinet{MR1265145}
  \doi{10.1142/S0129167X94000048}.

\bibitem[Kup96]{MR1403861}
Greg Kuperberg.
\newblock Spiders for rank {$2$} {L}ie algebras.
\newblock {\em Comm. Math. Phys.}, 180(1):109--151, 1996.
\newblock \arxiv{q-alg/9712003} \mathscinet{MR1403861}
  \euclid{euclid.cmp/1104287237}.

\bibitem[Kup97]{MR1469634}
Greg Kuperberg.
\newblock Jaeger's {H}igman-{S}ims state model and the {$B_2$} spider.
\newblock {\em J. Algebra}, 195(2):487--500, 1997.
\newblock \arxiv{9601221} \mathscinet{MR1469634} \doi{10.1006/jabr.1997.7045}.

\bibitem[KW93]{MR1237835}
David Kazhdan and Hans Wenzl.
\newblock Reconstructing monoidal categories.
\newblock In {\em I. {M}. {G}el\cprime fand {S}eminar}, volume~16 of {\em Adv.
  Soviet Math.}, pages 111--136. Amer. Math. Soc., Providence, RI, 1993.
\newblock \mathscinet{MR1237835} \googlebooks{GL6DksjIGJ8C}.

\bibitem[Lan02]{MR1950890}
Zeph~A. Landau.
\newblock Exchange relation planar algebras.
\newblock In {\em Proceedings of the {C}onference on {G}eometric and
  {C}ombinatorial {G}roup {T}heory, {P}art {II} ({H}aifa, 2000)}, volume~95,
  pages 183--214, 2002.

\bibitem[MPS10]{MR2559686}
Scott Morrison, Emily Peters, and Noah Snyder.
\newblock Skein theory for the {$\mathcal{D}_{2n}$} planar algebras.
\newblock {\em J. Pure Appl. Algebra}, 214(2):117--139, 2010.
\newblock \arxiv{math/0808.0764} \mathscinet{MR2559686}
  \doi{10.1016/j.jpaa.2009.04.010}.

\bibitem[MPS11]{MR2783128}
Scott Morrison, Emily Peters, and Noah Snyder.
\newblock Knot polynomial identities and quantum group coincidences.
\newblock {\em Quantum Topol.}, 2(2):101--156, 2011.
\newblock \arxiv{1003.0022} \mathscinet{MR2783128} \doi{10.4171/QT/16}.

\bibitem[MS12]{MR2914056}
Scott Morrison and Noah Snyder.
\newblock Subfactors of index less than 5, {P}art 1: {T}he principal graph
  odometer.
\newblock {\em Comm. Math. Phys.}, 312(1):1--35, 2012.
\newblock \arxiv{1007.1730} \mathscinet{MR2914056}
  \doi{10.1007/s00220-012-1426-y}.

\bibitem[New42]{MR0007372}
M.~H.~A. Newman.
\newblock On theories with a combinatorial definition of ``equivalence''.
\newblock {\em Ann. of Math. (2)}, 43:223--243, 1942.
\newblock \mathscinet{MR0007372}.

\bibitem[OEI]{EIS}
The {O}n-{L}ine {E}ncyclopedia of {I}nteger {S}equences.
\newblock \url{http://oeis.org}.

\bibitem[Ost03]{MR1976233}
Viktor Ostrik.
\newblock Module categories over the {D}rinfeld double of a finite group.
\newblock {\em Int. Math. Res. Not.}, 2003(27):1507--1520, 2003.
\newblock \arxiv{math/0202130} \mathscinet{MR1976233}
  \doi{10.1155/S1073792803205079}.

\bibitem[Sch80]{MR594695}
J.~T. Schwartz.
\newblock Fast probabilistic algorithms for verification of polynomial
  identities.
\newblock {\em J. Assoc. Comput. Mach.}, 27(4):701--717, 1980.
\newblock \mathscinet{MR594695} \doi{10.1145/322217.322225}.

\bibitem[Sem09]{how-to-almost-prove-the-4-color-theorem}
Secret~Blogging Seminar.
\newblock {H}ow to almost prove the 4-color theorem, 2009.
\newblock
  \url{http://sbseminar.wordpress.com/2009/10/07/how-to-almost-prove-the-4-color-theorem/}
  (accessed 2014-12-28).

\bibitem[Sie03]{MR1997336}
Jacob Siehler.
\newblock Near-group categories.
\newblock {\em Algebr. Geom. Topol.}, 3:719--775, 2003.
\newblock \arxiv{math/0209073} \mathscinet{MR1997336}
  \doi{10.2140/agt.2003.3.719}.

\bibitem[Spe94]{MR1268597}
Roland Speicher.
\newblock Multiplicative functions on the lattice of noncrossing partitions and
  free convolution.
\newblock {\em Math. Ann.}, 298(4):611--628, 1994.
\newblock \mathscinet{MR1268597} \doi{10.1007/BF01459754}.

\bibitem[Sta99]{MR1676282}
Richard~P. Stanley.
\newblock {\em Enumerative combinatorics. {V}ol. 2}, volume~62 of {\em
  Cambridge Studies in Advanced Mathematics}.
\newblock Cambridge University Press, Cambridge, 1999.
\newblock With a foreword by Gian-Carlo Rota and appendix 1 by Sergey Fomin
  \mathscinet{MR1676282} \doi{10.1017/CBO9780511609589}.

\bibitem[SW07]{MR2308953}
Adam~S. Sikora and Bruce~W. Westbury.
\newblock Confluence theory for graphs.
\newblock {\em Algebr. Geom. Topol.}, 7:439--478, 2007.
\newblock \arxiv{math.QA/0609832} \mathscinet{MR2308953}
  \doi{10.2140/agt.2007.7.439}.

\bibitem[{Thu}04a]{0405482}
D.~P. {Thurston}.
\newblock From {D}ominoes to {H}exagons, May 2004.
\newblock \arxiv{math/0405482}.

\bibitem[Thu04b]{F4E6}
Dylan~P. Thurston.
\newblock The {$F_4$} and {$E_6$} families have a finite number of points,
  2004.
\newblock available at \url{www.math.columbia.edu/~dpt/writing/F4E6.ps}.

\bibitem[TW05]{MR2132671}
Imre Tuba and Hans Wenzl.
\newblock On braided tensor categories of type {$BCD$}.
\newblock {\em J. Reine Angew. Math.}, 581:31--69, 2005.
\newblock \arxiv{math.QA/0301142} \mathscinet{MR2132671}
  \doi{10.1515/crll.2005.2005.581.31}.

\bibitem[Vog11]{MR2769234}
Pierre Vogel.
\newblock Algebraic structures on modules of diagrams.
\newblock {\em J. Pure Appl. Algebra}, 215(6):1292--1339, 2011.
\newblock \mathscinet{MR2769234} \doi{10.1016/j.jpaa.2010.08.013}.

\bibitem[Zip79]{MR575692}
Richard Zippel.
\newblock Probabilistic algorithms for sparse polynomials.
\newblock In {\em Symbolic and algebraic computation ({EUROSAM} '79,
  {I}nternat. {S}ympos., {M}arseille, 1979)}, volume~72 of {\em Lecture Notes
  in Comput. Sci.}, pages 216--226. Springer, Berlin-New York, 1979.
\newblock \mathscinet{MR575692}.

\end{thebibliography}

\end{document}